\title{Long-term accuracy of numerical approximations of SPDEs with the stochastic Navier-Stokes equations as a paradigm}
\author{Nathan E. Glatt-Holtz, Cecilia F. Mondaini \\
  \scriptsize{emails: negh@tulane.edu, cf823@drexel.edu}
  }
\definecolor{Red}{rgb}{0.7,0,0.1}
\definecolor{Green}{rgb}{0,0.7,0}
\numberwithin{equation}{section}
\newtheorem{Thm}{Theorem}[section]
\newtheorem{Lem}[Thm]{Lemma}
\newtheorem{Prop}[Thm]{Proposition}
\newtheorem{Cor}[Thm]{Corollary}
\newtheorem{Rmk}[Thm]{Remark}
\newtheorem*{Thm*}{Theorem}
\newcommand{\bu}{\mathbf{u}}
\newcommand{\bv}{\mathbf{v}}
\newcommand{\bw}{\mathbf{w}}
\newcommand{\bx}{\mathbf{x}}
\newcommand{\rd}{{\text{\rm d}}}
\newcommand{\mA}{\mathcal{A}}
\newcommand{\mB}{\mathcal{B}}
\newcommand{\mC}{\mathcal{C}}
\newcommand{\mF}{\mathcal{F}}
\newcommand{\mI}{\mathcal{I}}
\newcommand{\mK}{\mathcal{K}}
\newcommand{\mL}{\mathcal{L}}
\newcommand{\mM}{\mathcal{M}}
\newcommand{\mN}{\mathcal{N}}
\newcommand{\cO}{\mathcal{O}}
\newcommand{\mP}{\mathcal{P}}
\newcommand{\mS}{\mathcal{S}}
\newcommand{\mW}{\mathcal{W}}
\newcommand{\mX}{\mathcal{X}}
\newcommand{\mY}{\mathcal{Y}}
\newcommand{\tC}{\widetilde{C}}
\newcommand{\bP}{\mathbb{P}}
\newcommand{\bE}{\mathbb{E}}
\newcommand{\bT}{\mathbb{T}}
\newcommand{\dL}{\dot{L}}
\newcommand{\dH}{\dot{H}}
\newcommand{\bdH}{\mathbf{\dot{H}}}
\newcommand{\bdL}{\mathbf{\dot{L}}}
\newcommand{\dt}{\delta}
\newcommand{\xn}{\xi_N}
\newcommand{\xfd}{\xi_{N,\delta}} 
\newcommand{\txfd}{\widetilde\xi_{N,\delta}}
\newcommand{\txi}{\widetilde{\xi}}
\newcommand{\tu}{\widetilde{\mathbf{u}}}
\newcommand{\ufd}{\bu_{N,\delta}}
\newcommand{\tufd}{\widetilde \bu_{N,\delta}}
\newcommand{\tr}{\text{Tr}}
\newcommand{\rhoes}{\rho_{\varepsilon,s}}
\newcommand{\rhoesa}{\rho_{\varepsilon,s,\alpha}}
\newcommand{\Wes}{\mW_{\varepsilon,s}}
\newcommand{\Wesa}{\mW_{\varepsilon,s,\alpha}}
\newcommand{\Wesap}{\mW_{\varepsilon,s,\alpha'}}
\newcommand{\hW}{\widehat{W}}
\newcommand{\ind}{\mathds{1}}
\newcommand{\sm}{\alpha} 
\newcommand{\smx}{p}
\newcommand{\ra}{b}
\newcommand{\HN}{\Pi_N \dL^2}
\newcommand{\HK}{\Pi_K \dL^2}
\newcommand{\zn}{\zeta_N}
\newcommand{\zfd}{\zeta_{N,\delta}}
\newcommand{\QN}{Q_N}
\newcommand{\RR}{\mathbb{R}}  
\newcommand{\NN}{\mathbb{N}}
\newcommand{\ZZ}{\mathbb{Z}}
\newcommand{\z}{\zeta}
\newcommand{\tM}{\widetilde{M}}
\newcommand{\Pdisc}{P_n^{N, \dt}}
\newcommand{\Pdisct}{\mathcal{P}_t^{N, \dt}}
\newcommand{\tPdisc}{\widetilde{P}_n^{N,\dt, \xi_0}}
\newcommand{\Pcont}{P_t}
\newcommand{\Wass}{\mathcal{W}}
\newcommand{\Co}{\mathcal{C}} 
\newcommand{\KL}[2]{D_{\text{KL}}( #1 \| #2 )}
\newcommand{\tv}[1]{\left\| #1 \right\|_{\text{TV}}}
\newcommand{\range}[1]{range(#1)}
\newcommand{\cinv}{\mu_{\ast}}
\newcommand{\dinv}{\mu_{\ast}^{N, \dt}}
\newcommand{\tp}{{p'}} 
\newcommand{\gsp}{X}
\newcommand{\ggsp}{X}
\newcommand{\gP}{P}
\newcommand{\gdist}{\rho}
\newcommand{\gdista}{\rho_\sm}
\newcommand{\tT}{\widetilde{T}}
\newcommand{\LC}{C_0}
\newcommand{\pmr}{\theta} 
\newcommand{\gf}{f}
\newcommand{\gn}[1]{\| #1 \|} 
\newcommand{\gfd}{\Lambda} 
\newcommand{\gspmr}{\Theta}
\newcommand{\tsm}{\tilde{\sm}}
\newcommand{\pfunc}{g}
\newcommand{\tgf}{R} 
\newcommand{\tmu}{\tilde{\mu}}
\newcommand{\cD}{\mathcal{D}}
\newcommand{\f}{\mathbf{f}}
\newcommand{\spH}{H}
\newcommand{\spV}{V}
\newcommand{\ipV}[2]{ (\!( #1, #2)\!) }
\newcommand{\PL}{P_L}
\newcommand{\bH}{\mathbf{H}}
\newcommand{\btu}{\tilde{\mathbf{u}}}
\newcommand{\tP}{\widetilde{P}}
\begin{document}
\markboth{}{}

\maketitle

\begin{abstract}

This work introduces a general framework for establishing the long time accuracy for approximations 
of Markovian dynamical systems on separable Banach spaces. Our results illuminate the role that a certain uniformity in Wasserstein contraction rates for the approximating dynamics bears on long time accuracy estimates.  In particular, our approach yields weak consistency bounds on $\RR^+$ while providing a means to sidestepping a commonly occurring situation where certain higher order moment bounds are unavailable for the approximating dynamics. Additionally, to facilitate the analytical core of our approach, we develop a refinement of certain `weak Harris theorems'. This extension expands the scope of applicability of such Wasserstein contraction estimates to a variety of interesting SPDE examples involving weaker dissipation or stronger nonlinearity than would be covered by the existing literature.

As a guiding and paradigmatic example, we apply our formalism to the stochastic 2D Navier-Stokes equations and to a semi-implicit in time and spectral Galerkin in space numerical approximation of this system. In the case of a numerical approximation,
we establish quantitative estimates on the approximation of invariant measures as well as prove weak consistency on $\RR^+$. To develop these numerical analysis results, we provide a refinement of $L^2_x$ accuracy bounds in comparison to the existing literature which are results of independent interest.  
\end{abstract}

{\noindent \small
  {\it \bf Keywords: } Long Time Accuracy,  Weak Harris Theorems, Contraction in Wasserstein Distance,
  Numerical Analysis of Stochastic Partial Differential Equations,   Stochastic Navier-Stokes.\\
  {\it \bf MSC2020:}  60H15, 76M35, 65C30, 37L40, 37M25. }

\begin{footnotesize}

\setcounter{tocdepth}{2}
\tableofcontents
\end{footnotesize}

\newpage

\section{Introduction}\label{sec:intro}

Questions concerning long time accuracy under approximations for dynamical systems
exhibiting chaotic behavior are notoriously difficult. This is nevertheless a topic of wide interest particularly 
given that statistical theories of turbulence in fluid dynamics can be framed in terms of observables against invariant measures. According to this widely used paradigm such measures are connected to the fundamental governing equations through (putative) ergodic averages and thus may be regarded as containers for statistically robust properties of turbulent flows. Thus, from this point of view, it is natural to ask if the essential features of these invariant measures are maintained under suitable numerical approximations or in a variety of physically interesting singular parameter limits.

Unfortunately, the robustness of statistical properties, i.e. the verification of an ergodic hypothesis, for solutions of deterministic models such as the Navier-Stokes equations and its many variations are typically far from the reach of rigorous analysis. On the other hand, certain stochastic versions of these equations are more tractable to analyze in this regard. Moreover, such stochastic models often retain physical relevance 
while providing an important motivation and a set of unique challenges that have been driving a flurry of developments in the ergodic and mixing theory of infinite dimensional Markov processes in recent decades. In this stochastic setting, the question of the stability of long time statistical properties as a function of
model parameters is therefore of broad interest for a diverse variety of nonlinear, infinite dimensional, randomly stirred systems.

This work develops a novel framework for addressing such long time
stability and accuracy questions for parameter dependent Markov
processes on a Polish space. Our approach leverages a certain uniform
Wasserstein contraction condition (a strong form of exponential
mixing) which, as we will illustrate on several paradigmatic examples,
has a rather broad scope of applicability for finite and infinite
dimensional stochastic systems. Our results demonstrate that
appropriately leveraging uniform contraction provides an important
twist on an existing vein of research concerning infinite time
stability under parameter perturbation for certain stochastic systems
\cite{shardlow2000perturbation,kuksin2003some,HairerMattingly2008,MattinglyStuartTretyakov,hairer2010simple,HairerMattinglyScheutzow2011,johndrow2017error,foldes2017asymptotic,foldes2019large,cerrai2020convergence}. Here
we also note that the framework in
\cite{wang2010approximation,gottlieb2012long} for deterministic
dynamical systems anticipate some of the developments here, including
the invocation of a uniform dissipativity condition. However, the
scope of \cite{wang2010approximation, gottlieb2012long} is
fundamentally limited by its inability to rule out non-uniqueness (let
alone address ergodic and mixing properties) for the long term
statistics of the infinite dimensional deterministic models considered
therein.

As an important technical foundation to carry out our broad program, we develop a refinement of the so called `weak Harris approach' to exponential mixing. This portion of our contribution builds on the seminal works \cite{HairerMattingly2008,HairerMattinglyScheutzow2011}, which lay out a powerful
framework for addressing Wasserstein contraction. These earlier works make use of delicate norm constructions which sidestep the need to Byzantine explicit coupling constructions. On the other hand, the representative and natural selection of examples presented in \cite{GHRichMa2017,ButkovskyKulikScheutzow2019}, demonstrates the need to refine the approach for the typical situation where models lack certain higher order moment estimates or possess a weaker form of smoothing at small scales or both.

A primary domain of application for our framework regards the error analysis for numerical approximations of certain stochastic partial differential equations (SPDEs). This is an area of applied analysis that has undergone some rapid development in the past decade; see for example \cite{mattingly2002ergodicity,jentzen2009numerical,CarelliProhl2012,brzezniak2013finite,BessaihMillet2019,BessaihMillet2021} and containing references. Thus, to illustrate the scope of our approach on a paradigmatic example, we carry out a case study of the space-time numerical approximation of the stochastic 2D Navier-Stokes equations given by a spectral Galerkin discretization in space and a semi-implicit Euler time discretization. In the course of our analysis, we provide some novel approximation bounds and some significant refinements of existing finite time error bounds in comparison to the existing literature 
\cite{CarelliProhl2012,BessaihMillet2019,BessaihMillet2021} which are of independent interest. Note that our general framework has also been useful for several concurrent projects. In a recent work by the first author, \cite{GlattHoltzNguyen2021}, we make use of uniform contractivity to address certain singular limit problems 
concerning SPDEs with diffusive memory terms. Elsewhere in \cite{GHHKMBias2022}, we address 
application in bias estimation for statistical sampling algorithms.

\subsection{The Uniform Contraction Framework for Long Time Stability}\label{subsec:unif:contr:overview}

Let us now give an overview of the abstract foundation of our approach. We provide 
an idealized version here so that the reader can observe the underlying simplicity of our
framework. Of course, to carry out our program in practice we will need to impose a number of 
technical assumptions; we refer the reader to \cref{thm:gen:wk:conv},
\cref{thm:gen:wk:conv:2}, and \cref{cor:gen:wk:conv:obs} below for these more involved formulations.

Our departure point is to observe that, for certain stochastic Markovian systems, 
long time accuracy estimates can be developed in the presence of a strong type of mixing taking the form of a contraction estimate in a suitable Wasserstein distance. Note that such contraction estimates have been previously exploited in a variety of specific contexts for SDEs and SPDEs
and other Markovian processes \cite{HairerMattingly2008,HairerMattinglyScheutzow2011, hairer2010simple, johndrow2017error,foldes2017asymptotic,foldes2019large,cerrai2020convergence} to provide rigorous bounds on parameter dependent invariant measures. 
The crucial new element here centers on suitably exploiting parameter independent uniformity in the contraction rates.

Suppose that $\{\gP_t^\pmr\}_{t \geq 0}$ is a collection of Markov transition operators defined
on a Polish space $(\gsp,\rho)$ parameterized by $\pmr \in \Theta$. These operators act on probability measures $\nu$ and observables $\varphi$ as
\begin{align*}
	\nu \gP_t^\pmr (du) := \int\gP_t^\pmr (v, du) \nu(dv), \qquad
	\gP_t^\pmr \varphi (u) :=  \int \varphi(v)\gP_t^\pmr(u, dv),
\end{align*}
respectively. Let us suppose that for some $\theta_0$, corresponding to the `true' or `limiting' dynamics of interest, we have Wasserstein contraction. Namely, for any $t \geq 0$
\begin{align}
	\Wass(\mu\gP_t^{\pmr_0}, \tilde{\mu} \gP_t^{\pmr_0}) \leq C_0 e^{ - \kappa t} \Wass(\mu, \tilde{\mu}),
	\label{eq:wass:cont:intro}
\end{align}
for any Borel probability measure $\mu, \tilde{\mu}$, where $C_0, \kappa >0$ are constants independent of 
$\mu, \tilde{\mu}$ and $t \geq 0$. Here, as in e.g. \cite{villani2008}, $\Wass$ is the Wasserstein
distance corresponding to $\rho$, i.e.
\begin{align}
 	\Wass(\nu_1, \nu_2) = \inf_{\Gamma \in \Co(\nu_1, \nu_2)} \int \rho(u, \tilde{u}) \Gamma(du, d\tilde{u}),
	\label{eq:wass:dist:intro}
\end{align}
with  $\Co(\nu_1, \nu_2)$ denoting all of the couplings of $\nu_1$ and $\nu_2$. Note that such Wasserstein 
contraction estimates can be obtained using the so called `weak Harris approach' developed 
in \cite{HairerMattingly2008,HairerMattinglyScheutzow2011}, which we refine for our purposes 
here in \cref{thm:gen:sp:gap} below.

Suppose now that for every $\pmr \in \Theta$ we have a corresponding
measure $\mu_\pmr$ which is invariant under $\{\gP_t^\pmr \}_{t \geq 0}$,
namely $\mu_\pmr \gP_t^\pmr = \mu_\pmr$ for any $t \geq 0$. We then
make the following simple observation. Exploiting invariance, the
triangle inequality, and the contraction estimate
\eqref{eq:wass:cont:intro}, we have
\begin{align}
	\Wass(\mu_{\pmr_0}, \mu_\pmr) 
	=& \Wass(\mu_{\pmr_0} \gP_t^{\pmr_0}, \mu_\pmr \gP_t^\pmr) 
	\leq \Wass(\mu_{\pmr_0} \gP_t^{\pmr_0}, \mu_\pmr \gP_t^{\pmr_0})  + \Wass(\mu_{\pmr} \gP_t^{\pmr_0}, \mu_\pmr \gP_t^\pmr) 
	\notag\\
	\leq& C_0 e^{ - \kappa t}  \Wass(\mu_{\pmr_0}, \mu_\pmr)  + \Wass(\mu_{\pmr} \gP_t^{\pmr_0}, \mu_\pmr \gP_t^\pmr)
	\label{eq:tri:arg}
\end{align}
which holds for any $\pmr \in \Theta$ and any $t \geq 0$.  Thus, by selecting $t_*$ such that, say, $C_0 e^{ - \kappa t_*} \leq 1/2$, we can rearrange the above expression and obtain
\begin{align}
	\Wass(\mu_{\pmr_0}, \mu_\pmr) \leq  2 \Wass(\mu_{\pmr} \gP_{t*}^{\pmr_0}, \mu_\pmr  \gP_{t*}^\pmr).
	\label{eq:reduction:intro}
\end{align}
Thus we obtain a bound which reduces the question of long time
accuracy in the sense of invariant statistics to a certain finite time
error estimate and alongside suitable $\theta$-uniform moment bound on
$\mu_\theta$.

To make this significance of \eqref{eq:reduction:intro} a bit more concrete, we recall that $\Wass$ possesses a desirable Lipschitz structure. For example, if for each $u \in \gsp$, we can find a coupling, $u_{\pmr_0}(t, u), u_{\pmr}(t, u)$ of $\gP_t^{\pmr_0} (u, \cdot),  \gP_t^\pmr (u, \cdot)$ such that
\begin{align}
	\bE \rho( u_{\theta_0}(t, u),  u_{\theta}(t, u)) \leq e^{\tilde{C}_0t} f(u) g(\pmr, \pmr_0),
	\label{eq:finite:err:intro}
\end{align}
where $g$ is a (bounded) distance on $\Theta$, then basic properties of $\Wass$ lead to
\begin{align}
	\Wass(\mu_{\pmr} \gP_{t*}^{\pmr_0}, \mu_\pmr  \gP_{t*}^\pmr)
	\leq e^{\tilde{C}_0t_*}  g(\theta, \theta_0) \int f(u) \mu_{\theta}(d u) .
	\label{eq:finite:err:intro:1}
\end{align}
Hence we obtain from \eqref{eq:reduction:intro} that
\begin{align}
	\Wass(\mu_{\theta_0}, \mu_\theta) \leq  2 e^{\tilde{C}_0 t_*} g(\theta, \theta_0) \int f(u) \mu_{\theta}(d u) .
	\label{eq:finite:err:mom:intro}
\end{align}	
Of course obtaining \eqref{eq:wass:cont:intro} and then providing suitable qualitative estimates 
for $\Wass(\mu_{\pmr} \gP_{t*}^{\pmr_0}, \mu_\pmr  \gP_{t*}^\pmr)$ to leverage via \eqref{eq:reduction:intro} as in \eqref{eq:finite:err:intro}-\eqref{eq:finite:err:mom:intro} represents a bespoke and nontrivial mathematical challenge for each of specific works mentioned previously, \cite{HairerMattingly2008,HairerMattinglyScheutzow2011,johndrow2017error,foldes2017asymptotic,foldes2019large,cerrai2020convergence}. Furthermore, we emphasize for what follows that in order to exploit \eqref{eq:finite:err:mom:intro} we must obtain a uniform bound on $\int f(u) \mu_{\theta}(d u)$ 
as a function of $\theta$. 

This work develops a different and seemingly novel variation on the reduction in \eqref{eq:wass:cont:intro}, \eqref{eq:reduction:intro}. Suppose that, instead of \eqref{eq:wass:cont:intro}, we impose the stronger uniform contraction assumption  
\begin{align}
	\Wass(\mu \gP_t^\pmr, \tilde{\mu} \gP_t^\pmr ) \leq C e^{ - \kappa t} \Wass(\mu, \tilde{\mu}),
	\label{eq:wass:cont:unf:intro:1}
\end{align}
where, to emphasize, the constants $C, \kappa > 0$ are now supposed to be independent of the parameter $\theta \in \Theta$. In comparison to \eqref{eq:tri:arg}, we now proceed as
\begin{align}
	\Wass(\mu_{\theta_0}, \mu_\theta) 
	\leq& \Wass(\mu_{\theta_0} \gP_t^{\pmr_0}, \mu_{\theta_0} \gP_t^\pmr )  + \Wass(\mu_{\theta_0} \gP_t^\pmr, \mu_\theta \gP_t^\pmr) 
	\leq \Wass(\mu_{\theta_0} \gP_t^{\pmr_0}, \mu_{\theta_0} \gP_t^\pmr) + C_0 e^{ - \kappa t}  \Wass(\mu_{\theta_0}, \mu_\theta),
	\label{eq:tri:arg:unf}
\end{align}
so that, by again choosing $t_*$ such that 
\begin{align}
C_0 e^{ -  \kappa t_*} \leq 1/2, 
\label{eq:tstr:cond:intro}
\end{align}
we now find
\begin{align}
	\Wass(\mu_{\theta_0}, \mu_\theta) \leq  2 \Wass(\mu_{\theta_0} \gP_{t*}^{\pmr_0}, \mu_{\theta_0} \gP_{t*}^\pmr).
	\label{eq:reduction:unf:intro}
\end{align}
This seemingly innocent difference in comparison to \eqref{eq:reduction:intro} trades uniformity in the contraction rate for a single moment bound on the limit system. Indeed, under \eqref{eq:finite:err:intro} we obtain
\begin{align}
	\Wass(\mu_{\theta_0}, \mu_\theta) \leq  2 e^{\tilde{C}_0t_*} g(\theta, \theta_0) \int f(u) \mu_{\theta_0}(d u),
\label{eq:reduction:unf:intro:1}
\end{align}
so that we trade the requirement \eqref{eq:wass:cont:unf:intro:1} for
the uniform bound
$\sup_{\theta \in \Theta} \int f(u) \mu_{\theta}(d u)$.

This difference between \eqref{eq:reduction:unf:intro:1} and
\eqref{eq:finite:err:mom:intro} turns out to sometimes be an
indispensable trade off. We exploit it for the questions of numerical
accuracy we consider here as well as other situations of interest as
in the concurrent work \cite{GlattHoltzNguyen2021}. Specifically, as
we will describe in further detail immediately below, for our
applications here $\theta \not = \theta_0$ represents a numerical
approximation parameter for the stochastic Navier-Stokes equations. These
numerical approximations destroy (or complicate) certain crucial
Lyapunov structures, namely we lack the availability of moments for
$\mu_{\theta}$ when $\theta \not = \theta_0$ as would be needed in
\eqref{eq:finite:err:mom:intro}. In any case we refer to
\cref{thm:gen:wk:conv} which is framed in a context applicable to a
slightly weaker form of the uniform contraction estimates
\eqref{eq:wass:cont:unf:intro:1} required for our applications.

Leaving this consideration aside, the uniform contraction assumption
\eqref{eq:wass:cont:unf:intro:1} combined with finite time error
estimate bounds as in \eqref{eq:finite:err:intro} leads to other
desirable long time approximation estimates. Indeed with
\eqref{eq:wass:cont:unf:intro:1} and invoking invariance we obtain the
bound
\begin{align*}
	 \Wass(\mu \gP_t^\pmr, \mu \gP_t^{\pmr_0}) 
	 \leq&  \Wass(\mu \gP_t^\pmr , \mu_{\theta} \gP_t^\pmr ) + \Wass( \mu_\theta,  \mu_{\theta_0})+ \Wass( \mu_{\theta_0} \gP_t^{\pmr_0} , \mu \gP_t^{\pmr_0} ) \\
	 \leq& C_0 e^{-\kappa t} (\Wass(\mu, \mu_{\theta}) + \Wass(\mu, \mu_{\theta_0})) +  \Wass( \mu_\theta,  \mu_{\theta_0})\\
	 \leq&  C_0 e^{-\kappa t} \Wass(\mu, \mu_{\theta_0}) + (1+ C_0) \Wass( \mu_\theta,  \mu_{\theta_0}),
\end{align*}
for any `initial' distribution $\mu$. Hence, with this bound and \eqref{eq:reduction:unf:intro}, valid under \eqref{eq:reduction:intro} and \eqref{eq:finite:err:intro}, we obtain
\begin{align*}
\Wass(&\mu \gP_t^\pmr , \mu \gP_t^{\pmr_0}) \\
  &\leq \min\left\{ C_0 e^{-\kappa t} \Wass(\mu, \mu_{\theta_0}) + 2(1+ C_0)e^{\tilde{C}_0t_*}\! \! \int f(u) \mu_{\theta_0}(d u) \, g (\theta, \theta_0),
	 e^{\tilde{C}_0t} \! \! \int f(u) \mu(d u) \, g (\theta, \theta_0) \right\},
\end{align*}
valid for any $t \geq 0$, where we recall that $t^* > 0$ is given as in \eqref{eq:tstr:cond:intro}. Hence, optimizing appropriately over $t \geq 0$ in this bound we conclude
\begin{align}
	\sup_{t \geq 0} \Wass(\mu \gP_t^\pmr , \mu \gP_t^{\pmr_0}) 
	\leq  C\left( \Wass(\mu, \mu_{\theta_0}) + \int f(u) \mu_{\theta_0}(d u)  + \int f(u) \mu(d u) \right)  g (\theta, \theta_0)^\gamma.
	\label{eq:time:inf:intro}
\end{align}
where $C, \gamma > 0$ are independent of $\mu$ and $\theta$.  

Note that, in the numerical analysis context of interest here, this
bound, \eqref{eq:time:inf:intro}, immediately yields a weak order
approximation estimate valid on the entire time interval
$[0,\infty)$. The operational version of \eqref{eq:time:inf:intro}
formulated in order to address our nonlinear SPDE applications is
formulated in \cref{thm:gen:wk:conv:2} and in
\cref{cor:gen:wk:conv:obs}, which make explicit the connection with
weak order convergence in stochastic numerical analysis.

\subsection{Contributions to the Weak Harris Approach for Wasserstein Contraction}

Of course the elegant simplicity of the above discussion obscures a
number of bedeviling technical challenges which one must address in
order to carry out this program in practice. One challenge is to
establish (uniform) Wasserstein contraction bounds as in
\eqref{eq:wass:cont:intro} and in \eqref{eq:wass:cont:unf:intro:1}.
For this purpose that we develop \cref{thm:gen:sp:gap} below, which
provides general criteria for such contraction estimates. This is a
result that has independent interest for a variety of
infinite dimensional contexts as highlighted by the recent
contributions
\cite{GHRichMa2017,ButkovskyKulikScheutzow2019,glatt2021long}.

As previously mentioned, \cref{thm:gen:sp:gap} provides a new variation on the so called `weak Harris approach' developed in \cite{HairerMattingly2008,HairerMattinglyScheutzow2011}.
This weak Harris approach builds on a wide and well developed literature on mixing rates for Markov chains; 
see e.g. \cite{DaPratoZabczyk1996,meyn2012markov,Kulik2018,douc2018markov} for a systematic 
presentation. The classical Harris theorems date back to the 1950's by building on Doeblin's 
coupling approach to address mixing in unbounded phase spaces. The key is to 
appropriately incorporate the role of Lyapunov structure to facilitate coupling at `large scales'. Typically, in this literature mixing occurs in a total variation (TV) topology or other related `strong topologies' on probability measures; see \cite{hairer2011yet} for a recent treatment close to our setting.  
This use of a total variation topology highlights a limitation of the classical Harris approach:
it turns out to be ill-adapted to infinite dimensional contexts where measures tend  to be mutually singular as exemplified by the Feldman-Hajek theorem (see e.g. \cite[Theorem 2.25]{DaPratoZabczyk2014}).

More recent variations on this Harris approach, largely developed in
an extended body of literature in the SPDE context starting from
\cite{Mattingly2002,BKL2002,Hai02,KukShi2002}, address mixing in
Wasserstein (or the closely related dual-Lipschitz)
distance. Wasserstein distance reflects a weak topology which
sidesteps the issue of mutually singular laws arising in
infinite dimensional stochastic systems. A distinguished contribution
of the works \cite{HairerMattingly2008,HairerMattinglyScheutzow2011}
in this literature is to provide a contraction (or a so called
`spectral-gap') estimate as in \eqref{eq:wass:cont:intro} as suits our
needs here. As in the earlier literature, these results are based on
natural conditions leading to couplings which synchronize two point
dynamics at large, intermediate and small scales, through Lyapunov
structure, irreducibility and smoothing properties,
respectively. However, an elegant feature of
\cite{HairerMattingly2008,HairerMattinglyScheutzow2011} in this wider
mixing literature is the identification of a particular class of
metrics (or pseudo-metrics) on the phase space which are carefully
tailored to account for the three different mechanisms acting at
different scales which drive the coupling. This `norm approach' thus
avoids Byzantine explicit coupling constructions yielding a flexible
approach for applications while producing elegant, transparent proofs.

The approach \cite{HairerMattingly2008, HairerMattinglyScheutzow2011} is well 
adapted to the 2D randomly forced Navier-Stokes on compact domains in the absence of boundaries and as
well as several other reaction-diffusion type models of interest. However, a crucial requirement in \cite{HairerMattingly2008,HairerMattinglyScheutzow2011} 
appears to be stronger than can be expected for a rich variety 
of interesting SPDE examples involving weaker dissipation and/or stronger nonlinearity 
as illustrated in \cite{GHRichMa2017,ButkovskyKulikScheutzow2019,glatt2021long}. Indeed,
\cite{HairerMattingly2008} develops their theory around a certain geodesic metric which is adapted to a quadratic exponential Lyapunov structure, namely $V(u) = \exp(\alpha |u|^2)$ for some $\alpha > 0$.  It
turns out that this geodesic metric approach involves the use of a certain gradient bound on the Markovian dynamics closely related to the so called `asymptotic strong Feller' (ASF) condition introduced earlier in \cite{HaiMa2006}. While the pseudo-metric structures considered later in \cite{HairerMattinglyScheutzow2011} are in various ways more flexible, including in terms of its requirement on the Lyapunov structure,
\cite{HairerMattinglyScheutzow2011} still ultimately relies on these same ASF type gradient bound on
the Markov semigroup. To summarize, the existing works \cite{HairerMattingly2008,HairerMattinglyScheutzow2011} require a significant degree of uniformity across the phase space in contraction rates when two point dynamics are in close proximity. This is rather more than can be hoped for in a variety of interesting situations.

\cref{thm:gen:sp:gap} provides our new take on the weak Harris
approach. Its main advantage over these previous formulations consists
in sidestepping the need for a gradient bound.  Our approach builds on
machinery introduced recently in \cite{ButkovskyKulikScheutzow2019},
which provides a powerful and user friendly toolbox for addressing
exponential mixing by confronting the representative gallery of SPDE
examples introduced in \cite{GHRichMa2017}. Our result here may be
seen to be a sort of intermediate formulation of the topology for
contraction, laying between \cite{HairerMattingly2008} and
\cite{HairerMattinglyScheutzow2011}, and focusing specifically on
$V(u) = \exp(\alpha |u|^2)$.  This intermediate formulation then has
the advantage of allowing us to treat the gallery of examples from
\cite{GHRichMa2017,ButkovskyKulikScheutzow2019}.  Note that our
pseudo-metric does not maintain a generalized triangle inequality due
to the underlying Lyapunov structure around
$V (u) = \exp(\alpha |u|^2)$ as would be strictly required for bounds
like \eqref{eq:tri:arg}, \eqref{eq:tri:arg:unf}.  Instead, taking
advantage of a stronger `super-Lyapunov' structure for $V$, we provide
a `contraction-like' condition (see \eqref{Wass:contr:gen},
\eqref{def:gdista}), which is strong enough to then follow the general
stream of argumentation leading to our reduction bounds
\eqref{eq:reduction:unf:intro:1}, \eqref{eq:time:inf:intro}.

The list of problems in \cite{GHRichMa2017,ButkovskyKulikScheutzow2019} include, notably, 
the 2D stochastic Navier-Stokes equations on a bounded domain subject to the usual nonslip boundary
condition. We provide complete details for this case below in \cref{sec:cont:bnd:dom}, which we believe illustrates the full significance for \cref{thm:gen:sp:gap} in applications. We refer to \cref{rmk:s:bdd} below which provides technical level comparison of our \cref{thm:gen:sp:gap} to the results in \cite{HairerMattingly2008} and in \cite{HairerMattinglyScheutzow2011}.

\subsection{Results for the Numerical Approximation of the Stochastic Navier-Stokes Equations}\label{subsec:res:approx:SNSE}

As already alluded to above, our immediate goal is to demonstrate the
efficacy of the abstract formalism developed in \cref{sec:abs:res} for
the numerical analysis of certain classes of nonlinear SPDEs. As a
paradigmatic model problem, we carry out a detailed study of a fully
discrete numerical scheme to approximate the 2D stochastic
Navier-Stokes equations (SNSE) in the presence of spatially smooth but
sufficiently rich (or `mildly degenerate') stochastic forcing
structure.

Our main result which we preview immediately below as
\cref{thm:num:apx:SNSE:intro} adds to an extensive body of research on
the numerical analysis of stochastic dynamical systems.  However, the
literature on long time numerical approximation for SPDEs is scant,
and, to the best of our knowledge, there is no previous literature on
the stochastic Navier-Stokes or other such `strongly nonlinear'
equations in this regard.  To summarize, our primary contribution
in comparison to the existing numerical analysis literature is to
provide rigorous approximation bounds on invariant measures and to
establish weak convergence estimates, \`a la \eqref{eq:weak:const:intro},
on an infinite time horizon for the stochastic Navier-Stokes
equations.  This usage of our abstract framework lays out an approach
which would apply to the numerical analysis of a number of other
strongly nonlinear infinite dimensional systems seemingly out of reach
of the previously existing approaches, one which follow a very
different set of methodologies in comparison to the extant literature.

Let us be more concrete.  For our numerical application, we consider
the 2D Navier-Stokes equations on the torus $\bT^2$ so that we can
work with the convenient vorticity formulation
\begin{align}\label{2DSNSEv:intro}
\rd \xi + (- \nu \Delta \xi + \bu \cdot \nabla \xi ) \rd t = \sum_{k =1}^d \sigma_k dW_k = \sigma dW, \quad \bu = \mK \ast \xi.
\end{align}
Here $\mK$ is the Biot-Savart operator, which uniquely recovers the
divergence free vector field $\bu$ from $\xi$ (so that
$\xi = \nabla^\perp \cdot \bu$).  The physical parameter $\nu > 0$
represents the kinematic viscosity of the fluid.  The system is driven
by a white in time and spatially smooth Gaussian process
$\sigma dW = \sum_{k=1}^d \sigma_k \rd W^k$, where
$W = (W_1, W_2, \ldots, W_d)$ is a collection of i.i.d. Brownian
motions on a probability space $(\Omega, \mF, \bP)$, and
$\sigma_1, \ldots, \sigma_d$ are (spatially) mean free elements of
$L^2(\bT^2)$. We work under the simplifying assumption, the so called
`essentially-elliptic case', where we suppose a certain non-degeneracy
condition that noise excitation acts directly on some number of low
Fourier modes, namely
\begin{align}
	\mbox{span} \{\sigma_1, \ldots, \sigma_d\} \supset \Pi_K L^2(\bT^2)
	\label{eq:non-degen:intro}
\end{align}
for $K = K( \nu, \sum_{k=1}^d |\sigma_k|_{L^2}^2)$. This is a standard assumption in the SNSE literature, cf. 
\cite{Mattingly2003, KuksinShirikyan12, GHRichMa2017}.  
For simplicity we consider spatially mean free flows, a condition maintained by \eqref{2DSNSEv:intro} so long as the noise itself is mean free.

As numerical approximation of \eqref{2DSNSEv:intro}, we consider a spectral Galerkin discretization in space and a semi-implicit Euler time discretization, given by
\begin{align}\label{disc2DSNSEv:intro}
\xi_{N,\delta}^n = \xi_{N,\delta}^{n-1} + \dt [\nu \Delta \xi_{N,\delta}^n - \Pi_N(\bu_{N,\delta}^{n-1} \cdot \nabla \xi_{N,\delta}^n)] 
	+ \sqrt{\dt}\sum_{k =1}^d \Pi_N \sigma_k \eta_n^k,
\quad \text{ for } n \geq 1,
\end{align}
where  the numerical discretization parameters are the size of the time step $\delta > 0$, and $N \geq 1$ 
the degree of spectral (spatial) approximation so that the operator $\Pi_N$ projects onto the first $2N$ 
Fourier modes. As previously, $\bu_{N,\dt}^{n-1} = \mK \ast \xi_{N,\dt}^{n-1}$.  Here $\sqrt{\dt}  \eta_n^k$ have 
the laws of increments of the Brownian motions $W^k$, so that $\eta_n^k$ is generated by a sequence of
i.i.d. standard Gaussian random variables.  

Our main numerical result is given here in a heuristic formulation as follows.

\begin{Thm}\label{thm:num:apx:SNSE:intro}
Consider  \eqref{2DSNSEv:intro} and \eqref{disc2DSNSEv:intro} under the suitable non-degeneracy condition \eqref{eq:non-degen:intro} that the stochastic perturbation 
acts directly on sufficiently many low frequencies (depending only on $\nu > 0$ and $|\sigma|^2_{L^2} = \sum_k | \sigma_k|^2_{L^2}$).
Then \eqref{2DSNSEv:intro} has a unique statistically invariant state $\mu_*$, and \eqref{disc2DSNSEv:intro}
has a unique statistically invariant state $\mu_*^{N,\delta}$ for any $N \geq 1$, $\delta > 0$.  Moreover, for any sufficiently regular observable 
$\varphi: L^2(\bT^2) \to \RR$, we have the bound
\begin{align}\label{eq:inv:mrs:conv:intro}
	\left| \int \varphi( \xi' ) \mu_*^{N,\delta}( d \xi') - \int \varphi( \xi' ) \mu_*( d \xi') \right| \leq C_\varphi (\delta^{r} + N^{-r'})
\end{align}
for some $r = r( \nu, |\sigma|^2) > 0$, $r' = r'( \nu, |\sigma|^2) > 0$ which do not depend on $\varphi$ and 
where $C_\varphi$, $r$, $r'$ are all $\delta, N$-independent. 

Finally, \eqref{disc2DSNSEv:intro} is a weakly
consistent approximation of \eqref{2DSNSEv:intro}. Namely, for any such observable $\varphi$ and sufficiently regular $\xi_0$ it holds that
\begin{align}\label{eq:weak:const:intro}
	\sup_{n \geq 0} \left| \bE \varphi(\xi_{N,\delta}^n(\xi_0)) - \bE \varphi(\xi(n\delta; \xi_0)) \right| \leq C_\varphi (\delta^{r} + N^{-r'}),
\end{align}
where $\xi(t;\xi_0)$, $t \geq 0$, and $\xi_{N,\dt}^n(\xi_0)$, $n \in \NN$, denote the solutions of \eqref{2DSNSEv:intro} and \eqref{disc2DSNSEv:intro}, respectively, with initial datum $\xi_0$.
\end{Thm}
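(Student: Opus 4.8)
The plan is to deduce \cref{thm:num:apx:SNSE:intro} directly from the abstract framework of \cref{sec:abs:res}, taking $\theta_0$ to be the continuous system \eqref{2DSNSEv:intro} and letting $\theta = (N,\delta)$ range over the discretizations \eqref{disc2DSNSEv:intro}, with the metric on the parameter set chosen so that $g(\theta,\theta_0) \sim \delta^{r} + N^{-r'}$. Granting this, the three assertions follow respectively from: the uniform Wasserstein contraction \eqref{eq:wass:cont:unf:intro:1} (which forces uniqueness of $\mu_*$ and of $\mu_*^{N,\delta}$ and, together with a Lyapunov bound, their existence, and which via \eqref{eq:reduction:unf:intro} reduces \eqref{eq:inv:mrs:conv:intro} to a finite-time error bound measured against the single limit measure $\mu_*$); a finite-time coupled-trajectory error estimate of the type \eqref{eq:finite:err:intro}; and the infinite-horizon bound \eqref{eq:time:inf:intro}. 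Concretely I would verify the hypotheses of \cref{thm:gen:wk:conv}, \cref{thm:gen:wk:conv:2} and \cref{cor:gen:wk:conv:obs}, the last of which makes explicit the passage from a $\Wass$-bound to the observable estimates \eqref{eq:inv:mrs:conv:intro}, \eqref{eq:weak:const:intro} and absorbs the mismatch between the time grid $\{n\delta\}$ of the scheme and continuous time.

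\textbf{Step 1 (uniform contraction).} Using \cref{thm:gen:sp:gap}, this reduces to checking, \emph{with all constants independent of $(N,\delta)$}: (i) a super-Lyapunov estimate for $V(\xi) = \exp(\alpha|\xi|_{L^2}^2)$ along both dynamics --- here the semi-implicit treatment of the viscous term together with the energy-neutrality of $\Pi_N(\bu_{N,\delta}^{n-1}\cdot\nabla\xi_{N,\delta}^n)$ in the $L^2$ balance keeps the one-step estimate uniform in $\delta$, while the high-mode dissipation rate $\nu N^2$ on $\Pi_N\dL^2(\bT^2)$ only helps as $N$ grows; (ii) a uniform irreducibility/controllability property, which follows from the non-degeneracy assumption \eqref{eq:non-degen:intro}, since the noise directly forces all the low modes; and (iii) a uniform small-scale one-step contractivity for the two-point dynamics, which is the gradient-bound-free ingredient supplied by \cref{thm:gen:sp:gap} in place of an asymptotic-strong-Feller estimate. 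Since each discretization lives on the finite-dimensional space $\Pi_N\dL^2(\bT^2)$ with nondegenerate Gaussian noise, (ii) and (iii) are individually tractable; the delicate point is their uniformity, which is precisely what the structure of \eqref{disc2DSNSEv:intro} is designed to preserve.

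\textbf{Step 2 (finite-time error).} Coupling the continuous solution and an appropriate piecewise-in-time interpolant of the discrete solution through a common Brownian path, I would split the error into a Galerkin (spatial) part and a semi-implicit Euler (temporal) part and run a Gr\"onwall argument in $L^2_x$, controlling the transport term $\bu\cdot\nabla\xi$ via the two-dimensional interpolation inequalities and the parabolic smoothing of $e^{\nu\Delta t}$ and absorbing the worst term into the (uniform) dissipation. The outcome should be
\begin{align*}
\bE\,\bigl| \xi(n\delta;\xi_0) - \xi_{N,\delta}^n(\xi_0)\bigr|_{L^2}^2 \leq e^{\tilde C\, n\delta}\bigl(\delta^{2r} + N^{-2r'}\bigr)\,\Lambda(\xi_0)
\end{align*}
for a suitable higher-regularity moment functional $\Lambda$; since the pseudo-metric underlying $\Wesa$ is dominated by $\bigl(|\xi-\txi|_{L^2}\wedge 1\bigr)^{1/2}$ times weights in $V$, this upgrades to an estimate of the form \eqref{eq:finite:err:intro}. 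A refinement of the existing finite-time analyses \cite{CarelliProhl2012,BessaihMillet2019,BessaihMillet2021} is needed at this step --- in particular a sharpened dependence of both the error constants and of $\Lambda$ on $\xi_0$ --- so that the resulting $f = \Lambda^{1/2}$ is genuinely integrable against $\mu_*$; this sharpening is recorded separately as a result of independent interest.

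\textbf{Step 3 (moments and assembly).} With Steps 1 and 2 in place, \cref{thm:gen:wk:conv}, \cref{thm:gen:wk:conv:2} and \cref{cor:gen:wk:conv:obs} deliver \eqref{eq:inv:mrs:conv:intro} and \eqref{eq:weak:const:intro} provided $\int \Lambda\, d\mu_* < \infty$ and $\Lambda(\xi_0) < \infty$: the former is the standard polynomial/Sobolev moment bound for the stationary 2D stochastic Navier-Stokes equations, the latter holds for $\xi_0$ in the assumed regularity class, and a sufficiently regular $\varphi$ is Lipschitz with respect to the pseudo-metric up to $V$-weights that are absorbed by these moments. The exponents $r, r' > 0$ emerging here degrade from those of Step 2 upon optimizing \eqref{eq:time:inf:intro} over $t$, and hence inherit dependence on the contraction and error rates, i.e.\ on $\nu$ and $|\sigma|^2$. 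I expect the principal obstacle to be the \emph{uniform-in-$(N,\delta)$} verification of the hypotheses of \cref{thm:gen:sp:gap} in Step 1 --- in particular combining a uniform discrete super-Lyapunov estimate with a uniform small-scale contraction for the coupled Galerkin dynamics --- followed closely by carrying out Step 2 with a moment functional that is genuinely $\mu_*$-integrable rather than merely finite pointwise.
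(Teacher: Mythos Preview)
Your proposal is correct and follows essentially the same route as the paper: uniform-in-$(N,\delta)$ contraction via \cref{thm:gen:sp:gap} (verified through an exponential Lyapunov bound, a smallness estimate, and a local contractivity estimate, the latter two obtained by an asymptotic-coupling/nudging argument with a Girsanov cost-of-control term), a finite-time strong error split into Galerkin and Euler parts, and then assembly through \cref{thm:gen:wk:conv}, \cref{thm:gen:wk:conv:2}, \cref{cor:gen:wk:conv:obs}. One technical point you underestimate in Step~2: the paper does \emph{not} obtain the $L^2(\Omega)$ bound you display for the temporal error --- the nonlinearity obstructs a direct Gr\"onwall closure in that topology --- and instead proves only an $L^p(\Omega)$ bound for sufficiently small $p$ via a localization over discrete stopping times (cf.\ \cref{prop:time:disc:error}); the passage to the $\Wass_{\varepsilon,s,\alpha}$ estimate then relies on H\"older against the exponential Lyapunov weights, which is exactly what the structure of $\rho_{\varepsilon,s,\alpha}$ is designed to absorb.
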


The precise and complete formulation of \cref{thm:num:apx:SNSE:intro} is divided between \cref{thm:bias:ineq:NSE} and \cref{thm:wk:conv:SNSE} below. In particular, its proof is founded on a discretization-uniform contraction bound from \cref{thm:sp:gap:disc:SNSE} and the finite-time error estimates from
\cref{prop:spatial:error} and  \cref{prop:time:disc:error}, which provide concrete instantiations of \eqref{eq:wass:cont:unf:intro:1} and \eqref{eq:finite:err:intro}, in addition to being contributions of independent interest for \eqref{2DSNSEv:intro} and \eqref{disc2DSNSEv:intro}.

Further, we notice that  \eqref{eq:inv:mrs:conv:intro} together with the contraction inequality from \cref{thm:sp:gap:disc:SNSE} can be used to derive useful error estimates for the estimator $\frac{1}{n} \sum_{k =1}^n\varphi( \xi_{N,\delta}^k)$ as an approximation of the stationary average $\int \varphi( \xi' ) \mu_*( d \xi')$. Indeed, in \cref{rmk:variance:estimator} below we sketch the main steps involved in the derivation of the following bias estimate
\begin{align}\label{ineq:bias:intro}
\left| \bE \left( \frac{1}{n} \sum_{k =1}^n\varphi( \xi_{N,\dt}^k(\xi_0)) - \int \varphi(\xi' ) \mu_*( d\xi') \right) \right|
\leq C_{\varphi} \left( \frac{1}{n \delta} + \delta^{r} + N^{-r'} \right),
\end{align}
and the mean-squared error estimate
\begin{align}\label{eq:inv:mrs:appro:intro}
\bE \left| \frac{1}{n} \sum_{k =1}^n\varphi( \xi_{N,\delta}^k) - \int \varphi( \xi' ) \mu_*( d \xi') \right|^2 \leq C_\varphi \left( \frac{1}{n \delta} + \delta^{2r} + N^{-2r'}\right),
\end{align}
for $C_\varphi$, $r$, $r'$ as in
\eqref{eq:inv:mrs:conv:intro}. Clearly, estimates such as these have a
direct significance in practical applications where one naturally
computes the time-discrete average
$\frac{1}{n} \sum_{k =1}^n\varphi( \xi_{N,\delta}^k)$ for a certain
number $n$ of states as a way of approximating the average of a given
observable $\varphi$ with respect to the (typically unknown)
underlying stationary distribution $\mu_*$.

\subsubsection*{Previous Literature, Elements of Our Analysis}

There is an extensive literature on numerical analysis of stochastic
systems.  Some general background on this subject in the context of
SDEs can be found in e.g. \cite{kloeden1992stochastic,
  milstein2004stochastic}, and for SPDEs we refer to
\cite{jentzen2009numerical,lord2014introduction}.  In this community,
approximation results are typically characterized in terms of `strong'
and `weak' convergence.  The former notion of strong convergence
concerns, in our notations, bounds on the quantity
$|\xi_{N,\dt}^j(\xi_0) - \xi (j \dt;\xi_0)|_{L^2}$ either in mean or
in probability.  The latter notion of `weak' convergence
involves estimates for
$\bE (\phi(\xi_{N,\dt}^j(\xi_0)) - \phi(\xi (j \dt;\xi_0)))$ over
different classes of test functions, which can be expressed in terms of bounds on the Wasserstein metric, see \eqref{ineq:obs:Wass} below. For such classes of observables embodied in these metrics, strong convergence implies weak convergence but of course not visa-versa
and typically weak rates are better than strong rates (cf. \cite{DavieGaines2001,DebusschePrintems2009}).

The available approaches for weak convergence are mainly centered around the following observation. Expanding $\bE (\phi(\xi_{N,\dt}^j(\xi_0)) - \phi(\xi (j \dt;\xi_0)))$ in a telescoping sum allows one to estimate the error using the Kolmogorov equation associated to the limiting dynamic. These approaches require some degree of regularity for solutions of the Kolmogorov equation.  Additionally, note that one needs to show that these estimates are uniform in $j$ in order to address long time accuracy. This typically entails obtaining a time decay for the corresponding solutions. A further difficulty for SPDEs is that our Kolmogorov equation is a parabolic PDE whose ``spatial'' variable is infinite dimensional. In the setting we are concerned with here, due to the necessity and interest for noise acting in a limited subset of the phase space, the Kolmogorov equation has a degenerate second-order term. Furthermore, its drift term involves an unbounded operator and a strongly nonlinear term. 

Details of the Kolmogorov approach to weak convergence vary e.g. according to the model of interest,
the type of discretization considered, and the topology in which
numerical convergence is established, but frequently the estimates
appear with time-interval length dependent bounds.  For
finite dimensional SDEs we mention the pioneering works
\cite{Milshtein1979, Milshtein1995, Talay1984, Talay1986}, which were
further refined in a significant body of work, see
e.g. \cite{TalayTubaro1990, kloeden1992stochastic, BallyTalay1996,
  SzepessyTemponeZouraris2001, KohatsuHiga2001, CKHL2006} and
references therein. Analogous weak convergence results for SPDEs were
more recently obtained in e.g. \cite{AnderssonLarsson2016,
  AnderssonKruseLarsson2016, BuckwarShardlow2005,
  ConusJentzenKurniawan2019, DavieGaines2001, DeBouardDebussche2006,
  DebusschePrintems2009, Debussche2011, Hausenblas2010,
  JentzenKurniawan2021, KovacsLarssonLindgren2012,
  KovacsLarssonLindgren2013, Wang2016, WangGan2013} for equations that
are linear or with globally Lipschitz nonlinearities, and
\cite{BrehierDebussche2018, BrehierGoudenege2020, CaiGanWang2021,
  CuiHong2019,CuiHongSun2021,Dorsek2012} for more general nonglobally
Lipschitz scenarios. Another set of works focused on obtaining
long time approximation error bounds as in
\eqref{eq:inv:mrs:conv:intro}, \eqref{ineq:bias:intro} or
\eqref{eq:inv:mrs:appro:intro}, for either SDEs \cite{abdulle2014high,
  debussche2012weak, MattinglyStuartTretyakov,
  shardlow2000perturbation, talay1990second, talay2002stochastic} or
SPDEs \cite{Brehier2014,brehier2017approximation, Brehier2022,
  CuiHongSun2021, HongWang2019}. 

As previously mentioned, our approach for proving
\cref{thm:num:apx:SNSE:intro} is instead based on the uniform
Wasserstein contraction framework described above. Namely, by
establishing a uniform Wasserstein contraction estimate as in
\eqref{eq:wass:cont:unf:intro:1} together with a finite-time error
estimate as in \eqref{eq:finite:err:intro}. Regarding the latter, our
bound is in fact given in terms of the strong discretization error in
$L^p(\Omega;L^\infty_{\text{loc},t} L^2_x)$ for a sufficiently small
$p$, which is estimated from \cref{prop:spatial:error} and
\cref{prop:time:disc:error} below. Clearly, this approach is most
likely not guaranteeing an optimal weak convergence rate in
\eqref{eq:weak:const:intro}. Indeed, as we previously mentioned, it is generally expected that the
weak order of convergence is larger than the strong order, and many of
the references on weak convergence results mentioned above focused
precisely on establishing this order improvement.

However, we emphasize that the main advantage of our approach lies in
yielding a \emph{uniform in time} weak error estimate,
\eqref{eq:weak:const:intro}, in addition to providing long time error
estimates for approximations of the limiting stationary distribution,
i.e. \eqref{eq:inv:mrs:conv:intro}, \eqref{ineq:bias:intro},
\eqref{eq:inv:mrs:appro:intro}. Notably, these are the first results
of such type to be established for the stochastic Navier-Stokes
equations. Previous works on numerical approximations for the SNSE
focused on strong convergence in either probability
\cite{bessaih2014splitting,BreitDodgson2021,BreitProhl2022,CarelliProhl2012,RandrianasoloHausenblas2019}
or in mean 
\cite{BessaihMillet2019,BessaihMillet2021,BessaihMillet2022,brzezniak2013finite,Dorsek2012,MilsteinTretyakov2021}
for various space or time discretizations and noise types but always
for bounds on finite time windows $[0,T]$ with exponentially growing
constants as a function of $T > 0$.

Regarding the strong error bound implied by \cref{prop:spatial:error} and \cref{prop:time:disc:error}, we notice that it is given more explicitly, for sufficiently regular starting point $\xi_0$, as
\begin{align}\label{ineq:str:err}
	\bE \sup_{j \leq J} |\xi_{N,\dt}^j(\xi_0) - \xi (j \dt;\xi_0)|_{L^2}^p \leq C \left[ \dt^{p' p} + N^{-\frac{p}{2}} \right]
\end{align}
for any $J \in \NN$, $p' \in (0,1/2)$, and for $p > 0$ sufficiently small depending only on the viscosity parameter $\nu$ and $|\sigma|^2_{L^2} = \sum_k | \sigma_k|^2_{L^2}$, where $C = C(p, p', J)$ is a positive constant. This yields strong $L^p(\Omega)$ convergence\footnote{See e.g. \cite[Definition 2.6]{Printems2001}.} for the scheme \eqref{disc2DSNSEv:intro} with respect to the topology in $L^\infty_{\text{loc},t} L^2_x$, and with rates (almost) $1/2$ in time and $1$ in space ($N \sim h^{-2}$ for a spatial grid with cell edge length $h$). Additionally, as $\nu$ grows large, $p$ can be almost $2$. This temporal convergence rate is optimal due to $1/2$-H\"{o}lder time regularity of the solution, which is in turn implied by the regularity of the underlying stochastic forcing term. Optimal rates for other types of numerical approximations or noise terms for the 2D SNSE were also achieved in previous works under the velocity formulation, particularly \cite{bessaih2014splitting,BreitDodgson2021,BreitProhl2022} regarding convergence in probability, and \cite{BessaihMillet2022} concerning strong $L^2(\Omega)$ convergence under a suitable smallness assumption on the noise. In relation to these recent results, we expect our method of proof for deriving \eqref{ineq:str:err} to be of independent interest. See more details in \cref{subsubsec:fin:tim:err:L2} below.

\subsection{Outlook and Future Work}

A number of avenues for future development suggest themselves as an outgrowth of the work here.    Firstly, the model 
problems suggested in \cite{GHRichMa2017,ButkovskyKulikScheutzow2019,glatt2021long} provide a set of interesting
challenges for numerical analysis. Here note that, while the results in \cref{sec:cont:bnd:dom} provide a first step toward addressing 
the case of 2D stochastic Navier-Stokes on a domain with boundaries, subtle details remain to complete the analogous
program to the one which we fulfilled in the periodic setting in \cref{sec:app:SNSE}. Note furthermore that  \cref{sec:app:SNSE}
addresses just one of a variety of possible numerical approximations of governing equations, and indeed each of the model equations 
in \cite{GHRichMa2017,ButkovskyKulikScheutzow2019,glatt2021long} would be expected to have their own bespoke 
natural  approximation schemes. Another challenge for numerical  accuracy would be to address the fully hypo-elliptic case.  Here, to obtain a uniform rate of contraction
one would  presumably need to develop a discrete time analogue of the infinite dimensional Malliavin calculus based approaches
developed in \cite{HaiMa2006,HM11,FGHRT15,kuksin2020exponential}. Of course, other interesting parameter limit problems
outside of numerical approximation may be addressed from our formalism as in our concurrent work \cite{GlattHoltzNguyen2021}.

Finally, it is notable that abstract frameworks developed in \cref{sec:abs:res} have a scope of applicability reaching
far beyond the SPDE models that we have focused on here. As already identified in \cite{johndrow2017error}, one may leverage the type of contractivity obtained from weak Harris results as a means of bias estimation in 
a variety of applications in computational statistics. Our upcoming contribution \cite{GHHKMBias2022} expands on this insight particularly leveraging the use of uniformity identified here.

\subsection*{Organization}

The rest of this manuscript is organized as follows.  In \cref{sec:abs:res} we present our main abstract results, namely our Wasserstein contraction criteria in \cref{sec:gen:sp:gap} followed by our parameter convergence/stability at time $\infty$ given in \cref{sec:gen:wk:conv}.  \cref{sec:app:SNSE} presents our first application concerning the 
numerical analysis of a fully discrete scheme for the stochastic Navier-Stokes equations.  
Finally, \cref{sec:cont:bnd:dom} presents contraction estimates for the stochastic Navier-Stokes equations on a bounded domain.

\section{Abstract results}
\label{sec:abs:res}

Before presenting our general results in \cref{sec:gen:sp:gap} and \cref{sec:gen:wk:conv}, we briefly recall in \cref{subsec:prelim} some standard definitions regarding Markov processes and the Wasserstein distance on spaces of probability measures. For more details, we refer to e.g. \cite{DaPratoZabczyk1996,villani2008}.

\subsection{Preliminaries}\label{subsec:prelim}

Let $\ggsp$ be a Polish space. Throughout this manuscript, we denote by $\mB(\ggsp)$ the $\sigma$-algebra of Borel subsets of $\ggsp$, and by $\Pr(\ggsp)$ the corresponding space of Borel probability measures. We also denote by $\mM_b(\ggsp)$ the family of all real-valued, bounded and Borel-measurable functions on $\ggsp$.

We recall that $P: \ggsp \times \mB(\ggsp) \to [0,1]$ is a \emph{Markov kernel} if $P(\cdot,\cO)$ is measurable for each fixed $\cO \in \mB(\ggsp)$, and $P(u, \cdot)$ is a probability measure for each fixed $u \in \ggsp$. For any measure $\mu \in \Pr(\ggsp)$, we recall that its dual action on a Markov kernel $P$ is given by
\begin{align*}
\mu P (\cO) \coloneqq \int_{\ggsp} P(u, \cO) \mu(du), \quad \cO \in \mB(\ggsp).
\end{align*}
A measure $\mu \in \Pr(\ggsp)$ is said to be \emph{invariant} with respect to a family of Markov kernels $P_t$, $t \geq 0$, if and only if $\mu P_t = \mu$ for every $t \geq 0$.

Moreover, a \emph{Markovian transition function} is a family of Markov kernels $P_t$, $t \geq 0$, such that, for each $u \in \ggsp$ and $\cO \in \mB(\ggsp)$, $P_0(u, \cO) = \ind_\cO(u)$, where $\ind_\cO$ denotes the indicator function of $\cO$, and it satisfies the Chapman-Kolmogorov relation 
\begin{align*}
	P_{t + s}(u,\cO) = P_t P_s(u,\cO) \coloneqq \int_{\ggsp} P_s(v,\cO) P_t(u,dv).
\end{align*}

Given such Markovian transition function, its associated \emph{Markov semigroup} is defined as the family of operators $P_t$, $t \geq 0$, acting on functions $\varphi \in \mM_b(\ggsp)$ as
\begin{align}\label{def:Pt:phi}
	P_t \varphi(u) \coloneqq \int_\ggsp \varphi(v) P_t(u, dv), \quad u \in \ggsp.
\end{align} 

Finally, we recall that a mapping $\rho: \ggsp \times \ggsp \to \RR^+$ is called a \emph{distance-like} function if it is symmetric, lower semi-continuous, and satisfies that $\rho(u, \tilde{u}) = 0$ if and only if $u =  \tilde{u}$, see \cite[Definition 4.3]{HairerMattinglyScheutzow2011}. For any such distance-like function $\rho$, its Wasserstein-like extension to $\Pr(\ggsp)$ is the mapping $\Wass_\rho: \Pr(\ggsp) \times \Pr(\ggsp) \to \RR^+ \cup \{\infty\}$ defined as
\begin{align}\label{def:Wass:rhoe}
	\Wass_\rho (\mu, \tmu) = \inf_{\Gamma \in \Co(\mu, \tmu)} \int_{\ggsp \times \ggsp} \rho(u,  \tilde{u}) \Gamma (d u, d \tilde{u}),
\end{align}
where $\Co(\mu, \tmu)$ denotes the family of all \textit{couplings} of $\mu$ and $\tmu$, i.e. all probability measures $\Gamma$ on the product space $\ggsp \times \ggsp$ with marginals $\mu$ and $\tmu$. We notice that when $\rho$ is a metric on $\ggsp$, then its corresponding extension $\Wass_\rho$ coincides with the usual Wasserstein-1 distance, \cite{villani2008}.

\subsection{Wasserstein contraction}
\label{sec:gen:sp:gap}

Our first general result, \cref{thm:gen:sp:gap} below, provides a general set of assumptions on a given Markov semigroup that are sufficient for guaranteeing its contraction with respect to a suitable Wasserstein distance. Our formulation is  inspired by the weak Harris theorem from \cite[Theorem 4.8]{HairerMattinglyScheutzow2011}, which yields an analogous Wasserstein contraction under three main assumptions on the Markov semigroup. Namely, the existence of a Lyapunov function; a smallness condition for trajectories departing from certain level sets of the Lyapunov function; and a contractivity assumption between trajectories departing from points that are sufficiently ``close'' to each other. 

In our set of hypotheses, we focus on stochastic systems possessing an exponential Lyapunov structure, while allowing for more flexibility regarding the contractivity requirement, see \eqref{loc:contr} below. In particular, our ``contraction'' coefficient is given as the product of a constant that is smaller than $1$ with an exponential term depending on one of the starting points. This is tailored to reflect a typical situation in applications to SPDEs, particularly involving a dissipative structure. Indeed, this is demonstrated in the applications to the stochastic Navier-Stokes equations in \cref{subsec:Wass:contr:NSE} and \cref{sec:cont:bnd:dom} below. 

\begin{Thm}\label{thm:gen:sp:gap}
	Let $\gsp$ be a separable Banach space with norm $\gn{\cdot}$. Consider an index set $\mI \subset \RR^+$,
	and take $\{\gP_t\}_{t \in \mI}$ to be a Markov semigroup  on $\gsp$ satisfying
	\begin{enumerate}[label={(A\arabic*)}]
		\item\label{A:1} (Exponential Lyapunov structure) There exists a continuous function $\psi: \RR^+ \to \RR^+$ with $\lim_{t \to \infty} \psi(t) = 0$, and also $\sm_0 > 0$ such that for all $\sm \in (0, \sm_0]$, $t \in \mI$ and $u_0 \in \gsp$, the following inequality holds:
		\begin{align}\label{exp:Lyap:ineq}	
			\gP_t \exp(\sm \gn{u_0}^2) \leq \exp(\sm (\psi(t) \gn{u_0}^2 + \LC))
		\end{align}
		for some constant $\LC > 0$ which is independent of $t, u_0 $ and $\sm$.
	\end{enumerate}
	Furthermore, we fix a collection  $\gfd$ of distance-like functions $\gdist: \gsp \times \gsp \to [0,1]$ and consider the following set of assumptions on $\gfd$ and $\{\gP_t\}_{t \in \mI}$: 
	\begin{enumerate}[label={(A\arabic*)}]
		\setcounter{enumi}{1}
		\item\label{A:2}(Eventual $\rho$-smallness of bounded sets) For every $M > 0$ and $\gdist \in \Lambda$ there exists $T_1 = T_1 (M, \gdist)> 0$ and $\kappa_1 = \kappa_1(M) \in (0,1)$, which is independent of $\gdist$, such that
		\begin{align}
			\sup_{t \in \mI, \, t \geq T_1} \Wass_{\gdist}(\gP_t(u_0,\cdot), \gP_t(v_0, \cdot)) \leq 1 - \kappa_1
		\end{align}
		for every $u_0, v_0 \in \gsp$ with $\gn{u_0} \leq M$ and $\gn{v_0} \leq M$.
		\item\label{A:3} 
		For every $\kappa_2 \in (0,1)$ and for every $r > 0$ there exists $\gdist \in \Lambda$ for which the following holds:
		\begin{enumerate}[label={(A3.\roman*)}]
			\item\label{A3:i}(Eventual local $\rho$-contractivity)  There exists $T_2 = T_2(\kappa_2, r) > 0$ such that 
			\begin{align}\label{loc:contr}
				\sup_{t \in \mI, \, t \geq T_2} \Wass_{\gdist}(\gP_t(u_0,\cdot), \gP_t(v_0, \cdot)) \leq \kappa_2 \exp(r \gn{u_0}^2 ) \gdist (u_0, v_0)
			\end{align}
			for every $u_0, v_0 \in \gsp$ with $\gdist(u_0,v_0) < 1$.
			\item\label{A3:ii} For all $\tau \geq 0$, there exists $C = C(\tau, \gdist) > 0$ such that
			\begin{align}\label{fin:time:Wass}
			\sup_{t\in \mI, \,\, t \in [0,\tau]} \Wass_{\gdist}(\gP_t(u_0,\cdot), \gP_t(v_0, \cdot)) \leq C \exp(r \gn{u_0}^2 ) \gdist (u_0, v_0)
			\end{align}
			for every $u_0, v_0 \in \gsp$ with $\gdist(u_0,v_0) < 1$.
		\end{enumerate}
	\end{enumerate}
	Then, under the assumptions \ref{A:1}, \ref{A:2} and \ref{A3:i} it follows that for every $m \geq 1$, there exists $\sm_m  > 0$ such that for each $\sm \in (0, \sm_m]$ there exists $\gdist \in \Lambda$, $\kappa \in (0,1)$ and $T > 0$ for which the following inequality holds
	\begin{align}\label{Wass:contr:gen}
	\Wass_{\gdista}(\mu \gP_t, \tmu \gP_t) \leq \kappa \Wass_{\gdist_{\sm/m}} (\mu, \tmu)
	\end{align}
	for every $t \in \mI$ with $t \geq T$, and for all $\mu, \tmu \in \Pr(\gsp)$. Here, for each $a > 0$, $\gdist_{a}: \gsp \times \gsp \to \RR^+$ is the distance-like function defined as 
	\begin{align}\label{def:gdista}
	\gdist_{a}(u, v) \coloneqq \gdist(u, v)^{1/2} \exp(a \gn{u}^2 + a \gn{v}^2), \quad u, v \in \gsp.
	\end{align}
	
	Moreover, under additionally assumption \ref{A3:ii} it follows that for every $m \geq 1$, there exists $\sm_m  > 0$ such that for each $\sm \in (0, \sm_m]$ there exists $\gdist \in \Lambda$, $T > 0$ and constants $C_1, C_2 > 0$ for which it holds that
	\begin{align}\label{Wass:contr:gen:2}
	\Wass_{\gdista}(\mu\gP_t, \tmu \gP_t) \leq C_1 e^{- C_2 t} \Wass_{\gdist_{\sm/m}} (\mu, \tmu)
	\end{align}
	for every $\mu, \tmu \in \Pr(\gsp)$ and all $t \in \mI$ with $t \geq T$. Here, the constants $C_1$ and $C_2$ depend \emph{only} on the parameters $m$, $\sm$, $\gdist$, $T$, and the constants $\sm_0$, $C_0$, $\sup_{t \geq 0} \psi$ from assumption \ref{A:1}.
\end{Thm}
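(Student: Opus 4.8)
The plan is to establish the contraction estimate \eqref{Wass:contr:gen} via a two-scale coupling/estimate argument in the spirit of the weak Harris theorem \cite[Theorem 4.8]{HairerMattinglyScheutzow2011}, adapted to accommodate the exponential Lyapunov function $V(u) = \exp(\sm\gn{u}^2)$ and the weaker, ``super-Lyapunov''-compensated contractivity hypothesis \ref{A3:i}. Throughout one fixes $m\geq 1$ and works with $\sm$ small enough (depending on $m$, $\sm_0$, and $\sup_{t\geq 0}\psi$) so that all the exponential moment manipulations below close. The core observation is that it suffices to prove a one-step estimate: there exist $\gdist\in\gfd$, $\kappa\in(0,1)$, $T>0$ with
\begin{align*}
\Wass_{\gdista}(\gP_T(u_0,\cdot),\gP_T(v_0,\cdot)) \leq \kappa\, \gdist_{\sm/m}(u_0,v_0)
\end{align*}
for \emph{all} $u_0,v_0\in\gsp$; the passage to general $\mu,\tmu$ then follows by the standard gluing lemma for couplings, integrating the one-point bound against an optimal coupling of $(\mu,\tmu)$ for $\Wass_{\gdist_{\sm/m}}$ (this is where lower semicontinuity and the $[0,1]$-boundedness of the $\gdist$'s are used, and where one must be slightly careful since $\gdist_a$ is only distance-like, not a metric — but $\Wass_{\gdist_a}$ still satisfies the needed gluing/convexity properties).

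**For the one-step bound I would split into two regimes.** First, the \emph{large-scale} regime: when $\gdist(u_0,v_0)\geq 1$ (in particular when $u_0,v_0$ are far apart) or when $\gn{u_0},\gn{v_0}$ are large, one uses \ref{A:1} to drive the pair into a bounded set with high $V$-weighted probability, and then \ref{A:2} to get the factor $1-\kappa_1$ of genuine contraction on that bounded set. Concretely, for $M$ large one writes $\gdist_{\sm/m}(u_0,v_0)\leq \exp((\sm/m)\gn{u_0}^2+(\sm/m)\gn{v_0}^2)$ and estimates $\Wass_{\gdista}(\gP_T(u_0,\cdot),\gP_T(v_0,\cdot))$ by conditioning on the level set $\{\gn{\cdot}\leq M\}$: on the set the contraction $1-\kappa_1$ kicks in (after noting $\gdist_\sm\leq \exp(2\sm M^2)$ is bounded there, so $\Wass_{\gdist_\sm}\leq(1-\kappa_1)e^{2\sm M^2}$ up to combining with \ref{A:2} applied to $\gdist$ rather than $\gdist_\sm$ — this requires relating $\Wass_{\gdist_a}$ on a sublevel set to $\Wass_{\gdist}$, doable since on $\{\gn{\cdot}\leq M\}$ one has $\gdist_a \leq e^{2aM^2}\gdist^{1/2}$ and $\gdist\leq 1$ so $\gdist^{1/2}\leq 1$), while off the set one pays $V$-moment penalties controlled by \ref{A:1} with $\psi(T)$ small. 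Choosing $T$ large makes $\psi(T)$ as small as needed; choosing $M$ large makes the off-set contribution small relative to $\gdist_{\sm/m}(u_0,v_0)$ because the Lyapunov bound gives a Chebyshev-type decay in $M$. The gain of using $\sm/m$ on the right rather than $\sm$ is precisely what provides the ``room'' to absorb the constant $\LC$ and the $\psi(T)$-loss: $\exp(\sm\psi(T)\gn{u_0}^2 + \sm\LC)$ must be dominated by $\kappa\exp((\sm/m)\gn{u_0}^2)$ times something, which forces $\psi(T) < 1/m$ and then an $M$-dependent threshold.

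**The small-scale regime** is where \ref{A3:i} enters: when $\gdist(u_0,v_0)<1$ and $\gn{u_0}$ is moderate, \eqref{loc:contr} gives $\Wass_{\gdist}(\gP_t(u_0,\cdot),\gP_t(v_0,\cdot))\leq \kappa_2\exp(r\gn{u_0}^2)\gdist(u_0,v_0)$ for $t\geq T_2$. The key manipulation — and I expect this to be \textbf{the main obstacle} — is to convert this into a bound on $\Wass_{\gdista}$, i.e. against $\gdist_\sm$ with its symmetric weight $\exp(\sm\gn{u}^2+\sm\gn{v}^2)$, while only paying $\gdist^{1/2}$ on the target. One wants to write the target coupling's integrand as $\gdist(u,v)^{1/2}\exp(\sm\gn{u}^2+\sm\gn{v}^2)$ and bound its expectation under the coupling from \ref{A3:i} by Cauchy–Schwarz: $\bE[\gdist^{1/2}\exp(\sm\gn{\cdot}^2+\sm\gn{\cdot}^2)] \leq (\bE \gdist)^{1/2}(\bE\exp(2\sm\gn{\cdot}^2+2\sm\gn{\cdot}^2))^{1/2}$, where $\bE\gdist$ is controlled by \eqref{loc:contr} (giving $\kappa_2 e^{r\gn{u_0}^2}\gdist(u_0,v_0)$) and the exponential moment factor is controlled by \ref{A:1} (here needing $2\sm \leq \sm_0$ and needing to control $\bE\exp(2\sm\gn{\cdot}^2)$ along \emph{both} coupled marginals, which is fine since each marginal is $\gP_t(u_0,\cdot)$ or $\gP_t(v_0,\cdot)$ and \ref{A:1} applies to each). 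This yields $\Wass_{\gdist_\sm}(\gP_t(u_0,\cdot),\gP_t(v_0,\cdot)) \leq \kappa_2^{1/2}\exp((r/2)\gn{u_0}^2)\gdist(u_0,v_0)^{1/2}\cdot\exp(\sm\psi(t)(\gn{u_0}^2+\gn{v_0}^2)+2\sm\LC)$ roughly; using $\gdist(u_0,v_0)<1$ so $\gdist^{1/2}<1$ is not quite enough — one genuinely needs $\gdist(u_0,v_0)^{1/2}$ on the right to match $\gdist_{\sm/m}(u_0,v_0) = \gdist(u_0,v_0)^{1/2}\exp((\sm/m)\gn{u_0}^2+(\sm/m)\gn{v_0}^2)$, and then the exponential prefactors must satisfy $\exp((r/2)\gn{u_0}^2 + \sm\psi(t)(\gn{u_0}^2+\gn{v_0}^2))\leq \exp((\sm/m)(\gn{u_0}^2+\gn{v_0}^2))$ times a constant — which requires $r$ small (this is allowed: \ref{A:3} lets us \emph{choose} $\gdist$ after fixing $r$, so set $r = r(\sm,m) := \sm/(4m)$, say) and $\psi(t)$ small (choose $T_2$, hence $T$, large). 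The free choice of $r$ in \ref{A3:i} is exactly the mechanism that makes this work, and matching all the exponents — picking $\sm_m$, then $r$, then $\gdist$, then $T_2$, then the overlap threshold $M$ separating the two regimes, then $T=\max(T_1,T_2,\ldots)$, then $\kappa$ — is the delicate bookkeeping. Finally, to upgrade \eqref{Wass:contr:gen} to the exponential-rate form \eqref{Wass:contr:gen:2}, one uses the semigroup property: \eqref{Wass:contr:gen} holds at the fixed time $T$ with factor $\kappa<1$, and for general $t\geq T$ write $t = kT + s$ with $s\in[0,T)$ (or $s\in[T,2T)$), apply \ref{A3:ii} / \eqref{fin:time:Wass} with $\tau = 2T$ to absorb the remainder step into a constant $C(2T,\gdist)$ via the same Cauchy–Schwarz-against-Lyapunov trick, and iterate the $T$-step contraction $k\geq \lfloor t/T\rfloor - 1$ times to get $\kappa^{k} \leq \kappa^{-2}(\kappa^{1/T})^{t} =: C_1 e^{-C_2 t}$ with $C_2 = -T^{-1}\log\kappa > 0$; tracking constants shows $C_1,C_2$ depend only on $m,\sm,\gdist,T,\sm_0,\LC,\sup_{t\geq 0}\psi$ as claimed. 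One subtlety worth flagging in the write-up: since $\gdist_\sm$ fails the (generalized) triangle inequality, the iteration must be done at the level of the one-point kernels and the gluing lemma, composing couplings step by step, rather than by naively chaining $\Wass_{\gdist_\sm}$ inequalities — this is precisely the ``contraction-like'' versus genuine-contraction distinction emphasized in the introduction.
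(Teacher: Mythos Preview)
Your plan is essentially correct and matches the paper's approach in the reduction to pointwise bounds (via the convexity inequality for $\Wass_{\gdista}$, i.e.\ \cite[Theorem~4.8]{villani2008}), in the small-scale regime $\gdist(u_0,v_0)<1$ (Cauchy--Schwarz/H\"older splitting, then \ref{A3:i} and \ref{A:1}, with $r:=\sm/m$ chosen \emph{before} selecting $\gdist\in\gfd$ from \ref{A:3}), and in the iteration to the exponential rate using \ref{A3:ii} for the remainder.

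The one place where your argument diverges from the paper --- and where it is more complicated than necessary --- is the large-scale regime $\gdist(u_0,v_0)=1$. You propose to ``drive the pair into a bounded set'' via \ref{A:1} and then condition on the terminal level set $\{\gn{\cdot}\leq M\}$ before applying \ref{A:2}. But \ref{A:2} is a hypothesis on \emph{initial} data $u_0,v_0$ in a bounded set, not on conditioned terminal laws, so this would require an additional Markov-property restart and careful coupling bookkeeping. The paper instead makes a further \emph{deterministic} sub-split on the initial data: (i) if $\gn{u_0}^2+\gn{v_0}^2\leq 6m\LC$, then $u_0,v_0$ are already in the bounded set $M=(6m\LC)^{1/2}$ and \ref{A:2} applies directly, with \ref{A:1} controlling the exponential weight; (ii) if $\gn{u_0}^2+\gn{v_0}^2>6m\LC$, then the Lyapunov decay from \ref{A:1} \emph{alone} yields the contraction, since $\gdist\leq 1$ gives $\Wass_{\gdista}\leq \exp(2\sm\LC)\exp(\sm\psi(t)(\gn{u_0}^2+\gn{v_0}^2))$ and the threshold $6m\LC$ is chosen precisely so that $\exp(2\sm\LC)<\exp(-\sm\LC)\exp((\sm/(2m))(\gn{u_0}^2+\gn{v_0}^2))$ --- no use of \ref{A:2} at all in this case. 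This three-case split avoids any conditioning on terminal states and is considerably cleaner; your two-regime version can be made to work but obscures the mechanism.

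One further point worth flagging: in the iteration step the paper exploits the monotonicity $\gdist_{\sm/m}\leq\gdist_{\sm}$ to chain the one-step bound (applying it with parameter $2mK$ where $K=\sup\psi$, so that the remainder-step estimate via \ref{A3:ii} also closes), rather than composing couplings explicitly. Since you already have the pointwise one-step bound and the convexity inequality, the chaining is legitimate at the level of $\Wass_{\gdista}$ and no pseudo-metric triangle inequality is needed here.
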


\begin{proof}
	We start by noticing that, since each $\gdista$ is lower-semicontinuous and non-negative, it follows from \cite[Theorem 4.8]{villani2008} that for every $\mu, \tmu \in \mP(\gsp)$
	\begin{align}\label{Wass:convex}
		\Wass_{\gdista}(\mu \gP_t , \tmu \gP_t) \leq \int_{\gsp \times \gsp} \Wass_{\gdista} (\gP_t(u_0,\cdot), \gP_t(v_0,\cdot)) \Gamma (d u_0, d v_0),
	\end{align}
	for every coupling $\Gamma \in \mC(\mu, \tmu)$. Therefore, to show \eqref{Wass:contr:gen} it suffices to obtain that for every $m \geq 1$ there exists $\sm_m  > 0$ such that for each $\sm \in (0, \sm_m]$ there exists $\gdist \in \gfd$, $\kappa \in (0,1)$ and $T > 0$ for which the following holds
	\begin{align}\label{pt:sp:gap:kappa}
		\Wass_{\gdista}(\gP_t(u_0,\cdot), \gP_t(v_0,\cdot)) 
		\leq
		\kappa \gdist_{\sm/m}(u_0, v_0)
	\end{align}
	for every $t \in \mI$ with $t \geq T$, and every $u_0, v_0 \in \gsp$.

	Fix $m \geq 1$ and $u_0, v_0 \in \gsp$. For some fixed $\gdist \in \Lambda$ to be suitably chosen later in terms of $m, \sm$ and $\sm_0, C_0$ from \ref{A:1}, and following similar ideas from \cite[Theorem 4.8]{HairerMattinglyScheutzow2011}, we split the proof into three cases:
	\begin{enumerate}[label={(\roman*)}]
		\item Let us first suppose that $\gdist(u_0, v_0) = 1$ and $\gn{u_0}^2 + \gn{v_0}^2 \leq 6 m \LC$, with $\LC > 0$ as in assumption \ref{A:1}. From the definition of $\Wass_{\gdista}$ in \eqref{def:Wass:rhoe} and H\"older's inequality, it follows that for all $t \in \mI$
		\begin{align}\label{bound:W:P:a}
			&\Wass_{\gdista }(\gP_t(u_0,\cdot), \gP_t(v_0,\cdot)) 
			=  \inf_{\Gamma  \in \Co(\gP_t(u_0, \cdot), \gP_t(v_0, \cdot))} \int_{\gsp \times \gsp} \gdist(u, v)^{1/2} \exp\left(\sm \gn{u}^2 + \sm \gn{v}^2 \right) \Gamma (d u, d v) 
			\notag \\
			&\qquad \leq \inf_{\Gamma  \in \Co(\gP_t(u_0, \cdot), \gP_t(v_0, \cdot))} \left(\int_{\gsp \times \gsp} \gdist(du, d v)\, \Gamma (d u, d v) \right)^{1/2} \left( \int_{\gsp \times \gsp} \exp(4 \sm \gn{u}^2) \, \Gamma (d u, d v) \right)^{1/4} 
			\notag\\
			&\qquad \qquad \qquad \qquad \qquad \qquad\qquad \qquad \qquad\qquad \qquad \qquad \cdot \left( \int_{\gsp \times \gsp} \exp(4 \sm \gn{v}^2) \, \Gamma (d u, d v) \right)^{1/4} 
			\notag \\
			&\qquad  = \Wass_\gdist(\gP_t(u_0,\cdot), \gP_t(v_0,\cdot))^{1/2}  
			\left( \gP_t \exp(4 \sm\gn{u_0}^2) \right)^{1/4}  	\left( \gP_t \exp(4 \sm\gn{v_0}^2) \right)^{1/4}.
		\end{align}
		We now invoke assumption \ref{A:2} with $M \coloneqq (6 m \LC)^{1/2}$ to estimate the first factor in \eqref{bound:W:P:a}, and assumption \ref{A:1} to estimate the last two factors, assuming $\sm \in (0, \sm_0/4]$. It thus follows that for every $t \in \mI$ with $t \geq T_1$
		\begin{align}\label{bound:W:P:b}
			\Wass_{\gdista} (\gP_t(u_0,\cdot), \gP_t(v_0,\cdot)) 
			&\leq
			(1 - \kappa_1)^{1/2} \exp(2 \sm \LC) \exp(\sm \psi(t) (\gn{u_0}^2 + \gn{v_0}^2 ))
		\end{align}
		Since $\kappa_1 \in (0,1)$, we can take $\sm_m \in (0, \sm_0/4]$ sufficiently small such that for every $\sm \in (0,\sm_m]$ it holds
		\begin{align}\label{choice:sm}
			\tilde{\kappa}_1 \coloneqq (1 - \kappa_1)^{1/2} \exp(2 \sm \LC)  \leq  (1 - \kappa_1)^{1/2} \exp(2 \sm_m \LC) < 1.
		\end{align}
		Moreover, since $\lim_{t \to \infty} \psi(t) = 0$, we can take $\tT_1 \geq T_1$ sufficiently large such that $\psi(\tT_1) \leq 1/m$, so that it follows from \eqref{bound:W:P:b} that for any fixed $\sm \in (0,\sm_m]$ and for every $t \in \mI$ with $t \geq \tT_1$
		\begin{align}\label{W:middle:sc}
			\Wass_{\gdista} (\gP_t(u_0,\cdot), \gP_t(v_0,\cdot)) 
			\leq
			\tilde{\kappa}_1 \exp \left(\frac{\sm}{m} (\gn{u_0}^2 + \gn{v_0}^2 )\right)
			= \tilde{\kappa}_1 \gdist_{\sm/m}(u_0, v_0),
		\end{align}	
		where the equality follows from the assumption that $\gdist(u_0, v_0) = 1$.
		
		\item  Next, we assume that $\gdist(u_0, v_0) = 1$ and $\gn{u_0}^2 + \gn{v_0}^2 > 6 m \LC$. Since $\gdist(u, v) \leq 1$ for all $u, v \in \gsp$, it follows together with H\"older's inequality and assumption \ref{A:1} that for any fixed $\sm \in (0, \sm_m]$, with $\sm_m$ as in \eqref{choice:sm}, and for every $t \in \mI$
		\begin{align}\label{bound:W:P:c}
			\Wass_{\gdista}(P_t(u_0,\cdot), P_t(v_0, \cdot)) 
			&=  \inf_{\Gamma  \in \Co(\gP_t(u_0, \cdot), \gP_t(v_0, \cdot))} \int_{\gsp \times \gsp} \gdist(u, v)^{1/2} \exp\left(\sm \gn{u}^2 + \sm \gn{v}^2 \right) \Gamma (d u, d v) 
			\notag\\
			&\leq  \left( P_t \exp(2 \sm \gn{u_0}^2)\right)^{1/2} \left( P_t  \exp(2 \sm \gn{v_0}^2) \right)^{1/2}
			\notag\\
			&\leq \exp(2 \sm \LC) \exp(\sm \psi(t) (\gn{u_0}^2 + \gn{v_0}^2 )).
		\end{align}
		
		Notice that
		\begin{align*}
			\exp(2 \sm \LC) = \exp(-\sm \LC) \exp\left( \frac{\sm}{2m} 6 m \LC \right)
			< \exp(-\sm \LC) \exp \left( \frac{\sm}{2m} (\gn{u_0}^2 + \gn{v_0}^2 ) \right).
		\end{align*}
		Thus, from \eqref{bound:W:P:c}, we obtain
		\begin{align*}
			\Wass_{\gdista}(P_t(u_0,\cdot), P_t(v_0, \cdot)) 
			\leq 
			\exp(-\sm \LC)  \exp \left( \left(\frac{\sm}{2m} + \sm \psi(t) \right) (\gn{u_0}^2 + \gn{v_0}^2 ) \right).
		\end{align*}
		Therefore, taking $\tT > 0$ sufficiently large such that $\psi(\tT) < 1/(2m)$, we deduce that for all $t \in \mI$ with $t \geq \tT$
		\begin{align}\label{W:large:sc}
			\Wass_{\gdista}(P_t(u_0,\cdot), P_t(v_0, \cdot)) 
			\leq 
			\tilde{\kappa} \exp \left( \frac{\sm}{m}  (\gn{u_0}^2 + \gn{v_0}^2 ) \right) = \tilde{\kappa} \gdist_{\sm/m}(u_0,v_0),
		\end{align}
		where $\tilde{\kappa} \coloneqq \exp(-\sm \LC) < 1$.
		
		\item Finally, let us suppose that $\gdist(u_0, v_0) < 1$. Take $r \coloneqq \sm/m$, for a fixed $\sm \in (0, \sm_m]$, with $\sm_m \in (0, \sm_0/4]$ as chosen in \eqref{choice:sm}. Moreover, take  $\kappa_2 \in (0,1)$ satisfying $\kappa_2 < \exp(-\sm_0 C_0)$, with $\sm_0, C_0$ from assumption \ref{A:1}. For these choices of $r$ and $\kappa_2$, we fix $\gdist \in \gfd$ as being the corresponding distance-like function for which assumption \ref{A:3} holds. Here we notice carefully that since $\gdist$ depends on $r$, which depends on $\sm$, which, in its turn, as seen from \eqref{choice:sm}, depends on $\kappa_1$ from assumption \ref{A:2}, it thus follows that $\gdist$ depends on $\kappa_1$. Therefore, the fact that $\kappa_1$ in assumption \ref{A:2} is \emph{independent} of $\gdist$ is crucial for preventing a circular argument in the choice of $\gdist \in \gfd$.
		
		Proceeding with the same estimate as in \eqref{bound:W:P:a}, we now invoke assumption \ref{A3:i} to estimate the first factor, and assumption \ref{A:1} to estimate the remaining two factors. It thus follows that for every $t \in \mI$ with $t \geq T_2$
		\begin{align}\label{bound:W:P:d}
			&\Wass_{\gdista}(P_t(u_0,\cdot), P_t(v_0, \cdot)) \notag \\
			&\leq 
			\left[ \kappa_2 \exp\left(\frac{\sm}{m}\gn{u_0}^2 \right) \gdist(u_0,v_0) \right]^{1/2}  \exp(2 \sm \LC) \exp(\sm \psi(t) (\gn{u_0}^2 + \gn{v_0}^2 ))
			\notag \\
			&\leq \kappa_2^{1/2}  \exp(2 \sm \LC) \gdist(u_0, v_0)^{1/2} \exp\left(  \left(\frac{\sm}{2m} + \sm \psi(t) \right) (\gn{u_0}^2 + \gn{v_0}^2 ) \right).
		\end{align}
		Since $\sm \in (0, \sm_0/4]$ and, by the choice of $\kappa_2$, we have $\kappa_2 < \exp(-\sm_0 \LC)$, then
		\begin{align}\label{ineq:tkappa2}
			\tilde{\kappa}_2 \coloneqq (\kappa_2 \exp(4 \sm \LC))^{1/2} 
			\leq 
			(\kappa_2 \exp( \sm_0 \LC))^{1/2} < 1.
		\end{align}
		Moreover, taking as before $\tT_2 \geq T_2$ sufficiently large such that $\psi( \tT_2) < 1/(2m)$, we obtain from \eqref{bound:W:P:d} and \eqref{ineq:tkappa2} that for all $t \in \mI$ with $t \geq \tT_2$
		\begin{align}\label{W:small:sc}
			\Wass_{\gdista}(P_t(u_0,\cdot), P_t(v_0, \cdot)) 
			\leq
			\tilde{\kappa}_2 \gdist_{\sm/m}(u_0,v_0).
		\end{align}	
	\end{enumerate}
	
	From \eqref{W:middle:sc}, \eqref{W:large:sc} and \eqref{W:small:sc}, it follows that for each fixed $m \geq 1$ there exists $\sm_m > 0$ such that for each $\sm \in (0, \sm_m]$ there exists $\gdist \in \gfd$ and $T > 0$ for which \eqref{pt:sp:gap:kappa} holds with $\kappa \coloneqq \min \{\tilde{\kappa}_1, \tilde{\kappa}_2, \tilde{\kappa} \}$ and for all $t \in \mI$ with $t \geq T$. This finishes the first part of the proof.
	
	We proceed to show inequality \eqref{Wass:contr:gen:2} under the additional assumption \ref{A3:ii}, with the same choices of $\kappa_2$ and $r$ from step $(iii)$ above. Again as a consequence of \eqref{Wass:convex}, it suffices to show that for every $m \geq 1$ there exists $\sm_m > 0$ such that for each $\sm \in (0, \sm_m]$ there exists $\gdist \in \gfd$, $T > 0$ and constants $C_1, C_2 > 0$ such that
	\begin{align}\label{pt:sp:gap}
	\Wass_{\gdista}(\gP_t(u_0,\cdot), \gP_t(v_0,\cdot)) 
	\leq C_1 e^{- C_2 t} \gdist_{\sm/m}(u_0,v_0)
	\end{align}
	for all $t \in \mI$ with $t \geq T$, and for all $u_0, v_0 \in \gsp$.

	Fix $m \geq 1$. Take $K \geq 1$ such that the function $\psi$ from assumption \ref{A:1} satisfies $\psi(t) \leq K$ for all $t \geq 0$. Take also $\sm_{2mK} > 0$ as in \eqref{choice:sm} corresponding to the parameter $2 m K$. Let us fix $\sm \in (0, \sm_{2m K}]$ and the corresponding $\kappa \in (0,1)$, $\gdist \in \gfd$ and $T > 0$ for which \eqref{Wass:contr:gen} holds. Clearly, we may assume $T \in \mI$. Then, for any $t \in \mI$ with $t \geq T$, we may write $t = j T + s$, with $j \in \NN$, $j \geq 1$, and $s \in [0,T) \cap \mI$. Notice that $j T \in \mI$, for all $j \in \NN$.
	Thus, invoking \eqref{pt:sp:gap:kappa} $j$ times, it follows that for all $u_0, v_0 \in \gsp$ 
	\begin{align}\label{pt:sp:gap:all:t:0}
		\Wass_{\gdista} (P_t(u_0, \cdot), P_t(v_0, \cdot)) 
		&=
		\Wass_{\gdista} ( P_{(j-1)T + s}(u_0, \cdot) P_T, P_{(j-1)T + s} (v_0, \cdot) P_T) 
		\notag\\
		&\leq
		\kappa \Wass_{\gdist_{\sm/(2mK)}} (P_{(j-1)T + s}(u_0, \cdot), P_{(j-1)T + s}(v_0, \cdot))
		\notag\\
		&\leq
		\kappa \Wass_{\gdist_{\sm}} (P_{(j-1)T + s}(u_0, \cdot), P_{(j-1)T + s}(v_0, \cdot))
		\notag\\
		&\leq \ldots
		\leq \kappa^j \Wass_{\gdist_{\sm/(2mK)}} (P_s(u_0, \cdot), P_s(v_0, \cdot)). 
	\end{align}
	From a similar calculation as in \eqref{bound:W:P:a}, we have that for all $s \in [0, T) \cap \mI$
	\begin{align*}
		&\Wass_{\gdist_{\sm/(2mK)}}(P_s(u_0,\cdot), P_s(v_0, \cdot))
		\\
		&\qquad\qquad \leq
		\Wass_\gdist(\gP_s(u_0,\cdot), \gP_s(v_0,\cdot))^{1/2}  
		\left( \gP_s \exp\left(2 \frac{\sm}{mK} \gn{u_0}^2\right) \right)^{1/4}  	\left( \gP_s \exp\left(2 \frac{\sm}{mK} \gn{v_0}^2\right) \right)^{1/4}.
	\end{align*}
	Hence, recalling the analogous choice of $r$ in step $(iii)$ above, namely $r \coloneqq \sm/(2mK)$, with $\sm \in (0,\sm_{2mK}]$ and $\sm_{2mK}$ as in \eqref{choice:sm}, it follows from assumption \ref{A3:ii} along with assumption \ref{A:1} that
	\begin{align}\label{pt:sp:gap:all:t:1}
		&\Wass_{\gdist_{\sm/(2mK)}}(P_s(u_0,\cdot), P_s(v_0, \cdot))
		\notag\\
		&\qquad\qquad \leq
		C \exp\left( \frac{\sm}{4mK} \gn{u_0}^2 \right) \gdist(u_0, v_0)^{1/2} \exp\left(  \frac{\sm}{mK} \LC \right) \exp\left(\frac{\sm}{2mK} \psi(s) (\gn{u_0}^2 + \gn{v_0}^2 )\right)
		\notag\\
		&\qquad\qquad \leq
		C \exp\left( \frac{\sm}{4m} \gn{u_0}^2 \right) \gdist(u_0, v_0)^{1/2} \exp\left(  \frac{\sm}{mK} \LC \right) \exp\left(\frac{\sm}{2m}  (\gn{u_0}^2 + \gn{v_0}^2 )\right)
		\notag\\
		&\qquad\qquad\leq
		C  \gdist_{\sm/m} (u_0, v_0)
	\end{align}
	Plugging \eqref{pt:sp:gap:all:t:1} into \eqref{pt:sp:gap:all:t:0}, yields
	\begin{align*}
		\Wass_{\gdist_{\sm}}(P_t(u_0,\cdot), P_t(v_0, \cdot))
		\leq
		\kappa^j C  \gdist_{\sm/m} (u_0, v_0).
	\end{align*}
	Since $ t = jT + s < (j+1)T$, then $j > (t/T) - 1$ and, consequently, $\kappa^j < \kappa^{\frac{t}{T} - 1}$. Thus, 
	\begin{align*}
		\Wass_{\gdist_{\sm}}(P_t(u_0,\cdot), P_t(v_0, \cdot))
		\leq
		\kappa^{\frac{t}{T} - 1} C  \gdist_{\sm/m} (u_0, v_0)
		=
		\frac{C}{\kappa} e^{t \frac{\ln \kappa}{T}}  \gdist_{\sm/m} (u_0, v_0)
	\end{align*}
	for all $t \in \mI$ with $t \geq T$, and every $u_0, v_0 \in \gsp$. Therefore, \eqref{pt:sp:gap} holds with $C_1 = C/\kappa$ and $C_2 = - (\ln \kappa)/T$. This concludes the proof.
\end{proof}

\begin{Rmk}\label{rmk:exist:inv:meas}
	Clearly, if $\gP_t$, $t \in \mI$, is a Markov semigroup satisfying the assumptions of \cref{thm:gen:sp:gap}, and for which there exists an associated invariant measure $\mu_* \in \Pr(\gsp)$, i.e. $\mu_* \gP_t = \mu_*$ for all $t \in \mI$, then inequality \eqref{Wass:contr:gen} implies that $\mu_*$ must also be the unique invariant measure. Moreover, fixing $\gdist \in \gfd$ to be the distance-like function for which \eqref{Wass:contr:gen} holds, it follows similarly as in \cite[Corollary 4.11]{HairerMattinglyScheutzow2011} that if there exists a complete metric $\tilde{\rho}$ on $\gsp$ such that $\tilde{\rho} \leq \sqrt{\gdist}$ and such that $\gP_t$ is a Feller semigroup on $(\gsp,\tilde{\rho})$, then together with assumptions  \ref{A:1}, \ref{A:2} and \ref{A3:i} one can also guarantee the existence of such invariant measure. 
\end{Rmk}

\subsection{Uniform in time weak convergence}
\label{sec:gen:wk:conv}

The following general result provides the specific set of assumptions needed for achieving a long time bias estimate similar to \eqref{eq:finite:err:mom:intro}, though in a more general setting than considered in \cref{subsec:unif:contr:overview}. Indeed, our estimate is given with respect to the Wasserstein distance induced by the distance-like function $\gdista$ defined in \eqref{def:gdista} above, thus not necessarily a metric. The main assumptions are given by: a generalized triangle inequality satisfied by $\gdista$, \ref{H:1}; the existence of an invariant measure for each member of the given parametrized family of Markov kernels, \ref{H:2}; a finite-time error estimate for the approximating processes, \ref{H:4}; and a parameter-uniform Wasserstein contraction for the given family of Markov kernels. Under these assumptions, the proof follows essentially the same steps of argumentation leading to \eqref{eq:finite:err:mom:intro}.

\begin{Thm}\label{thm:gen:wk:conv}
	Let $\gsp$ be a separable Banach space with norm $\gn{\cdot}$. Fix a collection $\gfd$ of distance-like functions $\gdist: \gsp\times \gsp \to [0,1]$. 
	Consider a family of Markov kernels $\gP_t^\pmr$ on $\gsp$ indexed by $t \in \RR^+$ and a parameter $\pmr$ varying in some set $\gspmr$. 
	Assume the following set of conditions:
	\begin{enumerate}[label={(H\arabic*)}]
		\item\label{H:1} There exists a constant $\gamma \geq 1$ such that for every $\gdist \in \gfd$ and $\sm > 0$ the distance-like function $\gdista: \gsp \times \gsp \to \RR^+$ defined in \eqref{def:gdista} satisfies
		\begin{align}\label{gen:tri:ineq:gdista}
		\gdista(u, v) \leq C \left[ \gdist_{\gamma\sm}(u, w) + \gdist_{\gamma \sm}(w, v)\right]
		\end{align}
		for all $u, v, w \in \gsp$ and for some constant $C > 0$ (which may depend on $\gdist$, $\sm$ and $\gamma$).
		\item\label{H:2} For each $\pmr \in \gspmr$, there exists a probability measure $\mu_\pmr$ on $\gsp$ which is invariant under $\gP_t^\pmr$, $t \in \RR^+$.
		\item\label{H:4} 
		There exist $\pmr_0 \in \gspmr$ and $\sm' > 0$ such that for each $\sm \in (0, \sm']$ and for each $\gdist \in \gfd$ there exist functions $\pfunc: \gspmr \to \RR^+$, $\tgf: \RR^+ \to \RR^+$, and a measurable function $\gf: \gsp \to \RR^+ \cup \{\infty\}$ such that 
		\begin{align}\label{ineq:fin:time:0}
			\Wass_{\gdista} (\gP_t^\pmr(u, \cdot), \gP_t^{\pmr_0}(u, \cdot)) \leq \tgf(t) \gf(u) \pfunc(\pmr),
		\end{align}
		for all $\pmr \in \gspmr$, $t \in \RR^+$, and $u \in \gsp$. 
		\item\label{H:3} For every $m \geq 1$, there exists $\sm_m > 0$ such that for each $\sm \in (0, \sm_m]$ there exists $\gdist \in \gfd$, $T > 0$ and constants $C_1, C_2 > 0$ for which the following inequality holds:
		\begin{align}\label{sp:gap:pmr}
		\sup_{\pmr \in \gspmr \backslash \{\pmr_0\}} \Wass_{\gdista}(\mu \gP_t^\pmr, \tmu \gP_t^\pmr) \leq C_1 e^{-t C_2} \Wass_{\gdist_{\sm/m}} (\mu, \tmu)
		\end{align}
		for every $\mu, \tmu \in \Pr(\gsp)$ and $t \geq T$, with $\pmr_0 \in \gspmr$ as in \ref{H:4}.
	\end{enumerate}
	Then, there exists $\sm_* > 0$ such that for each fixed $\sm \in (0, \sm_*]$ there exists $\gdist \in \gfd$, $\tT > 0$ and a constant $C > 0$ for which it holds that 
	\begin{align}\label{W:inv:meas}
		\Wass_{\gdista}(\mu_\pmr, \mu_{\pmr_0}) \leq C \tgf(\tT) \pfunc(\pmr) \int_\gsp f(u) \mu_{\pmr_0}(du),
	\end{align}
	for every $\pmr \in \gspmr$.
\end{Thm}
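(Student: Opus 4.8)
The plan is to execute rigorously the uniform-contraction reduction sketched in \eqref{eq:tri:arg:unf}--\eqref{eq:reduction:unf:intro:1}, with the non-metric structure of $\gdista$ handled through the generalized triangle inequality \ref{H:1}. Since $\mu_{\pmr_0} = \mu_{\pmr_0}\gP_t^{\pmr_0}$ and $\mu_\pmr = \mu_\pmr\gP_t^\pmr$ by \ref{H:2}, the key is to compare $\mu_\pmr$ with $\mu_{\pmr_0}$ after one step of the respective dynamics, inserting the intermediate measure $\mu_{\pmr_0}\gP_t^\pmr$: the ``error'' piece $\Wass_{\gdist_{\gamma\sm}}(\mu_{\pmr_0}\gP_t^\pmr,\mu_{\pmr_0})$ is controlled by \ref{H:4} integrated against $\mu_{\pmr_0}$ (crucially against the \emph{limit} measure, not $\mu_\pmr$), while the ``contraction'' piece $\Wass_{\gdist_{\gamma\sm}}(\mu_\pmr\gP_t^\pmr,\mu_{\pmr_0}\gP_t^\pmr)$ is controlled by the $\pmr$-uniform estimate \ref{H:3}. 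Because $\Wass_{\gdista}$ is not known a priori to be finite between $\mu_\pmr$ and $\mu_{\pmr_0}$, I would not simply rearrange ``$X\le \tfrac12 X + B$''; instead I would run the one-step estimate as an iteration along the orbit $\nu_n := \mu_{\pmr_0}\gP_{n\tT}^\pmr$, whose base point $\nu_0=\mu_{\pmr_0}$ makes the seed of the induction trivially finite, and then pass to the limit $n\to\infty$.

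Concretely: (i) discard the trivial case $\pmr=\pmr_0$ and the case $\tgf(\tT)\pfunc(\pmr)\int\gf\,d\mu_{\pmr_0}=\infty$. (ii) Fix an integer $m\geq\gamma$; apply \ref{H:3} with weight parameter $\gamma\sm$ (in place of $\sm$) and this $m$ to obtain $\gdist\in\gfd$, $T>0$, $C_1,C_2>0$ with $\sup_{\pmr\neq\pmr_0}\Wass_{\gdist_{\gamma\sm}}(\mu\gP_t^\pmr,\tmu\gP_t^\pmr)\leq C_1e^{-C_2t}\Wass_{\gdist_{\gamma\sm/m}}(\mu,\tmu)$ for $t\geq T$; apply \ref{H:4} with weight $\gamma\sm$ and this same $\gdist$ to get $\tgf,\gf,\pfunc$; and use \ref{H:1} for this $\gdist$ and weight $\sm$ to get the constant $C$ in $\gdista(u,v)\leq C[\gdist_{\gamma\sm}(u,w)+\gdist_{\gamma\sm}(w,v)]$. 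Choose $\sm_*>0$ small enough that $\gamma\sm_*$ stays below the thresholds demanded by \ref{H:3} and \ref{H:4}, and then a positive integer multiple $\tT$ of $T$ with $CC_1e^{-C_2\tT}\leq 1/2$. (iii) For any $\nu\in\Pr(\gsp)$, write $\Wass_{\gdista}(\nu\gP_\tT^\pmr,\mu_{\pmr_0}) = \Wass_{\gdista}(\nu\gP_\tT^\pmr,\mu_{\pmr_0}\gP_\tT^{\pmr_0})$, insert the intermediate $\mu_{\pmr_0}\gP_\tT^\pmr$ and glue optimal couplings (which exist since $\gdist_{\gamma\sm}$ is lower semicontinuous) to upgrade \ref{H:1} to $\Wass_{\gdista}(\nu\gP_\tT^\pmr,\mu_{\pmr_0})\leq C[\Wass_{\gdist_{\gamma\sm}}(\nu\gP_\tT^\pmr,\mu_{\pmr_0}\gP_\tT^\pmr)+\Wass_{\gdist_{\gamma\sm}}(\mu_{\pmr_0}\gP_\tT^\pmr,\mu_{\pmr_0})]$; bound the first bracketed term by \ref{H:3} together with the elementary monotonicity $\gdist_{\gamma\sm/m}\leq\gdista$ (valid since $m\geq\gamma$), giving $\leq C_1e^{-C_2\tT}\Wass_{\gdista}(\nu,\mu_{\pmr_0})$; and bound the second, via the joint convexity of optimal transport used already in \eqref{Wass:convex}, by $\int\Wass_{\gdist_{\gamma\sm}}(\gP_\tT^\pmr(u,\cdot),\gP_\tT^{\pmr_0}(u,\cdot))\,\mu_{\pmr_0}(du)\leq \tgf(\tT)\pfunc(\pmr)\int\gf\,d\mu_{\pmr_0}$ by \ref{H:4}. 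Setting $B:=2C\tgf(\tT)\pfunc(\pmr)\int\gf\,d\mu_{\pmr_0}$, this is the one-step bound $\Wass_{\gdista}(\nu\gP_\tT^\pmr,\mu_{\pmr_0})\leq\tfrac12\Wass_{\gdista}(\nu,\mu_{\pmr_0})+\tfrac12 B$.

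(iv) Taking $\nu=\nu_{n-1}$ and using $\nu_n=\nu_{n-1}\gP_\tT^\pmr$ (semigroup property), the induction with seed $\Wass_{\gdista}(\nu_0,\mu_{\pmr_0})=\Wass_{\gdista}(\mu_{\pmr_0},\mu_{\pmr_0})=0$ yields $\Wass_{\gdista}(\nu_n,\mu_{\pmr_0})\leq B$ for every $n\geq 0$. (v) It remains to pass to the limit: $\nu_n=\mu_{\pmr_0}\gP_{n\tT}^\pmr\to\mu_\pmr$ in a topology for which $\Wass_{\gdista}$ is lower semicontinuous (e.g. weak convergence, via Kantorovich duality and lower semicontinuity of $\gdista$), whence $\Wass_{\gdista}(\mu_\pmr,\mu_{\pmr_0})\leq\liminf_n\Wass_{\gdista}(\nu_n,\mu_{\pmr_0})\leq B$, which is \eqref{W:inv:meas} (with the displayed constant there absorbing the present $2C$), and $\sm_*,\gdist,\tT$ independent of $\pmr$ as required.

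I expect step (v) to be the main obstacle. Uniqueness of $\mu_\pmr$ follows from \ref{H:3} exactly as in \cref{rmk:exist:inv:meas}, and re-running the induction of (iv) at a slightly larger weight shows $\sup_n\Wass_{\gdist_{\gamma\sm}}(\nu_n,\mu_{\pmr_0})<\infty$, from which the contraction gives that $(\nu_n)$ is Cauchy for $\Wass_{\gdist_{\gamma\sm}}$; converting this into weak convergence and identifying the limit as $\mu_\pmr$ is where a Feller/tightness input must be supplied and where the bulk of the technical work sits. A secondary point, but one that must be handled as carefully as in the proof of \cref{thm:gen:sp:gap}, is the bookkeeping in (ii): the $\gdist$ furnished by \ref{H:3} depends on $m$ and $\sm$, so one must order the choices $m\rightsquigarrow\sm_*\rightsquigarrow\gdist\rightsquigarrow(\tgf,\gf,\pfunc)$ without circularity — this is legitimate precisely because \ref{H:1} holds for \emph{every} $\gdist\in\gfd$ and \ref{H:4} supplies $\tgf,\gf,\pfunc$ for \emph{every} $\gdist\in\gfd$, so the contraction metric from \ref{H:3} is an admissible common choice.
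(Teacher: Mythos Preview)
Your step (iii) is precisely the paper's argument: they insert $\mu_{\pmr_0}\gP_t^\pmr$ via \ref{H:1} (lifted to Wasserstein by the gluing lemma, their \cref{prop:tri:ineq:Wass}), apply \ref{H:3} with $m=\gamma$ to the contraction piece, and bound the error piece through \ref{H:4} using the diagonal coupling of $\mu_{\pmr_0}$ with itself. The paper then does exactly what you decided not to do: take $\nu=\mu_\pmr$, so that $\nu\gP_{\tT}^\pmr=\mu_\pmr$ by \ref{H:2}, and rearrange $X\le \tfrac12 X + \tfrac12 B$ to $X\le B$. Your concern that this rearrangement needs $\Wass_{\gdista}(\mu_\pmr,\mu_{\pmr_0})<\infty$ a priori is a legitimate point the paper does not address explicitly; in the applications it follows from the exponential Lyapunov bounds (cf.\ \cref{prop:Lyap:exp:disc}, \cref{lem:int:inv:bounds}), but at the abstract level as stated there is indeed a small gap.

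However, your proposed workaround does not close it. Step (iv) writes $\nu_n=\mu_{\pmr_0}\gP_{n\tT}^\pmr=\nu_{n-1}\gP_\tT^\pmr$, which requires the Chapman--Kolmogorov relation $\gP_{(n-1)\tT}^\pmr\gP_\tT^\pmr=\gP_{n\tT}^\pmr$. But the theorem only assumes $\{\gP_t^\pmr\}_{t\ge0}$ is a \emph{family} of Markov kernels, not a Markovian transition function, and the paper uses this generality essentially: in the main application $\Pdisct$ is defined in \eqref{def:Pdisct} as a piecewise-constant-in-$t$ interpolation of a discrete-time chain and is explicitly \emph{not} a semigroup (see the remark immediately following \eqref{def:Pdisct}). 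So the iteration cannot even be set up. Step (v) compounds this: even granting a semigroup, identifying the weak limit of $(\nu_n)$ with $\mu_\pmr$ needs Feller-type input that is nowhere among \ref{H:1}--\ref{H:3}. The paper's route, while cavalier about finiteness, is the intended one; if you want rigor at the abstract level, the cleanest patch is to add a hypothesis guaranteeing $\int\exp(\sm\gn{u}^2)\,\mu_\pmr(du)<\infty$ for small $\sm$ (which in practice comes from the same Lyapunov structure underlying \ref{H:3}), rather than to reroute through an iteration the hypotheses do not support.
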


\begin{Rmk}
	\label{rmk:gen:tri:cond}
    The crucial condition \eqref{gen:tri:ineq:gdista} of \cref{thm:gen:wk:conv} is not hard to verify in practice.  The main underlying condition for the collection 
	of distance like functions $\gfd$ is that they satisfy a generalized triangle inequality, namely that for $\rho \in \gfd$, we have the bound $\rho(u,w) \leq C (\rho(u,v) + \rho(u,w))$, 
	for a constant $C$ independent of any $u,v,w \in X$.     See \cref{prop:gen:tri:ineq:rhoesa} below for our precise formulation and \eqref{def:rhoes} in \cref{subsec:Wass:contr:NSE},
	\eqref{def:gfd:bdd} in \cref{sec:was:contr:bnd:dm} where we put
	this result into practice.
\end{Rmk}
\begin{proof}
	Due to assumptions \ref{H:1} and \ref{H:2}, together with \cref{prop:tri:ineq:Wass}, it follows that for each $\gdist \in \gfd$ and $\sm > 0$ there exists a constant $C > 0$ such that for all $t \in \RR^+$ and $\pmr \in \gspmr \backslash \{\pmr_0\}$
	\begin{align}\label{W:inv:1}
	\Wass_{\gdista}(\mu_\pmr, \mu_{\pmr_0}) 
	= \Wass_{\gdista}(\mu_\pmr \gP_t^\pmr, \mu_{\pmr_0} \gP_t^{\pmr_0})
	\leq C \left[ \Wass_{\gdist_{\gamma \sm}} (\mu_\pmr \gP_t^\pmr, \mu_{\pmr_0} \gP_t^\pmr) + \Wass_{\gdist_{\gamma \sm}} (\mu_{\pmr_0} \gP_t^\pmr, \mu_{\pmr_0} \gP_t^{\pmr_0} ) \right].
	\end{align}
	Now invoking assumption \ref{H:3} with $m = \gamma $ to estimate the first term in the right-hand side of \eqref{W:inv:1}, we obtain that for any fixed $\sm \in (0, \sm_\gamma/\gamma]$ and corresponding $\gdist \in \gfd$, $T > 0$ and constants $C_1, C_2 > 0$, we have
	\begin{align}\label{W:inv:2}
	\Wass_{\gdista} (\mu_\pmr, \mu_{\pmr_0}) 
	\leq
	C \left[ C_1 e^{-t C_2} \Wass_{\gdist_{\sm}} (\mu_\pmr, \mu_{\pmr_0}) + \Wass_{\gdist_{\gamma \sm}} (\mu_{\pmr_0} \gP_t^\pmr, \mu_{\pmr_0} \gP_t^{\pmr_0} ) \right]
	\end{align}
	for all $t \geq T$. Take $\tT \geq T$ such that 
	\begin{align}\label{cond:T}
	C C_1 e^{-\tT C_2} < \frac{1}{2}.
	\end{align}
	Thus, taking $t = \tT$ in \eqref{W:inv:2} and rearranging terms, we deduce that 
	\begin{align}\label{W:inv:meas:2}
		\Wass_{\gdista}(\mu_\pmr, \mu_{\pmr_0}) \leq 2 C \Wass_{\gdist_{\gamma\sm}}(\mu_{\pmr_0} \gP_{\tT}^\pmr, \mu_{\pmr_0} \gP_{\tT}^{\pmr_0}).
	\end{align}
	
	Moreover, similarly as in \eqref{Wass:convex}, it follows from \cite[Theorem 4.8]{villani2008} that
	\begin{align}\label{W:muinv:convex}
		\Wass_{\gdist_{\gamma \sm}} (\mu_{\pmr_0} \gP_{\tT}^\pmr, \mu_{\pmr_0} \gP_{\tT}^{\pmr_0})
		\leq
		\int_{\gsp \times \gsp} \Wass_{\gdist_{\gamma \sm}} (\gP_{\tT}^\pmr(u, \cdot), \gP_{\tT}^{\pmr_0}(v, \cdot)) \Gamma(du, d v),
	\end{align}
	for every coupling $\Gamma \in \Co(\mu_{\pmr_0}, \mu_{\pmr_0})$. Take $\Gamma(du, dv) = \delta_u(dv) \mu_{\pmr_0}(du)$, where $\delta_u \in \Pr(\gsp)$ denotes the Dirac measure concentrated at $u \in \gsp$. It is not difficult to check that $\Gamma \in \Co(\mu_{\pmr_0}, \mu_{\pmr_0})$. Therefore,
	\begin{align}\label{W:muinv:convex:2}
		\Wass_{\gdist_{\gamma \sm}} (\mu_{\pmr_0} \gP_{\tT}^\pmr, \mu_{\pmr_0} \gP_{\tT}^{\pmr_0})
		&\leq
		\int_{\gsp \times \gsp} \Wass_{\gdist_{\gamma \sm}} (\gP_{\tT}^\pmr(u, \cdot), \gP_{\tT}^{\pmr_0}(v, \cdot)) \delta_u(dv) \mu_{\pmr_0} (du)
		\notag\\
		&=
		\int_{\gsp} \Wass_{\gdist_{\gamma \sm}} (\gP_{\tT}^\pmr (u, \cdot), \gP_{\tT}^{\pmr_0}(u, \cdot)) \mu_{\pmr_0} (du).
	\end{align}
	Let us assume, if necessary, that $\sm$ varies in a smaller range so that inequality \eqref{ineq:fin:time:0} from \ref{H:4} holds with respect to $\gamma \sm$, namely
	\begin{align}\label{W:distga:u}
		\Wass_{\gdist_{\gamma \sm}} (\gP_t^\pmr(u, \cdot), \gP_t^{\pmr_0}(u, \cdot)) \leq \tgf(t) \gf(u) \pfunc(\pmr),
	\end{align}
	for all $\pmr \in \gspmr$, $t \in \RR^+$, $u \in \gsp$, and where we are fixing $\gdist$ as in \eqref{W:inv:2}.
	It thus follows from \eqref{W:inv:meas:2}, \eqref{W:muinv:convex:2}, and \eqref{W:distga:u} with $t = \tT$ that
	\begin{align}\label{W:inv:meas:2a}
		\Wass_{\gdista}(\mu_\pmr, \mu_{\pmr_0}) \leq 2 C \Wass_{\gdist_{\gamma \sm}} (\mu_{\pmr_0} \gP_{\tT}^\pmr, \mu_{\pmr_0} \gP_{\tT}^{\pmr_0})
		\leq
		2C \tgf(\tT) \pfunc(\pmr) \int_\gsp \gf(u) \, \mu_{\pmr_0} (du).
	\end{align}
	This shows \eqref{W:inv:meas}, and concludes the proof.
\end{proof}

Next we notice that a finite time error estimate as in \eqref{ineq:fin:time:0}, assuming $\pfunc(\pmr)\to 0$ as $\pmr \to \pmr_0$, combined with a uniform Wasserstein contraction for the approximating family $\{\gP_t^\pmr\}_{t \geq 0}$, $\pmr \in \gspmr \backslash \{\pmr_0\}$, as in \eqref{sp:gap:pmr} yields a Wasserstein contraction result for the limiting process $\{\gP_t^{\pmr_0}\}_{t \geq 0}$. This is made precise as follows.

\begin{Lem}\label{lem:sp:gap:lim:sys}
Fix the same setting from \cref{thm:gen:wk:conv} and assume that hypotheses \ref{H:1}-\ref{H:3} hold. Regarding \ref{H:4}, suppose additionally that for each $\sm \in (0, \sm']$ and $\gdist \in \gfd$ the corresponding function $\pfunc$ is such that $\lim_{\pmr \to \pmr_0} \pfunc(\pmr) = 0$. Then, for every $m \geq 1$ there exists $\sm_m > 0$ such that for each $\sm \in (0, \sm_m]$ there exists $\gdist \in \gfd$, $\tT > 0$ and constants $\tC_1, \tC_2 > 0$ for which the following inequality holds:
\begin{align}\label{Wass:contr:lim:proc}
\Wass_{\gdista}(\mu \gP_t^{\pmr_0}, \tmu \gP_t^{\pmr_0}) \leq \tC_1 e^{-t \tC_2} \Wass_{\gdist_{\sm/m}} (\mu, \tmu)
\end{align}
for every $t \geq \tT$ and all $\mu, \tmu \in \Pr(\gsp)$ satisfying
\begin{align}\label{cond:gf:mu:tmu}
\int_\gsp \gf(u) \mu(d u) + \int_\gsp \gf(u) \tmu(d u) < \infty,
\end{align}
where $\gf$ is the function from \ref{H:4}.
\end{Lem}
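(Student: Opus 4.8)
The plan is to transfer the parameter-uniform contraction \ref{H:3} for the approximating family $\{\gP_t^\pmr\}_{\pmr \neq \pmr_0}$ onto the limiting semigroup $\{\gP_t^{\pmr_0}\}$. The mechanism mirrors the reduction in \cref{thm:gen:wk:conv}: insert $\mu\gP_t^\pmr$ and $\tmu\gP_t^\pmr$ as intermediate measures, use the generalized triangle inequality to replace $\gP_t^{\pmr_0}$ by $\gP_t^\pmr$ at the cost of two finite-time error terms from \ref{H:4}, control the leftover middle term by the contraction of $\gP_t^\pmr$, and finally let $\pmr \to \pmr_0$. Since $\pfunc(\pmr) \to 0$ and, under \eqref{cond:gf:mu:tmu}, the relevant $\gf$-integrals are finite, the error terms then disappear.

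For the setup, fix $m \geq 1$ and let $\gamma \geq 1$ be the constant from \ref{H:1}. I would first apply \ref{H:3} with $\gamma^2 m$ in place of $m$, obtaining a threshold $\sm_{\gamma^2 m} > 0$; set $\sm_m := \gamma^{-2}\min\{\sm_{\gamma^2 m}, \sm'\}$, where $\sm'$ is the parameter bound in \ref{H:4}, and fix $\sm \in (0, \sm_m]$. Then there exist $\gdist \in \gfd$, $T > 0$, and $C_1, C_2 > 0$ with
\[
\Wass_{\gdist_{\gamma^2\sm}}(\mu \gP_t^\pmr, \tmu \gP_t^\pmr) \leq C_1 e^{-t C_2}\, \Wass_{\gdist_{\sm/m}}(\mu,\tmu)
\]
for every $t \geq T$, every $\mu, \tmu \in \Pr(\gsp)$, and every $\pmr \in \gspmr \setminus \{\pmr_0\}$ (using $\gamma^2\sm/(\gamma^2 m) = \sm/m$). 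Crucially $\gdist$ is now fixed; since \ref{H:4} holds for every member of $\gfd$, I may invoke it with this $\gdist$ and the parameter $\gamma^2\sm \leq \sm'$, obtaining $\pfunc, \tgf, \gf$ — with $\pfunc(\pmr) \to 0$ as $\pmr \to \pmr_0$ by hypothesis — such that $\Wass_{\gdist_{\gamma^2\sm}}(\gP_t^\pmr(u,\cdot), \gP_t^{\pmr_0}(u,\cdot)) \leq \tgf(t)\gf(u)\pfunc(\pmr)$; the function $\gf$ in \eqref{cond:gf:mu:tmu} is to be understood as this one.

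Next, for $\pmr \neq \pmr_0$ and $t \geq T$, I would apply the Wasserstein form of \ref{H:1} (namely \cref{prop:tri:ineq:Wass}) twice — at parameter $\sm$ and then at parameter $\gamma\sm$ — and absorb the derived exponential weights into $\gdist_{\gamma^2\sm}$ via the monotonicity $\gdist_a \leq \gdist_b$ for $a \leq b$ (hence $\Wass_{\gdist_a} \leq \Wass_{\gdist_b}$), to obtain
\[
\Wass_{\gdista}(\mu\gP_t^{\pmr_0}, \tmu\gP_t^{\pmr_0}) \leq C\big[ \Wass_{\gdist_{\gamma^2\sm}}(\mu\gP_t^{\pmr_0}, \mu\gP_t^{\pmr}) + \Wass_{\gdist_{\gamma^2\sm}}(\mu\gP_t^{\pmr}, \tmu\gP_t^{\pmr}) + \Wass_{\gdist_{\gamma^2\sm}}(\tmu\gP_t^{\pmr}, \tmu\gP_t^{\pmr_0})\big].
\]
The middle term is controlled by the contraction bound above. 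For the two outer terms I would use the Dirac coupling $\Gamma(du,dv) = \delta_u(dv)\mu(du)$ (respectively with $\tmu$) together with the convexity of $\Wass$ from \cite[Theorem 4.8]{villani2008}, exactly as in \eqref{W:muinv:convex}--\eqref{W:muinv:convex:2}, followed by \ref{H:4}, bounding them by $\tgf(t)\pfunc(\pmr)\int_\gsp \gf(u)\mu(du)$ and $\tgf(t)\pfunc(\pmr)\int_\gsp \gf(u)\tmu(du)$. Combining, for every $\pmr \neq \pmr_0$ and $t \geq T$,
\[
\Wass_{\gdista}(\mu\gP_t^{\pmr_0}, \tmu\gP_t^{\pmr_0}) \leq C C_1 e^{-t C_2}\, \Wass_{\gdist_{\sm/m}}(\mu,\tmu) + C\tgf(t)\pfunc(\pmr)\Big( \int_\gsp \gf(u)\mu(du) + \int_\gsp \gf(u)\tmu(du)\Big).
\]
Fixing $t \geq T$ and letting $\pmr \to \pmr_0$: the left side and the first right-hand term are $\pmr$-independent, $\tgf(t)$ is a fixed finite constant, and the parenthesized integral is finite by \eqref{cond:gf:mu:tmu}, so the last term vanishes. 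This yields \eqref{Wass:contr:lim:proc} with $\tT := T$, $\tC_1 := C C_1$, and $\tC_2 := C_2$.

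The only genuinely delicate point is the bookkeeping of parameter dependencies: the distance-like function $\gdist$ delivered by \ref{H:3} depends on the shifted parameter $\gamma^2\sm$ (and on $m$), while the triple $\pfunc, \tgf, \gf$ from \ref{H:4} depends on that same $\gdist$ and on $\gamma^2\sm$, so applying these two hypotheses in the wrong order would create a circular dependence. This is resolved, as in the proofs of \cref{thm:gen:sp:gap} and \cref{thm:gen:wk:conv}, by selecting $\gdist$ from \ref{H:3} first and only then invoking \ref{H:4}, which holds for every element of $\gfd$. Everything else is the routine triangle-inequality and Wasserstein-convexity machinery already deployed in those proofs.
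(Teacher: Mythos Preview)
Your proposal is correct and follows essentially the same approach as the paper: apply the generalized triangle inequality from \ref{H:1} (via \cref{prop:tri:ineq:Wass}) twice to insert $\mu\gP_t^\pmr$ and $\tmu\gP_t^\pmr$, bound the middle term by \ref{H:3} and the outer terms by \ref{H:4} through the Dirac-coupling/convexity argument of \eqref{W:muinv:convex}--\eqref{W:muinv:convex:2}, then let $\pmr\to\pmr_0$. Your parameter bookkeeping (applying \ref{H:3} with $\gamma^2 m$ and $\gamma^2\sm$, and fixing $\gdist$ before invoking \ref{H:4}) is in fact more explicit than the paper's, which simply says ``assume $\sm>0$ is sufficiently small''; the paper also leaves its first outer term at level $\gamma\sm$ rather than uniformizing to $\gamma^2\sm$ via monotonicity as you do, but this is cosmetic.
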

\begin{proof}
Fix any $m \geq 1$ and $\mu$, $\tmu$ satisfying \eqref{cond:gf:mu:tmu}. Invoking \ref{H:1} and \cref{prop:tri:ineq:Wass} twice, we obtain
\begin{align}\label{Wass:mu:tmu:P0}
\Wass_{\gdista}(\mu \gP_t^{\pmr_0}, \tmu \gP_t^{\pmr_0})
\leq C \left[ \Wass_{\gdist_{\gamma \sm}}(\mu \gP_t^{\pmr_0}, \mu \gP_t^\pmr) + \Wass_{\gdist_{\gamma^2 \sm}}(\mu \gP_t^\pmr, \tmu \gP_t^\pmr)  +  \Wass_{\gdist_{\gamma^2 \sm}}(\tmu \gP_t^\pmr, \tmu \gP_t^{\pmr_0})\right]
\end{align}
for any $\pmr \in \gspmr \backslash\{\pmr_0\}$. Now we assume $\sm > 0$ is sufficiently small, then proceed as in \eqref{W:muinv:convex}-\eqref{W:muinv:convex:2} and invoke \ref{H:4} to estimate the first and third terms in the right-hand side of \eqref{Wass:mu:tmu:P0}, and \ref{H:3} to estimate the second term. It thus follows that for any such $\sm$
there exists $\gdist \in \gfd$ and $T > 0$ such that 
\begin{align*}
\Wass_{\gdista}(\mu \gP_t^{\pmr_0}, \tmu \gP_t^{\pmr_0})
\leq C \left[ \tgf(t) \pfunc(\pmr) \left( \int_\gsp  \gf(u) \mu(du) + \int_\gsp  \gf(u) \tmu(du) \right)   +  C_1 e^{-t C_2} \Wass_{\gdist_{\sm/m}} (\mu, \tmu) \right]
\end{align*}
for all $t \geq T$, where $C_1,C_2$ are the same as in \eqref{sp:gap:pmr}. Thus, taking the limit as $\pmr \to \pmr_0$ and recalling the assumptions that $\lim_{\pmr \to \pmr_0} \pfunc(\pmr) = 0$ and \eqref{cond:gf:mu:tmu}, we deduce \eqref{Wass:contr:lim:proc}.
\end{proof}

Next, we show that, under the same assumptions from \cref{thm:gen:wk:conv} together with some natural conditions on the functions appearing in the right-hand side of the finite-time error estimate \eqref{ineq:fin:time:0}, it follows that the given parametrized family of Markov kernels $\gP_t^\pmr$, $\pmr \in \gspmr$, converges uniformly in $t \geq 0$ towards $\gP_t^{\pmr_0}$ in the Wasserstein topology determined by $\Wass_{\gdista}$. 

\begin{Thm}	
\label{thm:gen:wk:conv:2}
	Fix the same setting from \cref{thm:gen:wk:conv} and assume that hypotheses \ref{H:1}-\ref{H:3} hold. Additionally, regarding \ref{H:4}, suppose that for each $\sm \in (0, \sm']$ and $\gdist \in \gfd$ the corresponding functions $\tgf$, $\pfunc$, and $\gf$ satisfy:
	\begin{enumerate}[label={(H\arabic*)}]
		\setcounter{enumi}{4}
		\item\label{H:5} $\tgf$ is continuous and strictly increasing;
		\item\label{H:6} $\pfunc$ is bounded, and $\lim_{\pmr \to \pmr_0} \pfunc(\pmr) = 0$;
		\item\label{H:7} $\int_\gsp \gf(u) \mu_{\pmr_0}(du) < \infty$, with $\mu_{\pmr_0}$ as in \ref{H:2}.
	\end{enumerate}  
	Then, there exists $\hat{\sm} > 0$ such that for each fixed $\sm \in (0,\hat{\sm}]$ there exists $\gdist \in \gfd$ for which the following inequality holds for every $\pmr \in \gspmr$ and $\mu \in \Pr(\gsp)$ with $\int_\gsp \gf(u) \mu(du) < \infty$:
	\begin{align}\label{gen:w:conv}
	\sup_{t \in \RR^+} \Wass_{\gdista}(\mu \gP_t^\pmr, \mu \gP_t^{\pmr_0}) 
	\leq
	\tilde{g}(\pmr) \left[ \Wass_{\gdist_{\gamma \sm}}(\mu, \mu_{\pmr_0}) + \int_\gsp \gf(u) \mu(du) + \int_\gsp \gf(u) \mu_{\pmr_0}(du) \right],
	\end{align}
	where, if $\tgf$ is bounded,
	\begin{align*}
		\tilde{\pfunc}(\pmr) = C \pfunc(\pmr),
	\end{align*}
	and if $\tgf$ is unbounded
	\begin{align}\label{def:tildeg}
		\tilde{\pfunc}(\pmr) = C \max \left\{ \exp \left( - C_2 \tgf^{-1}(C \pfunc(\pmr)^{-q}) \right), \pfunc(\pmr), \pfunc(\pmr)^{1-q} \right\},
	\end{align}
	for $\pfunc(\pmr) \neq 0$, and $\tilde{\pfunc}(\pmr) = 0$ otherwise. Here, $q$ is any fixed number in $(0,1)$, $C > 0$ is a constant which is independent of $\pmr$, and $C_2$ is the constant from \eqref{sp:gap:pmr}. 
	
	Consequently, if additionally $\Wass_{\gdist_{\gamma \sm}}(\mu, \mu_{\pmr_0}) < \infty$, then
	\begin{align}\label{gen:w:conv:2}
		\lim_{\pmr \to \pmr_0} \sup_{t \in \RR^+} \Wass_{\gdista}(\mu \gP_t^\pmr, \mu \gP_t^{\pmr_0}) = 0.
	\end{align}
\end{Thm}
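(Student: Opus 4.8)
The plan is to interpolate between two complementary bounds on $\Wass_{\gdista}(\mu\gP_t^\pmr,\mu\gP_t^{\pmr_0})$. The first is valid for every $t$ and comes directly from \ref{H:4}: applying convexity of the Wasserstein functional (\cite[Theorem 4.8]{villani2008}) to the diagonal coupling $\delta_u(dv)\mu(du)$ of $(\mu,\mu)$, exactly as in \eqref{W:muinv:convex}--\eqref{W:muinv:convex:2}, one gets
\begin{align}\label{plan:direct}
	\Wass_{\gdista}(\mu\gP_t^\pmr,\mu\gP_t^{\pmr_0}) \;\leq\; \int_\gsp \Wass_{\gdista}(\gP_t^\pmr(u,\cdot),\gP_t^{\pmr_0}(u,\cdot))\,\mu(du) \;\leq\; \tgf(t)\,\pfunc(\pmr)\int_\gsp \gf(u)\,\mu(du),
\end{align}
which is sharp for small $t$ but deteriorates like $\tgf(t)$. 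The second bound, good for large $t$, is obtained by relaying through the invariant measures. Using \ref{H:1} (via \cref{prop:tri:ineq:Wass}) I would insert $\mu_\pmr$ and then $\mu_{\pmr_0}$ between $\mu\gP_t^\pmr$ and $\mu\gP_t^{\pmr_0}$, exploit $\mu_\pmr\gP_t^\pmr=\mu_\pmr$ and $\mu_{\pmr_0}\gP_t^{\pmr_0}=\mu_{\pmr_0}$ from \ref{H:2}, and control the three resulting pieces by: the parameter-uniform contraction \ref{H:3} for $\gP^\pmr$ (with its $m$-parameter matched to $\gamma$ so the inflated index collapses back to $\gamma\sm$ on the right, exactly as in the derivation of \eqref{W:inv:2}), followed by one more use of \ref{H:1} and of \cref{thm:gen:wk:conv}; \cref{thm:gen:wk:conv} directly for the piece $\Wass(\mu_\pmr,\mu_{\pmr_0})$; and the contraction of the limit semigroup $\gP^{\pmr_0}$ furnished by \cref{lem:sp:gap:lim:sys} --- which is available precisely because \ref{H:6} provides $\pfunc(\pmr)\to 0$ and because $\int_\gsp \gf\,d\mu+\int_\gsp \gf\,d\mu_{\pmr_0}<\infty$ by the standing hypothesis and \ref{H:7}. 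The upshot is that, for $t$ beyond some threshold $T_*$,
\begin{align}\label{plan:contr}
	\Wass_{\gdista}(\mu\gP_t^\pmr,\mu\gP_t^{\pmr_0}) \;\leq\; C e^{-C_2 t}\,\Wass_{\gdist_{\gamma\sm}}(\mu,\mu_{\pmr_0}) + C\,\pfunc(\pmr)\int_\gsp \gf(u)\,\mu_{\pmr_0}(du),
\end{align}
with $C_2$ the rate from \eqref{sp:gap:pmr} (which may be taken equal to the limit-semigroup rate in \cref{lem:sp:gap:lim:sys}).

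Given \eqref{plan:direct} and \eqref{plan:contr}, the rest is an optimization of the crossover time. If $\tgf$ is bounded, \eqref{plan:direct} alone gives \eqref{gen:w:conv} with $\tilde{\pfunc}(\pmr)=C\pfunc(\pmr)$. If $\tgf$ is unbounded, then by \ref{H:5} it is a continuous strictly increasing bijection onto $[\tgf(0),\infty)$; fixing $q\in(0,1)$ I would take $t_*(\pmr)\coloneqq\tgf^{-1}(C\pfunc(\pmr)^{-q})$ and use \eqref{plan:direct} on $[0,t_*(\pmr)]$, contributing $\tgf(t_*(\pmr))\pfunc(\pmr)\int\gf\,d\mu=C\pfunc(\pmr)^{1-q}\int\gf\,d\mu$, together with \eqref{plan:contr} on $[t_*(\pmr),\infty)$, contributing $C e^{-C_2\tgf^{-1}(C\pfunc(\pmr)^{-q})}\Wass_{\gdist_{\gamma\sm}}(\mu,\mu_{\pmr_0})+C\pfunc(\pmr)\int\gf\,d\mu_{\pmr_0}$. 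Taking $\sup_{t\in\RR^+}$ and bounding the maximum of these by their sum gives \eqref{gen:w:conv} with $\tilde{\pfunc}$ exactly of the form \eqref{def:tildeg}. The parameters $\pmr$ with $\pfunc(\pmr)$ not small (so that $t_*(\pmr)$ may fall below $T_*$) are handled crudely: \eqref{plan:direct} on $[0,T_*]$ and \eqref{plan:contr} on $[T_*,\infty)$ already bound $\sup_t\Wass_{\gdista}$ by a fixed multiple of $\Wass_{\gdist_{\gamma\sm}}(\mu,\mu_{\pmr_0})+\int\gf\,d\mu+\int\gf\,d\mu_{\pmr_0}$, and there $\tilde{\pfunc}$ from \eqref{def:tildeg} is bounded below, so enlarging the constant in \eqref{def:tildeg} makes the estimate hold. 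Finally \eqref{gen:w:conv:2} is immediate: $\tilde{\pfunc}(\pmr)\to 0$ as $\pmr\to\pmr_0$, since $\pfunc(\pmr),\pfunc(\pmr)^{1-q}\to 0$ by \ref{H:6} and $\tgf^{-1}(C\pfunc(\pmr)^{-q})\to\infty$ (as $\tgf^{-1}$ increases to $+\infty$ by \ref{H:5} and $\pfunc(\pmr)^{-q}\to\infty$), so $\exp(-C_2\tgf^{-1}(C\pfunc(\pmr)^{-q}))\to 0$; and the bracket in \eqref{gen:w:conv} is finite by \ref{H:7}, the hypothesis $\int\gf\,d\mu<\infty$, and the assumed finiteness of $\Wass_{\gdist_{\gamma\sm}}(\mu,\mu_{\pmr_0})$.

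The one genuinely fiddly point --- already present in the proof of \cref{thm:gen:wk:conv} --- is the bookkeeping: every use of \ref{H:1} inflates the index $\sm\mapsto\gamma\sm$, so one must shrink $\hat\sm$ enough that all indices that occur (up to a fixed power of $\gamma$, say $\gamma^2\sm$) remain simultaneously in the admissible ranges of \ref{H:3}, \cref{thm:gen:wk:conv}, and \cref{lem:sp:gap:lim:sys}, and one must check that a single distance-like function $\gdist\in\gfd$ can be used throughout. This is arranged by fixing $\gdist$ as in the proof of \cref{thm:gen:wk:conv} (via \ref{H:3} with $m=\gamma$) and, where needed, re-deriving the quoted sub-estimates in sync with that $\gdist$ rather than invoking them as black boxes; everything else reduces to the routine manipulations above.
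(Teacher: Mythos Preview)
Your proposal is correct and follows essentially the same approach as the paper's proof: the two complementary bounds \eqref{plan:direct} and \eqref{plan:contr} coincide with the paper's \eqref{w:conv:4} and \eqref{w:conv:2}, and the crossover optimization via $t_*(\pmr)=\tgf^{-1}(C\pfunc(\pmr)^{-q})$ is exactly the paper's choice of $\tau_*$. The only cosmetic differences are the order of the triangle-inequality insertions (the paper first inserts $\mu_\pmr$, then splits each side through $\mu_{\pmr_0}$, yielding four terms at index $\gamma^2\sm$; you insert $\mu_\pmr$ and $\mu_{\pmr_0}$ in one pass and then split the first piece again), and the handling of the regime where $t_*(\pmr)$ falls below the contraction threshold (the paper uses \eqref{w:conv:4} on $[0,T]$ and \eqref{w:conv:2} on $(\max\{\tau_*,T\},\infty)$ explicitly, whereas you absorb this regime into the constant by noting $\tilde{\pfunc}$ is bounded below there---both are fine). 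Your bookkeeping remark about fixing $\gdist$ via \ref{H:3} with $m=\gamma$ and shrinking $\hat\sm$ so that indices up to $\gamma^2\sm$ stay admissible is precisely what the paper does.
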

\begin{proof}

From \cref{prop:tri:ineq:Wass}, along with assumptions \ref{H:1} and \ref{H:2}, we obtain that for every $\mu \in \Pr(\gsp)$, $\sm > 0$, $\gdist \in \gfd$, $t \in \RR^+$ and $\pmr \in \gspmr \backslash \{\pmr_0\}$
\begin{align}
&\Wass_{\gdista}(\mu \gP_t^\pmr, \mu \gP_t^{\pmr_0}) 
\leq C \left[ \Wass_{\gdist_{\gamma \sm}}(\mu \gP_t^\pmr,  \mu_\pmr \gP_t^\pmr)  +   \Wass_{\gdist_{\gamma \sm}} (\mu_\pmr, \mu \gP_t^{\pmr_0}) \right]
\label{w:conv:1}
\\ 
&\quad\leq C \left[ \Wass_{\gdist_{\gamma^2 \sm}}(\mu \gP_t^\pmr, \mu_{\pmr_0} \gP_t^\pmr) +  \Wass_{\gdist_{\gamma^2 \sm}}(\mu_{\pmr_0} \gP_t^\pmr, \mu_\pmr \gP_t^\pmr) +  \Wass_{\gdist_{\gamma^2 \sm}} (\mu_\pmr, \mu_{\pmr_0}) +  \Wass_{\gdist_{\gamma^2 \sm}} (\mu_{\pmr_0} \gP_t^{\pmr_0}, \mu \gP_t^{\pmr_0}) \right]. \notag
\end{align}
Now we invoke assumption \ref{H:3} to estimate the first and second terms in the right-hand side of \eqref{w:conv:1}, \eqref{W:inv:meas} from \cref{thm:gen:wk:conv} to estimate the third term, and \cref{lem:sp:gap:lim:sys} to estimate the fourth term. Here we notice that we can apply \ref{H:3} and \cref{lem:sp:gap:lim:sys} with any choice of $m \geq 1$ to estimate the first, second, and fourth terms. But for estimating the third term via \eqref{W:inv:meas}, as we recall from the proof of \cref{thm:gen:wk:conv}, we must invoke \ref{H:3} with $m= \gamma$. Since the distance-like function $\gdist \in \gfd$ for which \eqref{sp:gap:pmr} in \ref{H:3} and \eqref{Wass:contr:lim:proc} in \cref{lem:sp:gap:lim:sys} hold depends in particular on the choice of $m$, we thus also estimate the first, second, and fourth terms under $m=\gamma$. This yields that for $\sm > 0$ sufficiently small there exists $\gdist \in \gfd$, $T > 0$, and constants $C_1, C_2 > 0$ such that
\begin{align}\label{w:conv:2}
	\Wass_{\gdista}(\mu \gP_t^\pmr, \mu \gP_t^{\pmr_0}) 
	&\leq C \left[ C_1 e^{-t C_2} \Wass_{\gdist_{\gamma \sm}}(\mu, \mu_{\pmr_0}) + C_1 e^{-t C_2} \Wass_{\gdist_{\gamma \sm}}(\mu_{\pmr_0}, \mu_\pmr) +  \Wass_{\gdist_{\gamma^2 \sm}}(\mu_\pmr, \mu_{\pmr_0})  \right]
	\notag\\
	&\leq C \left[ e^{-t C_2} \Wass_{\gdist_{\gamma \sm}}(\mu, \mu_{\pmr_0}) + \Wass_{\gdist_{\gamma^2 \sm}}(\mu_\pmr, \mu_{\pmr_0})  \right] \notag\\
	&\leq C \left[ e^{-t C_2} \Wass_{\gdist_{\gamma \sm}}(\mu, \mu_{\pmr_0}) + \pfunc(\pmr) \int_\gsp f(u) \mu_{\pmr_0}(du) \right],
\end{align}
for every $t \geq T$, $\pmr \in \gspmr$, and $\mu \in \Pr(\gsp)$ such that $\int_\gsp \gf(u) \mu(du) < \infty$. Here, $C>0$ is a constant depending on $\sm, T$ which is independent of $t, \pmr$.

On the other hand, proceeding similarly as in \eqref{W:muinv:convex}-\eqref{W:inv:meas:2a}, we obtain directly from assumption \ref{H:4} that, by taking $\sm$ smaller if necessary, 
\begin{align}\label{w:conv:4}
\Wass_{\gdista}(\mu \gP_t^\pmr, \mu \gP_t^{\pmr_0}) 
\leq
\tgf(t) \pfunc(\pmr) \int_\gsp f(u) \mu(du),
\end{align}
for all $\pmr \in \gspmr$ and $t \in \RR^+$. 
We now combine inequalities \eqref{w:conv:2} and \eqref{w:conv:4} to yield the desired uniform in $t \in \RR^+$ estimate \eqref{gen:w:conv}. 

Let us first suppose that $\tgf$ is a bounded function. In this case, it follows immediately from \eqref{w:conv:4} that
\begin{align*}
	\sup_{t \in \RR^+} \Wass_{\gdista}(\mu \gP_t^\pmr, \mu \gP_t^{\pmr_0}) \leq C \pfunc(\pmr) \int_\gsp f(u) \mu(du),
\end{align*}
 for every $\pmr \in \gspmr$ and for some constant $C > 0$, as desired.

Now let us assume that $\tgf$ is unbounded. Fix $q \in(0,1)$ and let $t_* > 0$ such that $\tgf(t_*) \neq 0$. Let $\tilde \tgf(\tau) \coloneqq \tgf(\tau)/ \tgf(t_*)$, $\tau \in \RR^+$, and $\tilde\pfunc(\pmr) \coloneqq \pfunc(\pmr)/ \overline{\pfunc}$, $\pmr \in \gspmr$, where $\overline{\pfunc} \coloneqq \sup_{\pmr \in \gspmr} \pfunc(\pmr)$. Here we recall our assumptions that $\tgf$ is strictly increasing and continuous, and $\pfunc$ is bounded, which implies that $\tilde{\tgf}$ is also strictly increasing and continuous, and $\overline{\pfunc} < \infty$. 

Fix $\pmr \in \gspmr$ and assume first that $\pfunc(\pmr) \neq 0$, so that $\tilde{\pfunc}(\pmr) \neq 0$. Notice that $\tilde{\tgf}(t_*) = 1$, and $\tilde{\pfunc}(\pmr) \leq 1$, so that $\tilde{\tgf}(t_*) \leq \tilde{\pfunc}(\pmr)^{-q}$. Since $\tilde{\tgf}: \RR^+ \to \RR^+$ is continuous and unbounded, there exists $\tau_* \in \RR^+$ such that $\tilde{\tgf}(\tau_*) = \tilde{\pfunc}(\pmr)^{-q}$. And since $\tgf$, $\tilde{\tgf}$ are strictly increasing, then their corresponding inverses $\tgf^{-1}$, $\tilde{\tgf}^{-1}$ are well-defined, so that $\tau_* = \tilde{\tgf}^{-1}(\tilde{\pfunc}(\pmr)^{-q}) = \tgf^{-1}(\pfunc(\pmr)^{-q} \tgf(t_*)/\overline{\pfunc})$. From \eqref{w:conv:4}, we thus obtain that for every $t \leq \tau_*$
\begin{align}\label{w:conv:5}
	\Wass_{\gdista}(\mu \gP_t^\pmr, \mu \gP_t^{\pmr_0}) 
	\leq
	\tgf(\tau_*) \pfunc(\pmr)  \int_\gsp f(u) \mu(du)
	&= \tilde{\tgf}(\tau_*) \tilde{\pfunc}(\pmr) \tgf(t_*) \overline{\pfunc} \int_\gsp f(u) \mu(du) \notag \\
	&= \tilde{\pfunc}(\pmr)^{1-q} \tgf(t_*) \overline{\pfunc} \int_\gsp f(u) \mu(du) \notag \\
	&\leq C \pfunc(\pmr)^{1-q} \int_\gsp f(u) \mu(du),
\end{align}
and for every $t \leq T$
\begin{align}\label{w:conv:5b}
	\Wass_{\gdista}(\mu \gP_t^\pmr, \mu \gP_t^{\pmr_0}) 
	\leq
	\tgf(T) \pfunc(\pmr) \int_\gsp f(u) \mu(du).
\end{align}

On the other hand, it follows from \eqref{w:conv:2} that for every $t > \max\{\tau_*,T\}$
\begin{multline}\label{w:conv:6}
	\Wass_{\gdista}(\mu \gP_t^\pmr, \mu \gP_t^{\pmr_0}) 
	\leq
	C \left[  e^{-\tau_* C_2} \Wass_{\gdist_{\gamma \sm}}(\mu, \mu_{\pmr_0}) +  \pfunc(\pmr) \int_\gsp f(u) \mu_{\pmr_0}(du) \right]
	\\
	\leq 
	C \max \{  \exp \left( - C_2 \tgf^{-1}(\pfunc(\pmr)^{-q} \tgf(t_*)/\overline{\pfunc}) \right),  \pfunc(\pmr)\}
	\left[  \Wass_{\gdist_{\gamma \sm}}(\mu, \mu_{\pmr_0}) +  \int_\gsp f(u) \mu_{\pmr_0}(du)  \right].
\end{multline}
From \eqref{w:conv:5}, \eqref{w:conv:5b} and \eqref{w:conv:6} we conclude that 
\begin{multline}\label{gen:w:conv:0}
	\sup_{t \in \RR^+} \Wass_{\gdista}(\mu \gP_t^\pmr, \mu \gP_t^{\pmr_0}) 
	\leq
	C \max \{ \exp \left( - C_2 \tgf^{-1}(\pfunc(\pmr)^{-q}  \tgf(t_*)/\overline{\pfunc}) \right) ,  \pfunc(\pmr), \pfunc(\pmr)^{1-q}\} \\
	\cdot \left[  \Wass_{\gdist_{\gamma \sm}}(\mu, \mu_{\pmr_0}) +  \int_\gsp f(u) \mu_{\pmr_0}(du) + \int_\gsp f(u) \mu(du)  \right],
\end{multline}
for all $\pmr \in \gspmr$ such that $\pfunc(\pmr) \neq 0$. Further, if $\pfunc(\pmr) = 0$, then it follows directly from \eqref{w:conv:4} that 
\begin{align*}
\Wass_{\gdista}(\mu \gP_t^\pmr, \mu \gP_t^{\pmr_0})= 0 \quad \mbox{ for all } t \in \RR^+.
\end{align*}
This concludes the proof of \eqref{gen:w:conv}. Finally, the validity of \eqref{gen:w:conv:2} is clear under \eqref{gen:w:conv} and the additional condition $\Wass_{\gdist_{\gamma \sm}}(\mu, \mu_{\pmr_0}) < \infty$.
\end{proof}

\begin{Rmk}\label{rmk:exp:growth}
	In the particular case that assumption \ref{H:4} holds with $\tgf(t) = \tC e^{t C'}$ for some positive constants $\tC, C'$, it follows that, under assumptions \ref{H:1}-\ref{H:4}, inequality \eqref{gen:w:conv} holds with 
	\begin{align*}
		\tilde{\pfunc}(\pmr) = C \max \{\pfunc(\pmr)^{\frac{C_2 q}{C'}}, \pfunc(\pmr), \pfunc(\pmr)^{1-q}\}
	\end{align*} 
	for any fixed $q < 1$ and for some constant $C > 0$. Indeed, this follows directly from the general expression of $\tilde{\pfunc}(\pmr)$ in \eqref{def:tildeg} for this specific case of $\tgf$. This particular situation appears in the application to a numerical discretization of the 2D SNSE presented in \cref{sec:app:SNSE} below.
\end{Rmk}

To conclude this section, we have the following immediate corollary of \cref{thm:gen:wk:conv:2} yielding uniform-in-time weak convergence for stochastic processes associated to the Markov kernels $\gP_t^\pmr$, $\pmr \in \gspmr$, with respect to Lipschitz test functions.

\begin{Cor}\label{cor:gen:wk:conv:obs}
	Fix the same setting and assumptions from \cref{thm:gen:wk:conv:2}. For each $u_0 \in \gsp$ and $\pmr \in \gspmr$, let $u_\pmr(t; u_0)$, $t \in \RR^+$, be a stochastic process such that $\mL(u_\pmr(t; u_0)) = \gP_t^\pmr(u_0,\cdot)$ for every $t \in \RR^+$, where $\mL(u_\pmr(t; u_0))$ denotes the law of $u_\pmr(t; u_0)$.
	Then, there exists $\hat{\sm} > 0$ such that for each $\sm \in (0, \hat{\sm}]$ there exists $\gdist \in \gfd$ for which the following inequality holds for every $\pmr \in \gspmr$, $u_0 \in \gsp$ such that $\gf(u_0) < \infty$, and every $\gdista$-Lipschitz function $\varphi: \gsp \to \RR$ with Lipschitz constant $L_\varphi$:
	\begin{align}\label{E:phi:diff:u:upmr}
		\sup_{t \in \RR^+} \left| \bE \left[ \varphi( u_\pmr(t;u_0)) -  \varphi(u_{\pmr_0}(t;u_0)) \right] \right|
		\leq
		L_{\varphi} \tilde{g}(\pmr) \left[ \Wass_{\gdist_{\gamma \sm}}(\delta_{u_0}, \mu_{\pmr_0}) +  \gf(u_0) + \int_\gsp \gf(u) \mu_{\pmr_0}(du) \right],
	\end{align}
	where $\tilde{g}(\pmr)$ is as given in \eqref{def:tildeg}. 
	
	Consequently, if $\Wass_{\gdist_{\gamma \sm}}(\delta_{u_0}, \mu_{\pmr_0}) < \infty$ then
	\begin{align}\label{lim:E:phi:diff:u:upmr}
		\lim_{\pmr \to \pmr_0} \sup_{t \in \RR^+} \left| \bE \left[ \varphi( u_\pmr(t;u_0)) -  \varphi(u_{\pmr_0} (t;u_0)) \right] \right| = 0.
	\end{align}
\end{Cor}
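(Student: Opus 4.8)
The plan is to deduce the corollary directly from \cref{thm:gen:wk:conv:2} by translating the Wasserstein bound there into a statement about observables. The key elementary ingredient I would record first is a one-sided estimate relating $\gdista$-Lipschitz functions to $\Wass_{\gdista}$: if $\varphi:\gsp\to\RR$ satisfies $|\varphi(u)-\varphi(v)|\leq L_\varphi\,\gdista(u,v)$ for all $u,v\in\gsp$, then for any $\mu_1,\mu_2\in\Pr(\gsp)$ and any coupling $\Gamma\in\Co(\mu_1,\mu_2)$, using that $\mu_1,\mu_2$ are the marginals of $\Gamma$,
\begin{align*}
	\left| \int_\gsp \varphi \, d\mu_1 - \int_\gsp \varphi \, d\mu_2 \right|
	= \left| \int_{\gsp\times\gsp} \bigl( \varphi(u) - \varphi(v) \bigr) \, \Gamma(du,dv) \right|
	\leq L_\varphi \int_{\gsp\times\gsp} \gdista(u,v) \, \Gamma(du,dv),
\end{align*}
and infimizing over $\Gamma$ gives $\bigl|\int\varphi\,d\mu_1-\int\varphi\,d\mu_2\bigr|\leq L_\varphi\,\Wass_{\gdista}(\mu_1,\mu_2)$. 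I want to stress that this uses \emph{only} the definition \eqref{def:Wass:rhoe} of $\Wass_{\gdista}$ and not Kantorovich--Rubinstein duality, so it remains valid even though $\gdista$ is merely a distance-like function with no (generalized) triangle inequality. A short remark would justify finiteness of the integrals: since $\gdist\leq 1$ one has $\gdista(u,v)\leq \exp(\sm\gn{u}^2+\sm\gn{v}^2)$, so $\varphi$ has at most Gaussian-exponential growth and is integrable against $\gP_t^\pmr(u_0,\cdot)$ by the exponential Lyapunov structure underlying this setting; alternatively one realizes the pair $(u_\pmr(t;u_0),u_{\pmr_0}(t;u_0))$ through a near-optimal coupling of their prescribed laws, which is admissible since only the marginals are fixed.

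Next I would apply the bound above with $\mu_1=\mL(u_\pmr(t;u_0))=\delta_{u_0}\gP_t^\pmr$ and $\mu_2=\mL(u_{\pmr_0}(t;u_0))=\delta_{u_0}\gP_t^{\pmr_0}$, noting that by linearity $\bE[\varphi(u_\pmr(t;u_0))-\varphi(u_{\pmr_0}(t;u_0))]=\int\varphi\,d(\delta_{u_0}\gP_t^\pmr)-\int\varphi\,d(\delta_{u_0}\gP_t^{\pmr_0})$ regardless of the joint law of the two processes, to get
\begin{align*}
	\left| \bE\bigl[ \varphi(u_\pmr(t;u_0)) - \varphi(u_{\pmr_0}(t;u_0)) \bigr] \right| \leq L_\varphi \, \Wass_{\gdista}\bigl( \delta_{u_0}\gP_t^\pmr, \, \delta_{u_0}\gP_t^{\pmr_0} \bigr)
\end{align*}
for every $t\in\RR^+$. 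Then I would invoke \eqref{gen:w:conv} of \cref{thm:gen:wk:conv:2} with $\mu=\delta_{u_0}$, which is admissible precisely because $\int_\gsp\gf\,d\delta_{u_0}=\gf(u_0)<\infty$ by hypothesis (and $\int_\gsp\gf\,d\mu_{\pmr_0}<\infty$ by \ref{H:7}); this bounds $\sup_{t\in\RR^+}\Wass_{\gdista}(\delta_{u_0}\gP_t^\pmr,\delta_{u_0}\gP_t^{\pmr_0})$ by $\tilde{g}(\pmr)\bigl[\Wass_{\gdist_{\gamma\sm}}(\delta_{u_0},\mu_{\pmr_0})+\gf(u_0)+\int_\gsp\gf\,d\mu_{\pmr_0}\bigr]$ with $\tilde{g}$ as in \eqref{def:tildeg}. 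Combining the two displays and taking the supremum over $t$ yields \eqref{E:phi:diff:u:upmr}, with the same $\hat{\sm}$ and $\gdist\in\gfd$ furnished by \cref{thm:gen:wk:conv:2}. For the limiting statement \eqref{lim:E:phi:diff:u:upmr}, I would instead feed \eqref{gen:w:conv:2} of \cref{thm:gen:wk:conv:2} with $\mu=\delta_{u_0}$ (valid once $\Wass_{\gdist_{\gamma\sm}}(\delta_{u_0},\mu_{\pmr_0})<\infty$), which gives $\sup_t\Wass_{\gdista}(\delta_{u_0}\gP_t^\pmr,\delta_{u_0}\gP_t^{\pmr_0})\to 0$ as $\pmr\to\pmr_0$ — equivalently $\tilde{g}(\pmr)\to 0$ by \ref{H:6}, since $\pfunc(\pmr)\to 0$ makes each of the three terms in \eqref{def:tildeg} vanish (the exponential one because $\tgf^{-1}$ is applied to a quantity tending to $\infty$ and $C_2>0$) — into the estimate of the previous step with $L_\varphi$ held fixed.

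I do not expect a serious obstacle: this corollary is essentially a restatement of \cref{thm:gen:wk:conv:2} in the language of observables, and the bulk of the argument is a direct substitution $\mu=\delta_{u_0}$. The only point requiring genuine care is the first step — using the definition of $\Wass_{\gdista}$ directly rather than a duality formula, since $\gdista$ is not a metric — together with the minor bookkeeping to justify that $\bE\varphi(u_\pmr(t;u_0))$ is well-defined for $\gdista$-Lipschitz $\varphi$; everything downstream then follows verbatim from the conclusions already established.
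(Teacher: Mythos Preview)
Your proposal is correct and follows essentially the same approach as the paper: both reduce the observable bound to the Wasserstein bound via the elementary coupling inequality $\bigl|\int\varphi\,d\mu_1-\int\varphi\,d\mu_2\bigr|\leq L_\varphi\,\Wass_{\gdista}(\mu_1,\mu_2)$ (avoiding Kantorovich--Rubinstein duality, as you correctly note), and then substitute $\mu=\delta_{u_0}$ into \cref{thm:gen:wk:conv:2}. Your additional remarks on integrability and on why $\tilde g(\pmr)\to 0$ are more detailed than the paper's presentation but not a different argument.
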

\begin{proof}
Let $\hat{\sm} > 0$ such that \eqref{gen:w:conv} holds for each $\sm \in (0,\hat{\sm}]$ and corresponding $\gdist \in \gfd$, and let us fix any such $\sm$ and $\gdist$. Fix also $\pmr \in \gspmr$, $u_0 \in \gsp$ such that $\gf(u_0) < \infty$, with $\gf$ as in \eqref{ineq:fin:time:0}, and let $\varphi: \gsp \to \RR$ be a $\gdista$-Lipschitz function with Lipschitz constant denoted as $L_\varphi$. 
Thus, for every $t \in \RR^+$ and coupling $\Gamma \in \Co(\gP_t^\pmr (u_0, \cdot), \gP_t^{\pmr_0}(u_0, \cdot))$, we have
\begin{align*}
\left| \bE \left[ \varphi ( u_\pmr(t;u_0)) -  \varphi (u_{\pmr_0}(t;u_0)) \right] \right|
&=
\left| \int_\gsp \varphi(u) \gP_t^\pmr(u_0, du) - \int_\gsp \varphi(\tilde{u}) \gP_t^{\pmr_0} (u_0, d\tilde{u}) \right| \\
&=
\left| \int_\gsp \left[ \varphi(u) - \varphi(\tilde{u}) \right] \Gamma(du, d\tilde{u}) \right|
\leq
L_\varphi \int_\gsp \gdista(u, \tilde{u}) \Gamma(du, d\tilde{u}).
\end{align*}
Taking the infimum over $\Gamma \in \Co(\gP_t^\pmr (u_0, \cdot), \gP_t^{\pmr_0}(u_0, \cdot))$, we deduce that
\begin{align}\label{ineq:obs:Wass}
\left| \bE \left[ \varphi ( u_\pmr(t;u_0)) -  \varphi (u_{\pmr_0}(t;u_0)) \right] \right|
\leq
L_\varphi  \Wass_{\gdista} (\gP_t^\pmr(u_0, \cdot), \gP_t^{\pmr_0}(u_0, \cdot)).
\end{align}

Next, we take the supremum over $t \in \RR^+$ and invoke \cref{thm:gen:wk:conv:2} to further estimate the right-hand side. It thus follows from \eqref{gen:w:conv} with $\mu = \delta_{u_0}$ that
\begin{align*}
	\sup_{t \in \RR^+} \left| \bE \left[ \varphi ( u_\pmr(t;u_0)) -  \varphi (u_{\pmr_0}(t;u_0)) \right] \right|
	\leq
	L_\varphi \tilde{g}(\pmr) \left[ \Wass_{\gdist_{\gamma \sm}}(\delta_{u_0}, \mu_{\pmr_0}) +  \gf(u_0) + \int_\gsp \gf(u) \mu_{\pmr_0}(du) \right],
\end{align*}
with $\tilde{g}(\pmr)$ as given in \eqref{def:tildeg}. This shows \eqref{E:phi:diff:u:upmr}. Clearly, \eqref{lim:E:phi:diff:u:upmr} follows immediately from \eqref{E:phi:diff:u:upmr} and the assumption that $\Wass_{\gdist_{\gamma \sm}}(\delta_{u_0}, \mu_{\pmr_0}) < \infty$. This concludes the proof.
\end{proof}

\section{Numerical approximation of the 2D stochastic Navier-Stokes equations}\label{sec:app:SNSE}

We now turn to the application of the abstract results from the previous section to the 2D stochastic Navier-Stokes equations (SNSE) and a corresponding space-time numerical discretization. In \cref{subsec:prelim:NSE}, we introduce some preliminary material regarding the form of the 2D stochastic Navier-Stokes equations that we consider here, along with the specific space-time discretization to be analyzed. In \cref{subsec:Wass:contr:NSE}, we verify the general set of assumptions from \cref{thm:gen:sp:gap} for a suitable class $\gfd$ of distance-like functions, defined in \eqref{def:rhoes} below, to prove Wasserstein contraction for the Markov semigroup generated by this discretization. Here, as mentioned above in the Introduction, we emphasize that the contraction coefficients obtained for the discretized system are \emph{independent} of any discretization parameters. This fact is crucial for obtaining a weak convergence result for the numerical scheme as a consequence of \cref{thm:gen:wk:conv:2}, which we present later in \cref{subsec:wk:conv:NSE}. We also provide in \cref{subsec:wk:conv:NSE} an estimate of the bias between the long time statistics of the discrete system and the continuous one as an application of \cref{thm:gen:wk:conv}. Before in \cref{subsubsec:fin:tim:err:L2}, we present some pathwise finite-time error estimates that are used to verify the required assumption \ref{H:4} from \cref{thm:gen:wk:conv} and \cref{thm:gen:wk:conv:2} for these last two results.

\subsection{Mathematical setting and moment bounds}\label{subsec:prelim:NSE}

\subsubsection{Two-dimensional stochastic Navier-Stokes equations}\label{subsubsec:2DSNSE}

Let $\bT^2 \simeq (\RR/2\pi \ZZ)^2$ be the two-dimensional torus. We consider the homogeneous Lebesgue space $\dL^2 = \dL^2(\bT^2) = \{ \xi \in L^2(\bT^2) \,:\, \int_{\bT^2} \xi(x)dx = 0\}$, endowed with the standard inner product and norm of $L^2(\bT^2)$, which we denote by $(\cdot,\cdot)$ and $|\cdot|$, respectively. Recall that any function $\xi \in \dL^2$ can be written as the Fourier expansion $\xi(x) = \sum_{\kappa \in \ZZ^2} \hat{\xi}(\kappa) e^{i \kappa \cdot x}$.

We also consider, for each $s\geq 0$, the homogeneous Sobolev space $\dH^s = \dH^s(\bT^2) = \{\xi \in \dL^2(\bT^2) \,:\, \|\xi\|_{\dH^s} < \infty\}$, where $\|\cdot\|_{\dH^s}$ is the norm induced by the inner product $(\cdot,\cdot)_{\dH^s}$, given by
\begin{align*}
	(\xi_1, \xi_2)_{\dH^s} = (2 \pi)^2 \sum_{\kappa \in \ZZ^2\backslash \{\mathbf{0}\}} |\kappa|^{2s}  \hat{\xi}_1(\kappa)\overline{\hat{\xi}_2(\kappa)}, \quad \hat{\xi}(\kappa) \coloneqq (2\pi)^{-2} \int_{\bT^2} e^{-i\kappa\cdot x}\xi(x) dx,
\end{align*}
where $\overline{\,\cdot\,}$ denotes complex conjugation. Clearly, for every $s_1 < s_2$, we have $\dH^{s_2} \subset \dH^{s_1}$. Moreover, note that $\dH^0$ coincides with $\dL^2$, with $|\xi| = \|\xi\|_{\dH^0}$. Also, $|\nabla \xi| = \|\xi\|_{\dH^1}$ and $|\Delta \xi| = \|\xi\|_{\dH^2}$.

Fix a stochastic basis $\mS = (\Omega, \mF, \{\mF_t\}_{t \geq 0}, \bP, \{W^k\}_{k=1}^d)$, i.e. a filtered probability space equipped with a finite family $\{W^k\}_{k=1}^d$ of standard independent real-valued Brownian motions on $\Omega$ that are adapted to the filtration $\{\mF_t\}_{t \geq 0}$. We consider the stochastically forced 2D Navier-Stokes equations (SNSE) in vorticity form in $\bT^2$ and driven by a white in time and colored in space additive noise, namely
\begin{align}\label{2DSNSEv}
\rd \xi + (- \nu \Delta \xi + \bu \cdot \nabla \xi ) \rd t = \sum_{k=1}^d \sigma_k \rd W^k, \quad \bu = \mK \ast \xi,
\end{align}
where $\xi = \xi(\bx,t)$, $(\bx, t) \in \bT^2 \times (0,\infty)$, represents the unknown random vorticity field; $\bu = \bu(\bx,t)$ represents the random velocity field, which is determined from the vorticity through the Biot-Savart kernel $\mK$ in \eqref{2DSNSEv}, so that $\nabla^\perp \cdot\bu = (-\partial_y, \partial_x)\cdot (u_1,u_2) = \xi$ and $\nabla \cdot \bu = 0$, see e.g. \cite{MajdaBertozzi}. Moreover, 
$\sigma_1, \ldots, \sigma_d$ are given functions in $\dL^2$. We will sometimes use the abbreviated notation $\sigma dW$ for $\sum_{k=1}^d \sigma_k d W^k$. Also, we assume that \eqref{2DSNSEv} is in nondimensional form, so that the parameter $\nu$ equals $Re^{-1}$, where $Re$ denotes the Reynolds number associated to the fluid flow. 

Equation \eqref{2DSNSEv} is sometimes also written in the following convenient functional form
\begin{align}\label{snse:func}
	\rd \xi + (\nu A \xi + B(\xi,\xi)) \rd t = \sigma d W, \quad \bu = \mK \ast \xi,
\end{align}
where $A = (-\Delta): \dH^2 \to \dL^2$,
and $B: \dH^1 \times \dH^1 \to (\dH^1)'$ is the bilinear mapping defined as $B(\xi,\xi) = \bu \cdot \nabla \xi =  (\mK \ast \xi) \cdot \nabla \xi $. Here, $(\dH^1)'$ denotes the dual space of $\dH^1$. For each $s \geq 0$, we define the corresponding power of A as $A^s: D(A^s) \to \dL^2$, given by $A^s \xi (x)= \sum_{\kappa \in \ZZ^2\backslash\{\mathbf{0}\}} |\kappa|^{2s} \hat{\xi}(\kappa) e^{i\kappa\cdot x}$, where $D(A^s) = \dH^{2s}$. Notice that $|A^s\xi| = \|\xi\|_{\dH^{2s}}$. Further, we recall that $A$ is a positive and self-adjoint operator with compact inverse. As such, it possesses a nondecreasing sequence of positive eigenvalues $\{\lambda_k\}_{k \in \mathbb{N}}$ with $\lambda_k \sim k$ asymptotically, so that $\lambda_k \to \infty$ as $k \to \infty$, associated to a sequence of eigenfunctions $\{e_k\}_{k \in \mathbb{N}}$ that form an orthonormal basis of $\dL^2$.

Regarding the noise term in \eqref{2DSNSEv}, we adopt the following additional notation. For each $s \geq 0$, we denote by $\bdH^s$ the d-fold product of $\dH^s$, and define, for each $\sigma = (\sigma_1, \ldots, \sigma_d) \in \bdH^s$, $\|\sigma\|^2_{\dH^s} \coloneqq \sum_{k=1}^d \|\sigma_k\|^2_{\dH^s}$. Similarly, we consider $\bdL^2 = \bdH^0$ and denote $|\sigma| \coloneqq \|\sigma\|_{\dH^0}$ for all $\sigma \in \bdL^2$. We then set $\sigma W \coloneqq \sum_{k=1}^d \sigma_k W^k$, so that, for any $\sigma \in \bdH^s$, $\sigma W$ is a Brownian motion on $\dH^s$ with covariance operator $tQ_s$, where $Q_s: \dH^s \to \dH^s$ is given by 
\begin{align}\label{def:Q:TrQ}
Q_s \xi = \sum_{k=1}^d (\sigma_k , \xi)_{\dH^s} \sigma_k, \quad \xi \in \dH^s.
\end{align}
We notice that $Q_s$ is a compact and symmetric operator with $\tr(Q_s) = \| \sigma\|_{\dH^s}^2$, where we recall that $\tr(Q_s) = \sum_{j=1}^\infty (Q_s \tilde{e}_j, \tilde{e}_j)_{\dH^s}$ for any orthonormal basis $\{\tilde{e}_j\}_{j \geq 1}$ of $\dH^s$.

With a slight abuse of notation, we also regard a given $\sigma \in \bdH^s$ as a mapping $\sigma: \RR^d \to \dH^s$, defined as $\sigma (w_1, \ldots, w_d) = \sum_{k=1}^d \sigma_k w_k $ for all $(w_1, \ldots, w_d) \in \RR^d$. Clearly, $\sigma$ is thus a bounded linear operator on $\RR^d$ with operator norm bounded from above by $\|\sigma\|_{\dH^s}$. Moreover, we denote by $\sigma^{-1}: \range{\sigma} \to \RR^d$ its corresponding pseudo-inverse, which is a bounded operator.\footnote{Notice that since $\range{\sigma}$ is a finite dimensional subset of $\dL^2$, then it is closed. This implies that the pseudo-inverse $\sigma^{-1}$ is a bounded operator (see e.g. \cite[Theorem 3.8]{Sheffield1956}).}

In what follows, we will be interested in pathwise, i.e. probabilistically strong, solutions of \eqref{2DSNSEv}, which are defined with respect to a fixed stochastic basis $\mS = (\Omega, \mF, \{\mF_t\}_{t \geq 0}, \bP, \{W^k\}_{k=1}^d)$ as considered above. 
We have the following well-posedness result regarding this type of solutions.

\begin{Prop}\label{prop:wellposed:cont}
	Let $\mS = (\Omega, \mF, \{\mF_t\}_{t \geq 0}, \bP, \{W^k\}_{k=1}^d)$ be a stochastic basis. Then, given any sequence $\{\sigma_k\}_{k=1}^d$ in $\dL^2$ and any $\mF_0$-measurable random variable $\xi_0 \in L^2(\Omega, \dL^2)$, there exists a unique $\dL^2$-valued random process $\xi$ with
	\begin{align*}
		\xi \in L^2 (\Omega; L^2_{loc}([0,\infty);\dH^1) \cap C ([0,\infty); \dL^2)),
	\end{align*}
	which is $\mF_t$-adapted, solves \eqref{2DSNSEv} weakly in $\dL^2$, and satisfies the initial condition $\xi(0) = \xi_0$ almost surely. Moreover, $\xi$ depends continuously on the initial data, i.e. for each $\xi_0 \in \dL^2$, the mapping $\xi_0 \mapsto \xi(t; \xi_0, \{W^k\}_{k=1}^d)$ is continuous in $\dL^2$ for any $t \in [0,\infty)$ and any fixed realization $\{W^k(\cdot,\omega)\}_k$, $\omega \in \Omega$.
\end{Prop}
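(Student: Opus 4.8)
The plan is to establish \cref{prop:wellposed:cont} along the now-classical route for the 2D stochastic Navier--Stokes equations (see e.g. \cite{KuksinShirikyan12, GHRichMa2017} and the references therein), combining an Ornstein--Uhlenbeck change of variables with a spectral Galerkin scheme and pathwise energy estimates. First I would remove the stochastic forcing by introducing the stochastic convolution $z(t) = \int_0^t e^{-\nu(t-s)A}\sigma\,\rd W(s)$, which solves the linear Stokes system $\rd z + \nu A z\,\rd t = \sigma\,\rd W$, $z(0) = 0$. Since the covariance $Q_0$ in \eqref{def:Q:TrQ} has finite rank, so that $\tr(Q_0) = |\sigma|^2 < \infty$, a standard factorization/Kolmogorov continuity argument shows that $z$ is $\mF_t$-adapted and that, $\bP$-a.s., $z \in C([0,\infty);\dL^2) \cap L^2_{loc}([0,\infty);\dH^1)$, with $\bE \sup_{t\in[0,T]}|z(t)|^p + \bE\int_0^T \|z\|_{\dH^1}^2\,\rd t < \infty$ for all $p \geq 1$ and $T > 0$. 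Setting $v \coloneqq \xi - z$, the problem \eqref{snse:func} becomes the random (pathwise) PDE
\begin{align*}
	\partial_t v + \nu A v + B(v + z,\, v + z) = 0, \qquad v(0) = \xi_0,
\end{align*}
which I would then solve $\omega$ by $\omega$; because this is carried out on the prescribed stochastic basis $\mS$, the resulting solution of \eqref{2DSNSEv} is automatically probabilistically strong.

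Next I would run a spectral Galerkin approximation $v_N = \Pi_N v$ of this random PDE: local-in-time existence of $v_N$ is immediate from ODE theory, and global existence will follow from $N$-uniform a priori bounds. Pairing the truncated equation with $v_N$ in $\dL^2$ and exploiting the cancellation $(B(\zeta,\phi),\phi) = 0$ (valid for all $\zeta,\phi \in \dH^1$ since $\mK\ast\zeta$ is divergence free), one has $(B(v_N+z,v_N+z),v_N) = (B(v_N+z,z),v_N)$, and after an integration by parts this term is controlled, via the Biot--Savart estimate $\|\mK\ast\eta\|_{\dH^1} \lesssim |\eta|$ and the 2D Ladyzhenskaya inequality $\|f\|_{L^4}^2 \lesssim |f|\,\|\nabla f\|$, by $\tfrac{\nu}{2}\|\nabla v_N\|^2 + C_\nu\|z\|_{L^4}^2 |v_N|^2 + C_\nu |z|^2\|z\|_{L^4}^2$. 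This yields $\tfrac{\rd}{\rd t}|v_N|^2 + \nu\|\nabla v_N\|^2 \leq C_\nu\|z\|_{L^4}^2|v_N|^2 + C_\nu|z|^2\|z\|_{L^4}^2$, and since $\int_0^T (\|z\|_{L^4}^2 + |z|^2\|z\|_{L^4}^2)\,\rd t < \infty$ $\bP$-a.s. by the previous step and Ladyzhenskaya, Gronwall's inequality bounds $\{v_N\}$ in $L^\infty_{loc}([0,\infty);\dL^2) \cap L^2_{loc}([0,\infty);\dH^1)$ uniformly in $N$, pathwise. Combining this with a bound on $\partial_t v_N$ in a negative-order Sobolev space and the Aubin--Lions lemma gives strong compactness of $\{v_N\}$ in $L^2_{loc}([0,\infty);\dL^2)$; passing to a subsequential limit (the nonlinearity converges using this strong convergence together with the $L^2_{loc}\dH^1$ bound) produces $v$, whence $\xi \coloneqq v + z$ solves \eqref{2DSNSEv} weakly in $\dL^2$ and lies, $\bP$-a.s., in $C([0,\infty);\dL^2) \cap L^2_{loc}([0,\infty);\dH^1)$; adaptedness of $\xi$ passes from the adapted approximants once pathwise uniqueness is in hand.

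Finally I would obtain pathwise uniqueness and continuous dependence from a single energy estimate: for solutions $\xi^{(1)}, \xi^{(2)}$ with data $\xi_0^{(1)}, \xi_0^{(2)}$, the difference $w = \xi^{(1)} - \xi^{(2)}$ satisfies $\partial_t w + \nu A w + B(w,\xi^{(1)}) + B(\xi^{(2)},w) = 0$; pairing with $w$, using $(B(\xi^{(2)},w),w) = 0$ and $|(B(w,\xi^{(1)}),w)| \leq \tfrac{\nu}{2}\|\nabla w\|^2 + C_\nu|w|^2\|\xi^{(1)}\|_{L^4}^2$, Gronwall yields $|w(t)|^2 \leq |w(0)|^2\exp\big(C_\nu\int_0^t\|\xi^{(1)}\|_{L^4}^2\,\rd s\big)$ with $\int_0^t\|\xi^{(1)}\|_{L^4}^2\,\rd s \lesssim \int_0^t|\xi^{(1)}|\,\|\nabla\xi^{(1)}\|\,\rd s < \infty$ $\bP$-a.s. (indeed for each fixed realization of the noise), which gives both uniqueness (taking $w(0) = 0$) and the asserted continuity of $\xi_0 \mapsto \xi(t;\xi_0,\{W^k\}_{k=1}^d)$ in $\dL^2$. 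The membership $\xi \in L^2(\Omega; L^2_{loc}([0,\infty);\dH^1)\cap C([0,\infty);\dL^2))$ I would get by returning to \eqref{snse:func}, applying It\^o's formula to $|\xi_N|^2$ at the Galerkin level where it is rigorous, using the same nonlinear cancellation and the It\^o correction $\tr(Q_0) = |\sigma|^2$, together with the Burkholder--Davis--Gundy inequality for the martingale term, to obtain $\bE\sup_{t\in[0,T]}|\xi_N(t)|^2 + \nu\bE\int_0^T\|\nabla\xi_N\|^2\,\rd t \leq C(|\xi_0|_{L^2(\Omega,\dL^2)}^2 + |\sigma|^2T)$, and then passing to the limit by lower semicontinuity. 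The main obstacle is the a priori estimate in the second step --- controlling, in two dimensions, the nonlinear interaction between $v$ and the merely $L^2_{loc}([0,\infty);\dH^1)$-regular process $z$, and then passing to the limit in the nonlinear term --- which is exactly where the derivative gained by the Biot--Savart law together with the Ladyzhenskaya interpolation inequality is essential; the remaining steps are routine.
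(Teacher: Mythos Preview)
Your proposal is correct and follows essentially the same approach that the paper sketches: the paper's ``proof'' is merely a brief outline indicating that one should use the change of variables $\xi = \overline{\xi} + v$, with $v$ the Ornstein--Uhlenbeck process solving $\rd v - \nu\Delta v\,\rd t = \sigma\,\rd W$ and $\overline{\xi}$ solving the resulting random (pathwise deterministic) PDE, and then refers to the classical 2D Navier--Stokes literature for the latter. Your write-up supplies precisely the details that the paper omits --- the Galerkin scheme, the Ladyzhenskaya-based energy estimate, Aubin--Lions compactness, and the Gronwall uniqueness/continuity argument --- all along the same route.
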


Within this additive noise setting, a proof of \cref{prop:wellposed:cont} is given by following the standard argument of defining a change of variables $\xi = \overline{\xi} + v$, where 
\begin{align*}
	\frac{d\overline{\xi}}{dt} - \nu \Delta \overline{\xi} + (\mK \ast (\overline{\xi} + v)) \cdot \nabla (\overline{\xi} + v) = 0, \quad \mbox{and }\,\,
	dv - \nu \Delta v dt = \sigma dW.
\end{align*}
For each realization of $v$, $\overline{\xi}$ thus satisfies a deterministic equation for which one can show well-posedness by following similar arguments as for the 2D Navier-Stokes equations, see e.g. \cite{ConstantinFoias88,Temam2001}. Whereas $v$ satisfies a linear SPDE whose well-posedness is well-established, see e.g. \cite{DaPratoZabczyk2014}. We remark, however, that  well-posedness has also been established under much more general noise settings, see e.g. \cite{MiRoz2004,GHZ2009}.

With the notation introduced in \cref{subsec:prelim} and \cref{prop:wellposed:cont}, we denote the transition function associated to \eqref{2DSNSEv} by $\Pcont = \Pcont(\xi_0,\cO)$, for each $t \geq 0$, initial point $\xi_0 \in \dL^2$ and Borel set $\cO \in \mB(\dL^2)$, defined as
\begin{align}\label{def:Pcont:ker}
	\Pcont(\xi_0, \cO) := \bP (\xi(t; \xi_0) \in \cO),
\end{align}
where $\xi(t;\xi_0)$, $t \geq 0$, is the unique solution of \eqref{2DSNSEv} satisfying $\xi(0) = \xi_0$ almost surely, in the sense given in \cref{prop:wellposed:cont}. The corresponding Markov semigroup $\Pcont$, $t \geq 0$, is defined for each $\varphi \in \mM_b(\dL^2)$ as
\begin{align}\label{def:Pcont}	
	\Pcont \varphi (\xi_0) := \bE \varphi(\xi(t;\xi_0)), \quad \xi_0 \in \dL^2.
\end{align}
Since $\xi(\cdot;\xi_0)$ is continuous with respect to the initial data $\xi_0$, it follows that $\{\Pcont\}_{t \geq 0}$ is also a Feller Markov semigroup in $\dL^2$. I.e., denoting by $\mC_b(\dL^2)$ the space of real-valued, bounded and continuous functions on $\dL^2$, we have $\Pcont \varphi \in \mC_b(\dL^2)$ for every $\varphi \in \mC_b(\dL^2)$.

We recall that existence of an invariant measure $\mu_*$ with respect to the semigroup $\Pcont$, $t \geq 0$, is a well-established result, in fact valid for much more general noise structures than specified in \eqref{2DSNSEv}, see e.g. \cite{Flandoli1994}. On the other hand, showing uniqueness of the invariant measure requires extra assumptions on the noise term, see e.g. \cite{FlaMas1995,DaPratoZabczyk1996,Mattingly1999,BKL2001,EMatSi2001,KukShi2001,BKL2002,KPS2002,KukShi2002,Mattingly2002,Mattingly2003,HaiMa2006,HairerMattingly2008,HM11,KuksinShirikyan12,Debussche2013,GHRichMa2017}. Following a similar assumption from previous works, here we consider that the number $d$ of stochastically forced directions in \eqref{2DSNSEv} is sufficiently large depending on the ``size'' of the  parameter $\nu$ and the coefficients $\sigma_k$, see \eqref{cond:K:sigma} below. Nevertheless, we expect that similar results regarding the space-time discretization \eqref{disc2DSNSEv} below would also hold for a degenerate type of stochastic forcing as considered in \cite{HaiMa2006,HairerMattingly2008}, albeit with respect to a possibly different class of distance functions than \eqref{def:rhoes}. This would however require more sophisticated Malliavin calculus techniques that we intend to pursue in future work.

Let us also recall a few basic inequalities and properties of the bilinear term $\bu \cdot \nabla \xi$ in \eqref{2DSNSEv}. 
For any divergence-free $\bu \in (\dH^1)^2$, it follows with integration by parts that
\begin{align}\label{nonlin:ort:0}
(\bu \cdot \nabla \xi, \txi) = - (\bu \cdot \nabla \txi, \xi) \quad \mbox{ for all } \xi, \txi \in \dH^1,
\end{align}
which implies the orthogonality property
\begin{align}\label{nonlin:ort}
(\bu \cdot \nabla \xi, \xi) = ((\mK \ast \xi)  \cdot \nabla \xi, \xi) = 0 \quad \mbox{ for all } \xi \in \dH^1.
\end{align}
Moreover, the following inequalities follow by standard arguments involving H\"older and interpolation inequalities:
\begin{align}\label{ineq:nonlin:a:0}
	|\left( (\mK \ast \xi_1) \cdot \nabla \xi_2, \xi_3 \right)|  \leq  c |\xi_1| |\nabla \xi_2| |\xi_3|^{1 - a } |\nabla \xi_3|^{ a },
\end{align}
\begin{align}\label{ineq:nonlin:b}
	|\left( (\mK \ast \xi_1) \cdot \nabla \xi_2, \xi_3 \right)|  \leq  c |\xi_1|^{1/2} |\nabla \xi_1|^{1/2} |\nabla \xi_2| |\xi_3| ,
\end{align}
\begin{align}\label{ineq:nonlin:d}
	|\left( \nabla \left[ (\mK \ast \xi_1) \cdot \nabla \xi_2 \right] , \nabla \xi_2 \right) |
	\leq 
	c |\xi_1|^{3/4} |\Delta \xi_1|^{1/4} |\nabla \xi_2| |\xi_2|^{1/4} |\Delta \xi_2|^{3/4},
\end{align}
for some positive absolute constant $c$, and for all $\xi$ such that the norms above make sense.

\begin{Rmk}
Throughout the next sections, we adopt the following convention regarding constants. With lower-case letters $c$, $\tilde{c}$, we denote a positive absolute constant, i.e. independent of any parameters whatsoever. Whereas with upper-case letters $C, \tilde{C}, C_0, C_1, C_2$, we denote a positive constant which depends \emph{at most} on the parameters $\nu, |\sigma|, |\nabla \sigma|, |A \sigma|$, the parameters $\varepsilon > 0$ and $s \in (0,1]$ from the definition of the family of distances in \eqref{def:rhoes} below, along with other parameters that are specific to certain statements. These will be made explicit within each statement. Most importantly, these constants will always be independent of any discretization parameters. Under this convention regarding their dependences, we allow the values of these constants to vary from line to line. 
\end{Rmk}

\subsubsection{Spectral Galerkin discretization}

We start by fixing some notation. As before, we denote the eigenvalues and eigenfunctions of $A = (-\Delta): \dH^2 \to \dL^2$ by $\{\lambda_k\}_{k \in \NN}$ and $\{e_k\}_{k \in \NN}$, respectively. Then, for each $N \in \mathbb{N}$, we denote by $\Pi_N: \dL^2 \to \dL^2$ the projection operator onto the subspace $\HN$ of $\dL^2$ given by the span of the first $N$ eigenfunctions of $(- \Delta)$. Therefore, $I - \Pi_N$ is the projection operator onto the complement space $(I - \Pi_N )\dL^2$. We have the following Poincar\'e-type inequality:
\begin{align}\label{ineq:Poincare:N}
|\nabla (I - \Pi_N) \xi|^2 \geq \lambda_{N+1} |(I - \Pi_N) \xi|^2 \quad \mbox{ for all } \xi \in \dL^2.
\end{align}
The spectral Galerkin in space approximation of \eqref{2DSNSEv} in $\HN$ is given by
\begin{align}\label{2DSNSEvGal}
\rd \xi_N 
+ [- \nu \Delta \xi_N + \Pi_N (\bu_N \cdot \nabla \xi_N )]\rd t =  \Pi_N \sigma d W, \quad \bu_N = \mK \ast \xi_N.
\end{align}

The existence and uniqueness of probabilistically strong solutions of \eqref{2DSNSEvGal} satisfying a given initial condition follows analogously to the proof of \cref{prop:wellposed:cont}. For completeness, we state this result below.

\begin{Prop}\label{prop:wellposed:Gal}
	Fix a stochastic basis $\mS = (\Omega, \mF, \{\mF_t\}_{t \geq 0}, \bP, \{W^k\}_{k=1}^d)$. Then, given any family $\{\sigma_k\}_{k=1}^d$ of functions in $\dL^2$ and any $\mF_0$-measurable random variable $\xi_0 \in L^2(\Omega, \dL^2)$, there exists a unique $\HN$-valued random process $\xn$ with
	\begin{align*}
		\xn \in L^2 (\Omega; C ([0,\infty); \HN)),
	\end{align*}
	which is $\mF_t$-adapted, solves \eqref{2DSNSEvGal} weakly in $\dL^2$, and satisfies the initial condition $\xi(0) = \Pi_N \xi_0$ almost surely. Moreover, $\xn$ depends continuously on the initial data, i.e. for each $\xi_0 \in \dL^2$, the mapping $\xi_0 \mapsto \xn(t; \Pi_N \xi_0, \{W^k\}_{k=1}^d)$ is continuous in $\dL^2$ for any $t \in [0,\infty)$ and any fixed realization $\{W^k(\cdot,\omega)\}_k$, $\omega \in \Omega$.
\end{Prop}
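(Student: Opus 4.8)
The plan is to observe that, after the spectral truncation, \eqref{2DSNSEvGal} reduces to a finite-dimensional It\^o SDE on the $N$-dimensional subspace $\HN = \mathrm{span}\{e_1, \dots, e_N\}$ driven by additive noise and with a locally Lipschitz (indeed polynomial) drift, so that local existence and uniqueness is classical and only a global a priori bound needs to be supplied. Concretely, writing $\xn(t) = \sum_{k=1}^N a_k(t) e_k$ and testing \eqref{2DSNSEvGal} against each $e_j$, $1 \le j \le N$, produces the system
\begin{align*}
  d a_j = \Big( - \nu \lambda_j a_j - (\Pi_N(\un \cdot \nabla \xn), e_j) \Big)\, dt + \sum_{k=1}^d (\Pi_N \sigma_k, e_j)\, dW^k, \qquad j = 1, \dots, N,
\end{align*}
with $\un = \mK \ast \xn$. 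Since $(\Pi_N(\un \cdot \nabla \xn), e_j)$ is a quadratic polynomial in $(a_1, \dots, a_N)$, the drift is $C^\infty$ and locally Lipschitz while the diffusion matrix is constant; hence, for the $\mF_0$-measurable datum $\Pi_N \xi_0 \in L^2(\Omega; \HN)$, the standard theory of SDEs with locally Lipschitz coefficients yields a unique $\mF_t$-adapted solution with continuous paths on a maximal interval $[0, \tau_\infty)$, where $\tau_\infty = \lim_{m \to \infty} \tau_m$ and $\tau_m = \inf\{ t \ge 0 : |\xn(t)| \ge m\}$.

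Next I would rule out finite-time blow-up by the energy method. Applying It\^o's formula to $|\xn(t)|^2$ and using the orthogonality \eqref{nonlin:ort}---valid here because $\xn(t) \in \HN$, so that $(\Pi_N(\un \cdot \nabla \xn), \xn) = (\un \cdot \nabla \xn, \xn) = 0$---gives, for $t \le \tau_m$,
\begin{align*}
  |\xn(t)|^2 + 2\nu \int_0^t \|\xn(s)\|_{\dH^1}^2\, ds = |\Pi_N \xi_0|^2 + t\, |\Psigma|^2 + 2 \int_0^t (\xn(s), \Psigma\, dW(s)),
\end{align*}
with $|\Psigma|^2 \le |\sigma|^2$. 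Taking expectations kills the martingale term and gives $\bE |\xn(t \wedge \tau_m)|^2 \le \bE |\xi_0|^2 + |\sigma|^2 t$ uniformly in $m$; Chebyshev's inequality then forces $\tau_\infty = \infty$ almost surely, and a further application of the Burkholder--Davis--Gundy inequality to the stochastic integral yields $\bE \sup_{t \le T} |\xn(t)|^2 < \infty$ for every $T > 0$, whence $\xn \in L^2(\Omega; C([0,\infty); \HN))$. That $\xn$ solves \eqref{2DSNSEvGal} weakly in $\dL^2$ is then immediate, since testing against an arbitrary $\phi \in \dL^2$ reduces, owing to the presence of $\Pi_N$ in \eqref{2DSNSEvGal}, to testing against $\Pi_N \phi \in \HN$, i.e. to the coefficient equations above.

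For continuous dependence on the initial data I would fix a realization of $\{W^k\}_{k=1}^d$ and two initial data $\xi_0, \widetilde\xi_0 \in \dL^2$; since the noise is additive it cancels in the difference, so $w := \xn(\cdot;\Pi_N\xi_0) - \xn(\cdot;\Pi_N\widetilde\xi_0)$ solves the pathwise equation
\begin{align*}
  \partial_t w - \nu \Delta w + \Pi_N\big( (\mK \ast w) \cdot \nabla \xn^{(1)} + \un^{(2)} \cdot \nabla w \big) = 0, \qquad w(0) = \Pi_N(\xi_0 - \widetilde\xi_0),
\end{align*}
with $\xn^{(1)} := \xn(\cdot; \Pi_N\xi_0)$ and $\un^{(2)} := \mK \ast \xn(\cdot; \Pi_N\widetilde\xi_0)$. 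Testing with $w$, the term $(\Pi_N(\un^{(2)} \cdot \nabla w), w) = (\un^{(2)} \cdot \nabla w, w) = 0$ again by \eqref{nonlin:ort}, while \eqref{ineq:nonlin:b} together with the inverse inequality $|\nabla w| \le \lambda_N^{1/2} |w|$ on $\HN$ gives $|((\mK \ast w) \cdot \nabla \xn^{(1)}, w)| \le C_N \|\xn^{(1)}\|_{\dH^1} |w|^2$ for a constant $C_N$ depending on $N$. Gronwall's inequality then yields
\begin{align*}
  |w(t)|^2 \le |\Pi_N(\xi_0 - \widetilde\xi_0)|^2 \exp\Big( C_N \int_0^t \|\xn^{(1)}(s)\|_{\dH^1}\, ds \Big),
\end{align*}
and since $s \mapsto \|\xn^{(1)}(s)\|_{\dH^1}$ has continuous (hence locally bounded) paths, the map $\xi_0 \mapsto \xn(t; \Pi_N\xi_0)$ is locally Lipschitz, in particular continuous, in $\dL^2$ for each $t \ge 0$ and each fixed noise realization. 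I do not expect any genuine obstacle here; the only point calling for mild care is the verification that the solution remains in $\HN$, so that \eqref{nonlin:ort} is available in the a priori estimate, and this is immediate because every term on the right-hand side of \eqref{2DSNSEvGal}, including the noise increment $\Psigma\, dW$, lies in $\HN$. An alternative route, mirroring the argument sketched for \cref{prop:wellposed:cont}, is to split $\xn = \overline{\xn} + v_N$ with $v_N$ the solution of the linear equation $d v_N - \nu \Delta v_N\, dt = \Psigma\, dW$ in $\HN$, so that $\overline{\xn}$ obeys, for each realization of $v_N$, a random finite-dimensional ODE with locally Lipschitz right-hand side.
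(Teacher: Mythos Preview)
Your proposal is correct. The paper does not supply a proof of this proposition at all; it simply states, just before the proposition, that ``the existence and uniqueness of probabilistically strong solutions of \eqref{2DSNSEvGal} satisfying a given initial condition follows analogously to the proof of \cref{prop:wellposed:cont},'' and the latter is in turn only sketched via the change-of-variables $\xi = \overline{\xi} + v$ splitting off the linear Ornstein--Uhlenbeck part.

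Your primary route is genuinely different: you exploit the finite-dimensionality of $\HN$ directly, invoking classical SDE theory for locally Lipschitz drifts with additive noise, and then close the argument with an It\^o-formula energy estimate and the orthogonality \eqref{nonlin:ort} to preclude blow-up. This is arguably the more natural approach in the Galerkin setting, since it avoids the need to first understand the infinite-dimensional linear SPDE and then run a deterministic argument pathwise; everything stays in the finite-dimensional SDE world. The paper's (implicit) route, by contrast, has the advantage of being uniform in $N$ in spirit, since it literally reuses the argument for the full equation. You in fact mention the splitting approach yourself as an alternative at the end, which matches the paper's intended argument exactly. Either route is entirely adequate here.
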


We next state a collection of results regarding solutions of the Galerkin system \eqref{2DSNSEvGal} as well of the limiting system \eqref{2DSNSEv} that will be particularly useful in \cref{subsubsec:fin:tim:err:L2} and \cref{subsec:wk:conv:NSE} below. 

The following proposition presents some further moment bounds for solutions of the Galerkin scheme \eqref{2DSNSEvGal} and the fully continuous system \eqref{2DSNSEv}. The proof follows from similar arguments as in \cite[Corollary 2.4.11, Proposition 2.4.12]{KuksinShirikyan12}, where for handling the nonlinear term in each case we invoke \eqref{nonlin:ort}, \eqref{ineq:nonlin:d}, and the following inequality which follows similarly as in \cite[Lemma 2.1.20]{KuksinShirikyan12}
\begin{align*}
|\left( (\mK \ast \xi) \cdot \nabla \xi, A^k \xi \right)|
\leq c |\xi|^{\frac{k+2}{2(k+1)}} \|\xi\|_{\dH^1}^{\frac{k+2}{2k}} \|\xi\|_{\dH^k}^{\frac{k(4k+1) - 2}{2k(k+1)}},
\end{align*}
for all $\xi \in \dH^k$, $k \geq 2$.

\begin{Prop}\label{prop:sup:xn:nabla}
	Fix any $\xi_0 \in \dH^1$ and $\sigma \in \bdH^1$. Let $\xn(t)$, $t \geq 0$, be the solution of \eqref{2DSNSEvGal} satisfying $\xn(0) = \Pi_N \xi_0$ almost surely. Then, for every $T > 0$ and $m \in \NN$, it holds 
	\begin{align}\label{ineq:sup:nabla:xn}
		\sup_{N \in \NN} \bE \sup_{t \in [0,T]} \left( |\nabla \xn(t)|^2 + \nu \int_0^t |\Delta \xn(s)|^2 d s \right)^m \leq C (1 + |\xi_0|^{4m} + |\nabla \xi_0|^{2m}),
	\end{align}
	for some constant $C = C(m,\nu,T,|\nabla \sigma|)$.
	
	Moreover, given any $\xi_0 \in \dL^2$ and $\sigma \in \bdH^k$, $k \in \ZZ^+$, it follows that for every $T > 0$ and $m \in \NN$ there exists $p = p(k)$ such that 
	\begin{align}\label{ineq:sup:xn:Hk}
		\sup_{N \in \NN} \bE \sup_{t \in [0,T]} \left( t^k \|\xn(t)\|_{\dH^k}^2 + \nu \int_0^t s^k \|\xn(s)\|_{\dH^{k+1}}^2 ds \right)^m \leq \tC \left(1 + |\xi_0|^{2mp} \right),
	\end{align}
	for some constant $\tC = \tC(m,k,\nu,T,\|\sigma\|_{\dH^k})$. More precisely, $p(0) = 1$, $p(1) = 2$, and $p(k) = (3k+1)(k+2)/(3k+2)$ for every $k \geq 2$.
	
	Furthermore, let $\xi(t)$, $t \geq 0$, be the solution of  \eqref{2DSNSEv} satisfying $\xi(0) = \xi_0$ almost surely. Then, analogous inequalities to \eqref{ineq:sup:nabla:xn} and \eqref{ineq:sup:xn:Hk} hold with $\xn(t)$ replaced by $\xi(t)$. 
\end{Prop}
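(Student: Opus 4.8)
The plan is to run the standard program of Itô's formula plus parabolic regularization, closing a stochastic Grönwall estimate against the lower-order moment bounds that are already available --- the basic $\dL^2$ energy estimates and, crucially, the exponential integrability of $\int_0^T |\xi_N(s)|^2\,ds$ on any finite interval, both uniform in $N$ (these are standard; cf.\ \cite{KuksinShirikyan12}). Throughout one uses that $A^{s}$ commutes with $\Pi_N$ and preserves $\HN$, so the projections do not disturb the energy identities. \emph{Step 1: the $\dH^1$ bound \eqref{ineq:sup:nabla:xn}.} Apply Itô's formula to $t\mapsto |\nabla\xi_N(t)|^2 = |A^{1/2}\xi_N(t)|^2$ using \eqref{2DSNSEvGal}: pairing the drift with $A\xi_N$ gives the dissipation $2\nu|\Delta\xi_N|^2$, the nonlinear term $-2((\mK\ast\xi_N)\cdot\nabla\xi_N, A\xi_N) = 2(\nabla[(\mK\ast\xi_N)\cdot\nabla\xi_N],\nabla\xi_N)$ after an integration by parts, the trace term $|\nabla\Pi_N\sigma|^2\le|\nabla\sigma|^2$, and a local martingale. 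By \eqref{ineq:nonlin:d} with $\xi_1=\xi_2=\xi_N$ the nonlinear term is $\le c|\xi_N|\,|\nabla\xi_N|\,|\Delta\xi_N|$, so a $\tfrac\nu2|\Delta\xi_N|^2$ is absorbed by Young's inequality, leaving
\[
 d|\nabla\xi_N|^2 + \nu|\Delta\xi_N|^2\,dt \;\le\; \Big(\tfrac{c^2}{2\nu}|\xi_N|^2|\nabla\xi_N|^2 + |\nabla\sigma|^2\Big)dt + dM_t .
\]
Raising to the $m$-th power, estimating the martingale term by Burkholder--Davis--Gundy (with the left-hand supremum absorbing part of the bound in the usual way), and running a pathwise Grönwall estimate produces an integrating factor $\exp(\tfrac{c^2 m}{2\nu}\int_0^T|\xi_N|^2\,ds)$; a Hölder split then bounds the right-hand side by $|\nabla\xi_0|^{2m}$ together with the $\dL^2$ moments and the exponential moments of $\int_0^T|\xi_N|^2\,ds$, all uniform in $N$ and obtained from Itô applied to $|\xi_N|^2$ using the cancellation \eqref{nonlin:ort}. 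This yields \eqref{ineq:sup:nabla:xn}.

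\emph{Step 2: the higher-order bounds \eqref{ineq:sup:xn:Hk}, by induction on $k$.} Apply Itô's formula to $t\mapsto t^k\|\xi_N(t)\|_{\dH^k}^2 = t^k|A^{k/2}\xi_N(t)|^2$. Differentiating the weight produces $k\,t^{k-1}\|\xi_N\|_{\dH^k}^2\,dt$; pairing the drift with $A^k\xi_N$ produces the dissipation $2\nu t^k\|\xi_N\|_{\dH^{k+1}}^2$, the nonlinear term $-2t^k((\mK\ast\xi_N)\cdot\nabla\xi_N,A^k\xi_N)$, the trace term $t^k\|\sigma\|_{\dH^k}^2$, and a local martingale. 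Bound the nonlinear term by the displayed interpolation inequality stated just before the Proposition, namely $|((\mK\ast\xi)\cdot\nabla\xi,A^k\xi)|\le c|\xi|^{\frac{k+2}{2(k+1)}}\|\xi\|_{\dH^1}^{\frac{k+2}{2k}}\|\xi\|_{\dH^k}^{\frac{k(4k+1)-2}{2k(k+1)}}$, interpolate $\|\xi_N\|_{\dH^k}\le\|\xi_N\|_{\dH^{k-1}}^{1/2}\|\xi_N\|_{\dH^{k+1}}^{1/2}$, and use Young's inequality to absorb a fraction of $\nu t^k\|\xi_N\|_{\dH^{k+1}}^2$; the weight-differentiation term is handled the same way via $\|\xi_N\|_{\dH^k}^2\le\|\xi_N\|_{\dH^{k-1}}\|\xi_N\|_{\dH^{k+1}}$. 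What survives is a weighted time integral of a power of $\|\xi_N\|_{\dH^{k-1}}$ (times lower norms), controlled by the inductive hypothesis at indices $k-1$ and $k-2$ --- grounded in Step 1 and the basic $\dL^2$ estimate. Raising to the $m$-th power and invoking Burkholder--Davis--Gundy and Grönwall as in Step 1 closes the estimate, and tracking the exponents through the induction returns precisely $p(1)=2$, $p(k)=(3k+1)(k+2)/(3k+2)$ for $k\ge 2$.

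\emph{Step 3: the limiting system.} The identities \eqref{nonlin:ort}, \eqref{ineq:nonlin:d} and the $A^k$-inequality hold verbatim for solutions of \eqref{2DSNSEv}, so the same computations apply formally; the only gap is that applying Itô's formula in $\dH^k$ presupposes $\xi(t)\in\dH^k$, which is not asserted in \cref{prop:wellposed:cont}. One closes this by using that the bounds above for $\xi_N$ are uniform in $N$, that $\xi_N\to\xi$ in the topologies of \cref{prop:wellposed:cont} (and weakly-$*$ in the stronger norms along a subsequence), and passing to the limit via lower semicontinuity of the $\dH^k$ norms and Fatou's lemma, which transfers both \eqref{ineq:sup:nabla:xn} and \eqref{ineq:sup:xn:Hk} to $\xi$.

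\emph{Main obstacle.} The Itô expansions and the interpolation inequalities are routine; the genuinely delicate point is executing the Grönwall step in the presence of the stochastic term together with the \emph{unbounded, random} coefficient $|\xi_N|^2$ (resp.\ the lower-order weighted norms) multiplying the quantity being estimated. One must arrange the pathwise Grönwall, the Burkholder--Davis--Gundy absorption, the Hölder split, and the appeal to exponential integrability of $\int_0^T|\xi_N|^2\,ds$ in the correct order, and check that the induction in Step 2 keeps every moment exponent finite and reproduces exactly the stated $p(k)$.
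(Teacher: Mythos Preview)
Your overall architecture---Itô's formula on $|A^{k/2}\xi_N|^2$ (with the weight $t^k$ for $k\ge 1$), the nonlinear inequalities \eqref{nonlin:ort}, \eqref{ineq:nonlin:d}, and the displayed $A^k$ bound, induction on $k$, then passage to the limit via uniform-in-$N$ bounds and lower semicontinuity---is exactly what the paper's one-line proof points to in Kuksin--Shirikyan.

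The gap is in how you close Step~1. Running a pathwise Grönwall against the random coefficient $\tfrac{c^2}{2\nu}|\xi_N|^2$ and then appealing to exponential integrability of $\int_0^T|\xi_N|^2\,ds$ produces a multiplicative factor of order $\exp(c|\xi_0|^2)$ after taking expectation, not the polynomial $|\xi_0|^{4m}$ asserted in \eqref{ineq:sup:nabla:xn}; so your argument proves a true but strictly weaker statement. (There is also the subsidiary difficulty that pathwise Grönwall in the presence of the non-monotone increment $dM_t$ is itself delicate.) The route that actually delivers the stated polynomial bound avoids Grönwall entirely: integrate the differential inequality as is, so that
\[
\sup_{t\le T}\Big(|\nabla\xi_N|^2+\nu\!\int_0^t|\Delta\xi_N|^2\Big)\ \le\ |\nabla\xi_0|^2+\tfrac{c^2}{2\nu}\!\int_0^T|\xi_N|^2|\nabla\xi_N|^2\,ds+|\nabla\sigma|^2 T+\sup_{t\le T}|M_t|,
\]
bound the nonlinear integral by $\sup_t|\xi_N|^2\cdot\int_0^T|\nabla\xi_N|^2\,ds$, raise to the $m$-th power, take expectation, split by Cauchy--Schwarz, and close both factors with the basic $\dL^2$ energy estimate at moment level $2m$; each factor contributes $C(1+|\xi_0|^{2m})$, whence the $|\xi_0|^{4m}$. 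The martingale is handled by BDG together with $\int_0^T|\nabla\xi_N|^2\,ds$ from the same $\dL^2$ estimate, without any absorption into the left side. This is the mechanism in the Kuksin--Shirikyan argument the paper invokes, and it is what propagates through the induction in Step~2 to return the stated $p(k)$; your Grönwall-plus-exponential-moment variant would instead contaminate every level of the induction with an $\exp(c|\xi_0|^2)$ prefactor.
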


The next two propositions provide, respectively, some exponential moment bounds, and exponential Lyapunov inequalities for systems \eqref{2DSNSEvGal} and \eqref{2DSNSEv}. The proofs are available within similar contexts in e.g. \cite{HaiMa2006,HairerMattingly2008,KuksinShirikyan12,Debussche2013,GH2014,GHRichMa2017}, while also following as entirely analogous continuous versions of the proofs of \cref{prop:exp:bounds:sup} and \cref{prop:Lyap:exp:disc} below.

\begin{Prop}\label{prop:exp:mom:cont}
	Fix any $\sigma \in \bdL^2$ and $\xi_0 \in \dL^2$. Fix also $N \in \NN$ and let $\xn(t)$, $t \geq 0$, be the solution of  \eqref{disc2DSNSEv} satisfying $\xn(0) = \Pi_N \xi_0$ almost surely. Then, for all $\sm \in \RR$ satisfying
	\begin{align}\label{condgamma0}
	0 < \sm \leq \frac{\nu }{2 |\sigma|^2},
	\end{align}
	the following inequality holds
	\begin{align}\label{E:sup:exp:xi:T}
	\bE \sup_{t \geq 0 } \exp \left( \sm |\xn(t)|^2 + \sm \nu  \int_0^t |\nabla \xn(s) |^2 \rd s  - \sm |\sigma|^2 t \right) 
	\leq
	2 \exp \left( \sm |\xi_0|^2 \right).  
	\end{align}
	Moreover, let $\xi(t)$, $t \geq 0$, be the solution of  \eqref{2DSNSEv} satisfying $\xi(0) = \xi_0$ almost surely. Then, an analogous inequality to \eqref{E:sup:exp:xi:T} holds with $\xn(t)$ replaced by $\xi(t)$. 
\end{Prop}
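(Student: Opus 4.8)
The plan is to read the bound off the basic $\dL^2$ energy balance for the Galerkin system \eqref{2DSNSEvGal}, recast it as a statement about an exponential supermartingale, and then recover the fully continuous case by a limiting argument.

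First I would apply It\^o's formula to $t \mapsto |\xn(t)|^2$ along \eqref{2DSNSEvGal}. Self-adjointness of $A$ and, crucially, the orthogonality \eqref{nonlin:ort} --- which annihilates the nonlinear term since $(\Pi_N(\bu_N \cdot \nabla \xn), \xn) = (\bu_N \cdot \nabla \xn, \xn) = 0$ --- give the energy identity
\begin{equation*}
|\xn(t)|^2 + 2\nu \int_0^t |\nabla \xn(s)|^2 \, ds = |\Pi_N \xi_0|^2 + \Big( \sum_{k=1}^d |\Pi_N \sigma_k|^2 \Big) t + 2 \int_0^t \sum_{k=1}^d (\xn(s), \Pi_N \sigma_k) \, d W^k(s).
\end{equation*}
Writing $m_t$ for the martingale on the right, so that $d\langle m \rangle_t = 4 \sum_k (\xn, \Pi_N \sigma_k)^2 \, dt$, I would rearrange this identity and use $\sum_k |\Pi_N \sigma_k|^2 \le |\sigma|^2$ together with $|\Pi_N \xi_0| \le |\xi_0|$ to get, for any $\sm > 0$,
\begin{equation*}
\sm |\xn(t)|^2 + \sm \nu \int_0^t |\nabla \xn|^2 - \sm |\sigma|^2 t \ \le \ \sm |\xi_0|^2 + \sm m_t - \sm \nu \int_0^t |\nabla \xn|^2 .
\end{equation*}

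The next step absorbs the quadratic variation of $m_t$ into the available dissipation. The key pointwise bound is $\sum_k (\xn, \Pi_N \sigma_k)^2 = \sum_k (\xn, \sigma_k)^2 \le |\sigma|^2 |\xn|^2 \le |\sigma|^2 |\nabla \xn|^2$, by Cauchy--Schwarz and the Poincar\'e inequality on mean-free fields (recall $\lambda_1 = 1$ on $\bT^2$). Consequently, for $\sm$ in the admissible range \eqref{condgamma0}, a direct It\^o computation shows that the left-hand side of the last display, before exponentiating, leads to a bound of the exponential by $\exp(\sm|\xi_0|^2)$ times the square root of a non-negative supermartingale equal to $1$ at $t=0$; concretely,
\begin{equation*}
\exp\Big( \sm |\xn(t)|^2 + \sm \nu \int_0^t |\nabla \xn|^2 - \sm |\sigma|^2 t \Big) \ \le \ \exp(\sm |\xi_0|^2) \, \mathcal{E}_t^{1/2}, \qquad \mathcal{E}_t := \exp\big( 2\sm m_t - 2 \sm^2 \langle m \rangle_t \big),
\end{equation*}
where $\mathcal{E}_t$ is the Dol\'eans--Dade exponential of the continuous local martingale $2\sm m_t$. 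Since $\mathcal{E}_t$ is a non-negative local martingale with $\mathcal{E}_0 = 1$, it is a supermartingale, so Doob's maximal inequality yields $\bP(\sup_{t \ge 0} \mathcal{E}_t > \lambda) \le 1/\lambda$ for all $\lambda > 0$. The role of the square root is that $\sup_{t \ge 0} \mathcal{E}_t^{1/2}$ then has the \emph{integrable} tail $\bP(\sup_{t\ge0} \mathcal{E}_t^{1/2} > \lambda) = \bP(\sup_{t\ge0} \mathcal{E}_t > \lambda^2) \le \min(1,\lambda^{-2})$, whence $\bE \sup_{t \ge 0} \mathcal{E}_t^{1/2} \le \int_0^\infty \min(1,\lambda^{-2}) \, d\lambda = 2$; combined with the previous display this is \eqref{E:sup:exp:xi:T}. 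The fully continuous statement, for $\xi(t)$ solving \eqref{2DSNSEv}, then follows either by repeating the same computation verbatim --- nothing used the projection $\Pi_N$ beyond \eqref{nonlin:ort}, which also holds for $\xi$ --- or by passing to the limit $N \to \infty$ using $\xn \to \xi$ in $C([0,T];\dL^2) \cap L^2([0,T];\dH^1)$ along a subsequence (from \cref{prop:wellposed:Gal}, \cref{prop:wellposed:cont} and a standard compactness and pathwise-uniqueness argument), lower semicontinuity of the exponent, and Fatou's lemma applied to the uniform-in-$N$ bound.

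The step I expect to be the real obstacle is the passage from ``$\Phi_t := \exp(\sm|\xn|^2 + \sm\nu\int_0^t|\nabla\xn|^2 - \sm|\sigma|^2 t)$ is (dominated by) a non-negative supermartingale'' to a finite bound for $\bE \sup_{t \ge 0} \Phi_t$: Doob's inequality for a non-negative supermartingale only controls $\bP(\sup \Phi_t > \lambda)$ at the non-integrable rate $1/\lambda$, so one cannot simply take the expectation of $\sup_{t\ge0}\Phi_t$ itself, nor does ``$\Phi_t^{1/2}$ is a supermartingale'' help. The resolution --- and the source of the constant $2$ on the right-hand side --- is precisely to exhibit $\Phi_t$ as $\exp(\sm|\xi_0|^2)$ times the \emph{square root} of an exponential supermartingale with value $1$ at time $0$, which is why the quantitative absorption of $\langle m \rangle_t$ by the dissipative term (and hence the precise threshold on $\sm$) must be arranged with care. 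Everything else is the routine It\^o calculus and moment bookkeeping already carried out for the discrete analogue \cref{prop:exp:bounds:sup}.
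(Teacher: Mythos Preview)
Your approach is essentially the one the paper intends: it does not give a standalone proof but points to the result as the ``continuous version'' of \cref{prop:exp:bounds:sup}, which is precisely the It\^o energy identity combined with the exponential-supermartingale/square-root Doob trick you wrote out, and the same layered-integral estimate appears again in the proof of \cref{eq:bnd:dmn:exp:mom}.

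One quantitative point to tighten: your absorption step ``$\Phi_t \le \exp(\sm|\xi_0|^2)\,\mathcal{E}_t^{1/2}$ with $\mathcal{E}_t=\exp(2\sm m_t-2\sm^2\langle m\rangle_t)$'' requires $\sm\langle m\rangle_t \le \nu\int_0^t|\nabla\xn|^2$, and since $\langle m\rangle_t \le 4|\sigma|^2\int_0^t|\nabla\xn|^2$ this forces $\sm\le \nu/(4|\sigma|^2)$ rather than the $\nu/(2|\sigma|^2)$ in \eqref{condgamma0}. This factor of two does not affect the method or any downstream application, and it is exactly the threshold that appears in the paper's own analogous statements \eqref{cond:gamma:0} and \eqref{cond:sm:bdd}; so either the $1/2$ in \eqref{condgamma0} is a slip, or a marginally sharper bookkeeping is intended that is not spelled out in the paper.
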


\begin{Prop}\label{prop:exp:mom:cont:2}
	Fix any $\sigma \in \bdL^2$ and $\xi_0 \in \dL^2$. Fix also $N \in \NN$ and let $\xn(t)$, $t \geq 0$, be the solution of  \eqref{disc2DSNSEv} satisfying $\xn(0) = \Pi_N \xi_0$ almost surely. Consider $\sm \in \RR$ satisfying \eqref{condgamma0}. Then, the following inequality holds
	\begin{align}\label{bE:Lyap:gal}
	\bE \exp \left( \sm |\xn(t)|^2 \right) 
	\leq 
	\exp \left( \sm \left( e^{-\nu t}  |\xi_0|^2 +  \frac{|\sigma|^2}{\nu } \right) \right)  \quad \mbox{ for all } t \geq 0.
	\end{align}
Moreover, let $\xi(t)$, $t \geq 0$, be the solution of  \eqref{2DSNSEv} satisfying $\xi(0) = \xi_0$ almost surely. Then, an analogous inequality to \eqref{bE:Lyap:gal} holds with $\xn(t)$ replaced by $\xi(t)$. 
\end{Prop}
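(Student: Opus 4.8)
The plan is to prove \eqref{bE:Lyap:gal} for the Galerkin process $\xn$ solving \eqref{2DSNSEvGal} via an exponential-supermartingale argument, and then to obtain the bound for the limiting system \eqref{2DSNSEv} by running the identical computation directly on \eqref{2DSNSEv} --- the It\^o formula for $|\xi|^2$ being justified by the regularity recorded in \cref{prop:wellposed:cont} --- or, alternatively, by passing to the limit $N \to \infty$ using lower semicontinuity and $\xn(t) \to \xi(t)$ in $\dL^2$. As a first reduction, since $1 - e^{-\nu t} \le 1$ it suffices to establish the sharper estimate
\begin{align*}
	\bE \exp\big( \sm |\xn(t)|^2 \big) \le \exp\Big( \sm \Big( e^{-\nu t} |\xi_0|^2 + \frac{|\sigma|^2}{\nu}\big( 1 - e^{-\nu t}\big) \Big) \Big),
\end{align*}
whose exponent is $\sm g(t)$ with $g$ the solution of the scalar ODE $g' = -\nu g + |\sigma|^2$, $g(0) = |\xi_0|^2$; this $g$ is the deterministic envelope of the energy balance for $|\xn|^2$. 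We may assume $|\sigma| > 0$, the case $|\sigma| = 0$ being immediate.

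Next I would record the It\^o dynamics of $|\xn(s)|^2$: using the orthogonality \eqref{nonlin:ort} of the nonlinear term, the Poincar\'e inequality $|\nabla \xn|^2 \ge |\xn|^2$ (valid on mean-free fields on $\bT^2$, where $\lambda_1 = 1$), and $|\Pi_N \sigma|^2 \le |\sigma|^2$, one has
\begin{align*}
	d|\xn(s)|^2 = \big( -2\nu |\nabla \xn(s)|^2 + |\Pi_N\sigma|^2 \big)\, ds + 2\big( \xn(s), \Pi_N\sigma\, dW(s) \big), \qquad d\langle |\xn|^2\rangle_s \le 4 |\sigma|^2 |\xn(s)|^2\, ds.
\end{align*}
Fixing $t > 0$, I would then introduce the time-dependent weights $\beta(s) \coloneqq \sm e^{-\nu(t-s)}$ and $h(s) \coloneqq \tfrac{\sm|\sigma|^2}{\nu}\big( e^{-\nu(t-s)} - e^{-\nu t}\big)$ on $[0,t]$, chosen so that $\beta' = \nu\beta$, $h(0) = 0$, $h'(s) = \beta(s)|\sigma|^2$, $\beta(t) = \sm$ and $h(t) = \tfrac{\sm|\sigma|^2}{\nu}(1 - e^{-\nu t})$, and apply It\^o's formula to $\Psi(s) \coloneqq \exp\big( \beta(s)|\xn(s)|^2 - h(s) \big)$. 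The finite-variation part of $d\Psi(s)$ equals $\Psi(s)$ times
\begin{align*}
	\beta'(s)|\xn|^2 + \beta(s)\big( -2\nu |\nabla\xn|^2 + |\Pi_N\sigma|^2 \big) - h'(s) + 2\beta(s)^2 \sum_{k=1}^d (\xn, \Pi_N\sigma_k)^2 \;\le\; \beta(s)|\xn|^2 \big( 2\beta(s)|\sigma|^2 - \nu \big),
\end{align*}
using the cancellations $\beta'|\xn|^2 - 2\nu\beta|\xn|^2 = -\nu\beta|\xn|^2$ and $\beta |\Pi_N\sigma|^2 - h' \le 0$ together with the quadratic-variation bound. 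The role of \eqref{condgamma0} is precisely that $\beta(s) \le \sm \le \tfrac{\nu}{2|\sigma|^2}$ on $[0,t]$, so the right-hand side is $\le 0$ and $\Psi$ has nonpositive drift.

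It remains to deduce $\bE\Psi(t) \le \Psi(0)$, which I would do by localization: with $\tau_R \coloneqq \inf\{ s \ge 0 : |\xn(s)| \ge R\}$, the stopped process $\Psi(\cdot \wedge \tau_R)$ is a bounded supermartingale, so $\bE\Psi(t \wedge \tau_R) \le \Psi(0)$; since the finite-dimensional Galerkin system does not explode (\cref{prop:wellposed:Gal}, \cref{prop:sup:xn:nabla}) we have $\tau_R \to \infty$ a.s., and Fatou's lemma gives $\bE\Psi(t) \le \Psi(0) = \exp(\sm e^{-\nu t}|\Pi_N\xi_0|^2) \le \exp(\sm e^{-\nu t}|\xi_0|^2)$. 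Since $\Psi(t) = \exp\big( \sm|\xn(t)|^2 - \tfrac{\sm|\sigma|^2}{\nu}(1 - e^{-\nu t})\big)$, this is exactly the sharpened estimate above, hence \eqref{bE:Lyap:gal}. I expect the main obstacle to be this last passage from the differential inequality to the expectation bound without the stochastic integral spoiling it: in the Galerkin setting it is routine via the localization above, but for the limiting equation \eqref{2DSNSEv} one must take some care in justifying the It\^o formula for $|\xi|^2$ and, if one prefers the limiting route, the $N \to \infty$ passage --- both standard given the a priori bounds already in hand (\cref{prop:sup:xn:nabla}, \cref{prop:exp:mom:cont}). Finally, note this is simply the continuous-time analogue of the discrete-time \cref{prop:Lyap:exp:disc} proved below, with which it shares its structure.
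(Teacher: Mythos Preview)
Your argument is correct and is essentially the approach the paper has in mind: the paper omits the proof here, pointing out that it is the continuous-time analogue of \cref{prop:Lyap:exp:disc} (and a close sibling of the computation carried out explicitly for the bounded-domain case in \cref{eq:bnd:dmn:exp:mom}). The only cosmetic difference is packaging: the paper's written-out analogue applies It\^o's formula to the weighted energy $e^{-\nu(t-s)}|\xn(s)|^2$, adds and subtracts $\tfrac{\sm}{2}\langle M\rangle$, and then multiplies by $\sm$ and exponentiates, invoking the supermartingale property of $\exp(\sm M - \tfrac{\sm^2}{2}\langle M\rangle)$; you instead apply It\^o directly to the full exponential $\Psi$ and read off a nonpositive drift under \eqref{condgamma0}, closing with a stopping-time localization. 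These are two standard renderings of the same computation and yield the identical sharpened bound with exponent $\sm\big(e^{-\nu t}|\xi_0|^2 + \tfrac{|\sigma|^2}{\nu}(1-e^{-\nu t})\big)$.
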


The following result shows H\"older regularity in time for solutions of the Galerkin system \eqref{2DSNSEvGal}. We note that a similar result was shown in \cite[Lemma 2.3]{CarelliProhl2012} involving the velocity formulation of \eqref{2DSNSEv} subject to a suitable multiplicative noise structure, and resulting in H\"older regularity for the associated solution with respect to a weaker norm than presented here. A proof is included in \cref{sec:app:holder:reg}.

\begin{Thm}\label{thm:holder:reg}
	Fix any $\sigma \in \bdH^1$ and $\xi_0 \in \dH^2$. Let $\xn = \xn(t)$ be the solution of \eqref{2DSNSEvGal} satisfying $\xn(0) = \Pi_N \xi_0$ almost surely. Then, for every $T > 0$, $m \in \NN$ and $\tp \in (0,1/2)$, 
	\begin{align}\label{holder:reg:L2}
		\sup_{N \in \NN} \bE |\xn(t) - \xn(s)|^m \leq C |t - s|^{m \tp} (1 +  |\xi_0|^{4m} + |\nabla \xi_0|^{2m})
	\end{align}
	and
	\begin{align}\label{holder:reg:H1}
		\sup_{N \in \NN} \bE |\nabla \xn(t) - \nabla \xn(s)|^m \leq 
		C |t - s|^{m \tp} (1 +  |\xi_0|^{4m} + |\nabla \xi_0|^{2m} +  |A \xi_0|^m)
	\end{align}
	for all $s, t \in [0,T]$, where $C = C (m, \tp, T, \nu, |\sigma|, |\nabla \sigma|)$.
	Moreover, let $\xi(t)$, $t \geq 0$, be the solution of  \eqref{2DSNSEv} satisfying $\xi(0) = \xi_0$ almost surely. Then, analogous inequalities to \eqref{holder:reg:L2} and \eqref{holder:reg:H1} hold with $\xi(t)$ replaced by $\xi(t)$.
\end{Thm}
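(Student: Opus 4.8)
The plan is to estimate the increment $\xn(t)-\xn(s)$, $0\le s\le t\le T$, by splitting it into its viscous, nonlinear, and noise contributions, measured in $\dL^2$ for \eqref{holder:reg:L2} and in $\dH^1$ for \eqref{holder:reg:H1}, and to extract a factor $|t-s|^{1/2}$ from each --- from Cauchy--Schwarz in time for the drift terms and from the Burkholder--Davis--Gundy (BDG) inequality for the stochastic term --- which is then traded for $|t-s|^{\tp}$ via $|t-s|^{1/2}\le T^{1/2-\tp}|t-s|^{\tp}$ since $\tp<1/2$. For \eqref{holder:reg:L2} I would work directly with the integral form of \eqref{2DSNSEvGal}. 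The viscous part obeys $\bigl|\int_s^t\nu\Delta\xn\,dr\bigr|\le\nu|t-s|^{1/2}\bigl(\int_0^T|\Delta\xn|^2\,dr\bigr)^{1/2}$, whose $m$-th moment is exactly controlled by \eqref{ineq:sup:nabla:xn}. For the nonlinear part I would use the two-dimensional Agmon and Biot--Savart bounds $|\un|_{L^\infty}\lesssim|\xn|^{1/2}|\nabla\xn|^{1/2}$ (which follow from $\nabla^\perp\cdot\un=\xn$, $\nabla\cdot\un=0$), together with the Poincar\'e inequality \eqref{ineq:Poincare:N}, to get $|\un\cdot\nabla\xn|_{\dL^2}\le|\un|_{L^\infty}|\nabla\xn|\lesssim|\xn|^{1/2}|\nabla\xn|^{3/2}\le C|\nabla\xn|^2$; hence $\bigl|\int_s^t\Pi_N(\un\cdot\nabla\xn)\,dr\bigr|\le|t-s|\sup_{r\le T}|\nabla\xn(r)|^2$, again controlled in $L^m(\Omega)$ by \eqref{ineq:sup:nabla:xn}. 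Finally BDG gives $\bE\bigl|\int_s^t\Pi_N\sigma\,dW\bigr|^m\le C_m|t-s|^{m/2}\tr(Q_0)^{m/2}=C_m|t-s|^{m/2}|\sigma|^m$, recalling \eqref{def:Q:TrQ}. Collecting these yields \eqref{holder:reg:L2}.

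For \eqref{holder:reg:H1} the obstacle is that the differentiated integral identity contains the term $\nu\nabla\Delta\xn$, for which no moment bound on $\int_0^T\|\xn\|_{\dH^3}^2\,dr$ is available: the only $\dH^{\geq 2}$ estimate at hand, \eqref{ineq:sup:xn:Hk}, carries a vanishing time weight and would moreover require $\sigma\in\bdH^2$, whereas we only assume $\sigma\in\bdH^1$. The remedy is to use the mild formulation anchored at time $s$,
\begin{align*}
\xn(t)=e^{\nu(t-s)\Delta}\xn(s) &- \int_s^t e^{\nu(t-r)\Delta}\Pi_N\bigl(\un(r)\cdot\nabla\xn(r)\bigr)\,dr \\
&+ \int_s^t e^{\nu(t-r)\Delta}\Pi_N\sigma\,dW(r),
\end{align*}
where $e^{\nu\tau\Delta}$ denotes the Galerkin heat semigroup on $\HN$, and to exploit parabolic smoothing. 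Differentiating, the free term $(e^{\nu(t-s)\Delta}-I)\nabla\xn(s)$ is bounded by $C(t-s)^{\tp}\|\xn(s)\|_{\dH^{1+2\tp}}$ via $\|(e^{\tau\Delta}-I)A^{-\tp}\|\lesssim\tau^{\tp}$; the nonlinear term is bounded by $\int_s^t C(t-r)^{-1/2}|\un(r)\cdot\nabla\xn(r)|_{\dL^2}\,dr\lesssim(t-s)^{1/2}\sup_{r\le T}|\nabla\xn(r)|^2$ using $\|\nabla e^{\tau\Delta}\|_{\dL^2\to\dL^2}\lesssim\tau^{-1/2}$ and the Agmon bound above; and the noise term is bounded in $L^m(\Omega)$ by BDG and $\|\nabla e^{\tau\Delta}\Pi_N\|_{\dL^2\to\dL^2}\le 1$, giving $C_m(t-s)^{m/2}\tr(Q_1)^{m/2}=C_m(t-s)^{m/2}\|\sigma\|_{\dH^1}^m$ --- this is precisely where $\sigma\in\bdH^1$ is used. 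Taking $m$-th moments and using $\tp<1/2$ yields the rate $|t-s|^{m\tp}$.

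The one remaining input is a moment bound on $\sup_{s\le T}\|\xn(s)\|_{\dH^{1+2\tp}}$, which I would derive from the same mild formula anchored at time $0$: the free term contributes $\|\xi_0\|_{\dH^{1+2\tp}}\le|A\xi_0|$ (as $1+2\tp<2$); the nonlinear term contributes, via $\|A^{(1+2\tp)/2}e^{\nu\tau\Delta}\|_{\dL^2\to\dL^2}\lesssim(\nu\tau)^{-(1+2\tp)/2}$ (time-integrable since $\tp<1/2$) and the Agmon bound $\|\un\cdot\nabla\xn\|_{\dL^2}\lesssim|\nabla\xn|^2$, a term $\lesssim\sup_{r\le T}|\nabla\xn(r)|^2$; and the stochastic convolution contributes, via BDG and $\|A^{(1+2\tp)/2}e^{\nu\tau\Delta}\Pi_N\sigma_k\|\lesssim(\nu\tau)^{-\tp}|\nabla\sigma_k|$ with $\int_0^T\tau^{-2\tp}\,d\tau<\infty$ (again $\tp<1/2$), a term $\lesssim\|\sigma\|_{\dH^1}$. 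Taking $m$-th moments and invoking \eqref{ineq:sup:nabla:xn} for $\bE\sup_{r\le T}|\nabla\xn(r)|^{2m}$ gives $\sup_N\bE\sup_{s\le T}\|\xn(s)\|_{\dH^{1+2\tp}}^m\le C(1+|\xi_0|^{4m}+|\nabla\xi_0|^{2m}+|A\xi_0|^m)$, exactly the factor appearing on the right of \eqref{holder:reg:H1}. The fully continuous case is handled identically with $\Pi_N$ replaced by the identity, using the $N=\infty$ versions of \eqref{ineq:sup:nabla:xn}. The main obstacle is this balancing of the parabolic-smoothing exponents against the limited available spatial regularity, so that only $\dH^1$-in-supremum and $\dH^2$-in-space-time-$L^2$ moment bounds, and only $\sigma\in\bdH^1$, are ever needed; the Cauchy--Schwarz/viscous and BDG/martingale manipulations are otherwise routine.
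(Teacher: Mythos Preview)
Your proposal is correct and rests on the same toolbox as the paper's proof --- the mild formulation, the analytic semigroup bounds $\|A^a e^{-\nu\tau A}\|\lesssim(\nu\tau)^{-a}$ and $\|A^{-b}(I-e^{-\nu\tau A})\|\lesssim(\nu\tau)^b$, the Agmon/bilinear estimate giving $|B(\xn,\xn)|\lesssim|\nabla\xn|^2$, BDG for the stochastic convolution, and the moment bound \eqref{ineq:sup:nabla:xn} --- but the organization differs in one point worth noting. For \eqref{holder:reg:H1} you anchor the mild formula at time $s$, so the free term $(e^{\nu(t-s)\Delta}-I)\nabla\xn(s)$ forces you to prove an auxiliary $\dH^{1+2\tp}$ moment bound on $\xn(s)$, which you then obtain from a second mild representation anchored at $0$. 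The paper instead anchors the mild formula at $0$ from the start and takes the difference $\nabla\xn(t)-\nabla\xn(s)$ directly: the free term becomes $(e^{-\nu tA}-e^{-\nu sA})\nabla\Pi_N\xi_0$, which yields $|t-s|^{\tp}|A\xi_0|$ immediately, while the drift and noise integrals are each split into a $[0,s]$ piece (carrying the difference $e^{-\nu(t-\tau)A}-e^{-\nu(s-\tau)A}$) and a $[s,t]$ piece. This avoids your intermediate $\dH^{1+2\tp}$ estimate altogether and keeps the $|A\xi_0|^m$ dependence transparent from the outset; your route works equally well but costs one extra step. (A minor point: in your auxiliary estimate you speak of bounding $\bE\sup_{s\le T}\|\xn(s)\|_{\dH^{1+2\tp}}^m$, but for the theorem as stated you only need $\sup_{s\le T}\bE\|\xn(s)\|_{\dH^{1+2\tp}}^m$, which is what your argument actually delivers and is all that is required.)
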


\subsubsection{Space-time discretization}\label{subsubsec:sp:time:disc}

We now introduce, for each fixed time step $\dt > 0$, a fully space-time discrete approximation of \eqref{2DSNSEv} 
given by a semi-implicit in time Euler discretization of the Galerkin system \eqref{2DSNSEvGal}, namely
\begin{align}
  \xfd^n = \xfd^{n-1} + \dt [\nu \Delta \xfd^n - \Pi_N(\bu_{N,\dt}^{n-1} &\cdot \nabla \xfd^n)] + \sum_{k=1}^d \Pi_N \sigma_k (W^k(t_n) - W^k(t_{n-1})),
                                 \label{disc2DSNSEv}\\
  \bu_{N,\dt}^{n-1} &= \mK \ast \xfd^{n-1},
\notag
\end{align}
where each $\xfd^n$ represents the approximation of $\xi_N$, and thus of $\xi$, at time $t_n = n \dt$, for all $n \in \NN$.

Since $\{W^k\}_{k=1}^d$ is a sequence of independent real-valued Brownian motions, we can write
\begin{align}\label{def:etank}
W^k(t_n) - W^k(t_{n-1}) \stackrel{\mL}{=} \eta_n^k \dt^{1/2}
\end{align}
for a sequence $\eta_n^k: \Omega \to \mathbb{R}$, $n \in \mathbb{N}$, $k = 1, \ldots, d$, of independent and identically distributed Gaussian random variables with mean zero and covariance $1$. 

As in \eqref{snse:func} above, we adopt the abbreviated notation
\begin{align*}
	\eta_n = (\eta_n^1, \ldots, \eta_n^d), \quad
	\Pi_N\sigma = (\Pi_N \sigma_1, \ldots, \Pi_N \sigma_d), \quad \mbox{ and }\,\, 
	\Pi_N\sigma \eta_n = \sum_{k=1}^d \Pi_N \sigma_k \eta_n^k,
\end{align*}
so that \eqref{disc2DSNSEv} is compactly written as
\begin{align}\label{disc2DSNSEv2}
\xfd^n = \xfd^{n-1} + \dt [\nu \Delta \xfd^n - \Pi_N(\bu_{N,\dt}^{n-1} \cdot \nabla \xfd^n)] + \Pi_N\sigma \eta_n \dt^{1/2},
\quad
\bu_{N,\dt}^{n-1} = \mK \ast \xfd^{n-1}.
\end{align}
We notice that, for each $n \in \mathbb{N}$, $ \sigma \eta_n$ is a Gaussian random variable in $\dL^2$ with zero mean and covariance operator given by $Q_0$ defined in \eqref{def:Q:TrQ}.

\vspace{0.3cm}
\noindent{\bf Remark on notation:} To avoid overburdening notation, in the subsequent sections we will frequently denote $\xfd^n$ and $\ufd^n$ simply as $\xi^n$ and $\bu^n$, respectively, for any $n \in \NN$.
\vspace{0.3cm}

The following proposition establishes pathwise well-posedness of \eqref{disc2DSNSEv} for a given initial data. Its proof follows by standard arguments, so we omit the details.

\begin{Prop}\label{prop:wellposed:full:disc}
	Let $\mS = (\Omega, \mF, \{\mF_t\}_{t \geq 0}, \bP, \{W^k\}_{k=1}^d)$ be a stochastic basis. Then, given any family $\{\sigma_k\}_{k=1}^d$ of functions in $\dL^2$ and any $\dL^2$-valued and $\mF_0$-measurable random variable $\xi_0 \in L^2(\Omega, \dL^2)$, there exists a unique $\HN$-valued discrete random process $\{\xfd^n\}_{n \in \ZZ^+}$ with $\xfd^n \in L^2(\Omega, \dL^2)$, for all $n \in \ZZ^+$, and which is $\{\mF_{t_n}\}_{n\in\mathbb{N}}$-adapted, solves \eqref{disc2DSNSEv} in $\dL^2$ and satisfies the initial condition $\xfd^0 = \Pi_N \xi_0$ almost surely. Moreover, $\xfd^n$ depends continuously on the initial data, i.e. for each $\xi_0 \in \dL^2$, the mapping $\xi_0 \mapsto \xfd^n(\Pi_N \xi_0, \{W^k\}_{k=1}^d)$ is continuous in $\dL^2$ for any $n \in \ZZ^+$ and any fixed realization $\{W^k(\cdot,\omega)\}_k$, $\omega \in \Omega$.
\end{Prop}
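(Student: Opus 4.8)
The plan is a routine induction on $n$, so I will only indicate the structure. Set $\xfd^0 := \Pi_N\xi_0$, and suppose inductively that $\xfd^{n-1}\in\HN$ has been constructed, is $\mF_{t_{n-1}}$-measurable, and lies in $L^2(\Omega;\dL^2)$ (all true for $n=1$, since $\xi_0\in L^2(\Omega;\dL^2)$). Writing $\bu^{n-1} := \mK\ast\xfd^{n-1}$, the update rule \eqref{disc2DSNSEv2} is precisely the equation $L_n\xfd^n = \xfd^{n-1} + \Psigma\eta_n\dt^{1/2}$ posed in $\HN$, where $L_n:\HN\to\HN$ is the linear operator
\[
	L_n\xi := \xi - \dt\nu\Delta\xi + \dt\,\Pi_N(\bu^{n-1}\cdot\nabla\xi), \qquad \xi\in\HN .
\]
Note the right-hand side lies in $\HN$ and $L_n$ does map $\HN$ into $\HN$. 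Thus the entire matter reduces to the invertibility of $L_n$ and to tracking the dependence of $\xfd^n := L_n^{-1}(\xfd^{n-1} + \Psigma\eta_n\dt^{1/2})$ on $\xfd^{n-1}$ and $\eta_n$.

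The key step is the coercivity of $L_n$. Since $\bu^{n-1}$ is divergence free, integration by parts (cf. \eqref{nonlin:ort:0} with equal arguments) together with the orthogonality of $\HN$ and $(I-\Pi_N)\dL^2$ gives $(\Pi_N(\bu^{n-1}\cdot\nabla\xi),\xi) = (\bu^{n-1}\cdot\nabla\xi,\xi) = 0$ for every $\xi\in\HN$, so that
\[
	(L_n\xi,\xi) = |\xi|^2 + \dt\nu|\nabla\xi|^2 \ge |\xi|^2, \qquad \xi\in\HN .
\]
By Cauchy--Schwarz this forces $|L_n\xi|\ge|\xi|$ for all $\xi\in\HN$, so $L_n$ is injective, hence bijective on the finite-dimensional space $\HN$, with $\|L_n^{-1}\|_{\dL^2\to\dL^2}\le 1$. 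This yields existence and uniqueness of $\xfd^n\in\HN$ solving \eqref{disc2DSNSEv2}, together with the pathwise bound $|\xfd^n|\le|\xfd^{n-1}| + \dt^{1/2}|\Psigma\eta_n|\le|\xfd^{n-1}| + \dt^{1/2}|\sigma\eta_n|$; squaring, taking expectations and using $\bE|\sigma\eta_n|^2 = |\sigma|^2 < \infty$ closes the induction step $\xfd^n\in L^2(\Omega;\dL^2)$.

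For the remaining claims, I would observe that the map $z\mapsto L(z)$ from $\HN$ into the linear operators on $\HN$, $L(z)\xi := \xi - \dt\nu\Delta\xi + \dt\,\Pi_N((\mK\ast z)\cdot\nabla\xi)$, is affine, hence continuous; operator inversion is continuous on the set of invertible operators on $\HN$, which by the previous paragraph contains the entire range of $L(\cdot)$; therefore $(z,\eta)\mapsto L(z)^{-1}(z + \Psigma\eta\,\dt^{1/2})$ is continuous. In particular $\xfd^n$ is a continuous, hence Borel measurable, function of $(\xfd^{n-1},\eta_n)$, and since $\eta_n$ (equivalently $\{W^k(t_n) - W^k(t_{n-1})\}_{k=1}^d$, see \eqref{def:etank}) is $\mF_{t_n}$-measurable, the induction shows $\{\xfd^n\}_{n\in\ZZ^+}$ is $\{\mF_{t_n}\}$-adapted. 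Finally, continuity of $\xi_0\mapsto\xfd^n(\Pi_N\xi_0,\{W^k\}_k)$ in $\dL^2$ for a fixed realization follows by composing the continuous maps $\xi_0\mapsto\Pi_N\xi_0 = \xfd^0$ and $\xfd^{j-1}\mapsto\xfd^j$ for $j=1,\dots,n$. The only structurally essential ingredient is the coercivity/invertibility of $L_n$, which rests on the energy-orthogonality \eqref{nonlin:ort:0} of the implicitly discretized advection term; everything else is standard linear algebra and induction, which is why the statement omits the details.
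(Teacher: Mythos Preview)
Your argument is correct and is precisely the ``standard argument'' the paper alludes to but omits: coercivity of the implicit linear step via the orthogonality \eqref{nonlin:ort}, invertibility on the finite-dimensional space $\HN$, and then induction to propagate measurability, $L^2(\Omega)$-integrability, and continuous dependence on the data. There is nothing to add or correct.
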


For each fixed time step $\dt > 0$ and number $N$ of Galerkin modes, we denote the Markov transition function associated to $n$ steps of the discrete scheme \eqref{disc2DSNSEv2} by $\Pdisc = \Pdisc(\xi_0, \cO)$, $\xi_0 \in \dL^2$, $\cO \in \mB(\dL^2)$. This is defined as 
\begin{align}\label{def:Pdisc}
	\Pdisc (\xi_0, \cO) := \bP (\xfd^n(\Pi_N \xi_0) \in \cO),
\end{align}
where $\xfd^n(\Pi_N \xi_0)$ is the unique solution of \eqref{2DSNSEv} starting from the initial datum $\Pi_N \xi_0$, in the sense given in \cref{prop:wellposed:full:disc}. The corresponding Markov semigroup $\Pdisc$, $n \in \ZZ^+$, is thus defined for each $\varphi \in \mM_b(\dL^2)$ as
\begin{align}\label{def:Pdisc:0}
	\Pdisc \varphi(\xi_0) := \bE \varphi (\xfd^n(\Pi_N \xi_0)), \quad \xi_0 \in \dL^2, \quad n \in \ZZ^+.
\end{align}
Similarly as pointed out in \cref{subsubsec:2DSNSE} for the Markov semigroup $\Pcont$, $t \geq 0$, it follows as a consequence of the continuity of the solution $\xfd^n(\Pi_N \xi_0)$ with respect to the initial datum $\Pi_N \xi_0$, guaranteed by \cref{prop:wellposed:full:disc} above, that $\Pdisc$, $n \in \ZZ^+$, is a Feller Markov semigroup in $\dL^2$.

\subsection{Discretization-uniform Wasserstein contraction}\label{subsec:Wass:contr:NSE}

In this section, we apply \cref{thm:gen:sp:gap} to show a Wasserstein contraction result for the Markov semigroup $\Pdisc$, $n \in \ZZ^+$, associated to the numerical scheme \eqref{disc2DSNSEv2}, defined in \eqref{def:Pdisc:0}, for any fixed parameters $N \in \NN$, $\dt > 0$. Within the setting of \cref{thm:gen:sp:gap}, we consider $(\gsp, \gn{\cdot}) = (\dL^2, |\cdot|)$, $\mI = \dt \ZZ^+$, and $\{\gP_t\}_{t \in \mI}$ given by $\Pdisc$, $n \in \ZZ^+$. Moreover, we consider the class of distance-like functions $\Lambda = \{ \rhoes \,:\, \varepsilon > 0, \, 0 < s \leq 1 \}$, with each $\rhoes$ defined as
\begin{align}\label{def:rhoes}
\rhoes(\xi, \txi) = 1 \wedge \frac{|\xi -\txi|^s}{\varepsilon}, \quad \xi, \txi \in \dL^2.
\end{align}
Here, in fact, each $\rhoes$ is an actual metric on $\dL^2$, as it can be easily verified. The parameter $\varepsilon$ is appropriately tuned so as to produce a local contraction in \eqref{ineq:contr:Wass} below, in view of assumption \ref{A3:i} from \cref{thm:gen:sp:gap}. Thus, in a certain sense, $\varepsilon$ can be understood as representing the small spatial scales in the dynamics specified by \eqref{disc2DSNSEv} and \eqref{2DSNSEv}, respectively. 

As in \eqref{def:gdista}, for each $a > 0$ we denote the corresponding Lyapunov-weighted version of $\rhoes$ by $\rho_{\varepsilon,s,a}$, defined as
\begin{align}\label{def:rhoesa}
\rho_{\varepsilon,s,a}(\xi, \txi) = \rhoes(\xi, \txi)^{1/2} \exp \left( a |\xi|^2 + a |\txi|^2 \right), \quad \xi, \txi \in \dL^2.
\end{align}
Moreover, we denote the Wasserstein-like extensions to $\Pr(\dL^2)$ corresponding to $\rhoes$ and $\rho_{\varepsilon,s,a}$, as defined in \eqref{def:Wass:rhoe}, by $\Wes$ and $\Wass_{\varepsilon,s,a}$, respectively.

The validity of assumptions \ref{A:1}-\ref{A:3} from \cref{thm:gen:sp:gap} is verified in \cref{prop:Lyap:exp:disc}, \cref{prop:small:Wass}, and \cref{prop:contr:Wass} below. This leads us to the Wasserstein contraction result \cref{thm:sp:gap:disc:SNSE} below, whose proof we postpone to the end of this section. With the purpose of later applying \cref{thm:gen:wk:conv:2} to yield uniform weak convergence of the numerical scheme \eqref{disc2DSNSEv} towards the continuous system \eqref{2DSNSEv}, we state \cref{thm:sp:gap:disc:SNSE} in terms of a suitable continuous family of Markov kernels corresponding to the discrete semigroup $\Pdisc$, $n \in \ZZ^+$. Namely, we define for each $t \in \RR^+$
\begin{align}\label{def:Pdisct}
\Pdisct \coloneqq \Pdisc \quad \mbox{ if } t \in [n\dt, (n+1)\dt), \,\, n \in \ZZ^+.
\end{align}
We notice that the family $\Pdisct$, $t \in \RR^+$, may not define a Markov semigroup. However, the semigroup property is not required in the general weak convergence result, \cref{thm:gen:wk:conv:2}.

\begin{Thm}\label{thm:sp:gap:disc:SNSE}
	Fix $\dt_0 > 0$. Suppose there exists $K \in \NN$ and $\sigma \in \bdL^2$ such that
	\begin{align}\label{cond:K:sigma}
		\HK \subset \range{\sigma}, \quad \mbox{ and } \quad
		\lambda_{K+1} \geq 
		\frac{c}{\nu} \max \left\{ \frac{1}{\dt_0}, \frac{\dt_0^2 |\sigma|^4}{\nu^3 }, \frac{| \sigma|^4}{\nu^5 }  \right\}
	\end{align}
	for some absolute constant $c > 0$.	For each $N \in \NN$ and $0 < \dt \leq \dt_0$, let $\Pdisct$, $t \in \RR^+$, be the corresponding family of Markov kernels defined in \eqref{def:Pdisct}. Then, for every $m > 1$ there exists $\sm_m > 0$ such that for each $\sm \in (0, \sm_m]$ there exist $\varepsilon > 0$, $s \in (0,1]$, $T > 0$, and constants $C_1, C_2 > 0$ for which the following holds 
	\begin{align}\label{Wass:contr:Pdisc}
		\sup_{N \in \NN, \, 0< \dt \leq \dt_0} \Wesa(\mu \Pdisct, \tmu \Pdisct) \leq C_1 e^{-t C_2} \Wass_{\varepsilon, s, \sm/m} (\mu, \tmu)
	\end{align}
	for every $\mu, \tmu \in \Pr(\dL^2)$, and $t \geq T$. 
\end{Thm}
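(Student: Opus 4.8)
The plan is to verify the hypotheses \ref{A:1}, \ref{A:2}, \ref{A3:i} and \ref{A3:ii} of \cref{thm:gen:sp:gap} for the Markov semigroup $\Pdisc$, $n \in \ZZ^+$ (for fixed $N\in\NN$, $0<\dt\leq\dt_0$), taking $\gsp=\dL^2$ with its norm $|\cdot|$, index set $\mI=\dt\ZZ^+$, and collection of distance-like functions $\Lambda=\{\rhoes:\varepsilon>0,\,0<s\leq1\}$ as in \eqref{def:rhoes}, and then to invoke the theorem to obtain \eqref{Wass:contr:gen:2}. The decisive structural point is that every constant entering these hypotheses — the Lyapunov data $\sm_0,C_0,\psi$ in \ref{A:1}, the time $T_1$ and level $\kappa_1$ in \ref{A:2}, and the times and constants in \ref{A3:i}--\ref{A3:ii} — can be chosen independently of $N\in\NN$ and of $\dt\in(0,\dt_0]$. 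Since, by \cref{thm:gen:sp:gap}, the output constants $C_1,C_2,T$ depend only on $m$, $\sm$, the chosen metric, $T$, and on $\sm_0,C_0,\sup\psi$, this forces the resulting bound \eqref{Wass:contr:Pdisc} to be discretization-uniform.

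For \ref{A:1}, I would establish the discrete exponential Lyapunov estimate $\Pdisc\exp(\sm|\xi_0|^2)\leq\exp(\sm(\psi(n\dt)|\xi_0|^2+C_0))$ with $\psi(t)\to0$ as $t\to\infty$ — the content of \cref{prop:Lyap:exp:disc}. Pairing \eqref{disc2DSNSEv2} with $\xi^n$ and using the orthogonality \eqref{nonlin:ort} — which annihilates the semi-implicit nonlinear term $\Pi_N(\bu^{n-1}\cdot\nabla\xi^n)$ because $\bu^{n-1}$ is divergence free — together with the coercivity of the implicit linear part $\nu\Delta\xi^n$, yields a one-step inequality of the form $|\xi^n|^2+2\dt\nu\|\xi^n\|_{\dH^1}^2\leq|\xi^{n-1}|^2+\dt|\sigma|^2+(\text{martingale increment})$, with no step-size restriction precisely because the dissipation is treated implicitly. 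Iterating and running an exponential-supermartingale argument exactly as in the continuous analogue \cref{prop:exp:mom:cont:2} gives the estimate with, say, $\sm_0\sim\nu/|\sigma|^2$, $C_0\sim|\sigma|^2/\nu$ and $\psi(t)=Ce^{-ct}$, all independent of $N$ and of $\dt\leq\dt_0$.

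For the remaining hypotheses — \cref{prop:small:Wass} for \ref{A:2} and \cref{prop:contr:Wass} for \ref{A3:i}--\ref{A3:ii} — I would set up a discrete Foias--Prodi/determining-modes coupling for the scheme. The condition \eqref{cond:K:sigma} enters twice: the inclusion $\HK\subset\range{\sigma}$ makes the increments $\Pi_N\sigma\eta_n$ non-degenerate on $\HK$, so a discrete Girsanov shift of the Gaussian variables $\eta_n$ synchronizes the low-mode parts of two coupled trajectories $\xi^n,\txi^n$, while the lower bound on $\lambda_{K+1}$ — whose three terms $1/\dt_0$, $\dt_0^2|\sigma|^4/\nu^3$, $|\sigma|^4/\nu^5$ calibrate, respectively, the per-step strength of the implicit high-mode dissipation, the control of the stochastic contribution in the discrete enstrophy estimate, and the classical 2D Navier--Stokes determining-modes threshold — ensures that the high-mode part $(I-\Pi_K)(\xi^n-\txi^n)$ is contracted deterministically per step via \eqref{ineq:Poincare:N}, dominating the destabilizing effect of the nonlinearity. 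The relative-entropy (equivalently $L^2$) cost of the Girsanov shift is controlled using the moment bounds of \cref{prop:sup:xn:nabla} and the exponential bounds of \cref{prop:exp:bounds:sup}, and surfaces as the factor $\exp(r|\xi_0|^2)$ in \eqref{loc:contr} and \eqref{fin:time:Wass}. Running the coupling over a long but $(N,\dt)$-uniform number of steps and tuning $\varepsilon$ small and $s\in(0,1)$ appropriately yields both $\sup_{n\dt\geq T_1}\Wes(\Pdisc(\xi_0,\cdot),\Pdisc(\txi_0,\cdot))\leq1-\kappa_1$ on $\{|\xi_0|,|\txi_0|\leq M\}$ with $\kappa_1=\kappa_1(M)$ independent of the metric (essential to avoid the circularity flagged in the proof of \cref{thm:gen:sp:gap}), and the local contraction $\Wes(\Pdisc(\xi_0,\cdot),\Pdisc(\txi_0,\cdot))\leq\kappa_2\exp(r|\xi_0|^2)\rhoes(\xi_0,\txi_0)$ for any prescribed $\kappa_2\in(0,1)$, $r>0$; the finite-window bound \ref{A3:ii} follows from the same construction without requiring $\kappa_2<1$.

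Granting these verifications with $(N,\dt)$-uniform constants, \cref{thm:gen:sp:gap} yields directly, for every $m>1$, an $\sm_m>0$ and, for each $\sm\in(0,\sm_m]$, a metric $\rhoes\in\Lambda$, a time $T>0$ and constants $C_1,C_2>0$ — all independent of $N$ and $\dt$ — such that $\Wesa(\mu\Pdisc,\tmu\Pdisc)\leq C_1e^{-C_2 n\dt}\Wass_{\varepsilon,s,\sm/m}(\mu,\tmu)$ whenever $n\dt\geq T$. Transferring to the interpolated family \eqref{def:Pdisct}: given $t\geq T+\dt_0$, set $n=\lfloor t/\dt\rfloor$, so that $\Pdisct=\Pdisc$, $n\dt>t-\dt_0\geq T$, and $e^{-C_2 n\dt}\leq e^{C_2\dt_0}e^{-C_2 t}$; absorbing $e^{C_2\dt_0}$ into $C_1$ and replacing $T$ by $T+\dt_0$ gives \eqref{Wass:contr:Pdisc} uniformly over $N\in\NN$ and $0<\dt\leq\dt_0$. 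I expect the main obstacle to be the discretization-uniform verification of \ref{A3:i}--\ref{A3:ii} (and, similarly, \ref{A:2}): one must carry out a genuinely discrete coupling and Girsanov construction for the semi-implicit scheme \eqref{disc2DSNSEv2}, bound the control cost via moment estimates that hold uniformly in $N$ and $\dt$, and show that the per-step implicit high-mode dissipation — pinned down precisely by the three-term bound \eqref{cond:K:sigma} — dominates the semi-implicit nonlinearity $\Pi_N(\bu^{n-1}\cdot\nabla\xi^n)$ in the Foias--Prodi estimate, all while choosing $\varepsilon$ and $s$ so that the contraction constant is strictly less than $1$.
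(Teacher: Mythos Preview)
Your proposal is correct and follows essentially the same approach as the paper: verify \ref{A:1}--\ref{A:3} of \cref{thm:gen:sp:gap} for $\Pdisc$ via \cref{prop:Lyap:exp:disc}, \cref{prop:small:Wass}, and \cref{prop:contr:Wass} with constants uniform in $N$ and $\dt\in(0,\dt_0]$, then transfer the resulting contraction from discrete times $n\dt\geq T$ to the interpolated family $\Pdisct$ for $t\geq T+\dt_0$ by absorbing the factor $e^{C_2\dt_0}$ into $C_1$. Your account of the Foias--Prodi coupling mechanism and the role of \eqref{cond:K:sigma} is accurate, and the final passage from $n\dt$ to $t$ matches the paper's argument.
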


In view of \cref{rmk:exist:inv:meas}, \cref{thm:sp:gap:disc:SNSE} together with the Feller property of $\Pdisc$, $n \in \ZZ^+$, implies the existence of a unique associated invariant measure. We state this result below.
\begin{Cor}\label{cor:exist:inv:meas:disc}
	Consider the assumptions of \cref{thm:sp:gap:disc:SNSE}. Then, for each fixed discretization parameters $N \in \NN$ and $\dt > 0$, there exists a unique invariant measure $\mu_*^{N,\dt}$ of the discrete Markov semigroup $\Pdisc$, $n \in \ZZ^+$, and consequently of $\Pdisct$, $t \in \RR^+$.
\end{Cor}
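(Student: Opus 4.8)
The plan is to obtain both existence and uniqueness from \cref{rmk:exist:inv:meas}, whose hypotheses we can check for the discrete family using the verifications already carried out en route to \cref{thm:sp:gap:disc:SNSE}. Fix $N \in \NN$ and $0 < \dt \leq \dt_0$, and place ourselves in the setting of \cref{thm:sp:gap:disc:SNSE}: $(\gsp, \gn{\cdot}) = (\dL^2, |\cdot|)$, $\mI = \dt\ZZ^+$, $\{\gP_t\}_{t \in \mI}$ given by $\Pdisc$, and $\gfd = \{\rhoes : \varepsilon > 0,\ 0 < s \leq 1\}$. Assumptions \ref{A:1}, \ref{A:2} and \ref{A3:i} of \cref{thm:gen:sp:gap} hold for this family by \cref{prop:Lyap:exp:disc}, \cref{prop:small:Wass} and \cref{prop:contr:Wass}, respectively. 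Hence \cref{thm:gen:sp:gap} applies, yielding the conclusion \eqref{Wass:contr:gen} and its quantitative form \eqref{Wass:contr:Pdisc}: picking some $m > 1$, then $\sm \in (0, \sm_m]$ and the corresponding $\varepsilon > 0$, $s \in (0,1]$, $T > 0$ (which we may take in $\dt\ZZ^+$) and $C_1, C_2 > 0$, one has
\begin{align*}
	\Wesa(\mu \mathcal{P}_{t}^{N,\dt}, \tmu \mathcal{P}_{t}^{N,\dt}) \leq C_1 e^{-t C_2}\Wass_{\varepsilon,s,\sm/m}(\mu,\tmu), \qquad t \geq T,\ \ \mu, \tmu \in \Pr(\dL^2).
\end{align*}

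For existence, invoke the second assertion of \cref{rmk:exist:inv:meas} with $\gdist = \rhoes$ (the member of $\gfd$ from the previous paragraph) and the complete metric $\tilde\rho \coloneqq \rhoes$ itself. Indeed, each $\rhoes$ is a genuine metric on $\dL^2$; it is complete, since a $\rhoes$-Cauchy sequence is eventually $|\cdot|$-bounded and, by $|\xi_n - \xi_m|^s \leq \varepsilon\,\rhoes(\xi_n, \xi_m)$, also $|\cdot|$-Cauchy, hence convergent in $\dL^2$; it satisfies $\tilde\rho = \rhoes \leq \sqrt{\rhoes}$ because $\rhoes \leq 1$; and $\Pdisc$, which is Feller on $(\dL^2, |\cdot|)$ by \cref{prop:wellposed:full:disc} and the discussion at the end of \cref{subsubsec:sp:time:disc}, is then Feller on $(\dL^2, \tilde\rho)$ since the two metrics induce the same topology. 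Together with \ref{A:1}, \ref{A:2}, \ref{A3:i}, \cref{rmk:exist:inv:meas} produces an invariant measure $\mu_*^{N,\dt} \in \Pr(\dL^2)$ for $\Pdisc$, arising concretely as a Krylov--Bogoliubov limit of the averages $\frac1n\sum_{k=1}^n \delta_{\xi_0} P_k^{N,\dt}$; moreover \cref{prop:Lyap:exp:disc}, being a uniform-in-$k$ exponential moment bound along the orbit of a fixed deterministic initial point, gives $\int_{\dL^2}\exp(\sm|\xi|^2)\,\mu_*^{N,\dt}(d\xi) < \infty$ by Fatou.

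For uniqueness, let $\tmu \in \Pr(\dL^2)$ be any invariant measure for $\Pdisc$. Since for fixed $N$ the dynamics evolve on the finite-dimensional space $\HN$, the dissipative energy balance for \eqref{disc2DSNSEv2} forces $\int |\xi|^2\,\tmu(d\xi) < \infty$, and the exponential Lyapunov inequality of \cref{prop:Lyap:exp:disc} then upgrades this to $\int \exp((\sm/m)|\xi|^2)\,\tmu(d\xi) < \infty$; with the bound for $\mu_*^{N,\dt}$ and the product coupling this gives $\Wass_{\varepsilon,s,\sm/m}(\mu_*^{N,\dt}, \tmu) < \infty$. Evaluating the displayed contraction at $t = jT$, $j \in \NN$, and using invariance,
\begin{align*}
	\Wesa(\mu_*^{N,\dt}, \tmu) = \Wesa(\mu_*^{N,\dt}\mathcal{P}_{jT}^{N,\dt}, \tmu\,\mathcal{P}_{jT}^{N,\dt}) \leq C_1 e^{-jT C_2}\Wass_{\varepsilon,s,\sm/m}(\mu_*^{N,\dt}, \tmu) \xrightarrow[j\to\infty]{} 0,
\end{align*}
so $\Wesa(\mu_*^{N,\dt}, \tmu) = 0$ and hence $\tmu = \mu_*^{N,\dt}$, as $\rhoesa$ vanishes only on the diagonal. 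Finally, by \eqref{def:Pdisct} a measure $\mu$ is invariant for $\{\mathcal{P}_t^{N,\dt}\}_{t \in \RR^+}$ if and only if $\mu\Pdisc = \mu$ for all $n \in \ZZ^+$, and by the semigroup property of $\Pdisc$ this holds as soon as $\mu P_1^{N,\dt} = \mu$; thus $\{\mathcal{P}_t^{N,\dt}\}_{t \in \RR^+}$ and $\{\Pdisc\}_{n \in \ZZ^+}$ share the same invariant measures, namely $\mu_*^{N,\dt}$ alone. The step that needs the most care is the finiteness $\Wass_{\varepsilon,s,\sm/m}(\mu_*^{N,\dt}, \tmu) < \infty$ in the uniqueness argument: applying the exponential Lyapunov inequality directly to an invariant measure is circular until that measure is known to have finite moments, and the clean resolution is the finite-dimensionality of $\HN$ for fixed $N$, which delivers a polynomial moment for free from the energy estimate and then permits the bootstrap. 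Everything else — the three weak-Harris hypotheses (already verified for \cref{thm:sp:gap:disc:SNSE}), the completeness of $\rhoes$, and the reduction from $\{\mathcal{P}_t^{N,\dt}\}$ to $\{\Pdisc\}$ — is routine.
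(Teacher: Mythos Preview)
Your overall approach matches the paper's: both route through \cref{rmk:exist:inv:meas}, with you choosing $\tilde\rho = \rhoes$ and the paper choosing $\tilde\rho = \sqrt{\rhoes} = \rho_{\sqrt\varepsilon,\,s/2}$ (either choice works since $\rhoes \le 1$). The paper is terser on uniqueness, simply asserting it is immediate from \eqref{Wass:contr:Pdisc}.

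You go further and correctly flag that the contraction bound is vacuous unless $\Wass_{\varepsilon,s,\sm/m}(\mu_*^{N,\dt},\tmu) < \infty$, but your resolution of this point contains a gap. The two-step bootstrap --- first obtain $\int |\xi|^2\,\tmu(d\xi) < \infty$ from the energy balance, then upgrade to $\int \exp((\sm/m)|\xi|^2)\,\tmu(d\xi) < \infty$ via \cref{prop:Lyap:exp:disc} --- fails at the second step: integrating \eqref{Lyap:disc:0} against $\tmu$ gives $\int \exp(\sm|\xi|^2)\,\tmu(d\xi) \le e^{\sm C}\int \exp(2b_n\sm|\xi|^2)\,\tmu(d\xi)$ with $b_n = (1+\nu\lambda_1\dt)^{-n}$, and a finite second moment does \emph{not} make the right side finite for any $b_n > 0$ (a probability measure with polynomial tails is a counterexample). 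The correct argument bypasses the polynomial step entirely: choosing $n$ so large that $2b_n < 1$ and using the elementary bound $x^{2b_n} \le \gamma x + K_\gamma$ for $x \ge 1$, one converts \eqref{Lyap:disc:0} into a linear drift condition $P_n^{N,\dt} V \le \gamma' V + K'$ with $V(\xi) = \exp(\sm|\xi|^2)$ and $\gamma' < 1$; the standard truncation argument (as in Meyn--Tweedie or Hairer's notes) then yields $\int V\,d\tmu \le K'/(1-\gamma')$ for \emph{every} invariant $\tmu$ directly. So the direct route is not ``circular'' as you suggest --- it only requires the truncation step --- and the finite-dimensionality of $\HN$ plays no role.
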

\begin{proof}
The uniqueness of the invariant measure follows immediately from inequality \eqref{Wass:contr:Pdisc}. For the existence, as recalled in \cref{rmk:exist:inv:meas}, it follows similarly as in \cite[Corollary 4.11]{HairerMattinglyScheutzow2011} that it suffices to show there exists a complete metric $\tilde{\rho}$ on $\dL^2$ such that $\tilde{\rho} \leq \sqrt{\rhoes}$ and for which $\{\Pdisc\}_{n \in \ZZ^+}$ is a Feller semigroup on $(\dL^2, \tilde{\rho})$. Here, $\varepsilon > 0$ and $s \in (0,1]$ are any parameters such that \eqref{Wass:contr:Pdisc} holds. This is achieved, for example, by $\tilde{\rho} = \sqrt{\rhoes} = \rho_{\sqrt{\varepsilon},s/2}$, which is a metric on $\dL^2$ that is equivalent to the distance induced by the norm $|\cdot|$, so that the known Feller property of $\{\Pdisc\}_{n \in \ZZ^+}$ on $(\dL^2,|\cdot|)$ also holds in $(\dL^2, \sqrt{\rhoes})$. Clearly, if $\mu_*^{N,\dt}$ is an invariant measure for $\{\Pdisc\}_{n \in \ZZ^+}$, then from the definition \eqref{def:Pdisct} it follows immediately that $\mu_*^{N,\dt}$ is also an invariant measure for $\{\Pdisct\}_{t \in \RR^+}$.
\end{proof}

To prove \cref{thm:sp:gap:disc:SNSE}, we start by verifying the existence of an exponential Lyapunov structure as in assumption \ref{A:1} of \cref{thm:gen:sp:gap}.

\begin{Prop}\label{prop:Lyap:exp:disc}
	Fix any $N \in \NN$, $\dt, \dt_0 > 0$ with $\dt \leq \dt_0$, $\sigma \in \bdL^2$, and $\xi_0 \in \dL^2$.
	Let $\{\xfd^n\}_{n \in \ZZ^+}$ be the solution of \eqref{disc2DSNSEv2} corresponding to the parameters $N, \dt$ and satisfying $\xfd^0 = \Pi_N \xi_0$ almost surely. Then, for all
	$\sm \in \RR$ satisfying
	\begin{align}\label{cond:gamma:0}
		0 < \sm \leq \frac{\nu }{4 | \sigma|^2},
	\end{align}
	it holds that 
	\begin{align}\label{Lyap:disc:0}
	 	\bE \exp \left( \sm |\xfd^n|^2 \right) 
		\leq 
		\exp\left( \sm \left( \frac{2 |\xi_0|^2}{(1 + \nu \lambda_1 \dt)^n} + C \right)\right)
		\quad \mbox{  for all } n \in \ZZ^+,
	\end{align}
	for some positive constant $C$ depending only on $\nu, |\sigma|, \dt_0$.
	
	Consequently, recalling the definition of the Markov semigroup $\Pdisc$, $n \in \ZZ^+$, in \eqref{def:Pdisc:0}, it follows that for all $n \in \ZZ^+$ 
	\begin{align}\label{Lyap:disc:1}
		\Pdisc \exp \left( \sm |\xi_0|^2 \right) \leq \exp \left(  \sm \left(c e^{-n\dt \tC} |\xi_0|^2 + C \right)\right),
	\end{align}
	where $\tC$ is a positive constant depending only on $\nu, \dt_0$.
\end{Prop}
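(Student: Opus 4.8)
The plan is to prove the per-step conditional exponential moment bound and then iterate it. Throughout I write $\xi^n$ for $\xfd^n$ and $\bu^{n-1}=\mK\ast\xfd^{n-1}$ as in the remark following \eqref{disc2DSNSEv2}.

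First I would derive a pathwise energy estimate. Testing \eqref{disc2DSNSEv2} against $\xi^n$ and using that $\bu^{n-1}$ is divergence free, the semi-implicit transport term drops out entirely: since $\xi^n\in\HN$ we have $(\Pi_N(\bu^{n-1}\cdot\nabla\xi^n),\xi^n)=(\bu^{n-1}\cdot\nabla\xi^n,\xi^n)=0$ by \eqref{nonlin:ort:0} applied with $\xi=\txi=\xi^n$. This yields the exact identity
\begin{align*}
|\xi^n|^2+\dt\nu|\nabla\xi^n|^2=\bigl(\xi^{n-1}+\dt^{1/2}\Pi_N\sigma\eta_n,\,\xi^n\bigr).
\end{align*}
Bounding the left side below by $(1+\nu\lambda_1\dt)|\xi^n|^2$ via the Poincar\'e inequality $|\nabla\xi^n|^2\geq\lambda_1|\xi^n|^2$, and the right side by Cauchy--Schwarz, gives $(1+\nu\lambda_1\dt)|\xi^n|\leq|\xi^{n-1}+\dt^{1/2}\Pi_N\sigma\eta_n|$, hence
\begin{align*}
|\xi^n|^2\leq\frac{1}{(1+\nu\lambda_1\dt)^2}\bigl|\xi^{n-1}+\dt^{1/2}\Pi_N\sigma\eta_n\bigr|^2.
\end{align*}

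Next I would compute the conditional exponential moment. Conditioning on $\mF_{t_{n-1}}$, the vector $\xi^{n-1}+\dt^{1/2}\Pi_N\sigma\eta_n$ is Gaussian in $\dL^2$ with mean $\xi^{n-1}$ and covariance $\dt\,\Pi_N Q_0\Pi_N$, whose trace is at most $\dt|\sigma|^2$ by \eqref{def:Q:TrQ}. Diagonalizing this covariance and using the scalar identity $\bE\exp(\theta(a+sg)^2)=(1-2\theta s^2)^{-1/2}\exp(\theta a^2/(1-2\theta s^2))$ for $g\sim N(0,1)$ and $\theta s^2<1/2$, with $\hat q:=(1+\nu\lambda_1\dt)^{-2}$, one finds
\begin{align*}
\bE\bigl[\exp(\sm|\xi^n|^2)\,\big|\,\mF_{t_{n-1}}\bigr]\leq\exp\!\Bigl(\tfrac43\sm\hat q\dt|\sigma|^2\Bigr)\exp\!\Bigl(\sm\hat q\bigl(1+\tfrac83\sm\hat q\dt|\sigma|^2\bigr)|\xi^{n-1}|^2\Bigr),
\end{align*}
where the constraint \eqref{cond:gamma:0} (together with $\hat q\leq1$ and $\lambda_1=1$) is exactly what guarantees each relevant $\theta s^2$ is at most $1/8$, so the scalar identity applies and the Fredholm-type prefactor is controlled. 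Since \eqref{cond:gamma:0} gives $\sm\hat q\dt|\sigma|^2\leq\nu\dt/4$, and since $\lambda_1\geq\tfrac23$, the factor multiplying $|\xi^{n-1}|^2$ satisfies $\sm\hat q(1+\tfrac23\nu\dt)\leq\sm/(1+\nu\lambda_1\dt)$; thus, using $\hat q\leq1$ in the prefactor, I obtain a per-step bound
\begin{align*}
\bE\bigl[\exp(\sm|\xi^n|^2)\,\big|\,\mF_{t_{n-1}}\bigr]\leq\exp(C'\sm\dt)\exp\!\Bigl(\tfrac{\sm}{1+\nu\lambda_1\dt}|\xi^{n-1}|^2\Bigr),
\end{align*}
with $C'=C'(\nu,|\sigma|)>0$ independent of $\dt,N,n,\sm$.

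The crucial structural point is that the exponential parameter on the right, $\sm/(1+\nu\lambda_1\dt)$, is strictly smaller than $\sm$ and still obeys \eqref{cond:gamma:0}, so the bound can be fed into itself. Applying it successively via the tower property --- at the $k$-th stage with parameter $\sm(1+\nu\lambda_1\dt)^{-k}$ --- gives $\bE\exp(\sm|\xi^n|^2)\leq\exp\bigl(C'\sm\dt\sum_{k=0}^{n-1}(1+\nu\lambda_1\dt)^{-k}\bigr)\exp\bigl(\sm(1+\nu\lambda_1\dt)^{-n}|\xi^0|^2\bigr)$; the geometric sum is at most $(1+\nu\lambda_1\dt_0)/(\nu\lambda_1\dt)$, so the first exponent is $\leq\sm C$ with $C$ depending only on $\nu,|\sigma|,\dt_0$, and since $|\xi^0|=|\Pi_N\xi_0|\leq|\xi_0|$ this yields \eqref{Lyap:disc:0} (with the harmless factor $2$). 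Finally, \eqref{Lyap:disc:1} follows from \eqref{def:Pdisc:0} and \eqref{Lyap:disc:0} by writing $(1+\nu\lambda_1\dt)^{-n}=\exp(-n\ln(1+\nu\lambda_1\dt))\leq\exp(-n\dt\,\tC)$ with $\tC:=\nu\lambda_1/(1+\nu\lambda_1\dt_0)$, using $\ln(1+x)\geq x/(1+x)$ for $x=\nu\lambda_1\dt\in(0,\nu\lambda_1\dt_0]$.

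I expect the main obstacle to lie not in any single inequality but in the bookkeeping that keeps every constant uniform in $\dt\leq\dt_0$ and $N\in\NN$. This rests on three points: the exact cancellation of the semi-implicit nonlinearity (so no nonlinear moment bounds are needed at all), the use of $\lambda_1\geq1$ to absorb the $O(\nu\dt)$ overhead generated by the additive noise into the contraction factor $1/(1+\nu\lambda_1\dt)$, and the fact that the prefactor $\exp(C'\sm\dt)$ is \emph{linear} in the geometrically shrinking exponential parameter, so that its accumulation over $n$ steps is summable rather than exponentially growing in $n$.
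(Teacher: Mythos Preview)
Your argument is correct and yields \eqref{Lyap:disc:0}--\eqref{Lyap:disc:1}, but it proceeds along a genuinely different route from the paper. The paper does not work one step at a time: it derives the energy identity \eqref{energy:ineq:xn:1}, rewrites the noise term so as to produce a discrete martingale $M_n=2\dt^{1/2}\sum_j b^{m-j+1}(\sigma\eta_j,\xi^{j-1})$ with $b=(1+\nu\dt)^{-1}$, sums the weighted estimates \eqref{energy:ineq:xn:2} telescopically, and then uses that $\exp(2\sm M_n-2\sm^2\langle M\rangle_n)$ is a mean-one martingale together with the Gaussian moment bound \eqref{ineq:gauss:prob} for $|\sigma\eta_j|^2$. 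This is the discrete analogue of the exponential-martingale argument used in the continuous case (\cref{prop:exp:mom:cont:2}), and the same machinery is recycled verbatim in \cref{prop:exp:bounds:sup} to get the supremum bound \eqref{expboundssup}.

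Your approach instead exploits the additive Gaussian structure pointwise: the pathwise bound $(1+\nu\lambda_1\dt)^2|\xi^n|^2\le|\xi^{n-1}+\dt^{1/2}\Pi_N\sigma\eta_n|^2$ reduces the conditional exponential moment to an explicit Gaussian integral, and the resulting one-step contraction $\bE[\exp(\sm|\xi^n|^2)\mid\mF_{t_{n-1}}]\le e^{C'\sm\dt}\exp\bigl(\tfrac{\sm}{1+\nu\lambda_1\dt}|\xi^{n-1}|^2\bigr)$ is iterated via the tower property with a geometrically shrinking exponent. This is more elementary---no discrete martingale theory is invoked---and in fact gives the bound with $|\xi_0|^2$ in place of $2|\xi_0|^2$. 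On the other hand, the paper's martingale route is what naturally extends to the supremum-in-$n$ bound of \cref{prop:exp:bounds:sup}, which your per-step iteration does not directly deliver; that is the main structural advantage the paper's method retains.
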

\begin{proof}
	Throughout the proof we adopt the simplified notation $\xfd^j = \xi^j$, $j \in \ZZ^+$, mentioned in \cref{subsubsec:sp:time:disc} above.
	
	Fix $n \in \NN$.
	For each $j \in \{1, \ldots, n\}$, we take the inner product of the first equation in \eqref{disc2DSNSEv2} with $\xi^j$ in $\dL^2$ and
	invoke the Hilbert space identity
	\begin{align}\label{eq:Hilb:sp}
	2 ( \xi - \txi, \xi) = |\xi |^2 +  |\xi - \txi|^2 - |\txi|^2 \quad \mbox{ for all } \xi, \txi \in \dL^2,
	\end{align}
	together with the orthogonality property \eqref{nonlin:ort}, to obtain that
	\begin{align}\label{energy:ineq:xn:1}
		|\xi^j|^2 + |\xi^j - \xi^{j-1}|^2 - |\xi^{j-1}|^2 + 2 \nu \dt |\nabla \xi^j|^2
		= 2 \dt^{1/2} (\Pi_N\sigma \eta_j, \xi^j)
		= 2 \dt^{1/2} (\sigma \eta_j, \xi^j).
	\end{align}
	In view of obtaining a well-defined martingale in \eqref{def:Mn} below, we add and subtract $2 \dt^{1/2} ( \sigma \eta_j, \xi^{j-1})$ in the right-hand side and estimate
	\begin{align*}
	2 \dt^{1/2} (\sigma \eta_j, \xi^j)
	= 2 \dt^{1/2} (\sigma \eta_j, \xi^j - \xi^{j-1}) + 2 \dt^{1/2} (\sigma \eta_j, \xi^{j-1})
	\notag \\
	\leq |\xi^j - \xi^{j-1}|^2 + \dt | \sigma \eta_j|^2 + 2 \dt^{1/2} ( \sigma \eta_j, \xi^{j-1}),
	\end{align*}
	so that, from \eqref{energy:ineq:xn:1},
	\begin{align}\label{energy:ineq:xn:2}
	(1 + \nu \dt) |\xi^j|^2 + \nu \dt |\nabla \xi^j|^2 \leq 
	|\xi^j|^2 + 2 \nu \dt |\nabla \xi^j|^2 
	\leq  |\xi^{j-1}|^2 +  \dt |\sigma \eta_j|^2 + 2 \dt^{1/2} (\sigma \eta_j, \xi^{j-1}),
	\end{align}
	for all $j \in \{1, \ldots,n\}$.
	
	Fix $m \in \NN$ with $m \geq n$. Denoting $\ra \coloneqq (1 + \nu \dt)^{-1}$, we obtain after multiplying both sides of \eqref{energy:ineq:xn:2} by $\ra^{m - j + 1}$ and summing over $j = 1,\ldots,n$ that
	\begin{align}\label{energyineq6}
	\ra^{m-n} |\xi^n|^2 + \nu \dt \sum_{j=1}^n \ra^{m-j+1} |\nabla \xi^j|^2 
	\leq \ra^m|\xi_0|^2 + \sum_{j=1}^n \dt \ra^{m-j+1} |\sigma \eta_j|^2 
	+ M_n 
	\end{align}
	where $\{M_n\}_{n \in \NN}$ is the martingale defined as 
	\begin{align}\label{def:Mn}
	M_n \coloneqq 2 \dt^{1/2} \sum_{j=1}^n \ra^{m-j+1} (\sigma \eta_j, \xi^{j-1}) \quad \mbox{ for all } n \in \NN,
	\end{align}
	with corresponding quadratic variation given by
	\begin{align}\label{Mnquadvar0}
	\langle M \rangle_n = 4 \dt \sum_{j=1}^n \sum_{k=1}^d \ra^{2(m-j+1)} ( \sigma_k, \xi^{j-1})^2.
	\end{align}
	We then estimate $\langle M \rangle_n$ as 
	\begin{align*}
	\langle M \rangle_n
	&\leq 4 \dt |\sigma|^2 \sum_{j=1}^n \ra^{2(m-j +1)} |\xi^{j-1}|^2 
	=  4 \dt |\sigma|^2 \sum_{j=0}^{n-1} \ra^{2(m-j)} |\xi^j|^2 
	\\
	&\leq  4 \dt |\sigma|^2 \left( \ra^{2m} |\xi_0|^2 + \sum_{j=1}^{n-1} \ra^{2 (m-j)} |\nabla \xi^j|^2 \right)
	\\
	&\leq 4 \dt |\sigma|^2 \left( \ra^{m+1} |\xi_0|^2 + \sum_{j=1}^n \ra^{m-j + 1} |\nabla \xi^j|^2 \right),
	\end{align*}
	where in the last line we used that $\ra \leq 1$ and $m \geq n \geq 1$. Thus, for all $\sm \in \RR$ satisfying \eqref{cond:gamma:0} we obtain that 
	\begin{align}\label{estMn0}
		\sm \langle M \rangle_n 
		\leq
		\ra^m |\xi_0|^2 + \nu \dt \sum_{j=1}^n \ra^{m-j + 1} |\nabla \xi^j|^2.
	\end{align}
	
	Now, adding and subtracting $\sm \langle M \rangle_n$ to the right-hand side of \eqref{energyineq6} and invoking \eqref{estMn0} it follows that
	\begin{align}\label{energyineq7}
		\ra^{m-n} |\xi^n|^2  \leq 2 \ra^m |\xi_0|^2 + \sum_{j=1}^n \dt \ra^{m-j+1} |\sigma \eta_j|^2 + M_n - \sm \langle M \rangle_n.
	\end{align}
	Multiplying by $\sm$, taking exponentials and expected values on both sides of \eqref{energyineq7}, we deduce that
	\begin{align}\label{expbound00}
		\bE \exp \left( \sm \ra^{m-n} |\xi^n|^2 \right) 
		\leq \exp\left( 2 \sm \ra^m |\xi_0|^2 \right)
		\left[ \prod_{j=1}^n \bE \exp \left(  2 \sm  \dt \ra^{m-j+1} | \sigma \eta_j|^2 \right)\right]^{1/2}
		\left( \bE \tM_n \right)^{1/2},
	\end{align}
	where 
	\begin{align}\label{def:tMn}
		\tM_n = \exp \left(2 \sm  M_n - 2\sm^2 \langle M \rangle_n \right),
	\end{align}
	and we used the independence of the random variables $\sigma \eta_j$, $j = 1, \ldots, n$, to write the second factor in the right-hand side of \eqref{expbound00}.
	
	From \eqref{def:Mn}, let us denote $z_n \coloneqq  \ra^{m-n +1} (\sigma \eta_n, \xi^{n-1})$, and consider the regular conditional probability of $z_n$ given $\mF_{t_{n-1}}$, i.e. $\mu_n(\omega, A) \coloneqq \bP(z_n(\omega) \in A \,|\, \mF_{t_{n-1}}) = \bE[\ind_{z_n^{-1}(A)} \,|\, \mF_{t_{n-1}}]$, for $\omega \in \Omega$, $A \in \mB(\RR)$, see e.g. \cite[Section 10.2]{Dudley2002}. It is not difficult to show that, for each fixed $\omega \in \Omega$, $\mu_n(\omega, \cdot)$ is a Gaussian probability measure on $\mB(\RR)$ with zero mean and variance $\ra^{2(m-n+1)} (Q_0 \xi^{n-1}, \xi^{n-1}) = \sum_{k=1}^d \ra^{2(m-n+1)}(\sigma_k, \xi^{n-1})^2$, where $Q_0$ is as defined in \eqref{def:Q:TrQ}. Using this fact, one can easily show that $\{\tM_n\}_{n \in \mathbb{N}}$ is a martingale with respect to the filtration $\{\mF_{t_n}\}_{n \in \mathbb{N}}$, and $\bE \tM_n = 1$ for all $n$ (see e.g. \cite[Appendix]{LMS2007}). Moreover, since $ \sigma \eta_j \sim \mN(0, Q_0)$, $j = 1, \ldots, n$, from a general result on Gaussian probability measures on Hilbert spaces \cite[Proposition 2.17]{DaPratoZabczyk2014} it follows that
	\begin{align}\label{ineq:gauss:prob}
		\bE \exp(\gamma |\sigma \eta_j|^2 ) \leq \frac{1}{( 1 - 2 \gamma | \sigma|^2  )^{1/2}} \quad \mbox{  for  all  } \gamma < \frac{1}{2 |\sigma|^2},
	\end{align}
	where we recall that $| \sigma|^2 = \tr (Q_0)$.  In particular, since $\sm \leq \nu /(4 | \sigma|^2)$ by assumption \eqref{cond:gamma:0}, and since $\ra^{m - j + 1} \leq \ra = 1/ (1 + \nu \dt)$, we have that \eqref{ineq:gauss:prob} holds with $\gamma = 2 \sm \dt \ra^{m - j + 1} $. Thus, from \eqref{expbound00},
	\begin{align}
	\bE \exp \left( \sm \ra^{m-n} |\xi^n|^2 \right) 
	\leq \exp\left(2 \sm \ra^m |\xi_0|^2 \right) \prod_{j=1}^n \frac{1}{(1 - 4 \sm \dt \ra^{m-j+1} |\sigma|^2)^{1/4}} \quad \mbox{ for all } m \geq n \geq 1.
	\end{align}
	In particular, if $m=n$ then
	\begin{align}\label{ineq:exp:r:xn}
		\bE \exp \left( \sm |\xi^n|^2 \right) 
		&\leq \exp\left( 2 \sm \ra^n |\xi_0|^2 \right) \prod_{j=1}^n \frac{1}{(1 - 4 \sm \dt \ra^j | \sigma|^2)^{1/4}} 
		\notag\\
		&= \exp\left( 2 \sm \ra^n |\xi_0|^2 \right) \exp \left( -\frac{1}{4} \sum_{j=1}^n \ln \left( 1 - 4 \sm \dt \ra^j | \sigma|^2 \right) \right).
	\end{align}
	
	Since $- \ln(1 - x) \leq x (1 - x)^{-1}$ for all $x \in (0,1)$, we obtain 
	\begin{align}
		-\frac{1}{4} \sum_{j=1}^n \ln\left( 1 - 4 \sm \dt \ra^j | \sigma|^2 \right)
		\leq \frac{1}{4} \sum_{j=1}^n \frac{4 \sm \dt \ra^j | \sigma|^2 }{ 1 -  4 \sm \dt \ra^j | \sigma|^2}.
	\end{align}
	Moreover, since $\sm \leq \nu /(4 |\sigma|^2)$ and $\ra^j \leq \ra = 1/ (1 + \nu  \dt)$, it follows that $[1 -  4 \sm \dt \ra^j | \sigma|^2]^{-1} \leq 1 + \nu \dt$, so that
	\begin{align}\label{ineq:term:exp:r:xn}
	\frac{1}{4} \sum_{j=1}^n \frac{4 \sm \dt \ra^j |\sigma|^2 }{ 1 -  4 \sm \dt \ra^j | \sigma|^2}
	&\leq (1 + \nu  \dt) \sm \dt | \sigma|^2 \sum_{j=1}^n  \ra^j \notag \\
	&\leq (1 + \nu  \dt) \sm \dt | \sigma|^2  \frac{\ra}{1 - \ra}
	= (1 + \nu  \dt) \frac{ \sm | \sigma|^2}{\nu }
	\leq (1 + \nu  \dt_0) \frac{ \sm | \sigma|^2}{\nu }.
	\end{align}
	Therefore, from \eqref{ineq:exp:r:xn}-\eqref{ineq:term:exp:r:xn}, it follows that
	\begin{align*}
	\bE \exp \left( \sm |\xi^n|^2 \right) 
	\leq \exp\left( \sm (2 \ra^n |\xi_0|^2  + C)\right),
	\end{align*}
	where $C = (1 + \nu  \dt_0)|\sigma|^2/(\nu )$. This shows \eqref{Lyap:disc:0}.
	
	For the remaining inequality, \eqref{Lyap:disc:1}, we use the fact that for any constant $0 < a < 1$ we have $\ln(1 + x) \geq ax $ for all $x \in [0, (1/a) - 1]$. Since $\dt \leq \dt_0$, we take $a \coloneqq 1/(1 + \nu  \dt_0)$ and obtain that $\ln (1 + \nu  \dt) \geq \nu  \dt /(1 + \nu  \dt_0)$, so that
	\begin{align}\label{ineq:exp:0}
		\frac{2}{(1 + \nu  \dt)^n} = 2 \exp \left( - n \ln (1 + \nu  \dt )\right)
		\leq 2 \exp \left( - \frac{\nu }{1+ \nu  \dt_0} n \dt \right).
	\end{align} 
	From \eqref{Lyap:disc:0} and \eqref{ineq:exp:0}, it thus follows that, for every $0 < \sm \leq \nu /(4 | \sigma|^2)$,
	\begin{align*}
		\Pdisc \exp \left( \sm |\xi_0|^2 \right)
		=
		\bE \exp \left( \sm |\xi^n|^2 \right) 
		&\leq 
		\exp \left( \sm \left( \frac{2 |\xi_0|^2 }{(1 + \nu  \dt)^n} + C \right)\right)
		\\
		&\leq
		\exp \left( \sm \left( 2 \exp \left( - \frac{\nu }{1 + \nu  \dt_0} n \dt \right) |\xi_0|^2 + C \right)\right),
	\end{align*}
	which shows \eqref{Lyap:disc:1} and concludes the proof.
\end{proof}

For showing the remaining assumptions \ref{A:2} and \ref{A:3} from \cref{thm:gen:sp:gap}, we follow a similar asymptotic coupling strategy from previous works, see e.g. \cite{BKL2001,EMatSi2001,KukShi2001,Mattingly2002,KukShi2002,Mattingly2003,Hai02,WL02,HaiMa2006,HairerMattingly2008,DO05,HM11,HairerMattinglyScheutzow2011,KuksinShirikyan12,FGHRT15,ButkovskyKulikScheutzow2019}. The idea consists in introducing the following modified equation for a given $\xi_0 \in \dL^2$ and corresponding solution $\xfd^n = \xfd^n( \Pi_N \xi_0) = \xi^n( \Pi_N \xi_0)$, $n \in \NN$, of \eqref{disc2DSNSEv}. Namely, we consider $\txfd^n = \txi^n$, $n \in \NN$, satisfying
\begin{align}\label{eq:nudging:1}
\txi^n = \txi^{n-1} + \dt [\nu \Delta \txi^n - \Pi_N(\tu^{n-1} \cdot \nabla \txi^n) - \beta \Pi_K (\txi^n - \xi^n( \Pi_N \xi_0))]
+ \sum_{k=1}^d \Pi_N \sigma_k (W^k(t_n) - W^k(t_{n-1})) ,
\end{align}
\begin{align}\label{eq:nudging:2}
\tu^{n-1} = \mK \ast \txi^{n-1}.
\end{align}
Here, the extra term $- \beta \dt \Pi_K (\txi^n - \xi^n( \Pi_N \xi_0))]$ has the purpose of enforcing a suitable control over ``large'' scales, with $K \in \NN$ representing the number of controlled modes, to be appropriately chosen in \eqref{cond:K:beta} below. 

Analogously as in \cref{prop:wellposed:full:disc}, we can show that system \eqref{eq:nudging:1}-\eqref{eq:nudging:2} is well-posed in the pathwise sense. We omit the technical details. Therefore, for each $N \in \NN$, $\dt > 0$ and $\xi_0 \in \dL^2$, we may define
\begin{align}\label{def:control:P}
	\tPdisc (\txi_0, \cO) = \bP (\txfd^n(\Pi_N \txi_0; \Pi_N \xi_0) \in \cO) \quad \mbox{ for all } n \in \ZZ^+, \,\, \txi_0 \in \dL^2 \, \mbox{ and } \cO \in \mB(\dL^2),
\end{align}
where $\txfd^n(\Pi_N \txi_0; \Pi_N \xi_0)$ is the unique (strong) solution of \eqref{eq:nudging:1}-\eqref{eq:nudging:2} with respect to a fixed stochastic basis $(\Omega, \mF, \{\mF_t\}_{t \geq 0}, \bP, \{W^k\}_{k=1}^d)$, and which satisfies the initial condition $\txfd^ 0 = \Pi_N\txi_0$ almost surely. Moreover, for every bounded and measurable function $\varphi: \dL^2 \to \RR$, we denote
\begin{align}
	\tPdisc \varphi(\txi_0) = \bE \varphi(\txfd^n(\Pi_N \txi_0; \Pi_N \xi_0)) \quad \mbox{ for all } n \in \ZZ^+ \mbox{ and } \txi_0 \in \dL^2.
\end{align}

Given any $\xi_0, \txi_0 \in \dL^2$, the idea consists in utilizing the family $\tPdisc$, $n \in \ZZ^+$, to estimate the Wasserstein distance $\Wes$ between $\Pdisc(\xi_0, \cdot)$ and $\Pdisc(\txi_0, \cdot)$ as
\begin{align}\label{trg:ineq:wass}
	\Wes (\Pdisc(\xi_0, \cdot), \Pdisc(\txi_0, \cdot)) 
	\leq \Wes (\Pdisc(\xi_0, \cdot), \tPdisc(\txi_0, \cdot))  +  \Wes ( \tPdisc(\txi_0, \cdot), \Pdisc(\txi_0, \cdot)),
\end{align}
which holds since $\Wes$ is a metric in $\Pr(\dL^2)$. We then estimate each term on the right-hand side of \eqref{trg:ineq:wass} by analyzing system \eqref{eq:nudging:1}-\eqref{eq:nudging:2} under two different perspectives. The first term is estimated by establishing a suitable contraction between the solution $\txfd^n(\Pi_N \txi_0;\Pi_N \xi_0)$ of  \eqref{eq:nudging:1}-\eqref{eq:nudging:2} and the solution $\xfd^n(\Pi_N \xi_0)$ of \eqref{disc2DSNSEv}. This is possible due to the presence of the control term $- \beta \Pi_K (\txfd^n - \xfd^n(\Pi_N \xi_0))$ in \eqref{eq:nudging:1}, and provided the number $K \in \NN$  of controlled modes and the tuning parameter $\beta > 0$ are chosen sufficiently large (see \eqref{cond:K:beta} below).

For the second term in the right-hand side of \eqref{trg:ineq:wass}, due to uniqueness of pathwise strong solutions of \eqref{disc2DSNSEv} we deduce that the solution $\txfd^n(\Pi_N\txi_0; \Pi_N \xi_0, W)$ of \eqref{eq:nudging:1}-\eqref{eq:nudging:2} corresponding to the Wiener process $W$ coincides with the solution $\xfd^n(\Pi_N \txi_0; \hW)$ of \eqref{disc2DSNSEv} corresponding to the following shifted process
\begin{align}\label{def:shifted:W:1}
\hW(t) = W(t) + \int_0^t \sum_{j=1}^\infty \psi_j \ind_{[t_{j-1},t_j)}(\tau) \rd \tau,
\end{align}
where
\begin{align}\label{def:shifted:W:2}
\psi_j = - \beta \sigma^{-1} \Pi_K (\txfd^j(\Pi_N\txi_0; \Pi_N \xi_0,W) - \xfd^j( \Pi_N \xi_0;W)) \quad \forall j.
\end{align}
Here we recall that $\sigma^{-1}$ denotes the pseudo-inverse of $ \sigma$ (see \cref{subsubsec:2DSNSE}). The second term in the right-hand side of \eqref{trg:ineq:wass} can then be estimated by the \textit{total variation distance} (see \eqref{def:TV} below) between the laws of the processes $W$ and $\hW$. This is in turn estimated via a Girsanov-type result. We note carefully that in order to have the expression in \eqref{def:shifted:W:2} well-defined, particularly in what concerns the domain of definition of $\sigma^{-1}$, we assume that $\HK \subset \range{\sigma}$.

Under this approach, we prove here the following results validating assumptions \ref{A:2} and \ref{A:3} of \cref{thm:gen:sp:gap} for the Markov semigroup $\Pdisc$, $n \in \ZZ^+$, and the class of distances $\Lambda = \{\rhoes \,:\, \varepsilon > 0, s \in (0,1]\}$ defined in \eqref{def:rhoes} above.

\begin{Prop}\label{prop:small:Wass}
Fix $\dt_0 > 0$ and suppose there exists $K \in \NN$ and $\sigma \in \bdL^2$ such that \eqref{cond:K:sigma} holds.
Then, for every $M > 0$, $\varepsilon > 0$ and $s \in (0,1]$, there exist a time $T_1 = T_1(M, \varepsilon,s) > 0$ and a coefficient $\kappa_1 = \kappa_1(M) \in (0,1)$, which is independent of $\varepsilon,s$, such that 
\begin{align}\label{ineq:small:Wass}
	\sup_{N \in \NN, \, 0 < \dt \leq \dt_0} \sup_{ n \geq T_1/\dt} \Wes(\Pdisc(\xi_0, \cdot), \Pdisc(\txi_0, \cdot)) \leq 1 - \kappa_1
\end{align}
for all $\dt_0 > 0$, 
and for every $\xi_0, \txi_0 \in \dL^2$ with $|\xi_0| \leq M$ and $|\txi_0| \leq M$.
\end{Prop}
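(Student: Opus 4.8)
The plan is to follow the asymptotic-coupling strategy set up above, exploiting the nudged system \eqref{eq:nudging:1}--\eqref{eq:nudging:2} with the control parameters $K$ and $\beta$ taken as in \eqref{cond:K:beta} below --- the standing hypothesis \eqref{cond:K:sigma} guaranteeing that such a choice of $K$ is compatible with $\HK\subset\range{\sigma}$ --- and to use the triangle bound \eqref{trg:ineq:wass},
\[
\Wes(\Pdisc(\xi_0,\cdot),\Pdisc(\txi_0,\cdot))
\leq \Wes(\Pdisc(\xi_0,\cdot),\tPdisc(\txi_0,\cdot))
+ \Wes(\tPdisc(\txi_0,\cdot),\Pdisc(\txi_0,\cdot)),
\]
bounding the two terms separately and uniformly in $N\in\NN$, $0<\dt\leq\dt_0$, and in $\xi_0,\txi_0$ with $|\xi_0|,|\txi_0|\leq M$. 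Write $e^n:=\txfd^n(\Pi_N\txi_0;\Pi_N\xi_0)-\xfd^n(\Pi_N\xi_0)$ for the nudged difference, both processes built from the same Wiener path. The strategy is that the first term will be made arbitrarily small by taking $n$ large, while the second term will be bounded by a quantity strictly below $1$ depending only on $M$; it is this split that produces a coefficient $\kappa_1$ independent of $\varepsilon$ and $s$.

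For the first term, the diagonal coupling of $\Pdisc(\xi_0,\cdot)$ and $\tPdisc(\txi_0,\cdot)$ together with the definition \eqref{def:rhoes} gives $\Wes(\Pdisc(\xi_0,\cdot),\tPdisc(\txi_0,\cdot))\leq\bE[1\wedge|e^n|^s/\varepsilon]$. The key point is a discretization-uniform geometric decay of $e^n$: subtracting \eqref{disc2DSNSEv2} from \eqref{eq:nudging:1}, testing against $e^n$, using the Hilbert identity \eqref{eq:Hilb:sp}, the cancellation $(\bu^{n-1}\cdot\nabla e^n,e^n)=0$ from \eqref{nonlin:ort}, the inequalities \eqref{ineq:nonlin:a:0}--\eqref{ineq:nonlin:b} for the remaining nonlinear contribution $((\mK\ast e^{n-1})\cdot\nabla\txi^n,e^n)$, the Poincar\'e inequality \eqref{ineq:Poincare:N} on the modes above $K$, and the damping $-\beta\dt|\Pi_K e^n|^2$ on the modes below $K$, one arrives at an energy inequality of the form $|e^n|^2\leq|e^{n-1}|^2\exp(-c\dt+C\dt\,w_n)$, where $w_n$ is a trajectory-dependent weight governed by $|\txi^n|^2+|\nabla\txi^n|^2$ and where the condition \eqref{cond:K:beta} is exactly what makes the linear dissipation dominate. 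Working at a sufficiently small moment order $p>0$ and combining with the exponential moment bounds for the scheme (cf.\ \cref{prop:Lyap:exp:disc} and \cref{prop:exp:mom:cont}) then yields $\sup_{N,\,0<\dt\leq\dt_0}\bE|e^n|^{2p}\leq G(M)e^{-c\,n\dt}$ with $G(M),c>0$ independent of $N,\dt$ and of $\xi_0,\txi_0$ in the ball of radius $M$. A Markov-inequality splitting then gives $\bE[1\wedge|e^n|^s/\varepsilon]\leq\delta+(\delta\varepsilon)^{-2p/s}G(M)e^{-c\,n\dt}$ for any $\delta>0$, which can be driven below any prescribed level, uniformly in $N$ and $\dt$, once $n\geq T_1/\dt$ with $T_1=T_1(M,\varepsilon,s)$ large enough.

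For the second term we invoke the observation recorded in \eqref{def:shifted:W:1}--\eqref{def:shifted:W:2}: by pathwise uniqueness for \eqref{disc2DSNSEv2}, the nudged solution $\txfd^n(\Pi_N\txi_0;\Pi_N\xi_0;W)$ coincides with $\xfd^n(\Pi_N\txi_0;\hW)$, the scheme started from $\txi_0$ but driven by the shifted path $\hW$. Hence $\tPdisc(\txi_0,\cdot)$ and $\Pdisc(\txi_0,\cdot)$ are the images of $\mL(\hW)$ and $\mL(W)$ under one and the same measurable map; since $\rhoes\leq1$ (so $\Wes\leq\dtv$) and total variation does not increase under push-forward,
\[
\Wes(\tPdisc(\txi_0,\cdot),\Pdisc(\txi_0,\cdot))\leq\dtv\big(\mL(\hW),\mL(W)\big).
\]
From \eqref{def:shifted:W:2}, $|\psi_j|^2\leq\beta^2\|\sigma^{-1}\|^2|\Pi_K e^j|^2\leq C|e^j|^2$, so the geometric decay of $e^j$ above gives $\int_0^\infty|\psi(\tau)|^2\,d\tau=\dt\sum_{j\geq1}|\psi_j|^2<\infty$ almost surely, with small moments bounded solely in terms of $M$ and the fixed parameters $\nu,\sigma,\beta,K,\dt_0$, uniformly in $N$ and $\dt\leq\dt_0$. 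A Girsanov-type estimate (in the spirit of \cite{HairerMattingly2008,HairerMattinglyScheutzow2011,ButkovskyKulikScheutzow2019}), with the usual localization to handle the possible non-smallness of $\bE\int_0^\infty|\psi|^2$, then converts this into $\dtv(\mL(\hW),\mL(W))\leq q(M)$ for some $q(M)\in[0,1)$ depending only on $M$, uniformly in $N,\dt\leq\dt_0$ and in $\xi_0,\txi_0$ in the ball of radius $M$. Setting $\kappa_1:=\kappa_1(M):=(1-q(M))/2\in(0,1)$ and then choosing $T_1=T_1(M,\varepsilon,s)$ so that the first term is $\leq\kappa_1$ for all $n\geq T_1/\dt$ and all $N,\dt\leq\dt_0$, the triangle bound yields $\Wes(\Pdisc(\xi_0,\cdot),\Pdisc(\txi_0,\cdot))\leq\kappa_1+q(M)=1-\kappa_1$, which is \eqref{ineq:small:Wass}.

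The main obstacle is the energy estimate for the nudged difference $e^n$: closing it \emph{uniformly} in the discretization parameters $N$ and $\dt\leq\dt_0$ requires controlling the semi-implicit time stepping (in particular the $|e^n-e^{n-1}|^2$ terms and the implicit nonlinearity) and, because the $2$D vorticity nonlinearity leaves a trajectory-dependent weight that cannot be absorbed outright, passing to a sufficiently small moment order $p$ and balancing the resulting exponential growth against the linear decay rate provided by \eqref{cond:K:beta}. Pinning down the quantitative Girsanov bound $q(M)<1$ uniformly is comparatively standard but still requires care for the same reason that $\bE\int_0^\infty|\psi|^2$ need not be small.
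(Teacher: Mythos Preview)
Your overall strategy is exactly the paper's: the same triangle split \eqref{trg:ineq:wass}, the same nudged system, contraction of the nudged difference for the first term, and a Girsanov-type total-variation bound for the second. Two technical points are worth correcting, however. First, the trajectory weight you obtain should depend on the \emph{un-nudged} scheme $\xi^n$, not on $\txi^n$: the exponential moment bounds you cite are proved only for $\xfd^n$ (and the relevant discrete bound is \cref{prop:exp:bounds:sup}, not \cref{prop:exp:mom:cont}). This is easy to arrange, since $((\mK\ast e^{n-1})\cdot\nabla\txi^n,e^n)=((\mK\ast e^{n-1})\cdot\nabla\xi^n,e^n)$ by \eqref{nonlin:ort}, so the weight is $|\nabla\xi^n|^2$; the paper packages this as \cref{lem:bound:diff:xi:txi}, which gives a direct bound on $\bE|e^n|^2$ with no need to pass to small moments $p$ or to use a Markov splitting. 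Second, no ``localization'' is needed for the Girsanov step: the paper uses the explicit inequality \eqref{ineq:tv:kl:2}, $\tv{\mL(\hW)-\mL(W)}\leq 1-\tfrac12\exp(-\KL{\mL(\hW)}{\mL(W)})$, together with \eqref{ineq:kl:phi:0}, which yields a bound strictly below $1$ regardless of how large $\bE\int_0^\infty|\psi|^2$ is. With these two simplifications your outline becomes precisely the paper's proof.
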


\begin{Prop}\label{prop:contr:Wass} 
Fix $\dt_0 > 0$ and suppose there exists $K \in \NN$  and $\sigma \in \bdL^2$ such that  \eqref{cond:K:sigma} holds.
Then, for every $\kappa_2 \in (0,1)$ and for every $r > 0$ there exists $s \in (0,1]$ for which the following holds:
\begin{itemize}
    \item[(i)] For every $\varepsilon > 0$ and $\dt_0 > 0$, there exists a constant $C = C(\varepsilon,s, \dt_0) > 0$  such that
		\begin{align}\label{ineq:contr:Wass:0}
			\sup_{N \in \NN, \, 0 < \dt \leq \dt_0, \, n \in \ZZ^+} \Wes(\Pdisc(\xi_0,\cdot), \Pdisc(\txi_0,\cdot)) \leq C \exp(r |\xi_0|^2) \rhoes(\xi_0,\txi_0)
		\end{align}
		for every $\xi_0, \txi_0 \in \dL^2$ with $\rhoes(\xi_0, \txi_0) < 1$.
	\item[(ii)] For every $\dt_0 > 0$, there exist a parameter $\varepsilon=  \varepsilon(\kappa_2,r)  >0$ and a time $T_2 = T_2(\kappa_2, r) > 0$
	such that
		\begin{align}\label{ineq:contr:Wass}
			\sup_{N \in \NN, \, 0 < \dt \leq \dt_0} \sup_{ n \geq T_2/\dt} \Wes(\Pdisc(\xi_0, \cdot), \Pdisc(\txi_0, \cdot)) \leq \kappa_2 \exp(r |\xi_0|^2) \rhoes(\xi_0, \txi_0)
		\end{align}
		for every $\xi_0, \txi_0 \in \dL^2$ with $\rhoes(\xi_0, \txi_0) < 1$.
\end{itemize}
\end{Prop}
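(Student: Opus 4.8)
The plan is to analyze the nudged system \eqref{eq:nudging:1}--\eqref{eq:nudging:2} along the two complementary perspectives described in the text, exactly as for \cref{prop:small:Wass}, but now keeping careful track of the dependence on $|\xi_0 - \txi_0|$ so as to obtain the linear-in-$\rhoes(\xi_0,\txi_0)$ bounds \eqref{ineq:contr:Wass:0}, \eqref{ineq:contr:Wass}. First, I would fix $\beta, K$ so that the control term in \eqref{eq:nudging:1} yields contraction at large scales; concretely, testing the difference equation for $\zeta^n \coloneqq \txfd^n - \xfd^n(\Pi_N\xi_0)$ against $\zeta^n$ in $\dL^2$, using the Hilbert identity \eqref{eq:Hilb:sp}, the orthogonality \eqref{nonlin:ort}, the Poincar\'e-type bound \eqref{ineq:Poincare:N} on the uncontrolled modes $(I-\Pi_K)\zeta^n$, and the nonlinear estimates \eqref{ineq:nonlin:a:0}--\eqref{ineq:nonlin:b} to absorb $((\mathbf u - \tilde{\mathbf u})\cdot\nabla\xi^n,\zeta^n)$, one gets a recursion of the form
\begin{align*}
(1 + \nu\dt\lambda_{K+1} - c\dt|\nabla\xfd^n|^2)\,|\zeta^n|^2 \leq |\zeta^{n-1}|^2 + \beta\dt\,|\Pi_K\zeta^n|^2 \cdot(\text{controllable})
\end{align*}
so that, choosing $\beta$ and $K$ as in \eqref{cond:K:sigma}, and using the exponential moment bound \cref{prop:exp:mom:cont} to control the accumulation $\sum_j|\nabla\xfd^j|^2$ on a good event of probability close to $1$, we obtain a geometric decay $|\zeta^n|^2 \le b^n |\zeta^0|^2 = b^n|\txi_0-\xi_0|^2$ with $b<1$ on that event, and a crude exponential-in-$|\xi_0|^2$ bound on the complementary small-probability event. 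Raising this to the power $s/2$ and taking expectations gives, via the coupling $(\xfd^n(\Pi_N\xi_0),\txfd^n(\Pi_N\txi_0;\Pi_N\xi_0))$,
\begin{align*}
\Wes(\Pdisc(\xi_0,\cdot),\tPdisc(\txi_0,\cdot)) \leq \bE\left[1\wedge \frac{|\zeta^n|^s}{\varepsilon}\right] \leq C\,b^{ns/2}\exp(r|\xi_0|^2)\,\frac{|\txi_0-\xi_0|^s}{\varepsilon} + (\text{small remainder}),
\end{align*}
and the smallness constraint $\rhoes(\xi_0,\txi_0)<1$ ensures $|\txi_0-\xi_0|^s/\varepsilon < 1$ so that the remainder too is bounded by $C\,\text{(small)}\cdot\rhoes(\xi_0,\txi_0)$ after choosing $s$ small and $n$ large.

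Second, for the term $\Wes(\tPdisc(\txi_0,\cdot),\Pdisc(\txi_0,\cdot))$ I would use the Girsanov shift \eqref{def:shifted:W:1}--\eqref{def:shifted:W:2}, bounding this Wasserstein distance by (a function of) the total variation distance between the laws of $W$ and $\hat W$, which in turn is controlled by $\bE\exp(c\sum_j\dt|\psi_j|^2)$; since $|\psi_j| \le \beta\|\sigma^{-1}\|\,|\Pi_K\zeta^j| \le C|\zeta^j|$ and $|\zeta^j|^2$ decays geometrically as above (again on the good event, starting from $|\zeta^0|=|\txi_0-\xi_0|$), the Girsanov exponent is itself small when $|\txi_0-\xi_0|$ is small, giving $\Wes(\tPdisc(\txi_0,\cdot),\Pdisc(\txi_0,\cdot)) \le C\exp(r|\xi_0|^2)|\txi_0-\xi_0| \le C'\exp(r|\xi_0|^2)\rhoes(\xi_0,\txi_0)$ when $\rhoes(\xi_0,\txi_0)<1$. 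Combining the two pieces via \eqref{trg:ineq:wass} yields part (i) for any $\varepsilon>0$ and $n\in\ZZ^+$ with a finite-time constant $C(\varepsilon,s,\dt_0)$; for part (ii), one first fixes $s=s(\kappa_2,r)$ small enough that the powers $|\txi_0-\xi_0|^{s}/\varepsilon$ appearing everywhere can be made small, then takes $n\ge T_2/\dt$ large enough that $b^{ns/2}$ beats $1/\kappa_2$, and finally tunes $\varepsilon=\varepsilon(\kappa_2,r)$ so the Girsanov contribution is absorbed into $\kappa_2$; the uniformity in $N$ and in $\dt\le\dt_0$ is inherited from the discretization-uniform moment bounds (\cref{prop:Lyap:exp:disc}, \cref{prop:exp:mom:cont}) and from the fact that $\lambda_{K+1}$ in \eqref{cond:K:sigma} is chosen $\dt_0$-uniformly.

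The main obstacle I anticipate is the bookkeeping needed to thread the factor $\exp(r|\xi_0|^2)$ cleanly through \emph{both} estimates while simultaneously extracting the correct power $\rhoes(\xi_0,\txi_0) = 1\wedge(|\xi_0-\txi_0|^s/\varepsilon)$ (rather than its square root or some other power). In particular, the contraction estimate for $|\zeta^n|$ naturally lives on a high-probability event defined in terms of $\sum_j\dt|\nabla\xfd^j|^2$, and one must ensure that (a) the complementary-event contribution, controlled only by a crude Lyapunov-type exponential in $|\xi_0|$ and $|\txi_0|$, can still be dominated by $C\exp(r|\xi_0|^2)\rhoes(\xi_0,\txi_0)$ once $s$ is small and $n$ large — this uses $\rhoes<1 \Rightarrow |\xi_0-\txi_0|<\varepsilon^{1/s}$ and crucially the boundedness of $\rhoes$ by $1$ together with the freedom to shrink $s$; and (b) the Girsanov exponent $\bE\exp(c\sum_j\dt|\psi_j|^2)$ remains finite uniformly in $N,\dt$, which requires the smallness of $\beta\dt$ relative to $\nu$ and the spectral gap condition, i.e. precisely the third term in \eqref{cond:K:sigma}. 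Once these dependencies are organized, the rest is a routine assembly mirroring the proof of \cref{prop:small:Wass}.
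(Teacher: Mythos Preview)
Your overall architecture matches the paper --- triangle inequality \eqref{trg:ineq:wass}, contraction of the nudged difference $\zeta^n$ for the first piece, Girsanov for the second --- but there is a genuine gap in your handling of the cost-of-control term $\Wes(\tPdisc(\txi_0,\cdot),\Pdisc(\txi_0,\cdot))$. You propose to bound it via ``$\bE\exp(c\sum_j\dt|\psi_j|^2)$'' and assert the outcome is $\leq C\exp(r|\xi_0|^2)|\txi_0-\xi_0|$. But the standard Pinsker/KL route that this description corresponds to gives at best $\tv{\mL(\hW)-\mL(W)}\leq(\tfrac12\bE\int|\varphi|^2)^{1/2}$, and combined with \eqref{ineq:int:phi:c:0:0} this produces an exponential factor $\exp\bigl(\tfrac{C}{2}(\nu^3\beta)^{-1/2}|\xi_0|^2\bigr)$ whose coefficient is \emph{fixed} by $\beta$. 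Since $\beta$ is itself bounded above by $\nu\lambda_{K+1}/2$ with $K$ fixed once and for all by \eqref{cond:K:sigma}, there is no mechanism in your argument to make this coefficient $\leq r$ for arbitrarily small $r>0$; shrinking $s$ does nothing here, because $s$ never enters the Pinsker bound. Your good-event/bad-event splitting does not help either: the bad-event probability carries no factor of $|\xi_0-\txi_0|$, so it cannot be absorbed into $\kappa_2\exp(r|\xi_0|^2)\rhoes(\xi_0,\txi_0)$.

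The paper closes this gap by invoking instead the refined total-variation estimate \eqref{ineq:tv:phi:0} from \cite{ButkovskyKulikScheutzow2019}: for any $a\in(0,1]$,
\begin{align*}
\tv{\mL(\hW)-\mL(W)}\;\leq\;2^{\frac{1-a}{1+a}}\Bigl[\bE\Bigl(\textstyle\int_0^\infty|\varphi(t)|^2\,dt\Bigr)^{a}\Bigr]^{\frac{1}{1+a}}.
\end{align*}
Applying H\"older ($\bE X^a\leq(\bE X)^a$) together with \eqref{ineq:int:phi:c:0:0}, and then choosing $a$ so that $\tfrac{2a}{1+a}=s$, simultaneously produces the power $|\xi_0-\txi_0|^{s}$ \emph{and} the exponential coefficient $\tfrac{Cs}{2}(\nu^3\beta)^{-1/2}$, which is then made $\leq r$ by the same choice \eqref{choice:s} of $s$ that governs the first term. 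This tying of $s$ to the exponential coefficient via the free parameter $a$ is the missing idea. As a secondary remark, for the first term the paper does not split into events at all: \cref{lem:bound:diff:xi:txi} already delivers the clean expectation bound $\bE|\zeta^n|^2\leq\tilde c\,(1+\beta\dt)^{-3n/4}\exp\bigl(C(\nu^3\beta)^{-1/2}|\xi_0|^2\bigr)|\xi_0-\txi_0|^2$ (via the discrete exponential moment \eqref{expbounds:1}, not \cref{prop:exp:mom:cont}), after which a single H\"older $\bE|\zeta^n|^s\leq(\bE|\zeta^n|^2)^{s/2}$ gives the factor $\exp(r|\xi_0|^2)$ directly.
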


\begin{Rmk}
    We notice that item $(i)$ of \cref{prop:contr:Wass} gives a slightly stronger result than required in the general assumption \ref{A3:ii} of \cref{thm:gen:sp:gap}. Indeed, inequality \eqref{ineq:contr:Wass:0} is valid over all $n \in \ZZ^+$ and $\varepsilon > 0$. In contrast, \eqref{fin:time:Wass} concerns only a finite time interval $[0,\tau]$ and a particular choice of distance-like function $\rhoes$, which thus entails both a particular choice of $s \in (0,1]$ and $\varepsilon > 0$.
\end{Rmk}

Before proceeding with the proofs of \cref{prop:small:Wass} and \cref{prop:contr:Wass}, we establish some preliminary facts and terminology that are necessary for following the outline described under \eqref{trg:ineq:wass} above. We start with the following result establishing suitable exponential moment bounds for solutions of \eqref{disc2DSNSEv2}.

\begin{Lem}\label{prop:exp:bounds:sup}
	Fix any $N \in \NN$, $\dt, \dt_0 > 0$ with $\dt \leq \dt_0$, $\sigma \in \bdL^2$ and $\xi_0 \in \dL^2$. Let $\{\xfd^n\}_{n \in \ZZ^+}$ be the solution of \eqref{disc2DSNSEv2} corresponding to the parameters $N,\dt$, and satisfying $\xfd^0 = \Pi_N \xi_0$ almost surely. Then, there exists an absolute constant $c > 0$ such that for all  $\sm \in \RR$ satisfying
	\begin{align}\label{condgammasup}
		0 < \sm \leq \frac{c}{|\sigma|^2} \min\left\{ \nu , \frac{1}{\dt_0}\right\},
	\end{align}
	the following inequality holds
	\begin{align}\label{expboundssup}
		\bE \sup_{n \geq 1} \exp\left( \sm |\xfd^n|^2 + \sm \nu \dt \sum_{j=1}^n |\nabla \xfd^j|^2 + \frac{n}{4} \ln (1 - 4 \sm \dt |\sigma|^2) \right) 
		\leq \tilde{c} \exp \left( C \sm |\xi_0|^2 \right),
	\end{align}
	and, consequently, 
	\begin{align}\label{expbounds:1}
	\bE \exp\left( \sm |\xfd^n|^2 + \sm \nu \dt \sum_{j=1}^n |\nabla \xfd^j|^2\right) 
	\leq \tilde{c} \exp \left( C \sm |\xi_0|^2 \right) \exp \left( \tilde{c} \sm |\sigma|^2 n \dt \right) \quad \mbox{ for all } n \in \mathbb{N}.
	\end{align}
	Here, $C = \tilde{c} (1 + \nu \dt_0)$ and $\tilde{c} > 0$ is an absolute constant.
\end{Lem}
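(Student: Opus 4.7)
The plan is to derive a sharp pathwise energy identity at each time step from \eqref{disc2DSNSEv2}, exponentiate, and then identify a nonnegative supermartingale to which Doob's maximal inequality can be applied. I would first take the $\dL^2$ inner product of \eqref{disc2DSNSEv2} with $\xfd^j$, invoke the Hilbert identity $2(\xi - \tilde\xi, \xi) = |\xi|^2 + |\xi - \tilde\xi|^2 - |\tilde\xi|^2$ together with the nonlinear orthogonality \eqref{nonlin:ort}, and split the stochastic cross term as $2\dt^{1/2}(\sigma\eta_j, \xfd^j) = 2\dt^{1/2}(\sigma\eta_j, \xfd^j - \xfd^{j-1}) + 2\dt^{1/2}(\sigma\eta_j, \xfd^{j-1})$, absorbing the first piece into $|\xfd^j - \xfd^{j-1}|^2$ via Young's inequality. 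This yields the one-step estimate $(1+\nu\dt)|\xfd^j|^2 + \nu\dt|\nabla\xfd^j|^2 \leq |\xfd^{j-1}|^2 + \dt|\sigma\eta_j|^2 + 2\dt^{1/2}(\sigma\eta_j,\xfd^{j-1})$, which is precisely the unweighted analogue of \eqref{energy:ineq:xn:2} in the proof of \cref{prop:Lyap:exp:disc}.

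Summing over $j=1,\ldots,n$ and using $(1+\nu\dt)|\xfd^j|^2 = |\xfd^j|^2 + \nu\dt|\xfd^j|^2$ together with the reindexing $\sum_{j=1}^n|\xfd^j|^2 = |\xfd^n|^2 - |\xi_0|^2 + \sum_{j=1}^n|\xfd^{j-1}|^2$ gives $|\xfd^n|^2 + \nu\dt\sum_{j=1}^n|\nabla\xfd^j|^2 + \nu\dt\sum_{j=1}^n|\xfd^{j-1}|^2 \leq (1+\nu\dt_0)|\xi_0|^2 + \dt\sum_{j=1}^n|\sigma\eta_j|^2 + M_n$, where $M_n = 2\dt^{1/2}\sum_{j=1}^n(\sigma\eta_j,\xfd^{j-1})$ is a discrete martingale with $\langle M\rangle_n \leq 4\dt|\sigma|^2\sum_{j=1}^n|\xfd^{j-1}|^2$. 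Under \eqref{condgammasup} with the absolute constant $c$ chosen small enough, $\sm\langle M\rangle_n \leq \nu\dt\sum_{j=1}^n|\xfd^{j-1}|^2$, so I can subtract this on both sides, multiply by $\sm$, exponentiate, and add the correction $\frac{n}{4}\ln(1-4\sm\dt|\sigma|^2)$ to arrive at the pointwise bound $\exp\bigl(\sm|\xfd^n|^2 + \sm\nu\dt\sum_{j=1}^n|\nabla\xfd^j|^2 + \tfrac{n}{4}\ln(1-4\sm\dt|\sigma|^2)\bigr) \leq e^{C\sm|\xi_0|^2}\, Z_n\, \tM_n^{1/2}$, where $Z_n = \prod_{j=1}^n\bigl[e^{\sm\dt|\sigma\eta_j|^2}(1-4\sm\dt|\sigma|^2)^{1/4}\bigr]$ and $\tM_n = \exp(2\sm M_n - 2\sm^2\langle M\rangle_n)$ is the Dol\'eans exponential martingale.

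A direct one-step conditional expectation --- exploiting that $\eta_n$ is a standard Gaussian on $\RR^d$ independent of $\mF_{t_{n-1}}$, its quadratic-form moment generating function, and the spectral bound $\det(I-2\alpha G) \geq 1-2\alpha|\sigma|^2$ for the Gram matrix $G_{kl}=(\sigma_k,\sigma_l)$ --- shows that $Z_n\tM_n^{1/2}$ is a nonnegative supermartingale starting from $1$, the key algebraic step reducing to the elementary inequality $(1-4x)^{1/2} \leq 1-2x$ for $x \in [0,1/4]$. The sup bound \eqref{expboundssup} then follows by applying Doob's maximal inequality to this supermartingale, and \eqref{expbounds:1} is an immediate corollary upon invoking the elementary lower bound $\frac{n}{4}\ln(1-4\sm\dt|\sigma|^2) \geq -\tilde c\, n\sm\dt|\sigma|^2$ valid whenever $4\sm\dt|\sigma|^2 \leq 1/2$. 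The main obstacle I anticipate is precisely in this sup step: Doob's weak-$L^1$ maximal inequality for a nonnegative supermartingale only delivers $\bP(\sup_n Z_n\tM_n^{1/2} \geq \lambda) \leq 1/\lambda$, which by itself is insufficient for integrability of the supremum. To close the argument one must upgrade this to an $L^p$ bound with $p > 1$, which I expect to obtain by rerunning the Gaussian MGF calculation with a doubled parameter and exploiting the slack provided by taking the absolute constant $c$ in \eqref{condgammasup} sufficiently small; equivalently, one may apply Doob's $L^2$ inequality directly to the exponential martingale $\tM_n$ and combine it with the uniform $L^2$ bound $\bE Z_n^2 \leq 1$ that follows from $\bE e^{2c_j} \leq 1$.
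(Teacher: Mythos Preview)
Your proposal is correct and follows essentially the same route as the paper: the one-step energy identity, summation, absorption of $\sm\langle M\rangle_n$, and exponentiation are identical (the paper absorbs $\sm\langle M\rangle_n$ into the gradient sum via Poincar\'e rather than into $\nu\dt\sum|\xfd^{j-1}|^2$, but this is cosmetic). For the sup step you correctly flag as the obstacle, the paper carries out precisely your ``doubled parameter'' fix: it multiplies the energy inequality by $\sm/2$ rather than $\sm$, then separates via Cauchy--Schwarz into \emph{two} genuine martingales $\exp(D_n)$ and $\exp(E_n)$ (corresponding to your $\tM_n$ and $Z_n^2$ at the doubled parameter), and bounds each $\bE\sup_n e^{D_n/2}$, $\bE\sup_n e^{E_n/2}$ by~$2$ using the layer-cake formula $\bE\sup_n e^{D_n/2}=\int_0^\infty\bP(\sup_n e^{D_n}\geq z^2)\,dz$ together with weak-$L^1$ Doob. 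Your ``equivalently'' alternative as stated needs a bit more care (Doob's $L^2$ on $\tM_n$ requires $\sup_n\bE\tM_n^2<\infty$, and you would need $\bE\sup_n Z_n^2$ rather than $\bE Z_n^2$), but the Cauchy--Schwarz/layer-cake route avoids this entirely.
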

\begin{proof}
Proceeding as in \eqref{energy:ineq:xn:1}-\eqref{energy:ineq:xn:2} above, and summing \eqref{energy:ineq:xn:2} over $j = 1, \ldots, n$, we obtain
\begin{align}\label{energyineq3}
	|\xi^n|^2 - |\xi_0|^2 + 2 \nu \dt \sum_{j=1}^n |\nabla \xi^j|^2 \leq \dt \sum_{j=1}^n | \sigma \eta_j|^2 + M_n,
\end{align}
where $\{M_n\}_{n \in \mathbb{N}}$ is the martingale defined as
\begin{align}\label{defMn}
M_n := 2 \dt^{1/2} \sum_{j=1}^n (\sigma \eta_j, \xi^{j-1})
\end{align}
with corresponding quadratic variation given by
\begin{align}\label{quadvar:Mn}
\langle M \rangle_n = 4 \dt \sum_{j=1}^n \sum_{k=1}^d (\sigma_k, \xi^{j-1})^2.
\end{align}
We estimate $\langle M \rangle_n$ as
\begin{align}\label{estMn}
\langle M \rangle_n 
\leq 4 \dt |\sigma|^2 \sum_{j=1}^n |\xi^{j-1}|^2 
&= 4 \dt |\sigma|^2 \left( |\xi_0|^2 + \sum_{j=1}^{n-1} |\xi^{j}|^2 \right) 
\notag\\
&\leq 4 \dt |\sigma|^2 |\xi_0|^2 + 4 \dt |\sigma|^2 \sum_{j=1}^{n-1} |\nabla \xi^j|^2.
\end{align}

Thus, under assumption \eqref{condgammasup} on $\sm$ with a suitable absolute constant $c$ it follows that
\begin{align*}
	\sm \langle M \rangle_n \leq \nu  \dt |\xi_0|^2 + \nu \dt \sum_{j=1}^{n-1} |\nabla \xi^j|^2.
\end{align*}
Adding and subtracting $\sm \langle M\rangle_n$ in \eqref{energyineq3}, yields
\begin{align}\label{energyineq4}
|\xi^n|^2 + \nu \dt \sum_{j=1}^n |\nabla \xi^j|^2 \leq (1 + \nu \dt)  |\xi_0|^2 + \dt \sum_{j=1}^n | \sigma \eta_j|^2 +  M_n - \sm \langle M \rangle_n.
\end{align}
We now subtract $R_n \coloneqq - \frac{n}{2\sm} \tr (\ln(1 - 2 \sm \dt Q_0))$ from both sides of \eqref{energyineq4}, where $Q_0$ is defined in \eqref{def:Q:TrQ}. Then, multiplying by $\sm/2$, taking exponentials, the supremum over $n \in \{1, \ldots,m\}$ for some $m \in \mathbb{N}$, and expected values, it follows that
\begin{multline}\label{supexpmoment}
\bE \sup_{1 \leq n \leq m} \exp\left( \frac{\sm}{2} |\xi^n|^2 + \frac{\sm \nu \dt}{2} \sum_{j=1}^n |\nabla \xi^j|^2  - \frac{\sm}{2} R_n \right)\\
\leq \exp \left( \frac{\sm}{2}(1 + \nu \dt )|\xi_0|^2 \right) 
\left[ \bE \sup_{1 \leq n \leq m} \exp\left(\frac{D_n}{2}\right) \right]^{1/2}
\left[ \bE \sup_{1 \leq n \leq m} \exp\left(\frac{E_n}{2}\right) \right]^{1/2}, 
\end{multline}
where 
\begin{align}\label{def:Dn}
D_n = 2 \sm M_n - 2 \sm^2 \langle M \rangle_n
\end{align}
and
\begin{align}\label{def:El}
E_n =   \sm \dt \sum_{j=1}^n | \sigma \eta_j|^2 -  \sm R_n.
\end{align}

Similarly as in \eqref{def:tMn}, we have that $\{\exp(D_n)\}_{n \in \mathbb{N}}$ is a martingale with respect to the filtration $\{\mF_{t_n}\}_{n \in \mathbb{N}}$, and $\bE \exp(D_n) = 1$ for all $n$.

Clearly, each $\exp(E_n)$ is measurable with respect to $\mF_{t_n}$. To conclude that $\{\exp(E_n)\}_{n \in \NN}$ is a martingale, it remains to show that $\bE |\exp(E_n)| = \bE \exp(E_n) < \infty$ and $\bE (\exp(E_{n+1}) | \mF_{t_n}) = \exp(E_n)$ for all $n \in \mathbb{N}$. Since $\sm < c (\dt_0 |\sigma|^2)^{-1}$ and $\sigma \eta_n \sim \mN(0,Q_0)$, it follows by invoking once again \cite[Proposition 2.17]{DaPratoZabczyk2014} that for all $\dt \leq \dt_0$
\begin{align*}
	\bE \exp (\sm \dt |\sigma \eta_n|^2) = \exp\left(-\frac{1}{2} \tr \ln(1 - 2 \sm \dt Q_0) \right) \quad \mbox{ for all } n \in \mathbb{N}.
\end{align*}
Hence,
\begin{align*}
\bE(\exp(E_{n+1}) \,|\, \mF_{t_n}) &= \exp\left( \sm \dt \sum_{j=1}^{n} |\sigma \eta_j|^2\right) \bE\exp \left(\sm \dt |\sigma \eta_{n+1}|^2 + \frac{(n+1)}{2} \tr \ln(1 - 2\sm \dt Q_0)\right)\\
&=  \exp\left( \sm \dt \sum_{j=1}^{n} |\sigma \eta_j|^2  +\frac{n}{2} \tr \ln (1 - 2 \sm \dt Q_0)\right) = \exp(E_{n}).
\end{align*}
This implies that, for all $n \in \NN$, 
\begin{align}\label{eq:E:expEn:1}
\bE[\exp(E_n)] 
= \bE[\bE[\exp(E_n) \,|\, \mF_{t_1}]] 
= \bE[\exp(E_1)]
= \exp \left( \sm \dt |\sigma \eta_1|^2 + \frac{1}{2} \tr \ln(1 - 2\sm \dt Q_0)  \right) = 1.
\end{align} 
Therefore, $\{\exp(E_n)\}_{n \in \NN}$ is a martingale and, moreover, $\bE \exp(E_n) = 1$ for all $n$.

With these facts, we proceed to further estimate the right-hand side of \eqref{supexpmoment} by noticing that 	
\begin{align}\label{estEk}
\bE \sup_{1\leq n \leq m} \exp\left(\frac{E_n}{2}\right) 
&= \int_0^\infty \bP \left(  \sup_{1\leq n \leq m} \exp(E_n) \geq z^2 \right) d z
\notag \\
&\leq 1 + \int_1^\infty \bP \left( \sup_{1\leq n \leq m} \exp(E_n) \geq z^2\right) d z
\leq 1 + \int_1^\infty \frac{\bE \exp(E_m)}{z^2} d z = 2,
\end{align}
where the last inequality follows from Doob's martingale inequality, while in the final equality we used \eqref{eq:E:expEn:1}. Analogously, we can show that
\begin{align}\label{estDk}
\bE \sup_{1\leq n \leq m} \exp\left(\frac{D_n}{2}\right)  \leq 2.
\end{align}
Plugging estimates \eqref{estEk} and \eqref{estDk} into \eqref{supexpmoment}, it follows that
\begin{align}\label{expboundssup0}
	\bE \sup_{1 \leq n \leq m} \exp\left( \frac{\sm}{2} |\xi^n|^2 + \frac{\sm \nu \dt}{2} \sum_{j=1}^n |\nabla \xi^j|^2 - \frac{\sm}{2} R_n \right) \leq 2 \exp \left( \frac{\sm}{2}(1 + \nu \dt )|\xi_0|^2 \right) ,
\end{align}
for all $m \in \mathbb{N}$. Replacing $\sm/2$ by $\sm$, and noticing that $ \tr (\ln(1 - 4 \sm \dt Q_0)) \geq \ln (1 - 4 \sm \dt \tr(Q_0))$ (see \cite[Proposition 2.17]{DaPratoZabczyk2014} ) and $\tr(Q_0) = |\sigma|^2$, we obtain that for all $N \in \NN$ and $\dt \leq \dt_0$
\begin{align}\label{expboundssup0:1}
	\bE \sup_{1 \leq n \leq m} \exp\left( \sm |\xi^n|^2 + \sm \nu \dt \sum_{j=1}^n |\nabla \xi^j|^2 + \frac{n}{4} \ln (1 - 4 \sm \dt |\sigma|^2) \right) 
	\leq 2 \exp \left( \frac{\sm}{2}(1 + \nu \dt_0 )|\xi_0|^2 \right) 
\end{align}
for all $m \in \NN$. Now we conclude \eqref{expboundssup} from \eqref{expboundssup0:1} by invoking the Monotone Convergence theorem.

For the final inequality \eqref{expbounds:1}, first notice that \eqref{expboundssup} clearly implies
\begin{align}\label{expbounds}
\bE \exp\left( \sm |\xi^n|^2 + \sm \nu \dt \sum_{j=1}^n |\nabla \xi^j|^2  \right) 
\leq \frac{ \tilde{c} \exp \left( C \sm |\xi_0|^2 \right)}{ ( 1 - 4 \sm \dt | \sigma|^2  )^{n/4}} \quad \mbox{ for all } n \in \NN.
\end{align}
Now we use the elementary fact that $\ln(1 - x) \geq - e x$ for every $0 \leq x \leq 1/e$. Thus, choosing the constant $c$ in \eqref{condgammasup} appropriately so that $4 \sm \dt_0 |\sigma|^2 \leq 1/e$, we obtain that for all $\dt \leq \dt_0$
\begin{align*}
(1 - 4 \sm \dt |\sigma|^2)^{-n/4} =
\exp \left( - \frac{n}{4}  \ln (1 - 4 \sm \dt |\sigma|^2)   \right)
\leq 
\exp( \tilde{c} \sm |\sigma|^2 n \dt).
\end{align*}
Plugging this inequality into \eqref{expbounds}, we deduce \eqref{expbounds:1}. This concludes the proof.
\end{proof}

Next, we have the following contraction result.

\begin{Lem}\label{lem:bound:diff:xi:txi}
	Fix any $N \in \NN$, $\dt, \dt_0 > 0$ with $\dt \leq \dt_0$, $\sigma \in \bdL^2$, and $\xi_0, \txi_0 \in \dL^2$. 
	Let $\txfd^n = \txfd^n(\Pi_N \txi_0; \Pi_N \xi_0)$, $n \in \ZZ^+$, be the solution of \eqref{eq:nudging:1}-\eqref{eq:nudging:2} corresponding to the parameters $N, \dt, \xi_0$, and satisfying $\txfd^0 = \Pi_N \txi_0$ almost surely. Suppose $K$ and $\beta$ from \eqref{eq:nudging:1} satisfy 
	\begin{align}\label{cond:K:beta}
	\nu \lambda_{K+1} \geq 2 \beta
	\end{align}
	and
	\begin{align}\label{cond:beta:0}
	\beta \geq c \max \left\{ \frac{1}{\dt_0}, \frac{\dt_0^2 |\sigma|^4}{\nu^3 }, \frac{| \sigma|^4}{\nu^5 }  \right\}
	\end{align}
	for some absolute constant $c > 0$.
	Then, for every $n \in \ZZ^+$
	\begin{align}\label{bound:diff:xi:txi}
	\bE |\txfd^n(\Pi_N \txi_0; \Pi_N \xi_0) - \xfd^n(\Pi_N \xi_0)|^2 
	\leq
	\tilde{c} \frac{ \exp\left(C (\nu^3 \beta)^{-1/2} |\xi_0|^2 \right)}{(1 + \beta \dt)^{3n/4}} |\txi_0 - \xi_0|^2 ,
	\end{align}
	where $C > 0$ is a constant depending only on $\nu, \dt_0$.
\end{Lem}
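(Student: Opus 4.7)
The plan is to derive a pathwise multiplicative recursion for $|\zeta^n|^2$ where $\zeta^n \coloneqq \txfd^n - \xfd^n$, iterate it, and control the resulting random exponential factor using \cref{prop:exp:bounds:sup}. Subtracting \eqref{disc2DSNSEv2} from \eqref{eq:nudging:1}, the nonlinear difference decomposes as $\tu^{n-1}\cdot\nabla\txi^n - \bu^{n-1}\cdot\nabla\xi^n = (\mK\ast\zeta^{n-1})\cdot\nabla\txi^n + \bu^{n-1}\cdot\nabla\zeta^n$. Taking the $\dL^2$-inner product with $\zeta^n$, applying \eqref{eq:Hilb:sp}, and invoking \eqref{nonlin:ort} twice --- once to kill $(\bu^{n-1}\cdot\nabla\zeta^n,\zeta^n)$, and once (after writing $\txi^n = \xi^n + \zeta^n$) to reduce $((\mK\ast\zeta^{n-1})\cdot\nabla\txi^n,\zeta^n)$ to $((\mK\ast\zeta^{n-1})\cdot\nabla\xi^n,\zeta^n)$ --- yields the energy identity
\begin{align*}
|\zeta^n|^2 + |\zeta^n - \zeta^{n-1}|^2 - |\zeta^{n-1}|^2 + 2\nu\dt|\nabla\zeta^n|^2 + 2\beta\dt|\Pi_K\zeta^n|^2 = -2\dt((\mK\ast\zeta^{n-1})\cdot\nabla\xi^n,\zeta^n).
\end{align*}
The nudging condition \eqref{cond:K:beta} combined with the Poincar\'e-type inequality \eqref{ineq:Poincare:N} allows splitting the dissipation as $2\nu|\nabla\zeta^n|^2 + 2\beta|\Pi_K\zeta^n|^2 \geq \nu|\nabla\zeta^n|^2 + 2\beta|\zeta^n|^2$, leaving both a gradient and a pointwise quadratic dissipation available to absorb the nonlinearity.

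I then estimate the nonlinear term via \eqref{ineq:nonlin:a:0} with the specific choice $a = 1/2$,
\begin{align*}
2\dt|((\mK\ast\zeta^{n-1})\cdot\nabla\xi^n,\zeta^n)| \leq c\dt\,|\zeta^{n-1}|\,|\nabla\xi^n|\,|\zeta^n|^{1/2}|\nabla\zeta^n|^{1/2},
\end{align*}
followed by a carefully tuned four-factor Young's inequality that allocates the $|\nabla\zeta^n|^2$-factor into the $\nu$-dissipation and the $|\zeta^n|^2$-factor into the $\beta$-dissipation, both in positive proportion. Solving the resulting system of Young parameter constraints forces the coefficient on $|\zeta^{n-1}|^2|\nabla\xi^n|^2$ to equal exactly $C/\sqrt{\nu\beta}$. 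This precise $1/\sqrt{\nu\beta}$ scaling --- which requires $a = 1/2$ rather than $a = 1$ and simultaneously exploits \emph{both} dissipation mechanisms --- is the arithmetic heart of the argument, and is what ultimately yields the $(\nu^3\beta)^{-1/2}$ factor in the final exponential. After absorbing I obtain a scalar recursion of the form
\begin{align*}
\bigl(1 + \tfrac{7}{4}\beta\dt\bigr)|\zeta^n|^2 \leq |\zeta^{n-1}|^2\left(1 + \frac{C\dt}{\sqrt{\nu\beta}}|\nabla\xi^n|^2\right).
\end{align*}

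Iterating this, using the elementary inequality $\log(1 + 7x/4) \geq (3/4)\log(1+x)$ for $x \geq 0$ together with $\log(1+y) \leq y$, yields the pathwise bound
\begin{align*}
|\zeta^n|^2 \leq \frac{|\zeta^0|^2}{(1+\beta\dt)^{3n/4}}\exp\left(\frac{C\dt}{\sqrt{\nu\beta}}\sum_{j=1}^n|\nabla\xi^j|^2\right),
\end{align*}
with the slack $(1+7\beta\dt/4)^n/(1+\beta\dt)^{3n/4}$ held in reserve. Taking expectations and applying \cref{prop:exp:bounds:sup} with the parameter $\sm = C/\sqrt{\nu^3\beta}$, chosen so that $\sm\nu$ matches the coefficient $C/\sqrt{\nu\beta}$ in the exponent, produces the desired factor $\exp(C(\nu^3\beta)^{-1/2}|\xi_0|^2)$ alongside a residual term $\exp(\tilde c \sm|\sigma|^2 n\dt)$ that still must be handled.

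The main technical obstacle is the delicate calibration of constants tying everything together. I must verify that (i) the choice $\sm = C/\sqrt{\nu^3\beta}$ satisfies the admissibility constraint \eqref{condgammasup}, which upon unpacking corresponds precisely to the conditions $\beta \geq c|\sigma|^4/\nu^5$ and $\beta \geq c\dt_0^2|\sigma|^4/\nu^3$ already present in \eqref{cond:beta:0}; and (ii) the residual growth $\exp(\tilde c\sm|\sigma|^2 n\dt)$ can be absorbed into the reserve contained in $(1+7\beta\dt/4)^n/(1+\beta\dt)^{3n/4}$. For the latter, the bound $\beta \geq c|\sigma|^4/\nu^5$ yields $\sm|\sigma|^2 \leq C(\nu)$, reducing the growth to $\exp(C(\nu)n\dt)$, while the condition $\beta \geq c/\dt_0$ supplies just enough room to absorb this at the cost of folding a dependence on $\nu$ and $\dt_0$ into the final constants, exactly as stated in the lemma.
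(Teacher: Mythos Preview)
Your proof is correct and follows essentially the same architecture as the paper: derive the energy identity for $\zeta^n$, estimate the single surviving nonlinear term via \eqref{ineq:nonlin:a:0} with $a=1/2$ and a Young split that produces the crucial $1/\sqrt{\nu\beta}$ coefficient, iterate, and close with \cref{prop:exp:bounds:sup} at $\alpha = C(\nu^3\beta)^{-1/2}$. The only real difference is in the last absorption step. The paper tunes Young to leave the factor $(1+\beta\dt)$ in the recursion and then invokes the sharper bound \eqref{expbounds} (not \eqref{expbounds:1}), after which the clean algebraic fact $1-4\alpha\dt|\sigma|^2 \geq 1/(1+\beta\dt)$ directly yields $(1+\beta\dt)^n(1-4\alpha\dt|\sigma|^2)^{n/4}\geq(1+\beta\dt)^{3n/4}$. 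Your ``reserve'' route via \eqref{expbounds:1} also works, but the calibration you sketch in step~(ii) should use the condition $\beta \geq c\dt_0^2|\sigma|^4/\nu^3$ (giving $\alpha|\sigma|^2 \leq C/(\sqrt{c}\,\dt_0)$) rather than $\beta \geq c|\sigma|^4/\nu^5$; otherwise the bound $\alpha|\sigma|^2 \leq C\nu$ you obtain cannot be absorbed uniformly in $\nu\dt_0$ by the condition $\beta \geq c/\dt_0$ alone.
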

\begin{proof}
	Denote $\zeta^n = \zfd^n := \txfd^n(\Pi_N \txi_0; \Pi_N \xi_0) - \xfd^n(\Pi_N \xi_0)$ and $\bv^n = \bv_{N,\dt}^n = \tufd^n - \ufd^n$. Subtracting \eqref{disc2DSNSEv} from \eqref{eq:nudging:1}, we obtain
	\begin{align}\label{eq:zeta}
	\zeta^n = \zeta^{n-1}  + \dt [\nu \Delta \zeta^n - \Pi_N(\bv^{n-1} \cdot \nabla \zeta^n) - \Pi_N(\bv^{n-1} \cdot \nabla \xi^n) - \Pi_N (\bu^{n-1} \cdot \nabla \zeta^n) - \beta \Pi_K \zeta^n ].
	\end{align}
	Taking the inner product of \eqref{eq:zeta} with $\zeta^n$ and invoking \eqref{nonlin:ort} and \eqref{eq:Hilb:sp}, it follows that
	\begin{align}\label{ineq:zeta:0}
	|\zeta^n|^2 - |\zeta^{n-1}|^2 + |\zeta^n - \zeta^{n-1}|^2 + 2 \nu \dt |\nabla \zeta^n |^2 
	= -2 \dt (\bv^{n-1} \cdot \nabla \xi^n, \zeta^n) - 2 \beta \dt |\Pi_K \zeta^n|^2.
	\end{align}
	Invoking \eqref{ineq:nonlin:a:0} with $a = 1/2$ and Young's inequality, we estimate the nonlinear term above as
	\begin{align}\label{ineq:nonlin:term:per}
	2 \dt |(\bv^{n-1} \cdot \nabla \xi^n, \zeta^n)| 
	&\leq \tilde{c} \dt |\zeta^{n-1}| |\nabla \xi^n | |\zeta^n|^{1/2} |\nabla \zeta^n |^{1/2} 
	\notag \\
	&\leq \tilde c \frac{\dt}{(\nu \beta)^{1/2}} |\zeta^{n-1}|^2 |\nabla \xi^n|^2 + \beta \dt |\zeta^n|^2 + \nu \dt |\nabla \zeta^n |^2,
	\end{align}
	for some absolute constant $\tilde c > 0$. Thus, from \eqref{ineq:zeta:0},
	\begin{align*}
	|\zeta^n|^2 - |\zeta^{n-1}|^2 + |\zeta^n - \zeta^{n-1}|^2 +  \nu \dt |\nabla \zeta^n |^2
	\leq \tilde c \frac{\dt}{(\nu  \beta)^{1/2}} |\zeta^{n-1}|^2 |\nabla \xi^n |^2 + \beta \dt |\zeta^n|^2 - 2 \beta \dt |\Pi_K \zeta^n|^2.
	\end{align*}
	
	With inequality \eqref{ineq:Poincare:N}, we estimate the last term in the left-hand side as
	\begin{align}\label{ineq:PiK}
	\nu \dt |\nabla \zeta^n |^2 
	= \nu \dt (|\nabla \Pi_K \zeta^n |^2 + |\nabla (I - \Pi_K) \zeta^n |^2)
	&\geq \nu \dt (|\nabla \Pi_K \zeta^n |^2 + \lambda_{K+1} |(I - \Pi_K) \zeta^n|^2) 
	\notag\\
	&\geq \nu \dt |\nabla \Pi_K \zeta^n |^2 + 2 \beta \dt |(I - \Pi_K) \zeta^n|^2,
	\end{align}
	where in the last inequality we invoked the hypotheses that $\nu \lambda_{K+1} \geq 2 \beta$, \eqref{cond:K:beta}.
	After rearranging terms, we deduce that
	\begin{align}\label{ineq:zeta}
	|\zeta^n|^2 - |\zeta^{n-1}|^2 + |\zeta^n - \zeta^{n-1}|^2 + \nu \dt |\nabla \Pi_K \zeta^n |^2 + \beta \dt |\zeta^n|^2 
	\leq \tilde c \frac{\dt}{(\nu  \beta)^{1/2}} |\zeta^{n-1}|^2 |\nabla \xi^n |^2.
	\end{align}
	In particular, after ignoring the third and fourth terms from the left-hand side of \eqref{ineq:zeta}, we obtain
	\begin{align*}
	(1 + \beta \dt) |\zeta^n|^2 
	\leq \left( 1 + \tilde c \dt\frac{|\nabla \xi^n |^2}{(\nu  \beta)^{1/2}} \right)|\zeta^{n-1}|^2 \quad \mbox{ for all } n \in \NN.
	\end{align*}
	Therefore, by induction,
	\begin{align}\label{ineq:zeta:2}
	|\zeta^n|^2 
	\leq \frac{|\zeta_0|^2}{(1 + \beta \dt)^n} \prod_{j=1}^n \left( 1 + \tilde c \dt \frac{|\nabla \xi^j |^2}{(\nu  \beta)^{1/2}} \right) 
	\leq \frac{|\zeta_0|^2}{(1 + \beta \dt)^n} \exp\left( \sum_{j=1}^n \tilde c \dt \frac{|\nabla \xi^j |^2}{(\nu  \beta)^{1/2}} \right), 
	\end{align}
	where in the last inequality we used that $1 + x \leq e^x$, for all $x \in \mathbb{R}$. Taking expected values on both sides of \eqref{ineq:zeta:2}, we thus obtain
	\begin{align}\label{ineq:zeta:3}
	\bE |\zeta^n|^2 
	\leq
	\frac{|\zeta_0|^2}{(1 + \beta \dt)^n} \bE \exp\left( \sum_{j=1}^n \tilde{c} \dt \frac{|\nabla \xi^j |^2}{(\nu  \beta)^{1/2}} \right).
	\end{align}
	
	Now let $\sm = \tilde c (\nu^3  \beta)^{-1/2}$. From assumption \eqref{cond:beta:0} on $\beta$ it is clear that $\sm$ satisfies condition \eqref{condgammasup} from  \cref{prop:exp:bounds:sup}. Thus, from \eqref{expbounds} and \eqref{ineq:zeta:3} we obtain that
	\begin{align}\label{E:diff:xi:txi:1}
	\bE |\zeta^n|^2 
	\leq \frac{| \zeta_0 |^2}{(1 + \beta \dt)^n} \frac{ \tilde{c} \exp \left( C \alpha |\xi_0|^2 \right)}{(1 - 4 \alpha \dt | \sigma|^2)^{n/4}}.
	\end{align}
	
	Moreover, we can assume that the constant $c$ in assumption \eqref{cond:beta:0} on $\beta$ is large enough so that $\sm =  \tilde c (\nu^3  \beta)^{-1/2} \leq 1/(8 \dt_0 |\sigma|^2)$ and $\beta \geq 1/\dt_0$. 
	We then estimate
	\begin{align*}
		1 - 4 \alpha \dt | \sigma|^2 &\geq 1 - \frac{\dt}{2 \dt_0}
		= 1 - \frac{1}{2 \dt_0 \beta} \beta \dt \\
		&\geq 1 - \frac{1}{1 + \dt_0 \beta} \beta \dt
		\geq 1 - \frac{1}{1 + \beta \dt} \beta \dt
		= \frac{1}{1 + \beta \dt},
	\end{align*}
	where in the last inequality we used that $\dt \leq \dt_0$.
	Therefore,
	\begin{align*}
	(1 + \beta \dt )^n (1 - 4 \sm \dt |\sigma|^2 )^{n/4} 
	\geq (1 + \beta \dt )^n (1 + \beta \dt)^{-n/4} 
	= (1 + \beta \dt)^{3n/4},
	\end{align*}
	so that from \eqref{E:diff:xi:txi:1} we deduce
	\begin{align}\label{E:diff:xi:txi:2}
	\bE |\zeta^n|^2 
	\leq |\zeta_0|^2  \frac{\tilde{c} \exp \left( C \alpha |\xi_0|^2 \right)}{(1 + \beta \dt)^{3n/4}}.
	\end{align}
	This shows \eqref{bound:diff:xi:txi} and concludes the proof.
\end{proof}

We next recall some additional notions of distance in the space of probability measures on any measurable space $(X, \Sigma_X)$, along with some useful related inequalities. These will be particularly helpful in further estimating the second term in the right-hand side of \eqref{trg:ineq:wass}, i.e. the cost-of-control term. First, we recall that the \textit{total variation} distance between any two measures $\mu, \tmu \in \Pr(X)$ is defined as 
\begin{align}\label{def:TV}
\tv{\mu - \tmu} \coloneqq \sup_{A \in \Sigma_X} |\mu(A) - \tmu(A)|.
\end{align}
Given another measurable space $(Y, \Sigma_Y)$ and a measurable function $\phi: X \to Y$, it follows immediately from definition \eqref{def:TV} that 
\begin{align}\label{ineq:pfwd:tv}
\tv{\phi^*\mu - \phi^*\tmu} \leq \tv{\mu - \tmu},
\end{align}
where here $\phi^*\mu \in \Pr(Y)$ denotes the \emph{pushforward measure} of $\mu$ by the function $\phi$, i.e. $\phi^*\mu(A) \coloneqq \mu(\phi^{-1}(A))$ for all $A \in \Sigma_Y$.

Secondly, we recall that the \textit{Kullback-Leibler divergence} is defined as 
\begin{align}\label{def:KL}
\KL{\tmu}{\mu} := \int_X \ln \left( \frac{d \tmu }{d \mu} (\xi )\right) \tmu (d\xi),
\end{align}
for any $\mu, \tmu \in \Pr(X)$ such that $\tmu$ is absolutely continuous with respect to $\mu$, so that the Radon-Nikodym derivative $d \tmu/ d\mu$ is well-defined. When $\tmu$ is not absolutely continuous with respect to $\mu$, we set $\KL{\tmu}{\mu} := + \infty$.

Regarding these two notions of distance, we will make use of two useful inequalities from \cite{ButkovskyKulikScheutzow2019} providing estimates on the distance between the law of a $d$-dimensional Wiener process $W$ and the corresponding shifted process 
\begin{align}\label{hW:W:phi}
    \hW(t) = W(t) + \int_0^t \varphi(\tau) d\tau 
\end{align}
for some progressively measurable process $\varphi(t)$, $t \geq 0$. In the proofs below, these inequalities will be applied with $\varphi (t)=  \sum_{j=1}^\infty \psi_j \ind_{[t_{j-1},t_j)}(t)$, for $\psi_j$ as given in \eqref{def:shifted:W:2}. Specifically, denoting by $\mL(W)$ and $\mL(\hW)$ the laws of $W$ and $\hW$, respectively, it follows from \cite[Theorem A.2]{ButkovskyKulikScheutzow2019} that
\begin{align}\label{ineq:kl:phi:0}
    \KL{\mL(\hW)}{ \mL(W)}
    \leq 
    \frac{1}{2} \bE \int_0^\infty |\varphi(t)|^2 \rd t.
\end{align}
And from \cite[Theorem A.5, (A.13)]{ButkovskyKulikScheutzow2019}, we have that for any $a \in (0,1]$\footnote{In \cite[Theorem A.5, (A.13)]{ButkovskyKulikScheutzow2019}, it is actually assumed $a \in (0,1)$. In fact, inequality \eqref{ineq:tv:phi:0} also holds with $a=1$, although a slightly sharper bound is valid in this case due to \eqref{ineq:kl:phi:0} and Pinsker's inequality (see e.g. \cite[Lemma 2.5.(i)]{Tsybakov2009}). Namely, $\tv{\mL(\hW) -  \mL(W)} \leq \sqrt{\frac{1}{2} \KL{\mL(\hW)}{\mL(W)}} \leq \frac{1}{2} \left( \bE \int_0^\infty |\varphi(t)|^2 \rd t\right)^{1/2}$.}
\begin{align}\label{ineq:tv:phi:0}
    \tv{\mL(\hW) -  \mL(W)}
	 &\leq 2^{\frac{1-a}{1+a}} \left[ \bE \left( \int_0^\infty |\varphi(t)|^2 \rd t \right)^a \right]^{\frac{1}{1+a}}.
\end{align}

We also recall the following inequality providing an explicit relation between these two definitions (see e.g. \cite[inequality (2.25)]{Tsybakov2009}):
\begin{align}\label{ineq:tv:kl:2}
\tv{ \mu - \tmu } \leq 1 - \frac{1}{2} \exp \left( - \KL{\tmu}{\mu} \right)
\end{align}
for all $\mu, \tmu \in \Pr(X)$.

To further connect these definitions with the Wasserstein-like distances defined in \eqref{def:Wass:rhoe} on $\Pr(\dL^2)$, we notice that for any distance-like function $\rho: \dL^2 \times \dL^2 \to \RR^+$ such that $\rho(\xi,\txi) \leq 1$ for all $\xi,\txi \in \dL^2$, it follows as an immediate consequence of the coupling lemma \cite[Lemma 1.2.24]{KuksinShirikyan12} that 
\begin{align}\label{ineq:wass:TV}
\Wass_\rho (\mu, \tmu) \leq \tv{\mu - \tmu} \quad \mbox{ for all } \mu, \tmu \in \Pr(\dL^2). 
\end{align}

With these notations and facts in place, we now proceed with the proofs of \cref{prop:small:Wass} and \cref{prop:contr:Wass}.

\begin{proof}[Proof of \cref{prop:small:Wass}]
Fix $M > 0$, $\varepsilon >0$, $s \in (0,1]$, and let $\xi_0, \txi_0 \in \dL^2$ such that $|\xi_0| \leq M$ and $|\txi_0| \leq M$. 
	
We start with the triangle inequality as in \eqref{trg:ineq:wass} and provide an estimate of each term in the right-hand side by following the strategy described in the introduction to this section. For the first term, it follows from the definition of $\Wes$ according to \eqref{def:Wass:rhoe} and \eqref{def:rhoes}, along with H\"older's inequality, that
\begin{align}\label{W:dP:dtP:0}
	\Wes (\Pdisc(\xi_0,\cdot), \tPdisc(\txi_0,\cdot)) 
	&\leq \bE \left( 1 \wedge \frac{|\xi^n(\Pi_N \xi_0) - \txi^n( \Pi_N \xi_0; \Pi_N \txi_0)|^s}{\varepsilon} \right) 
	\notag\\
	&\leq \frac{1}{\varepsilon} \left( \bE |\xi^n( \Pi_N \xi_0) - \txi^n(\Pi_N \xi_0; \Pi_N \txi_0)|^2 \right)^{s/2}.
\end{align}
By assumption, there exists $K \in \NN$ such that \eqref{cond:K:sigma} holds. In particular, the second condition in \eqref{cond:K:sigma} implies that we can take $\beta > 0$ satisfying assumptions \eqref{cond:K:beta} and \eqref{cond:beta:0} of \cref{lem:bound:diff:xi:txi}.
It thus follows from \eqref{bound:diff:xi:txi} and \eqref{W:dP:dtP:0} that 
\begin{align}
	\Wes (\Pdisc(\xi_0,\cdot), \tPdisc(\txi_0,\cdot))
	&\leq \frac{|\xi_0 - \txi_0|^s}{\varepsilon} \frac{\tilde{c}^s \exp\left( C s (\nu^3 \beta)^{-1/2} |\xi_0|^2 \right)}{(1 + \beta \dt)^{3n s/8}}
	\label{W:dP:dtP:1:0}\\
	&= \tilde{c}^s \frac{M^s}{\varepsilon}  \frac{ \exp\left( C s (\nu^3  \beta)^{-1/2} M^2 \right)}{(1 + \beta \dt)^{3n s/8}},
	\label{W:dP:dtP:1}
\end{align}
for some absolute constant $\tilde{c} > 0$ and some constant $C> 0$ depending only on $\nu, \dt_0$.

We proceed to estimate the second term in the right-hand side of \eqref{trg:ineq:wass}. Let us denote by $\xi^n(\Pi_N \txi_0;W)$ and $\txi^n(\Pi_N \txi_0; \Pi_N \xi_0, W)$ the solutions of \eqref{disc2DSNSEv} and \eqref{eq:nudging:1}-\eqref{eq:nudging:2}, respectively, starting from $\Pi_N \txi_0 \in \HN$ and corresponding to the family $W = \{W^k\}_{k=1}^d$ of independent real-valued Brownian motions $W^k$, $k=1, \ldots, d$. Then, denoting by $\mL(Z)$ the law of a random variable $Z$, we can equivalently write the Markov transition kernels defined in \eqref{def:Pdisc} and \eqref{def:control:P} as 
\begin{align*}
\Pdisc(\txi_0, \cdot) = \mL(\xi^n(\Pi_N \txi_0;W)) \quad \mbox{ and } \quad \tPdisc(\txi_0, \cdot) = \mL(\txi^n(\Pi_N \txi_0;\Pi_N \xi_0,W)),
\end{align*}
respectively. From inequality \eqref{ineq:wass:TV}, we thus have
\begin{align}\label{est:sec:term}
	\Wes ( \tPdisc(\txi_0, \cdot), \Pdisc(\txi_0, \cdot))
	&\leq 
	\tv{\tPdisc(\txi_0, \cdot) -  \Pdisc(\txi_0, \cdot)} \notag\\
	&= \tv{\mL(\txi^n(\Pi_N \txi_0; \Pi_N \xi_0, W))  -  \mL(\xi^n(\Pi_N \txi_0;W)) }.
\end{align}

Let $\hW = \{\hW^k\}_{k=1}^d$ be the family of shifted independent Brownian motions defined in \eqref{def:shifted:W:1}-\eqref{def:shifted:W:2}. Here notice that, in the definition of $\psi_j$ in \eqref{def:shifted:W:2}, $\sigma^{-1} \Pi_K$ is well-defined due to the assumption that $\HK \subset \range{\sigma}$ in \eqref{cond:K:sigma}.
Moreover, due to the uniqueness of pathwise strong solutions of \eqref{disc2DSNSEv}, as shown in \cref{prop:wellposed:full:disc}, it follows that $\xi^n(\Pi_N \txi_0;\hW) = \txi^n(\Pi_N \txi_0; \Pi_N \xi_0, W)$ for all $n \in \NN$ almost surely. Hence, from \eqref{est:sec:term}, 
\begin{align}\label{bound:sec:term}
	\Wes ( \tPdisc(\txi_0, \cdot), \Pdisc(\txi_0, \cdot))
	\leq
	\tv{\mL(\xi^n(\Pi_N \txi_0;\hW)) -  \mL(\xi^n(\Pi_N \txi_0; W))}. 
\end{align}

It is not difficult to show that $W \in \RR^d \mapsto \xi^n(\Pi_N \txi_0;W)$ is a continuous mapping. It thus follows from \eqref{ineq:pfwd:tv} that
\begin{align}\label{ineq:tv:hW:W}
	\tv{\mL(\xi^n(\Pi_N \txi_0;\hW))  -   \mL(\xi^n(\Pi_N \txi_0; W))}
	&=
	\tv{\xi^n(\Pi_N \txi_0;\cdot)^* \mL(\hW) - \xi^n(\Pi_N \txi_0;\cdot)^*\mL(W)} \notag\\
	&\leq 
	\tv{\mL(\hW) -  \mL(W)}.
\end{align}
Together with inequality \eqref{ineq:tv:kl:2}, we thus have
\begin{align}\label{ineq:tv:hW:W:2}
	\tv{\mL(\xi^n(\Pi_N \txi_0;\hW))  -   \mL(\xi^n(\Pi_N \txi_0; W))}
	\leq
	1 - \frac{1}{2} \exp \left( -\KL{\mL(\hW)}{ \mL(W)} \right).
\end{align}

Recalling from \eqref{def:shifted:W:1} that
\begin{align*}
\hW(t) = W(t) + \int_0^t \varphi(\tau) d \tau, \quad \mbox{ where }  \varphi(\tau) \coloneqq \sum_{j=1}^\infty \psi_j \ind_{[t_{j-1},t_j)}(\tau),
\end{align*}
with $\psi_j$ as defined in \eqref{def:shifted:W:2}, we now invoke \eqref{ineq:kl:phi:0} and obtain that 
\begin{align}\label{ineq:kl:phi}
\KL{\mL(\hW)}{ \mL(W)}
\leq 
\frac{1}{2} \bE \int_0^\infty |\varphi(t)|^2 \rd t
=
\bE \dt \sum_{j=1}^\infty |\psi_j|^2 .
\end{align}

Now invoking the fact that $\sigma^{-1}$ is bounded, and once again \cref{lem:bound:diff:xi:txi}, we further estimate the right-hand side above as
\begin{align}\label{ineq:int:phi:b:0}
\bE \dt \sum_{j=1}^\infty |\psi_j|^2 
&\leq
\dt \beta^2 \|\sigma^{-1}\|^2 \sum_{j=1}^\infty \bE | \txi^j(\Pi_N \txi_0; \Pi_N \xi_0,W) - \xi^j(\Pi_N \xi_0;W) |^2
\notag \\
&\leq
\dt \beta^2 \|\sigma^{-1}\|^2 \sum_{j=1}^\infty |\xi_0 - \txi_0|^2 \frac{ \tilde{c} \exp\left( C (\nu^3 \beta)^{-1/2} |\xi_0|^2 \right)}{(1 + \beta \dt)^{3j/4}}
\end{align}
for some constant $C > 0$, and where $\|\sigma^{-1}\|$ denotes the operator norm of $\sigma^{-1}$. Notice that
\begin{align*}
\beta \dt \sum_{j=1}^\infty \frac{1}{(1 + \beta \dt)^{3j/4}} 
=
\beta \dt \frac{1}{1 -(1 + \beta \dt)^{-3/4}}
&\leq
\beta \dt \frac{1}{1 -(1 + \beta \dt)^{-1/2}}
= \beta \dt \frac{(1 + \beta \dt)^{1/2}}{(1 + \beta \dt)^{1/2} - 1} \\
&= \beta \dt \frac{(1 + \beta \dt)^{1/2}  [(1 + \beta \dt)^{1/2} + 1] }{\beta \dt}
\leq 2  (1 + \beta \dt), 
\end{align*}
so that, from \eqref{ineq:int:phi:b:0},
\begin{align}
\frac{1}{2} \bE \int_0^\infty |\varphi(t)|^2 \rd t = \bE \dt \sum_{j=1}^\infty |\psi_j|^2 
\leq
\tilde{c} \beta (1 + \beta \dt) \|\sigma^{-1}\|^2 |\xi_0 - \txi_0|^2 \exp\left( C (\nu^3  \beta)^{-1/2} |\xi_0|^2 \right)
\label{ineq:int:phi:c:0:0} \\
\leq 
\tilde{c} \beta (1 + \beta \dt) \|\sigma^{-1}\|^2 M^2 \exp\left( C (\nu^3  \beta)^{-1/2} M^2 \right).
\label{ineq:int:phi:c:0}
\end{align}
From \eqref{bound:sec:term}-\eqref{ineq:kl:phi} and \eqref{ineq:int:phi:c:0}, we thus have
\begin{align}\label{bound:sec:term:b}
	\Wes& ( \tPdisc(\txi_0, \cdot), \Pdisc(\txi_0, \cdot))
	\notag\\
	&\leq
	1 - \frac{1}{2} \exp \left\{ - \tilde{c} \beta (1 + \beta \dt) \|\sigma^{-1}\|^2 M^2 \exp\left( C (\nu^3  \beta)^{-1/2} M^2 \right) \right\}.
\end{align}

Hence, combining inequality \eqref{trg:ineq:wass} with the estimates \eqref{W:dP:dtP:1} and \eqref{bound:sec:term:b}, it follows that
\begin{align}\label{ineq:small:Wass:0}
    \Wes ( \Pdisc(\txi_0, \cdot), \Pdisc(\txi_0, \cdot))
	&\leq
	\tilde{c}^s \frac{M^s}{\varepsilon}  \frac{\exp\left( C s (\nu^3 \beta)^{-1/2} M^2 \right)}{(1 + \beta \dt)^{3n s/8}}
	\\
	&\qquad +
	1 - \frac{1}{2} \exp \left\{ - \tilde{c} \beta \left(1 + \beta \dt_0 \right) \|\sigma^{-1}\|^2 M^2 \exp\left( C (\nu^3  \beta)^{-1/2} M^2 \right) \right\},
	\notag
\end{align}
where we have used that $\dt \leq \dt_0$ to further estimate the right-hand side of \eqref{bound:sec:term:b}. 

To arrive at \eqref{ineq:small:Wass}, we use the fact that for any constant $0 < a < 1$ we have $\ln(1 + x) \geq ax $ for all $x \in [0, (1/a) - 1]$. In particular, taking $a = 1/(1 + \beta \dt_0)$ we have $\beta \dt \leq \beta \dt_0 = (1/a) - 1$, so that
\begin{align}\label{ineq:n:dt}
\frac{1}{(1 + \beta \dt)^{3ns/8}} = \exp \left( - \frac{3ns}{8} \ln (1 + \beta \dt) \right)
\leq \exp \left( - \frac{3s}{8} \frac{\beta}{1 + \beta \dt_0} n \dt \right). 
\end{align}
Hence, we can fix a time $T_1 > 0$ depending on $M, s, \varepsilon, \beta, \sigma, \dt_0$ such that for all $n \in \NN$ with $n \dt \geq T_1$ the first term in the right-hand side of \eqref{ineq:small:Wass:0} can be estimated as
\begin{align}\label{est:first:small}
    \tilde{c}^s \frac{M^s}{\varepsilon}  \frac{\exp\left( C s (\nu^3  \beta)^{-1/2} M^2 \right)}{(1 + \beta \dt)^{3n s/8}}
    &\leq
    \tilde{c}^s \frac{M^s}{\varepsilon} \exp\left( C s (\nu^3 \beta)^{-1/2} M^2 \right)  \exp \left( - \frac{3s}{8} \frac{\beta}{1 + \beta \dt_0} n \dt \right)
    \notag \\
    &\leq 
    \frac{1}{4}  \exp \left\{ - \tilde{c} \beta \left( 1 + \beta \dt_0 \right)  \|\sigma^{-1}\|^2 M^2 \exp\left( C (\nu^3 \beta)^{-1/2} M^2 \right) \right\}.
\end{align}

From \eqref{ineq:small:Wass:0} and \eqref{est:first:small}, we thus conclude
\begin{align*}
     \Wes ( \Pdisc(\txi_0, \cdot), \Pdisc(\txi_0, \cdot))
	&\leq
	1 -  \frac{1}{4}  \exp \left\{ - \tilde{c} \beta \left(1 + \beta \dt_0 \right) \|\sigma^{-1}\|^2 M^2 \exp\left( C (\nu^3  \beta)^{-1/2} M^2 \right) \right\}
	\\
	&\quad \eqqcolon 1 - \kappa_1.
\end{align*}
This shows \eqref{ineq:small:Wass} and concludes the proof. 
\end{proof}

\begin{proof}[Proof of \cref{prop:contr:Wass}]
Fix $\kappa_2 \in (0,1)$ and $r > 0$. Let $\xi_0, \txi_0 \in \dL^2$ satisfying $\rhoes(\xi_0, \txi_0) < 1$, for $\rhoes$ as defined in \eqref{def:rhoes}. Note that this implies $\rhoes(\xi_0, \txi_0) = |\xi_0 - \txi_0|^s/\varepsilon$. 

We proceed analogously as in the proof of \cref{prop:small:Wass}, starting with the inequality \eqref{trg:ineq:wass}. For the first term in the right-hand side of \eqref{trg:ineq:wass}, we first estimate as in \eqref{W:dP:dtP:1:0}. Then, choose $s \in (0,1]$ such that
\begin{align}\label{choice:s}
    \frac{ C s}{(\nu^3  \beta)^{1/2}} \leq r,
\end{align}
with $C > 0$ as in  \eqref{W:dP:dtP:1:0} and \eqref{ineq:int:phi:c:0:0}.
Here we recall that $\beta > 0$ is fixed so that assumptions \eqref{cond:K:beta} and \eqref{cond:beta:0} of \cref{lem:bound:diff:xi:txi} hold, which is possible due to the second condition in the standing assumption \eqref{cond:K:sigma}.

With this choice of $s$, it follows from \eqref{W:dP:dtP:1:0} that
\begin{align}\label{est:first:term:contr}
    \Wes (\Pdisc(\xi_0,\cdot), \tPdisc(\txi_0,\cdot))
    \leq
    \tilde{c}^s \frac{|\xi_0 - \txi_0|^s}{\varepsilon} \frac{\exp\left( r |\xi_0|^2 \right)}{(1 + \beta \dt)^{3n s/8}}.
\end{align}

For the second term in the right-hand side of \eqref{trg:ineq:wass}, we proceed as in \eqref{est:sec:term}-\eqref{ineq:tv:hW:W}, and then invoke \eqref{ineq:tv:phi:0} to obtain that for any $a \in (0,1]$
\begin{align}\label{est:sec:term:contr}
    \Wes ( \tPdisc(\txi_0, \cdot), \Pdisc(\txi_0, \cdot))
	\leq 
	\tv{\mL(\hW) -  \mL(W)}
	 &\leq 2^{\frac{1-a}{1+a}} \left[ \bE \left( \int_0^\infty |\varphi(t)|^2 \rd t \right)^a \right]^{\frac{1}{1+a}}.
\end{align}
By H\"older's inequality, together with estimate \eqref{ineq:int:phi:c:0:0}, we have
\begin{multline}\label{E:phi:a}
    2^{\frac{1-a}{1+a}} \left[ \bE \left( \int_0^\infty |\varphi(t)|^2 \rd t \right)^a \right]^{\frac{1}{1+a}} 
	\leq
	2^{\frac{1-a}{1+a}} \left[ \bE \int_0^\infty |\varphi(t)|^2 \rd t  \right]^{\frac{a}{1+a}}
	\\
	\leq 
	2^{\frac{1-a}{1+a}} \left( \tilde{c} \beta (1 + \beta \dt) \|\sigma^{-1}\|^2 \right)^{\frac{a}{1+a}} |\xi_0 - \txi_0|^{\frac{2a}{1+a}} \exp\left( \frac{C a}{(1+a) (\nu^3 \beta)^{1/2}} |\xi_0|^2 \right).
\end{multline}
In particular, choosing $a \in (0,1]$ such that $2a /(1 + a) = s$, with $s \in (0,1]$ as fixed in \eqref{choice:s}, it follows from \eqref{est:sec:term:contr}, \eqref{E:phi:a} and \eqref{choice:s} that 
\begin{align}\label{est:sec:term:contr:1}
	\Wes ( \tPdisc(\txi_0, \cdot), \Pdisc(\txi_0, \cdot))
	&\leq 	
	2^{1-s} \left( \tilde{c} \beta (1 + \beta \dt) \|\sigma^{-1}\|^2 \right)^{s/2} |\xi_0 - \txi_0|^s \exp \left( r |\xi_0|^2  \right)
	\notag\\
	&\leq
	2^{1-s} \left( \tilde{c} \beta \left(1 + \beta \dt_0 \right) \|\sigma^{-1}\|^2 \right)^{s/2} |\xi_0 - \txi_0|^s \exp \left( r |\xi_0|^2  \right).
\end{align}
Thus, from \eqref{trg:ineq:wass}, \eqref{est:first:term:contr}, \eqref{est:sec:term:contr:1}, and since $\rhoes(\xi_0, \txi_0) = |\xi_0 - \txi_0|^s/\varepsilon$, it follows that
\begin{multline}\label{ineq:contr}
    \Wes (\Pdisc(\xi_0, \cdot), \Pdisc(\txi_0, \cdot)) 
    \\
    \leq 
    \left[ \frac{\tilde{c}^s}{(1 + \beta \dt)^{3ns/8}} + \varepsilon 2^{1-s} \left( \tilde{c} \beta \left(1 + \beta \dt_0 \right) \|\sigma^{-1}\|^2 \right)^{s/2} \right]  \exp \left( r |\xi_0|^2  \right) \rhoes(\xi_0, \txi_0)
\end{multline}
for every $n \in \NN$.

In particular, by estimating $1/(1 + \beta \dt)^{3ns/8} \leq 1$, we deduce that \eqref{ineq:contr:Wass:0} holds with 
\begin{align*}
    C(\varepsilon,s) = \tilde{c}^s +  \varepsilon 2^{1-s} \left( \tilde{c} \beta \left(1 + \beta \dt_0 \right) \|\sigma^{-1}\|^2 \right)^{s/2}.
\end{align*}

Moreover, proceeding as in \eqref{ineq:n:dt} and choosing $T_2 = T_2(\kappa_2,r) > 0$ and $\varepsilon = \varepsilon(\kappa_2,r) > 0$ such that
\begin{align*}
	\tilde{c}^s \exp \left( - \frac{3s}{8} \frac{ \beta}{1 + \beta \dt_0} T_2  \right) + \varepsilon 2^{1-s} \left( \tilde{c} \beta \left( 1 + \beta \dt_0 \right) \|\sigma^{-1}\|^2 \right)^{s/2} 
	\leq 
	\kappa_2,
\end{align*}
we conclude from \eqref{ineq:contr} that \eqref{ineq:contr:Wass} holds for every $n \in \NN$ with $n \dt \geq T_2$, as desired.
\end{proof}

We conclude this section by combining the above results to deduce a proof of \cref{thm:sp:gap:disc:SNSE}.

\begin{proof}[Proof of \cref{thm:sp:gap:disc:SNSE}]
Fix $N \in \NN$ and $\dt \leq \dt_0$. Following the notation from \cref{thm:gen:sp:gap}, we take $(\gsp, \gn{\cdot}) = (\dL^2, |\cdot|)$, $\mI = \dt \ZZ^+$, $\{\gP_t\}_{t \in \mI}$ given by $\Pdisc$, $n \in \ZZ^+$, and $\gfd$ as the class of distance functions defined in \eqref{def:rhoes}. It follows from \cref{prop:Lyap:exp:disc}, \cref{prop:small:Wass} and \cref{prop:contr:Wass} that assumptions \ref{A:1}, \ref{A:2} and \ref{A:3} of \cref{thm:gen:sp:gap} are satisfied in this setting. Thus, from \eqref{Wass:contr:gen:2} we obtain that for every $m > 1$ there exists $\sm_m > 0$ such that for each $\sm \in (0, \sm_m)$ there exist $\varepsilon > 0$, $s \in (0,1]$, $T > 0$, and constants $C_1, C_2 > 0$ for which the following holds 
\begin{align}\label{sp:gap:disc:1}
	\Wesa(\mu \Pdisc, \tmu \Pdisc) \leq C_1 e^{-n \dt C_2} \Wass_{\varepsilon, s, \sm/m} (\mu, \tmu)
\end{align}
for every $\mu, \tmu \in \Pr(\dL^2)$, and all $n \in \ZZ^+$ such that $n \dt \geq T$. 

Now take any $t \in \RR^+$ with $t \geq \dt_0 + T$, and let $n_0 \coloneqq \inf_{n \in \ZZ^+} \{ n \dt_0 \geq T\}$. It follows that $(n_0 - 1)\dt_0 < T \leq t - \dt_0$, and hence $t \geq n_0 \dt_0 \geq n_0 \dt$ for all $\dt \leq \dt_0$. Thus there exists $n \in \ZZ^+$ for which $t \in [n\dt, (n+1) \dt)$, so that from definition \eqref{def:Pdisct} we have $\Pdisct = \Pdisc$. Therefore, 
\begin{align}\label{sp:gap:disc:2}
	\Wesa(\mu \Pdisct, \tmu \Pdisct) = \Wesa(\mu \Pdisc, \tmu \Pdisc) 
	&\leq C_1 e^{-n \dt C_2} \Wass_{\varepsilon, s, \sm/m} (\mu, \tmu) \notag \\
	&\leq C_1 e^{\dt_0 C_2} e^{-t C_2} \Wass_{\varepsilon, s, \sm/m} (\mu, \tmu)
\end{align}
for every $\mu, \tmu \in \Pr(\dL^2)$. Moreover, according to the dependence of the constants $C_1$, $C_2$ made explicit in the statement of \cref{thm:gen:sp:gap}, it follows from \cref{prop:Lyap:exp:disc} that $C_1$ and $C_2 $ depend \emph{only} on $m$, $\sm$, $T$, $\nu$, $|\sigma|$, $\dt_0$. Hence, we may take the supremum in \eqref{sp:gap:disc:2} with respect to $N \in \NN$ and $\dt$ in $(0,\dt_0]$ to conclude that \eqref{Wass:contr:Pdisc} holds for every $t \geq \dt_0 + T$. 
\end{proof}

\subsection{Finite-time strong error estimates for the numerical scheme}\label{subsubsec:fin:tim:err:L2}

In this section, we present an estimate of the error between a solution $\xi(t)$, $t  \geq 0$, of \eqref{2DSNSEv}, and a solution $\xfd^n$, $n \in \ZZ^+$, of the numerical scheme \eqref{disc2DSNSEv}, in a suitable strong sense. This will be used later in \cref{subsec:wk:conv:NSE} to show a uniform weak convergence result for the family of Markov semigroups $\{\Pdisc\}_{n \in \ZZ^+}$, defined in \eqref{def:Pdisc:0}, as an application of \cref{thm:gen:wk:conv:2}. Specifically, it will be used to verify assumption \ref{H:4} in \cref{thm:gen:wk:conv}.

For this purpose, we split the error $|\xi(n \dt) - \xfd^n|$ into the spatial discretization error $|\xi(n\dt) - \xn(n\dt)|$ and the time discretization error $|\xn(n\dt) - \xfd^n|$. Here we recall that $\xn(t)$, $t \geq 0$, denotes a solution of the spectral Galerkin discretization scheme \eqref{2DSNSEvGal}. Concretely, we obtain a strong $L^2(\Omega)$ estimate of the spatial discretization error with respect to the topology in $L^\infty_{\text{loc},t} L^2_x$. For the time discretization error, due to limitations associated to the nonlinear terms in \eqref{2DSNSEvGal} and \eqref{disc2DSNSEv}, we are only able to obtain strong convergence in $L^\smx(\Omega;L^\infty_{\text{loc},t} L^2_x)$ for sufficiently small $\smx > 0$. As we show later in \cref{prop:fin:time:err:wass}, this is however compensated in the Wasserstein error estimate by the presence of the Lyapunov function $\xi \mapsto \exp(\sm |\xi|^2)$ in the definition of $\rhoesa$, thanks to the associated Lyapunov inequalities from \cref{prop:Lyap:exp:disc} and \cref{prop:exp:mom:cont:2}.

We start by providing an estimate of the spatial discretization error.

\begin{Prop}\label{prop:spatial:error}
	Fix any $N \in \NN$, $\sigma \in \bdH^1$, and $\xi_0 \in \dH^1$. Let $\xi = \xi(t)$ and $\xn = \xn(t)$ be the solutions of \eqref{2DSNSEv} and \eqref{2DSNSEvGal}, satisfying $\xi(0) = \xi_0$ and $\xn(0) = \Pi_N \xi_0$ almost surely, respectively. Then, for every $\sm$ satisfying
	\begin{align*}
	0 < \sm \leq \frac{\nu }{2|\sigma|^2},
	\end{align*}
	it follows that, for every $T > 0$,
	\begin{align}\label{ineq:lem:spatial:error}
		\bE \sup_{t \in [0,T]} | \xi(t) - \xn(t)|^2 \leq	\frac{C}{N} \left[  \exp(c \sm |\xi_0|^2 ) +  |\nabla \xi_0|^2 \right],
	\end{align}
	for some positive constant $C$ depending only on $\nu,|\sigma|, |\nabla \sigma|, \sm, T$.
\end{Prop}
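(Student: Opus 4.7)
The plan is a direct energy estimate for $\zeta(t) \coloneqq \xi(t) - \xi_N(t)$, which solves
\begin{align*}
d\zeta + [-\nu \Delta \zeta + (\bu \cdot \nabla \xi - \Pi_N(\bu_N \cdot \nabla \xi_N))] dt = (I - \Pi_N) \sigma\, dW, \qquad \zeta(0) = (I - \Pi_N)\xi_0.
\end{align*}
The Poincar\'e inequality \eqref{ineq:Poincare:N} already delivers the target $N^{-1}$ scaling for both the initial condition, $|\zeta(0)|^2 \leq \lambda_{N+1}^{-1}|\nabla \xi_0|^2$, and the noise intensity $|(I-\Pi_N)\sigma|^2 \leq \lambda_{N+1}^{-1}|\nabla \sigma|^2$. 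Applying It\^o's formula to $|\zeta|^2$ yields
\begin{align*}
d|\zeta|^2 + 2\nu |\nabla \zeta|^2 dt = -2 I(t)\, dt + |(I - \Pi_N)\sigma|^2 dt + 2(\zeta, (I - \Pi_N)\sigma \, dW),
\end{align*}
where $I(t) \coloneqq (\bu \cdot \nabla \xi - \Pi_N(\bu_N \cdot \nabla \xi_N), \zeta)$.

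The central algebraic step is the decomposition of $I(t)$. Writing $\bu - \bu_N = \mK \ast \zeta$, using the orthogonality identities $(\bu_N \cdot \nabla \zeta, \zeta) = 0 = ((\mK \ast \zeta)\cdot \nabla \zeta, \zeta)$ from \eqref{nonlin:ort}, and the self-adjointness of $\Pi_N$ together with $(I-\Pi_N)\zeta = (I-\Pi_N)\xi \eqqcolon \tilde\xi$ (since $\xi_N \in \Pi_N \dL^2$), one arrives at
\begin{align*}
I(t) = -((\mK \ast \zeta) \cdot \nabla \zeta, \xi_N) + (\bu_N \cdot \nabla \xi_N, \tilde\xi).
\end{align*}
For the \emph{coercivity loss} term I will apply \eqref{ineq:nonlin:a:0} with $a = 1/2$ together with Young's inequality to obtain $|((\mK \ast \zeta)\cdot \nabla \zeta, \xi_N)| \leq (\nu/4)|\nabla \zeta|^2 + C|\zeta|^2 |\nabla \xi_N|^{4/3}$; the pairing with $|\nabla \xi_N|^{4/3}$ is preferable to the alternative $|\xi_N|^4$ since $\int_0^T |\nabla \xi_N|^2 ds$ enjoys sharp exponential moments via \cref{prop:exp:mom:cont}. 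For the \emph{projection error} term I will integrate by parts through \eqref{nonlin:ort:0} to rewrite it as $-(\bu_N \cdot \nabla \tilde\xi, \xi_N)$ and then bound it by combining standard product inequalities for the nonlinearity with the spectral smallness $|\tilde\xi|_{L^2} \leq \lambda_{N+1}^{-1/2}|\nabla \xi|$; parabolic smoothing from \cref{prop:sup:xn:nabla} supplies the extra regularity of $\xi$ on $(0, T]$ despite $\xi_0 \in \dH^1$ only. After a further Young step this contributes a source scaling as $\lambda_{N+1}^{-1} \sim N^{-1}$ with bounded moments.

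To close the resulting inequality, whose chief feature is the unbounded multiplicative term $C|\zeta|^2 |\nabla \xi_N|^{4/3}$, I will employ the random integrating factor $\Phi(t) \coloneqq \exp\left(-C\int_0^t |\nabla \xi_N(s)|^{4/3} ds\right)$ and apply the product rule to $\Phi|\zeta|^2$ to absorb the offending contribution. Taking $\sup_t$ followed by expectation, and applying the Burkholder--Davis--Gundy inequality to the stochastic integral, reduces the estimate to bounding $\bE \Phi^{-1}$ together with moments of the source. Here H\"older's inequality yields $\int_0^T |\nabla \xi_N|^{4/3} ds \leq T^{1/3}\left(\int_0^T |\nabla \xi_N|^2 ds\right)^{2/3}$, and combining $x^{2/3} \leq 1 + x$ with the super-Lyapunov estimate of \cref{prop:exp:mom:cont} produces the desired $\exp(c\sm|\xi_0|^2)$ prefactor in \eqref{ineq:lem:spatial:error}, while \cref{prop:exp:mom:cont:2} and \cref{prop:sup:xn:nabla} supply the remaining moment bounds on the source. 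The main obstacle is precisely this interplay between an unbounded multiplicative coefficient and the need to take $\sup_t$ inside the expectation; it is the Lyapunov-weighted integrating factor, coupled with the dissipation-driven exponential moments characteristic of the SNSE, that makes the argument close.
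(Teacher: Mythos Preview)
Your approach is viable but takes a genuinely different route from the paper. The paper first splits $\xi - \xi_N = (\Pi_N\xi - \xi_N) + (I-\Pi_N)\xi$. The high-mode piece is bounded directly via \eqref{ineq:Poincare:N} and the $\dH^1$ moment bound \eqref{ineq:sup:nabla:xn}. The low-mode piece $\zeta_N \coloneqq \Pi_N\xi - \xi_N$ then satisfies a \emph{random ODE with no stochastic integral} (the $\Pi_N\sigma\,dW$ terms cancel) and with \emph{zero initial data}; hence a purely pathwise Gronwall argument applies --- no It\^o formula, no BDG, no martingale to thread through the integrating factor. The price is three nonlinear terms instead of your two (cf.\ \eqref{eq:nonl:term}), each yielding either a clean $\lambda_{N+1}^{-1}$ source or the Gronwall multiplier $|\nabla\xi|^{4/3}|\zeta_N|^2$. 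Your direct approach on the full difference retains the noise and nonzero initial data, which forces the stochastic Gronwall machinery you sketch; this can be made to close, but the step ``take $\sup_t$, then expectation, then recover $|\zeta|^2$ from $\Phi|\zeta|^2$'' is more delicate than your summary suggests, since $\Phi^{-1}(T)$ is not adapted and the BDG contribution has to be absorbed back into the left-hand side.

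The one point that does not work as written is your treatment of the projection-error term $(\bu_N\cdot\nabla\xi_N,\tilde\xi)$. After the integration by parts you obtain $-(\bu_N\cdot\nabla\tilde\xi,\xi_N)$, in which $\tilde\xi$ is differentiated, so the bound $|\tilde\xi|\leq\lambda_{N+1}^{-1/2}|\nabla\xi|$ is no longer directly usable; and even without the IBP, that bound alone produces only an $N^{-1/2}$ source, not $N^{-1}$, so no ``further Young step'' can recover the missing factor. The clean fix, which also makes the appeal to parabolic smoothing unnecessary, is to observe that $\tilde\xi = (I-\Pi_N)\zeta$ and hence $|\tilde\xi|\leq\lambda_{N+1}^{-1/2}|\nabla\zeta|$. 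Then \eqref{ineq:nonlin:b} gives $|(\bu_N\cdot\nabla\xi_N,\tilde\xi)|\leq c\lambda_{N+1}^{-1/2}|\xi_N|^{1/2}|\nabla\xi_N|^{3/2}|\nabla\zeta|$, and Young against the dissipation yields $\tfrac{\nu}{4}|\nabla\zeta|^2 + C\lambda_{N+1}^{-1}|\xi_N||\nabla\xi_N|^3$, delivering the required $N^{-1}$ scaling with a source whose moments are controlled by \cref{prop:sup:xn:nabla}.
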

\begin{proof}
	We start by estimating the spatial discretization error by its low mode and high mode components, namely
	\begin{align}\label{split:err:high:low}
	\bE \sup_{t \in [0,T]} | \xi(t) - \xn(t)|^2 
	&= \bE \sup_{t \in [0,T]} | (\Pi_N \xi(t) - \xn(t)) + (I - \Pi_N) \xi(t) |^2 \notag \\
	&\leq 2\bE \sup_{t \in [0,T]} | \Pi_N \xi(t) - \xn(t)|^2 + 2 \bE \sup_{t \in [0,T]} |(I - \Pi_N) \xi(t)|^2.
	\end{align}
	For the second term, it follows from \eqref{ineq:Poincare:N} and the analogous version of the bound \eqref{ineq:sup:nabla:xn} for $\xi(t)$, $t \geq 0$, that 
	\begin{align}\label{ineq:E:QN:xi}
	\bE \sup_{t \in [0,T]} |(I - \Pi_N) \xi(t)|^2
	\leq
	\lambda_{N+1}^{-1} \bE \sup_{t \in [0,T]} |\nabla \xi(t)|^2
	\leq N^{-1} C (1 + |\xi_0|^4 + |\nabla \xi_0|^2 ),
	\end{align}
	for some positive constant $C = C(\nu, T, |\sigma|, |\nabla \sigma|)$, where we have also used that $\lambda_j \sim j$ as recalled in \cref{subsubsec:2DSNSE}.
	
	We proceed to estimate the first term in \eqref{split:err:high:low}. Let us denote $\zn = \Pi_N \xi - \xn$ and $\bv_N = \mK \ast \zn$. Applying the projection $\Pi_N$ to \eqref{2DSNSEv}, we have
	\begin{align}\label{2DSNSEv:proj}
	\rd \Pi_N \xi + \left[ - \nu \Delta \Pi_N \xi + \Pi_N (\bu \cdot \nabla \xi) \right] \rd t = \sum_{k=1}^d \Pi_N \sigma_k \rd W^k.
	\end{align}
	Subtracting \eqref{2DSNSEvGal} from \eqref{2DSNSEv:proj}, we obtain that $\zn$ satisfies
	\begin{align*}
	\rd \zn + \left[ - \nu \Delta \zn + \Pi_N \left( \bu \cdot \nabla \xi - \bu_N \cdot \nabla \xn \right) \right] \rd t = 0.
	\end{align*}
	Hence, it follows by It\^o formula that
	\begin{align}\label{eq:energy:zN}
	\rd |\zn|^2 + 2 \nu | \nabla \zn |^2 \rd t = - 2 \left( \bu \cdot \nabla \xi - \bu_N \cdot \nabla \xn , \zn \right) \rd t.
	\end{align}
	Denote $\QN = I - \Pi_N$. Notice that
	\begin{align*}
	\bu \cdot \nabla \xi - \bu_N \cdot \nabla \xn = 
	\bu \cdot \nabla \QN \xi + \QN \bu \cdot \nabla \Pi_N \xi + \Pi_N \bu \cdot \nabla \zn + \bv_N \cdot \nabla \Pi_N \xi - \bv_N \cdot \nabla \zn.
	\end{align*}
	Thus, due to the orthogonality property \eqref{nonlin:ort},
	\begin{align}\label{eq:nonl:term}
	\left( \bu \cdot \nabla \xi - \bu_N \cdot \nabla \xn , \zn \right) = \left( \bu \cdot \nabla \QN \xi , \zn \right) + \left( \QN \bu \cdot \nabla \Pi_N \xi ,\zn \right) + \left( \bv_N \cdot \nabla \Pi_N \xi, \zn \right).
	\end{align}
	We proceed to estimate each term in the right-hand side of \eqref{eq:nonl:term}. Invoking \eqref{nonlin:ort:0}, \eqref{ineq:nonlin:b}, and \eqref{ineq:Poincare:N}, we obtain
	\begin{multline}\label{est:nonlin:zN:1}
	|\left( \bu \cdot \nabla \QN \xi , \zn \right)| = |\left( \bu \cdot \nabla \zn, \QN \xi \right)|  \leq c |\xi|^{1/2} |\nabla \xi|^{1/2} |\nabla \zn| | \QN \xi| \\
	\leq \frac{c}{ \lambda_{N+1}^{1/2}} |\xi|^{1/2} |\nabla \xi|^{3/2} |\nabla \zn| 
	\leq \frac{\nu}{6} |\nabla \zn|^2 + \frac{c}{\nu \lambda_{N+1}} |\xi| \, |\nabla \xi|^3.
	\end{multline}
	
	Moreover, it follows from \eqref{ineq:nonlin:a:0} with $a = 1/2$ that
	\begin{align}\label{est:nonlin:zN:2}
	\left| \left( \bv_N \cdot \nabla \Pi_N \xi , \zn \right) \right|  \leq  c  |\nabla \Pi_N \xi | \,  |\zn|^{3/2} \,  |\nabla \zn|^{1/2}  
	\leq \frac{\nu}{6} |\nabla \zn|^2  +  \frac{c}{\nu^{1/3}} |\nabla \xi|^{4/3} |\zn|^2,
	\end{align}
	and
	\begin{multline} \label{est:nonlin:zN:3}
	\left|  \left( \QN \bu \cdot \nabla \Pi_N \xi ,\zn \right)   \right|   
	=   \left|  \left( \QN \bu \cdot \nabla \zn ,  \Pi_N \xi  \right)   \right|  
	\leq c  |\QN \xi | \, |\nabla \zn| \,  |\Pi_N \xi |^{1/2} \,  |\nabla \Pi_N \xi |^{1/2}  \\
	\leq \frac{c}{\lambda_{N+1}^{1/2}}  |\xi|^{1/2} |\nabla \xi|^{3/2} |\nabla \zn| 
	\leq \frac{\nu}{6} |\nabla \zn|^2 + \frac{c}{\nu \lambda_{N+1}} |\xi| \, |\nabla \xi|^3.
	\end{multline}
	Hence, from \eqref{eq:energy:zN}, we obtain
	\begin{align*}
	\frac{d}{dt} |\zn|^2  +  \nu  |\nabla \zn|^2  \leq  \frac{c}{\nu^{1/3}} |\nabla \xi|^{4/3}  |\zn|^2   + \frac{c}{\nu \lambda_{N+1}} |\xi| \, |\nabla \xi|^3.
	\end{align*}
	Ignoring the second term in the right-hand side and applying Gronwall's inequality, recalling that $\zn(0) = 0$, it follows that
	\begin{align}\label{ineq:Gronw:zN}
	|\zn (t)|^2   \leq   \frac{c}{\nu  \lambda_{N+1}}  \int_0^t  |\xi(s)| \, |\nabla \xi(s)|^3  \exp \left( \frac{c}{\nu^{1/3}} \int_s^t  |\nabla \xi(\tau) |^{4/3}  d \tau  \right)  ds.
	\end{align}
	For some $\sm > 0 $ to be appropriately chosen later, we estimate
	\begin{align}\label{ineq:grad:xi}
	\frac{c}{\nu^{1/3}}  \int_s^t  |\nabla \xi(\tau) |^{4/3}  d \tau   
	\leq  \int_s^t  \left(  \frac{\sm \nu }{2}  |\nabla \xi(\tau)|^2  + C_\sm  \right)  d \tau 
	\leq  \frac{\sm \nu}{2}  \int_0^T |\nabla \xi(\tau)|^2  d \tau   +   C_\sm T,
	\end{align}
	where $C_\sm = c(\sm^2 \nu^3)^{-1}$ for some positive absolute constant $c$. 
	
	Plugging \eqref{ineq:grad:xi} into \eqref{ineq:Gronw:zN}, and taking the supremum over $t \in [0,T]$, expected values, and applying H\"older's inequality, it follows that
	\begin{align}\label{ineq:bE:zN}
	\bE&  \sup_{t \in [0,T]} |\zn(t)|^2  \notag\\
	&\leq  c  \frac{ e^{C_\sm T}}{\nu \lambda_{N+1}}   \left(  \bE \exp \left(  \sm \nu \int_0^T  |\nabla \xi(\tau)|^2 d \tau   \right)  \right)^{1/2}  \left(   \bE  \left(  \int_0^T |\xi(s)| \, |\nabla \xi(s)|^3  ds   \right)^2   \right)^{1/2}.
	\end{align}
	
	Choosing $0 < \sm \leq \nu /(2| \sigma|^{2})$ and invoking \cref{prop:exp:mom:cont}, we estimate the first term between parentheses above as 
	\begin{align}\label{E:exp:nablaxi}
	\bE \exp \left(  \sm \nu \int_0^T  |\nabla \xi(\tau)|^2 d \tau   \right) 
	\leq 2 \exp \left( \sm|\xi_0|^2 \right) \exp \left( \sm |\sigma|^2 T \right).
	\end{align}
	
	For the last term in \eqref{ineq:bE:zN}, 
	we estimate 
	\begin{align*}	
	\bE  \left(  \int_0^T |\xi(s)| \, |\nabla \xi(s)|^3  ds   \right)^2
	&\leq
	\bE \left[ \sup_{t \in [0,T]} |\xi(t)|^2 |\nabla \xi(t)|^2 \left( \int_0^T |\nabla \xi(s)|^2 ds \right)^2 \right] \\
	&\leq
	C \bE \left[ \left( \sup_{t \in [0,T]} \left(  |\xi(t)|^2  + \nu \int_0^t |\nabla \xi(s)|^2 ds \right)^3 \right) \left( \sup_{t \in [0,T]}| \nabla \xi(t)|^2 \right) \right].
	\end{align*}
	Thus, after applying H\"older's inequality, we obtain from the analogous versions of inequalities \eqref{ineq:sup:nabla:xn} and \eqref{ineq:sup:xn:Hk} with $k=0$ satisfied by $\xi(t)$ that
	\begin{align}\label{E:xi:nablaxi:3}
	\bE  \left(  \int_0^T |\xi(s)| \, |\nabla \xi(s)|^3  ds   \right)^2
	&\leq
	C (1 + |\xi_0|^6) (1 + |\xi_0|^4 + |\nabla \xi_0|^2) \notag \\
	&\leq C \exp(c\sm |\xi_0|^2 ) (1 + |\nabla \xi_0|^2 ), 
	\end{align}
	for some positive constant $C$ depending on $\nu$, $\sm$, $T$, $|\nabla \sigma|$.
	
	Plugging \eqref{E:exp:nablaxi} and \eqref{E:xi:nablaxi:3} into \eqref{ineq:bE:zN}, we deduce that
	\begin{align}\label{ineq:E:zn}
	\bE  \sup_{t \in [0,T]} |\zn(t)|^2  
	\leq
	\frac{C}{N} \exp(c\sm |\xi_0|^2 ) (1 + |\nabla \xi_0| ),
	\end{align}
	where we again used that $\lambda_j \sim j$.
	
	Now combining \eqref{ineq:E:zn} with \eqref{ineq:E:QN:xi}, it follows from \eqref{split:err:high:low} that
	\begin{align*}
	\bE \sup_{t \in [0,T]} | \xi(t) - \xn(t)|^2 
	&\leq
	\frac{2 C}{N} \exp(c\sm |\xi_0|^2 ) (1 + |\nabla \xi_0| )
	+ \frac{2 C}{N}  (1 + |\xi_0|^4 + |\nabla \xi_0|^2 ) \\
	&\leq
	\frac{C}{N} \left[  \exp(c \sm |\xi_0|^2 ) + |\nabla \xi_0|^2 \right].
	\end{align*}
	This finishes the proof.
\end{proof}

We proceed by showing an estimate of the time discretization error as mentioned above. We note that a related result is obtained in \cite{BessaihMillet2019} (see also \cite{BessaihMillet2021,BessaihMillet2022}), where the authors consider instead the velocity formulation of the 2D stochastic Navier-Stokes equations subject to periodic boundary conditions and either multiplicative or additive noise as in \eqref{2DSNSEv}. In particular, for a semi-implicit Euler time discretization under additive noise analogously as in \eqref{disc2DSNSEv2}, their result yields a strong $L^2(\Omega)$ estimate of the discretization error for the approximating velocity fields under the topology of $L^\infty_{\text{loc},t} L^2_x$ and with order of convergence $1/4$. Our result below provides instead a strong $L^p(\Omega)$ bound for the approximating vorticity fields in $L^\infty_{\text{loc},t} L^2_x$, which implies an error bound for the corresponding velocity fields in $L^\infty_{\text{loc},t} H^1_x$, for $p$ sufficiently small, and with higher order of convergence $1/2$. Notably, according to \eqref{cond:sm:smx} below it follows that $p$ can be almost $2$ as $\nu \to \infty$. The main difference in our proof in relation to \cite{BessaihMillet2019} concerns the definition of the appropriate localization set in the sample space. Here, we consider a sequence of localization sets which are related to a suitable sequence of discrete stopping times, see \eqref{def:kappal} and \eqref{def:OmegalK} below.

\begin{Prop}\label{prop:time:disc:error}
	Fix any $N \in \NN$, $\dt, \dt_0 > 0$ with $\dt \leq \dt_0$, $\sigma \in \bdH^1$, and $\xi_0 \in \dH^2$. Let $\xn(t)$, $t \geq 0$, and $\xfd^n$, $n \in \ZZ^+$, be the solutions of \eqref{2DSNSEvGal} and \eqref{disc2DSNSEv} satisfying $\xn(0) = \Pi_N \xi_0$ and $\xfd^0 = \Pi_N \xi_0$, respectively. Then there exist positive absolute constants $c_1,c_2$ such that if
	\begin{align}\label{cond:sm:smx}
	0 < \sm \leq \frac{c_1}{|\sigma|^2} \min\left\{ \nu , \frac{1}{\dt_0}\right\}, 
	\quad \mbox{ and } \quad 
	0 < \smx <  \frac{2 \nu^2 \sm}{c_2 + \nu^2 \sm}, 
	\end{align}
	then for every $K \in \NN$ and $\tp \in (0,1/2)$
	\begin{align}\label{ineq:time:disc:error}
	\bE \sup_{k \leq K}|\xfd^k(\xi_0) - \xn(t_k; \xi_0)|^\smx \leq 
	C \dt^{\tp\smx} (1 + |\nabla \xi_0|^4 + |A\xi_0|^2)^{\smx/2}  \exp \left( \tC \sm |\xi_0|^2 \right),
	\end{align}
	where $\tC = c(1 + \nu \dt_0)$ 
	and $C = C( \tp, \smx, \nu, \dt_0, T,|\sigma|, |\nabla \sigma|,\sm)$, with $T \geq (K + 1)\dt$. Notably, $C$ and $\tC$ are independent of $N$ and $\dt$.
\end{Prop}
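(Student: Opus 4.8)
The plan is to estimate the time discretization error $\zfd^k \coloneqq \xfd^k - \xn(t_k)$ by deriving an energy-type recursion for $|\zfd^k|^2$ and then controlling the accumulated error via a discrete Gronwall argument localized on a suitable sequence of sample-space events. First I would write the equation satisfied by $\zfd^k$ by subtracting the Galerkin equation \eqref{2DSNSEvGal}, evaluated at $t_k$ using its mild/integral form, from the semi-implicit scheme \eqref{disc2DSNSEv2}. Crucially, the stochastic increments match exactly (both use $\Pi_N \sigma(W(t_k) - W(t_{k-1}))$), so the noise contributes no error term directly; the error is entirely driven by (i) the implicit-vs-continuous treatment of the viscous term $\nu \Delta$, (ii) the time-lag in the transport velocity $\bu_{N,\dt}^{k-1}$ versus $\bu_N(s)$ for $s \in [t_{k-1},t_k]$, and (iii) the freezing of $\xi$ inside the nonlinearity over each subinterval. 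Testing the $\zfd^k$-equation against $\zfd^k$ and using the Hilbert identity \eqref{eq:Hilb:sp} and orthogonality \eqref{nonlin:ort}, I would obtain
\begin{align*}
|\zfd^k|^2 - |\zfd^{k-1}|^2 + |\zfd^k - \zfd^{k-1}|^2 + 2\nu\dt|\nabla\zfd^k|^2 \leq \dt\, G_k,
\end{align*}
where $G_k$ collects the bilinear cross-terms and the consistency remainders. The cross-terms of the form $(\bv_N^{k-1}\cdot\nabla\xn(t_k), \zfd^k)$ with $\bv_N^{k-1} = \mK * \zfd^{k-1}$ would be handled exactly as in \eqref{ineq:nonlin:term:per} using \eqref{ineq:nonlin:a:0} with $a=1/2$ and Young's inequality, absorbing the $\nu\dt|\nabla\zfd^k|^2$ gradient term.

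The key quantitative input for the consistency remainders is the H\"older-$\tp$ time regularity of $\xn$ in both $\dL^2$ and $\dH^1$, namely \eqref{holder:reg:L2}--\eqref{holder:reg:H1} from \cref{thm:holder:reg}, together with the higher moment bounds \eqref{ineq:sup:nabla:xn}, \eqref{ineq:sup:xn:Hk} (with $k=1,2$, hence requiring $\xi_0 \in \dH^2$ as in the hypothesis) and the exponential moment bound \eqref{E:sup:exp:xi:T} of \cref{prop:exp:mom:cont}. The remainder terms should each carry a factor $\dt^{\tp}$ times a polynomial in the $\dH^1,\dH^2$ norms of $\xn$ over $[0,T]$, and the transport remainder additionally a factor like $\int_{t_{k-1}}^{t_k}|\nabla\xn(s)| ds$. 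After iterating the recursion from $0$ to $k$ (noting $\zfd^0 = 0$), I would arrive at a bound of the shape
\begin{align*}
\sup_{k\leq K}|\zfd^k|^2 \leq C\dt^{2\tp}\,\Phi(T)\exp\Big(\tilde c\,\dt\sum_{j=1}^{K}|\nabla\xn(t_j)|^2\Big),
\end{align*}
where $\Phi(T)$ is a polynomial expression in $\sup_{[0,T]}(\|\xn\|_{\dH^1}^2 + \nu\int_0^\cdot\|\xn\|_{\dH^2}^2)$ and similar quantities. The exponential prefactor arises because the $|\zfd^{j-1}|^2|\nabla\xn(t_j)|^2$ cross-term forces a multiplicative Gronwall rather than additive one.

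The main obstacle — and the reason the conclusion is only in $L^\smx(\Omega)$ for small $\smx$ rather than $L^2(\Omega)$ — is that the exponential Gronwall factor $\exp(\tilde c\,\dt\sum_j|\nabla\xn(t_j)|^2)$ is not integrable against the $L^2$-norm of the polynomial factor $\Phi(T)$: one only controls $\bE\exp(\sm\nu\int_0^T|\nabla\xn|^2)$ for $\sm \lesssim \nu/|\sigma|^2$, via \cref{prop:exp:mom:cont}, and $\dt\sum_j|\nabla\xn(t_j)|^2$ must be compared to $\int_0^T|\nabla\xn|^2$. To handle this I would introduce, as the authors announce, a sequence of discrete stopping times: set $\kappa_\ell$ to be the first index $k$ at which $\dt\sum_{j=1}^k|\nabla\xn(t_j)|^2$ exceeds level $\ell$ (cf. \eqref{def:kappal}), and the localization events $\Omega_{\ell,K} = \{\kappa_\ell > K\}$ (cf. \eqref{def:OmegalK}). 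On $\Omega_{\ell,K}$ the Gronwall factor is at most $e^{\tilde c\ell}$, so one controls $\bE[\ind_{\Omega_{\ell,K}}\sup_k|\zfd^k|^\smx]$ cleanly; on the complement one uses that $\bP(\Omega_{\ell,K}^c) \leq e^{-\sm\nu\ell}\bE\exp(\sm\nu\dt\sum_j|\nabla\xn(t_j)|^2) \leq e^{-\sm\nu\ell}\cdot 2e^{\sm|\xi_0|^2}e^{\sm|\sigma|^2 T}$ by Chebyshev and the exponential moment bound (here one needs $\dt\sum_j|\nabla\xn(t_j)|^2 \leq \int_0^T|\nabla\xn|^2 + (\text{lower order})$, using $\xn$ solves \eqref{2DSNSEvGal}). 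Then $\sup_k|\zfd^k|^\smx \leq C\,\dt^{\tp\smx}$ on $\Omega_{\ell,K}^c$ only in a crude deterministic-in-$\ell$ way that still decays, and summing over $\ell$ with the geometric decay $e^{-\sm\nu\ell}$ against $e^{\tilde c\ell}$ — which converges precisely when the smallness constraint on $\smx$ relative to $\sm$ in \eqref{cond:sm:smx} holds — yields
\begin{align*}
\bE\sup_{k\leq K}|\zfd^k|^\smx \leq C\dt^{\tp\smx}(1 + |\nabla\xi_0|^4 + |A\xi_0|^2)^{\smx/2}\exp(\tilde C\sm|\xi_0|^2),
\end{align*}
with all constants independent of $N$ and $\dt$ since every intermediate moment bound invoked (\cref{prop:sup:xn:nabla}, \cref{prop:exp:mom:cont}, \cref{thm:holder:reg}) is uniform in $N$, and the only $\dt$-dependence is through $\dt \leq \dt_0$. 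The interplay between the Gronwall exponent and the exponential-moment exponent is exactly what pins down the admissible range $0 < \smx < 2\nu^2\sm/(c_2 + \nu^2\sm)$, and verifying that this bookkeeping closes is the delicate part of the argument.
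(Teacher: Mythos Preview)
Your overall strategy---energy recursion for $\zfd^k$, localization via discrete stopping times, H\"older splitting into level sets, geometric-series summation constrained by the relation between $\smx$ and $\sm$---matches the paper's approach closely. However, there are two substantive deviations from the paper's proof, one of which creates a genuine gap.

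\textbf{The Gronwall factor involves the wrong gradient.} You write the cross-term as $(\bv_N^{k-1}\cdot\nabla\xn(t_k),\zfd^k)$, which after Young's inequality feeds $|\nabla\xn(t_j)|^2$ into the discrete Gronwall. You then need $\dt\sum_j|\nabla\xn(t_j)|^2 \leq \int_0^T|\nabla\xn|^2 + (\text{lower order})$ to invoke the continuous exponential moment \eqref{E:sup:exp:xi:T}. But this Riemann-sum-to-integral comparison is not a consequence of the equation in any direct way: the only available control on $|\nabla\xn(t_j)-\nabla\xn(s)|$ is the H\"older estimate \eqref{holder:reg:H1}, which holds in expectation, not pathwise, and exponentiating it inside the Gronwall factor is not justified. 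The paper avoids this entirely by splitting the nonlinearity so that the cross-term is $(\bv^j\cdot\nabla\xi^{j+1},\zeta^{j+1})$ with the \emph{discrete} $\xi^{j+1}=\xfd^{j+1}$; see \eqref{zn:nonlin:a}. The resulting Gronwall factor $\dt\sum_j|\nabla\xfd^j|^2$ is controlled directly by the discrete exponential moment \cref{prop:exp:bounds:sup}, with no comparison to an integral needed. This is why the paper's $\Omega_l^K$ in \eqref{def:OmegalK} tracks both $\sup_t|\xn(t)|^2$ (for the consistency-remainder coefficients) and $\dt\sum_j|\nabla\xfd^j|^2$ (for the Gronwall factor), and why the probability bound \eqref{ineq:P:OmegalK} invokes both \cref{prop:exp:mom:cont} and \cref{prop:exp:bounds:sup}.

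\textbf{The localization and Gronwall mechanics.} Your description of the good-set/bad-set decomposition is imprecise: on the complement $\Omega_{\ell,K}^c$ you have no a priori bound on $\sup_k|\zfd^k|$, so one cannot simply sum over $\ell$ in the way you suggest. The paper instead partitions $\Omega=\bigcup_l\Omega_l^K$ with $\Omega_l^K$ the level set $\{l-1\leq\cdot<l\}$, then applies H\"older as in \eqref{ineq:sup:z:smx}. Moreover, rather than a direct multiplicative Gronwall, the paper introduces a further subdivision $\kappa_l^0<\kappa_l^1<\cdots<\kappa_l^L=\kappa_l$ with $L\sim cl$ chosen so that on each sub-block the gradient sum is small enough to absorb the cross-term (cf. \eqref{choice:L}, \eqref{est:term:sup:z}); iterating then yields the factor $3^L\sim e^{cl}$ in \eqref{ineq:sup:z}. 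A direct Gronwall would give the same exponential-in-$l$ growth, so this is more a matter of bookkeeping, but the subdivision makes the dependence on the remainder terms transparent and avoids compounding the $\sup|\xn|^2$ coefficient through the product.
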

\begin{proof}
	For each $j \in \mathbb{N}$, let $\z^j_{N,\dt} = \z^j := \xfd^j - \xn(t_j) = \xi^j - \xn(t_j)$ and $\bv^j = \mK \ast \z^j$. Integrating \eqref{2DSNSEvGal} with respect to $t \in [t_j, t_{j+1}]$,
	\begin{align}\label{eq:xn:tn}
	\xn(t_{j+1}) - \xn(t_j) &+  \int_{t_j}^{t_{j+1}} \left[ - \nu \Delta \xn(s) + \Pi_N (\bu_N(s) \cdot \nabla \xn(s)) \right] \rd s  \notag\\
		&= \Pi_N \sigma (W(t_{j+1})  -  W(t_j)).
	\end{align}
	Thus, subtracting \eqref{eq:xn:tn} from \eqref{disc2DSNSEv} with $n=j+1$, we obtain
	\begin{align*}
	\z^{j+1} - \z^j - \nu \int_{t_j}^{t_{j+1}} (\Delta \xi^{j+1} - \Delta \xn(s)) \rd s  +  \int_{t_j}^{t_{j+1}}  \left[  \Pi_N  \left( \bu^j \cdot \nabla \xi^{j+1}  \right)  -   \Pi_N  (\bu_N(s) \cdot \nabla \xn(s))  \right] \rd s  =  0
	\end{align*}
	Taking the inner product with $\z^{j+1}$ in $\dL^2$ yields
	\begin{multline}\label{eq:energy:zn}
	|\z^{j+1}|^2 + |\z^{j+1} -  \z^j|^2  -  |\z^j|^2   = 
	2 \nu \int_{t_j}^{t_{j+1}}  (\Delta \xi^{j+1} - \Delta \xn(s), \z^{j+1}) \rd s  \\
	- 2  \int_{t_j}^{t_{j+1}}  \left( ( \bu^j \cdot \nabla \xi^{j+1} ) -   (\bu_N(s) \cdot \nabla \xn(s)) ,  \z^{j+1}  \right)\rd s
	\end{multline}
	Notice that
	\begin{align}\label{diff:dissp}
	2 \nu \int_{t_j}^{t_{j+1}} (\Delta \xi^{j+1} - \Delta \xn(s),  &\z^{j+1} ) \rd s \notag\\
	&=
	- 2 \nu \dt |\nabla \z^{j+1}|^2   +  2 \nu \int_{t_j}^{t_{j+1}} \left( \Delta \xn(t_{j+1}) - \Delta \xn(s),  \z^{j+1} \right) \rd s .
	\end{align}
	Integrating by parts the second term in the right-hand side of \eqref{diff:dissp}, then applying Cauchy-Schwarz and Young's inequalities, it follows that
	\begin{align}\label{diff:int}
	2 \nu &\left| \int_{t_j}^{t_{j+1}} \left( \Delta \xn(t_{j+1}) - \Delta \xn(s),  \z^{j+1} \right) \rd s \right| \notag\\
	&\qquad \qquad \leq  2\nu \int_{t_j}^{t_{j+1}} \! \! \! \! |\nabla \xn(t_{j+1}) - \nabla \xn(s)|  \, |\nabla \z^{j+1}|  \rd s \notag \\
	&\qquad \qquad \leq  \frac{\nu \dt}{4} |\nabla \z^{j+1}|^2  +  c \nu \int_{t_j}^{t_{j+1}}  \! \! \! \!  |\nabla \xn(t_{j+1}) - \nabla \xn(s)|^2 \rd s
	\end{align}
	
	Now for the second term in the right-hand side of \eqref{eq:energy:zn}, first notice that
	\begin{align}\label{zn:nonlin:term}
	&\left( ( \bu^j \cdot \nabla \xi^{j+1} )   -    (\bu_N(s) \cdot \nabla \xn(s)) , \z^{j+1} \right) \\
	&=
	\left( \bv^j \cdot \nabla  \xi^{j+1},  \z^{j+1} \right)  
	+   \left(  (\bu_N(t_j)  -  \bu_N(s)) \cdot \nabla \xn(t_{j+1}) ,  \z^{j+1}  \right)   -   \left( \bu_N(s) \cdot \nabla (\xn(s)  -   \xn(t_{j+1})),  \z^{j+1}   \right)
	\notag
	\end{align}
	
	We proceed to estimate each term in the right-hand side of \eqref{zn:nonlin:term}.  With \eqref{ineq:nonlin:a:0} and Young's inequality, we obtain
	\begin{align}\label{zn:nonlin:a}
	\left|  \left( \bv^j \cdot \nabla  \xi^{j+1},  \z^{j+1} \right)  \right| 
	&\leq  c  |\z^j| \, |\nabla \xi^{j+1}|  \,   |\z^{j+1}|^{1/2} |\nabla \z^{j+1}|^{1/2} \notag \\
	&\leq  c |\z^j| \, |\nabla \xi^{j+1}| \,  |\nabla \z^{j+1}|  \notag \\
	&\leq  \frac{\nu}{8} |\nabla \z^{j+1}|^2   +   \frac{c}{\nu}  |\z^j|^2 |\nabla \xi^{j+1}|^2.
	\end{align}
	Similarly,
	\begin{align}\label{zn:nonlin:c}
	\left|   \left( \bu_N(s) \cdot \nabla (\xn(s)  -   \xn(t_{j+1})),  \z^{j+1}   \right)  \right|
	\leq  \frac{\nu}{8} |\nabla \z^{j+1}|^2  +  \frac{c}{\nu} |\xn(s)|^2   |\nabla \xn(s)  -  \nabla \xn(t_{j+1}))|^2.
	\end{align}
	
	Now from \eqref{nonlin:ort:0} and \eqref{ineq:nonlin:b} we obtain
	\begin{align}\label{zn:nonlin:b}
	\left|  \left(  (\bu_N(t_j)  -  \bu_N(s)) \cdot \nabla \xn(t_{j+1}) ,  \z^{j+1}  \right)  \right| 
	&= \left|  \left(  (\bu_N(t_j)  -  \bu_N(s)) \cdot \nabla \z^{j+1}, \xn(t_{j+1})  \right)  \right|
	\notag \\ 
	&\leq c |\nabla \xn(t_j) - \nabla \xn(s)| \, |\nabla \z^{j+1}| \,  |\xn(t_{j+1})|
	\notag \\
	&\leq  \frac{\nu}{8} |\nabla \z^{j+1}|^2  +  \frac{c}{\nu}  | \nabla \xn(t_j)  -  \nabla \xn(s) |^2  |\xn(t_{j+1})|^2.
	\end{align}
	
	With \eqref{diff:dissp}-\eqref{zn:nonlin:b}, we obtain from \eqref{eq:energy:zn} the following inequality valid for every $j \in \mathbb{N}$
	\begin{multline}\label{eq:energy:zn:b} 
	|\z^{j+1}|^2 + |\z^{j+1} -  \z^j|^2  -  |\z^j|^2  +  \nu \dt |\nabla \z^{j+1}|^2  \\
	\leq
	c \nu \int_{t_j}^{t_{j+1}}   |\nabla \xn(t_{j+1}) - \nabla \xn(s)|^2 \rd s
	+  \frac{c \dt }{\nu}  |\z^j|^2 |\nabla \xi^{j+1}|^2\\
	+  \frac{c}{\nu}  |\xn(t_{j+1})|^2  \int_{t_j}^{t_{j+1}} | \nabla \xn(t_j)  -  \nabla \xn(s) |^2  \rd s 
	+  \frac{c}{\nu}  \int_{t_j}^{t_{j+1}} |\xn(s)|^2   |\nabla \xn(s)  - \nabla \xn(t_{j+1}))|^2 \rd s.
	\end{multline}
	
	Fix $K \in \mathbb{N}$. For each $l \in \mathbb{N}$, we define the following discrete stopping time
	\begin{align}\label{def:kappal}
	\kappa_l := \min \left\{ k \geq 1 \,:\, \sup_{t \in [0, t_{k+2}]} \frac{|\xn(t)|^2}{\nu^2} + \frac{3 \dt}{\nu} \sum_{j=1}^{k+2} |\nabla \xi^j|^2 \geq l  \right\} \wedge K.
 	\end{align}
	We also define a corresponding family of discrete stopping times $\kappa_l^i$, $i = 0,1,\ldots, L$, for some $L = L(l) \in \mathbb{N}$ to be suitably chosen, given by
	\begin{align*}
	\kappa_l^0 := 0; \quad
	\kappa_l^i := \min \left\{  k \geq \kappa_l^{i-1} + 1 \,:\,  \frac{\dt}{\nu}  \sum_{j = \kappa_l^{i-1} + 1}^{k+2} |\nabla \xi^j|^2 \geq \frac{l}{L}  \right\} \wedge \kappa_l ,
	\quad  i = 1, \ldots, L.
	\end{align*}
	It is not difficult to show that $\kappa_l^{i-1} < \kappa_l^i$ for all $i = 1,\ldots, L$, and $\kappa_l^L = \kappa_l$.
	
	Ignoring the second and fourth terms in the left-hand side of \eqref{eq:energy:zn:b} and summing over $j = \kappa_l^{i-1}, \ldots, k$, for $\kappa_l^{i-1} \leq k \leq \kappa_l^i$ and $i \in \{1, \ldots, L\}$, it follows that
	\begin{multline}\label{eq:energy:z:sum}
	|\z^{k+1}|^2 - |\z^{\kappa_l^{i-1}}|^2 
	\leq 
	c \nu \sum_{j = \kappa_l^{i-1}}^{\kappa_l^i} \int_{t_j}^{t_{j+1}}   |\nabla \xn(t_{j+1}) - \nabla \xn(s)|^2 \rd s
	+ \frac{c \dt}{\nu} \sup_{\kappa_l^{i-1} \leq j \leq \kappa_l^i } |\z^j|^2 \sum_{j = \kappa_l^{i-1} + 1}^{\kappa_l^i+1} | \nabla \xi^j|^2 
	\\
	+ \frac{c}{\nu} \sup_{t \in [0, t_{\kappa_l^i + 1}] } |\xn(t)|^2 \sum_{j = \kappa_l^{i-1}}^{\kappa_l^i} \int_{t_j}^{t_{j+1}} \left[ | \nabla \xn(t_j)  -  \nabla \xn(s) |^2  +       |\nabla \xn(s)  - \nabla \xn(t_{j+1}))|^2 \right] \rd s.
	\end{multline}
	
	Notice that, by the definition of $\kappa_l^i$, $i = 1, \ldots, L$, we have
	\begin{align*}
	\frac{\dt}{\nu}  \sum_{j = \kappa_l^{i-1} + 1}^{\kappa_l^i + 1} |\nabla \xi^j|^2 \leq \frac{l}{L} \quad \mbox{ for all } i = 1, \ldots, L.
	\end{align*}
	Choose $L \in \mathbb{N}$ as
	\begin{align}\label{choice:L}
	L = \min \left\{ j \in \mathbb{N} \,:\, cl/j \leq 1/2  \right\},
	\end{align}
	with $c > 0$ as in the second term in the right-hand side of \eqref{eq:energy:z:sum}. We then estimate this term as
	\begin{align}\label{est:term:sup:z}
	\frac{c \dt}{\nu} \sup_{\kappa_l^{i-1} \leq j \leq \kappa_l^i } |\z^j|^2 \sum_{j = \kappa_l^{i-1} + 1}^{\kappa_l^i+1} | \nabla \xi^j|^2 
	\leq 
	c \frac{l}{L}\sup_{\kappa_l^{i-1} \leq j \leq \kappa_l^i } |\z^j|^2 
	\leq
	\frac{1}{2} \sup_{\kappa_l^{i-1} \leq j \leq \kappa_l^i + 1} |\z^j|^2. 
	\end{align}
	Notice that \eqref{eq:energy:z:sum} is in fact valid for all $k = (\kappa_l^{i-1} - 1), \kappa_l^{i-1}, \ldots, \kappa_l^i$. Thus, taking in \eqref{eq:energy:z:sum} the supremum over $k =(\kappa_l^{i-1} - 1), \kappa_l^{i-1}, \ldots, \kappa_l^i$ and invoking \eqref{est:term:sup:z}, we obtain
	\begin{multline}\label{eq:energy:z:sup}
	\sup_{\kappa_l^{i-1} \leq j \leq \kappa_l^i + 1} |\z^j|^2 
	\leq
	2 |\z^{\kappa_l^{i-1}}|^2 
	+ c \nu \sum_{j = \kappa_l^{i-1}}^{\kappa_l^i } \int_{t_j}^{t_{j+1}}   |\nabla \xn(t_{j+1}) - \nabla \xn(s)|^2 \rd s
	\\
	+ \frac{c}{\nu} \sup_{t \in [0, t_{\kappa_l^i + 1}] } |\xn(t)|^2 \sum_{j = \kappa_l^{i-1}}^{\kappa_l^i} \int_{t_j}^{t_{j+1}} \left[ | \nabla \xn(t_j)  -  \nabla \xn(s) |^2  +       |\nabla \xn(s)  - \nabla \xn(t_{j+1}))|^2 \right] \rd s.
	\end{multline}
	
	From the definition of $\kappa_l$ and since $\kappa_l^i \leq \kappa_l^L = \kappa_l$ for all $i=1,\ldots,L$, we have
	\begin{align}\label{est:term:sup:xi}
	\sup_{t \in [0, t_{\kappa_l^i + 1}]} \frac{|\xn(t)|^2}{\nu^2}
	\leq 
	\sup_{t \in [0, t_{\kappa_l + 1}] } \frac{|\xn(t)|^2}{\nu^2}
	\leq
	\sup_{t \in  [0, t_{\kappa_l + 1}] } \frac{|\xn(t)|^2}{\nu^2}
	+ \frac{3 \dt}{\nu} \sum_{j=1}^{\kappa_l + 1} |\nabla \xi^j|^2 
	\leq l.
	\end{align}
	Using \eqref{est:term:sup:xi} to estimate the third term in the right-hand side of \eqref{eq:energy:z:sup}, it follows that
	\begin{align*}
	\sup_{\kappa_l^{i-1} \leq j \leq \kappa_l^i + 1} |\z^j|^2 
	\leq
	2 |\z^{\kappa_l^{i-1}}|^2 + \mA_i,
	\end{align*}
	where
	\begin{align}\label{def:Ai}
	\mA_i := 
	c \nu l  \sum_{j = \kappa_l^{i-1}}^{\kappa_l^i} \int_{t_j}^{t_{j+1}} \left[ | \nabla \xn(t_j)  -  \nabla \xn(s) |^2  +       |\nabla \xn(s)  - \nabla \xn(t_{j+1}))|^2 \right] \rd s.
	\end{align}
	
	Hence, for every $i = 1, \ldots, L$,
	\begin{align*}
	\sup_{j \leq \kappa_l^i} |\z^j|^2 
	&\leq \sup_{j \leq \kappa_l^{i - 1} } |\z^j|^2 + \sup_{ \kappa_l^{i -1 } \leq j \leq \kappa_l^i + 1} |\z^j|^2 
	\\
	&\leq  \sup_{j \leq \kappa_l^{i - 1} } |\z^j|^2 + 2  |\z^{\kappa_l^{i-1}}|^2 + \mA_i
	\\
	&\leq 3 \sup_{j \leq \kappa_l^{i - 1} } |\z^j|^2 + \mA_i.
	\end{align*}
	By induction, it follows that for every $l \in \mathbb{N}$ and $L = L(l) \in \mathbb{N}$ as in \eqref{choice:L},
	\begin{align}\label{ineq:sup:z}
	\sup_{j \leq \kappa_l} |\z^j|^2 = \sup_{j \leq \kappa_l^L } |\z^j|^2 
	\leq 3^L |\z^0|^2 + \sum_{i = 1}^L 3^{L - i} \mA_i 
	=  \sum_{i = 1}^L 3^{L - i} \mA_i,
	\end{align}
	since $\z^0 = 0$. 
	
	Now, for $K \in \mathbb{N}$ as in \eqref{def:kappal}, we define for every $l \in \mathbb{N}$
	\begin{align}\label{def:OmegalK} 
	\Omega_l^K := \left\{  \omega \in \Omega \,:\,  l - 1 \leq \sup_{t \in [0, t_{K+2}]} \frac{|\xn(t)|^2}{\nu^2}  +  \frac{3 \dt}{\nu} \sum_{j=1}^{K+2} |\nabla \xi^j|^2 < l  \right\}.
	\end{align}
	Since $\Omega = \bigcup_{l \in \mathbb{N}} \Omega_l^K$, we obtain for any $\smx >0$
	\begin{align}\label{ineq:sup:z:smx}
	\bE \sup_{j \leq K} |\z^j|^\smx
	= \bE \left( \sup_{j \leq K} |\z^j|^\smx \sum_{l=1}^\infty \ind_{\Omega_l^K} \right)
	&= \sum_{l = 1}^\infty \bE \left( \sup_{j \leq K} |\z^j|^\smx \ind_{\Omega_l^K}  \right)
	\notag \\
	&\leq \sum_{l=1}^\infty \left(  \bE \sup_{j \leq K} |\z^j|^2  \ind_{\Omega_l^K} \right)^{\smx/2} \bP(\Omega_l^K)^{\frac{2 - \smx}{2}},
	\end{align}
	where the last step follows from H\"older's inequality. Moreover, from the definition of $\kappa_l$ and $\Omega_l^K$, it follows that if $\omega \in \Omega_l^K$ then $\kappa_l(\omega) = K$. Thus, from \eqref{ineq:sup:z} and \eqref{def:Ai},
	\begin{multline*}
	\bE \sup_{j \leq K} |\z^j|^2  \ind_{\Omega_l^K} 
	=
	\bE \sup_{j \leq \kappa_l} |\z^j|^2 \ind_{\Omega_l^K}
	\leq \sum_{i = 1}^L 3^{L-i} \bE ( \ind_{\Omega_l^K}  \mA_i )
	\\
	\leq c \nu l 3^{L-1} \bE  \left( \ind_{\Omega_l^K}  \sum_{i=1}^L \sum_{j = \kappa_l^{i-1}}^{\kappa_l^i} \int_{t_j}^{t_{j+1}} \left[ | \nabla \xn(t_j)  -  \nabla \xn(s) |^2  +   |\nabla \xn(s)  - \nabla \xn(t_{j+1})|^2 \right] \rd s \right)
	\\
	\leq c \nu l 3^{L-1} \bE \sum_{j=0}^{K} \int_{t_j}^{t_{j+1}} \left[ | \nabla \xn(t_j)  -  \nabla \xn(s) |^2  +   |\nabla \xn(s)  - \nabla \xn(t_{j+1})|^2 \right] \rd s 
	\\
	= c \nu l 3^{L-1} \sum_{j=0}^{K} \int_{t_j}^{t_{j+1}} \left[ \bE | \nabla \xn(t_j)  -  \nabla \xn(s) |^2  +   \bE |\nabla \xn(s)  - \nabla \xn(t_{j+1})|^2 \right] \rd s .
	\end{multline*}
	
	Fix any $T \geq (K+1) \dt$. From \cref{thm:holder:reg} with $m = 2$, we have that for every $\tp \in (0,1/2)$ and $s, t \in [0,T]$ 
	\begin{align}\label{ineq:Holder:reg:2}
	\bE |\nabla \xn(t) - \nabla \xn(s)|^2 \leq C R_0 |t - s|^{2 \tp},
	\end{align}
	where $R_0 = (1 + |\xi_0|^8 + |\nabla \xi_0|^4 + |A\xi_0|^2 )$, and $C$ is a positive constant depending on  $\tp, T, \nu, |\sigma|, |\nabla \sigma|$. 
	Hence, 
	\begin{align*}
	\bE \sup_{j \leq K} |\z^j|^2  \ind_{\Omega_l^K} 
	&\leq
	c \nu l 3^{L-1} \sum_{j=0}^{K} C R_0 \dt^{2 \tp + 1} 
	\leq C R_0 \dt^{2 \tp} l 3^{L-1},
	\end{align*}
	where $C = C(\tp, T, \nu,  |\sigma|, |\nabla \sigma|)$.
	
	Moreover, from the definition of $L$ in \eqref{choice:L} it follows that $2cl \leq L \leq 2cl + 1$, so that $l 3^{L-1} \leq 3^{cl}$ and
	\begin{align}\label{ineq:sup:z:ind}
	\bE \sup_{j \leq K} |\z^j|^2  \ind_{\Omega_l^K}  \leq  C R_0 \dt^{2 \tp} 3^{cl}.
	\end{align}
	
	From the definition of the set $\Omega_l^K$ in \eqref{def:OmegalK} and invoking Markov's inequality, it follows that for any $\tsm > 0$
	\begin{align*}
	\bP(\Omega_l^K) 
	&\leq \mathbb{P} \left( \sup_{t \in [0, t_{K+2}]} \frac{|\xn(t)|^2}{\nu^2}  +  \frac{3 \dt}{\nu} \sum_{j=1}^{K+2} |\nabla \xi^j|^2 
	\geq l - 1  \right)
	\\
	&= \mathbb{P} \left( \exp \left( \tsm \sup_{t \in [0, t_{K+2}]} \frac{|\xn(t)|^2}{\nu^2}  +  \tsm \frac{3 \dt}{\nu} \sum_{j=1}^{K+2} |\nabla \xi^j|^2 \right) \geq e^{\tsm (l - 1)}  \right)
	\\
	&\leq e^{-\tsm (l-1)} \bE \left(  \exp \left( \tsm \sup_{t \in [0, t_{K+2}]} \frac{|\xn(t)|^2}{\nu^2}  +  \tsm \frac{3 \dt}{\nu} \sum_{j=1}^{K+2} |\nabla \xi^j|^2 \right)
	\right)
	\\
	&\leq e^{-\tsm (l-1)} \left( \bE \exp \left( 2 \tsm \sup_{t \in [0, t_{K+2}]} \frac{|\xn(t)|^2}{\nu^2}  \right) \right)^{1/2}
	\left( \bE \exp \left( c \tsm \frac{\dt}{\nu} \sum_{j=1}^{K+2} |\nabla \xi^j|^2  \right)  \right)^{1/2}.
	\end{align*}
	Now we assume that $0 < \tsm \leq \tilde{c} \nu^2 |\sigma|^{-2} \min\{\nu, \dt_0^{-1}\}$ for some absolute constant $\tilde{c} > 0$ that is small enough so we can invoke the bounds \eqref{expbounds:1} from  \cref{prop:exp:bounds:sup} and \eqref{E:sup:exp:xi:T} from \cref{prop:exp:mom:cont} to obtain that
	\begin{align}\label{ineq:P:OmegalK}
	\bP(\Omega_l^K) 
	&\leq 
	c e^{-\tsm (l-1)}  \exp \left( \tC \frac{\tsm}{\nu^2} |\xi_0|^2 \right) \exp \left(c  \frac{\tsm}{\nu^2} |\sigma|^2 (K + 2) \dt \right)  
	\notag \\
	&\leq C e^{-\tsm (l-1)}  \exp \left( \tC \frac{\tsm}{\nu^2} |\xi_0|^2 \right),
	\end{align}
where $C = C(\nu, \dt_0, T, |\sigma|)$ and $\tC = c(1 + \nu \dt_0)$.
	
	Therefore, it follows from from \eqref{ineq:sup:z:smx}, \eqref{ineq:sup:z:ind}, and \eqref{ineq:P:OmegalK} that
	\begin{align}\label{ineq:E:sup:z}
	\bE \sup_{j \leq K} |\z^j|^\smx 
	&\leq  
	\sum_{l=1}^\infty \left( C R_0 \dt^{2 \tp} 3^{cl} \right)^{\smx/2}  \left( C e^{-\tsm (l-1)} \exp \left(  \tC \frac{\tsm}{\nu^2} |\xi_0|^2 \right) \right)^{\frac{2 - \smx}{2}}
	\notag\\
	&\leq 
	C R_0^{\smx/2} \exp \left( \tC \frac{\tsm}{\nu^2} |\xi_0|^2 \right) \dt^{\tp \smx} \left(\sum_{l=1}^\infty 3^{c \smx l} e^{-\frac{\tsm (2 - \smx)}{2} l } \right) e^{\tsm \frac{(2 - \smx)}{2}}.
	\end{align}
	For the terms depending on the initial datum $\xi_0$, we have by the definition of $R_0$ in \eqref{ineq:Holder:reg:2} that
	\begin{align}\label{term:xi0}
	R_0^{\smx/2}  \exp \left( \tC \frac{\tsm}{\nu^2} |\xi_0|^2 \right)
	&=
	(1 + |\xi_0|^8 + |\nabla \xi_0|^4 + |A\xi_0|^2 )^{\smx/2}  \exp \left( \tC \frac{\tsm}{\nu^2} |\xi_0|^2 \right)
	\notag\\
	&\leq
	C (1 + |\nabla \xi_0|^4 + |A\xi_0|^2)^{\smx/2}  \exp \left( \tC \frac{\tsm}{\nu^2} |\xi_0|^2 \right),
	\end{align}
	for $C = C(\smx,\nu, \dt_0, |\sigma|, \tsm)$.
	
	Moreover, for the term involving the sum in \eqref{ineq:E:sup:z}, notice that
	\begin{align}\label{term:sum}
	\sum_{l=1}^\infty 3^{c \smx l} e^{-\tsm \frac{ (2 - \smx)}{2} l } = 	\sum_{l=1}^\infty e^{\ln (3) c \smx l} e^{-\tsm \frac{ (2 - \smx)}{2} l } = 	\sum_{l=1}^\infty e^{- \gamma l},
	\end{align}
	where $\gamma = [\tsm (2 - \smx)/2]  -  c \smx$. We choose $\smx < 2\tsm / (c + \tsm)$, so that $\gamma > 0$ and, consequently, the last sum in \eqref{term:sum} is finite. We thus conclude from \eqref{ineq:E:sup:z} and \eqref{term:xi0} that
	\begin{align*}
	\bE \sup_{j \leq K} |\z^j|^\smx 
	\leq  
	C \dt^{\tp\smx} (1 + |\nabla \xi_0|^4 + |A\xi_0|^2)^{\smx/2}  \exp \left( \tC \frac{\tsm}{\nu^2} |\xi_0|^2 \right),
	\end{align*}
	for some positive constant $C = C( \tp, \smx, \nu, \dt_0, T,|\sigma|, |\nabla \sigma|, \tsm)$. 
	This shows \eqref{ineq:time:disc:error} by denoting $\sm \coloneqq \tsm \nu^{-2}$, and finishes the proof.
\end{proof}

\subsection{Uniform in time weak convergence of the numerical scheme}\label{subsec:wk:conv:NSE}

This section focuses on the application of \cref{thm:gen:wk:conv} and \cref{thm:gen:wk:conv:2} to the space-time discretization of the 2D stochastic Navier-Stokes equations introduced in \eqref{disc2DSNSEv}. Following the notation from \cref{thm:gen:wk:conv}, similarly as in \cref{subsec:Wass:contr:NSE} we take $(\gsp, \gn{\cdot}) = (\dL^2, |\cdot|)$ and again consider $\gfd$ to be the class of distance functions $\rhoes$, $\varepsilon > 0$, $s \in (0,1]$, defined in \eqref{def:rhoes}. We let $\gspmr$ be the set of pairs $\{(N,\dt)\,:\, N \in \NN, \,\, \dt > 0\} \cup \{(\infty,0)\}$. Then, for every $\pmr = (N,\dt)$ with $N \in \NN$ and $\dt > 0$, we let $\{\gP_t^\pmr\}_{t \geq 0}$ be the family of Markov kernels $\{\Pdisct\}_{t \geq 0}$ associated to the numerical scheme \eqref{disc2DSNSEv2}, defined in \eqref{def:Pdisct} and \eqref{def:Pdisc:0}. For $\pmr_0 \coloneqq (\infty,0)$, we let $\{\gP_t^{\pmr_0}\}_{t\geq 0}$ be the Markov semigroup $\{\Pcont\}_{t \geq 0}$ associated to the 2D SNSE \eqref{2DSNSEv}, and defined in \eqref{def:Pcont}.

Regarding assumptions \ref{H:1}-\ref{H:3} of \cref{thm:gen:wk:conv:2}, only \ref{H:4} requires extra work to be verified. This is done in the following proposition, whose proof follows crucially from the strong error estimates obtained in \cref{prop:spatial:error} and \cref{prop:time:disc:error} above, combined with the exponential Lyapunov inequalities from \cref{prop:Lyap:exp:disc} and \cref{prop:exp:mom:cont:2}.

\begin{Prop}\label{prop:fin:time:err:wass}
Fix any $N \in \NN$, $\dt, \dt_0 > 0$ with $\dt \leq \dt_0$, and $\sigma \in \bdH^1$. Let $\{\Pcont\}_{t\geq 0}$ and $\{\Pdisct\}_{t \geq 0}$ be the corresponding family of Markov kernels associated to systems \eqref{2DSNSEv} and \eqref{disc2DSNSEv2}, respectively, as defined in \eqref{def:Pcont} and \eqref{def:Pdisct}. Then, there exists a positive absolute constant $c_1$ such that if
\begin{align}\label{cond:sm:smx:2}
	\sm' = \frac{c_1}{|\sigma|^2} \min\left\{ \nu , \frac{1}{\dt_0}\right\}
\end{align}
then for every $\sm \in (0, \sm']$, $s \in (0,1]$, $\varepsilon > 0$, and $\tp \in (0,1/2)$, it holds that
\begin{align}\label{fin:tim:err:wass}
&\Wesa(\Pdisct(\xi_0, \cdot), \Pcont (\xi_0, \cdot)) \\
&\qquad \qquad\qquad \qquad\leq
\frac{C e^{C' t}}{\varepsilon^{1/2}} \left[  \max \{ \dt^s, \dt^{\smx/2}\}^{\tp/2}  + N^{-s/4} \right] \exp(\tC \sm' |\xi_0|^2) (1 + |\nabla \xi_0| + |A\xi_0|^{1/2})^s,
\notag
\end{align}
for every $t \geq 0$ and $\xi_0 \in \dH^2$. Here, $0< \smx <  \frac{2\nu^2\sm'}{c_2 + \nu^2 \sm'}$ for some absolute constant $c_2$, and $\tC = c(1+ \nu \dt_0)$, $C' = C'(\nu, |\sigma|)$, $C = C(\tp, \smx, \nu, \dt_0, |\sigma|, |\nabla \sigma|, \sm)$.
\end{Prop}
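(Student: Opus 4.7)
The plan is to exploit a natural synchronous coupling between the two processes and then reduce the Wasserstein estimate to pathwise strong error bounds already at hand. On a fixed stochastic basis, let $\xi(t;\xi_0)$ solve \eqref{2DSNSEv} and $\xfd^n(\xi_0)$ solve \eqref{disc2DSNSEv2}, both driven by the same Brownian motion $W$ and starting from the same deterministic datum $\xi_0 \in \dH^2$. For $t \in [n\dt,(n+1)\dt)$ the joint law of $(\xfd^n(\xi_0),\xi(t;\xi_0))$ is a coupling of $\Pdisct(\xi_0,\cdot)$ and $\Pcont(\xi_0,\cdot)$, so the definition \eqref{def:Wass:rhoe} of $\Wesa$ immediately yields $\Wesa(\Pdisct(\xi_0,\cdot),\Pcont(\xi_0,\cdot)) \leq \bE[\rhoesa(\xfd^n,\xi(t))]$.

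Two successive applications of Cauchy-Schwarz to the factorized form \eqref{def:rhoesa} of $\rhoesa$ then yield $\bE[\rhoesa(\xfd^n,\xi(t))] \leq (\bE\rhoes(\xfd^n,\xi(t)))^{1/2} (\bE\exp(4\sm|\xfd^n|^2))^{1/4} (\bE\exp(4\sm|\xi(t)|^2))^{1/4}$. Provided the absolute constant $c_1$ in \eqref{cond:sm:smx:2} is chosen small enough that $4\sm$ satisfies the hypotheses of \cref{prop:Lyap:exp:disc} and \cref{prop:exp:mom:cont:2}, the exponential Lyapunov inequalities \eqref{Lyap:disc:1} and \eqref{bE:Lyap:gal} control the two exponential factors uniformly in $t \geq 0$ by the target $\exp(\tC\sm'|\xi_0|^2)$.

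For the remaining factor I would bound $\rhoes(\xi,\txi) \leq |\xi - \txi|^s/\varepsilon$ and triangulate $\xfd^n - \xi(t) = (\xfd^n - \xn(n\dt)) + (\xn(n\dt) - \xi(n\dt)) + (\xi(n\dt) - \xi(t))$, so that the subadditivity $|a+b+c|^s \leq |a|^s + |b|^s + |c|^s$ valid for $s \in (0,1]$ reduces matters to three pieces. The spatial discretization term is handled by Jensen's inequality together with \cref{prop:spatial:error}, producing a bound of order $N^{-s/2}$ whose outer square root yields the advertised $N^{-s/4}$ contribution. The temporal H\"older remainder is controlled by Jensen and the continuous analogue of \cref{thm:holder:reg}, which yields a contribution of order $\dt^{\tp s/2}$ after the outer square root. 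In both cases, the initial-data polynomial factors absorb into $(1 + |\nabla\xi_0| + |A\xi_0|^{1/2})^s$ using elementary inequalities such as $x^{s/2} \leq 1 + x^s$ and $(a+b)^{s/2} \leq a^{s/2} + b^{s/2}$.

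The main technical obstacle is the time discretization term $\bE|\xfd^n - \xn(n\dt)|^s$, because \cref{prop:time:disc:error} only supplies an $L^\smx$ bound for the typically small exponent $\smx$ constrained by \eqref{cond:sm:smx}. When $s \leq \smx$, Jensen gives $\bE|\xfd^n - \xn(n\dt)|^s \leq (\bE|\xfd^n - \xn(n\dt)|^\smx)^{s/\smx}$, producing rate $\dt^{\tp s}$ and, after the outer square root, the $\dt^{s\tp/2}$ branch of the maximum. When $s > \smx$, I would instead split $|X|^s = |X|^{\smx/2}\cdot|X|^{s - \smx/2}$ and apply Cauchy-Schwarz to get $\bE|X|^s \leq (\bE|X|^\smx)^{1/2} (\bE|X|^{2s - \smx})^{1/2}$; the first factor supplies the slower rate $\dt^{\tp\smx/2}$, while the second is bounded uniformly in $N,\dt$ by polynomial moments of $\xfd^n$ and $\xn(n\dt)$ extracted from the exponential Lyapunov bounds of \cref{prop:Lyap:exp:disc} and \cref{prop:exp:mom:cont:2}. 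The outer square root then yields $\dt^{\tp\smx/4}$, matching the $\dt^{\smx/2}$ branch of $\max\{\dt^s,\dt^{\smx/2}\}^{\tp/2}$. Finally, the $T$-dependent constants appearing in \cref{prop:spatial:error}, \cref{prop:time:disc:error}, and the continuous H\"older bound combine into the prefactor $e^{C't}$ once one takes $T = (n+1)\dt \leq t + \dt_0$ and absorbs the $e^{C\dt_0}$ factor into the constant $C$.
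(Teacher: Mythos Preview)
Your proposal is correct and follows essentially the same route as the paper: synchronous coupling, H\"older factorization of $\rhoesa$ into a $\rhoes$ piece and two exponential pieces, then triangulation of $\xfd^n-\xi(t)$ into spatial, temporal-discretization, and H\"older-remainder parts handled respectively by \cref{prop:spatial:error}, \cref{prop:time:disc:error}, and \cref{thm:holder:reg}, with the same two-case split on $s$ versus $\smx$. The only cosmetic difference is that for $s>\smx$ the paper bounds $|X|^{s-\smx/2}\leq |\xfd^n|^{s-\smx/2}+|\xn(n\dt)|^{s-\smx/2}$ and then dominates these by exponential moments before Cauchy--Schwarz, whereas you apply Cauchy--Schwarz first and then bound $\bE|X|^{2s-\smx}$ by polynomial moments; both produce the same rate $\dt^{\tp\smx/2}$.
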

\begin{proof}
Fix any $\sm \in (0, \sm']$, $s \in (0,1]$, $\varepsilon > 0$, $\tp \in (0,1/2)$, $\xi_0 \in \dH^2$, and $t \geq 0$. 
Let $n \in \ZZ^+$ such that $t \in [n\dt, (n+1)\dt)$. It follows immediately from the definitions of $\{\Pcont\}_{t \geq 0}$, $\{\Pdisct\}_{t \geq 0}$ and $\Wesa$ that
\begin{align*}
	\Wesa(\Pdisct(\xi_0, \cdot), \Pcont(\xi_0, \cdot)) 
	&\leq 
	\Wesap(\Pdisct(\xi_0, \cdot), \Pcont(\xi_0, \cdot)) 
	= \Wesap(\Pdisc(\xi_0, \cdot), \Pcont(\xi_0, \cdot)) \\
	&\leq \frac{1}{\varepsilon^{1/2}} \bE \left[ |\xi^n(\xi_0) - \xi(t;\xi_0)|^{s/2} \exp \left( \sm' |\xi^n(\xi_0)|^2 + \sm' |\xi(t; \xi_0)|^2 \right) \right].
\end{align*}
By H\"older's inequality,
\begin{multline}\label{Wesa:Pdisc:Pcont:xi0}
	\Wesap(\Pdisct(\xi_0, \cdot), \Pcont(\xi_0, \cdot)) \\
	\qquad \leq \frac{1}{\varepsilon^{1/2}} \left( \bE |\xi^n(\xi_0) - \xi(t;\xi_0)|^s \right)^{1/2}
	\left( \bE \exp \left( 4 \sm' |\xi^n(\xi_0)|^2 \right) \right)^{1/4}
	\left( \bE \exp \left( 4 \sm' |\xi(t; \xi_0)|^2 \right) \right)^{1/4}.
\end{multline}

We assume that the constants $c_1$ in \eqref{cond:sm:smx:2} and $c_2$ in the definition of $p$ are sufficiently small so that the results of \cref{prop:Lyap:exp:disc}, \cref{prop:exp:mom:cont:2}, \cref{prop:spatial:error}, and \cref{prop:time:disc:error} can be applied in the estimates to follow. In particular, invoking the exponential Lyapunov inequalities \eqref{Lyap:disc:0} from \cref{prop:Lyap:exp:disc} and the analogous version of \eqref{bE:Lyap:gal} for $\xi(t)$, $t \geq 0$, from \cref{prop:exp:mom:cont:2}, respectively, we estimate the last two terms between parentheses in \eqref{Wesa:Pdisc:Pcont:xi0} as
\begin{align}\label{E:exp:xndisc:xi}
\left( \bE \exp \left( 4 \sm' |\xi^n(\xi_0)|^2 \right) \right)^{1/4}
\left( \bE \exp \left( 4 \sm' |\xi(t; \xi_0)|^2 \right) \right)^{1/4}
&\leq
C \exp \left( \frac{ 2 \sm' |\xi_0|^2}{(1 + \nu \lambda_1 \dt)^n} \right) \exp \left( \sm' e^{-\nu t} |\xi_0|^2 \right) \notag \\
&\leq C \exp(3 \sm' |\xi_0|^2),
\end{align}
where $C = C (\nu, \dt_0, |\sigma|)$.

Regarding the first term between parentheses in \eqref{Wesa:Pdisc:Pcont:xi0}, we first estimate as
\begin{align}\label{diff:xndisc:xi:s}
	\bE |\xi^n(\xi_0) - \xi(t;\xi_0)|^s
	\leq \bE  |\xi^n(\xi_0) - \xn(n\dt;\xi_0)|^s + \bE  |\xn(n\dt;\xi_0) - \xi(n\dt;\xi_0)|^s + \bE  |\xi(n\dt;\xi_0) - \xi(t;\xi_0)|^s,
\end{align}
where we recall that $s \in (0,1]$. We proceed to estimate the terms in the right-hand side of \eqref{diff:xndisc:xi:s} by invoking \cref{thm:holder:reg}, \cref{prop:spatial:error}, and \cref{prop:time:disc:error} with $T = t$. Here we will write the $t$-dependence of the constant $C$ from \eqref{holder:reg:L2}, \eqref{ineq:lem:spatial:error} and \eqref{ineq:time:disc:error} explicitly as $e^{C' t}$, for some constant $C' = C'(\nu, |\sigma|)$, as can easily be seen from the corresponding proofs. 

In particular, invoking inequality \eqref{ineq:lem:spatial:error} from \cref{prop:spatial:error} and H\"older's inequality, we estimate the second term in the right-hand side of \eqref{diff:xndisc:xi:s} as
\begin{align}\label{xn:xi:s}
\bE |\xn(n\dt;\xi_0) - \xi(n\dt;\xi_0)|^s
&\leq
\left( \bE |\xn(n\dt;\xi_0) - \xi(n\dt;\xi_0)|^2 \right)^{s/2} \notag \\
&\leq 
\left( \frac{C e^{C' t}}{N} 
\left[  \exp(c \sm' |\xi_0|^2 ) +  |\nabla \xi_0|^2 \right] \right)^{s/2} \notag \\
&\leq
\frac{C e^{C' t}}{N^{s/2}} \exp(c \sm' |\xi_0|^2 ) (1 + |\nabla \xi_0|^2)^{s/2},
\end{align}
where $C = C(\nu,|\sigma|, |\nabla \sigma|, \sm')$. 

For the third term in the right-hand side of \eqref{diff:xndisc:xi:s}, we invoke the analogous version of inequality \eqref{holder:reg:L2} from \cref{thm:holder:reg} with $\xi(t)$, and obtain
\begin{align}\label{xi:ndt:t}
	\bE  |\xi(n\dt;\xi_0) - \xi(t;\xi_0)|^s 
	\leq \left( \bE  |\xi(n\dt;\xi_0) - \xi(t;\xi_0)|\right)^s
	\leq C e^{C' t} \dt^{s\tp} (1 + |\xi_0|^4 + |\nabla \xi_0|^2)^s,
\end{align}
where $C = C(\tp,\nu, |\sigma|, |\nabla \sigma|)$. 

Finally, to estimate the first term in the right-hand side of \eqref{diff:xndisc:xi:s}, let us first assume that $s < \smx$, with $\smx$ as in \eqref{cond:sm:smx}. In this case, it follows by H\"older's inequality and \eqref{ineq:time:disc:error} from \cref{prop:time:disc:error} that
\begin{align}\label{xndisc:xn:s:1}
\bE |\xi^n(\xi_0) - \xn(n\dt;\xi_0)|^s
&\leq
\left( \bE |\xi^n(\xi_0) - \xn(n\dt;\xi_0)|^\smx \right)^{s/\smx} \notag \\
&\leq
\left( C e^{C' t} \dt^{\tp \smx} \exp(\tC \sm' |\xi_0|^2) (1 + |\nabla \xi_0|^4 + |A\xi_0|^2 )^{\smx/2} \right)^{s/\smx} \notag \\
&\leq
C e^{C' t} \dt^{\tp s} \exp (\tC \sm' |\xi_0|^2) (1 + |\nabla \xi_0|^4 + |A\xi_0|^2 )^{s/2},
\end{align}
where $\tC = c(1 + \nu \dt_0)$ and $C = C(\tp, \smx, \nu, \dt_0, |\sigma|, |\nabla \sigma|)$. On the other hand, if $s \geq \smx$ we proceed as follows
\begin{align*}
\bE |\xi^n(\xi_0) - \xn(n\dt;\xi_0)|^s
&=
\bE \left[ |\xi^n(\xi_0) - \xn(n\dt;\xi_0)|^{\smx/2} |\xi^n(\xi_0) - \xn(n\dt;\xi_0)|^{s - \frac{\smx}{2}} \right] \\
&\leq
\bE \left[ |\xi^n(\xi_0) - \xn(n\dt;\xi_0)|^{\smx/2} \left( |\xi^n(\xi_0)|^{s - \frac{\smx}{2}} +  |\xn(n\dt;\xi_0)|^{s - \frac{\smx}{2}} \right) \right] \\
&\leq 
C \,	\bE \left[ |\xi^n(\xi_0) - \xn(n\dt;\xi_0)|^{\smx/2} \left( \exp(\sm' |\xi^n(\xi_0)|^2 ) + \exp(\sm' |\xn(n\dt; \xi_0)|^2 )  \right) \right], 
\end{align*}
where $C = C(\sm')$. Thus, by H\"older's inequality and inequalities \eqref{Lyap:disc:0}, \eqref{bE:Lyap:gal}, and \eqref{ineq:time:disc:error} from \cref{prop:Lyap:exp:disc}, \cref{prop:exp:mom:cont:2}, and \cref{prop:time:disc:error}, respectively, we obtain that
\begin{align}\label{xndisc:xn:s:2}
&\bE |\xi^n(\xi_0) - \xn(n\dt;\xi_0)|^s \notag \\
&\qquad \leq
C \left( \bE |\xi^n(\xi_0) - \xn(n\dt;\xi_0)|^\smx \right)^{1/2} \left[ \bE \exp (2 \sm' |\xi^n(\xi_0)|^2)  + \bE \exp (2 \sm' |\xn(n\dt; \xi_0)|^2 ) \right]^{1/2} \notag \\
&\qquad \leq
C \left( e^{C' t} \dt^{\tp \smx} \exp(\tC \sm' |\xi_0|^2) (1 + |\nabla \xi_0|^4 + |A\xi_0|^2 )^{\smx/2} \right)^{1/2} \notag \\
&\qquad \qquad
\cdot \left[ \exp \left( \frac{4\sm' |\xi_0|^2}{(1 + \nu \lambda_1 \dt)^n} \right) + \exp \left( 2 \sm' e^{-\nu n \dt} |\xi_0|^2 \right)  \right]^{1/2} \notag \\
&\qquad \leq
C e^{C' t} \dt^{\tp \smx /2} \exp(\tC \sm' |\xi_0|^2) (1 + |\nabla \xi_0|^4 + |A\xi_0|^2 )^{s/4},
\end{align}
where $C = C(\tp, \smx, \nu, \dt_0, |\sigma|, |\nabla \sigma|, \sm')$. Combining \eqref{xndisc:xn:s:1} and \eqref{xndisc:xn:s:2}, it thus follows that for every $s \in (0,1]$
\begin{align}\label{xndisc:xn:s:3}
\bE |\xi^n(\xi_0) - \xn(n\dt;\xi_0)|^s
\leq	
C e^{C' t} \max \{ \dt^s, \dt^{\smx/2}\}^{\tp} \exp(\tC \sm' |\xi_0|^2) (1 + |\nabla \xi_0|^4 + |A\xi_0|^2 )^{s/2}.
\end{align}

Plugging \eqref{xn:xi:s}, \eqref{xi:ndt:t} and \eqref{xndisc:xn:s:3} into \eqref{diff:xndisc:xi:s}, we thus have
\begin{align}\label{diff:xndisc:xi:s:2}
\bE |\xi^n(\xi_0) - \xi(n\dt;\xi_0)|^s
\leq
C e^{C' t} \left[  \max \{ \dt^s, \dt^{\smx/2}\}^{\tp}  + N^{-s/2} \right] \exp(\tC \sm' |\xi_0|^2) (1 + |\nabla \xi_0|^4 + |A\xi_0|^2 )^{s/2}.
\end{align}

Now, plugging \eqref{E:exp:xndisc:xi} and \eqref{diff:xndisc:xi:s:2} into \eqref{Wesa:Pdisc:Pcont:xi0}, we deduce that
\begin{align*}
&\Wesap(\Pdisct(\xi_0, \cdot), \Pcont(\xi_0, \cdot)) \\
&\qquad \qquad\qquad \qquad\leq
\frac{C e^{C' t}}{\varepsilon^{1/2}} \left[  \max \{ \dt^s, \dt^{\smx/2}\}^{\tp/2}  + N^{-s/4} \right] \exp(\tC \sm' |\xi_0|^2) (1 + |\nabla \xi_0| + |A\xi_0|^{1/2})^s,
\end{align*}
with $C = C(\tp, \smx, \nu, \dt_0, |\sigma|, |\nabla \sigma|, \sm')$, and we recall that $\tC = c(1 + \nu \dt_0)$, $C' = C'(\nu, |\sigma|)$. This shows \eqref{fin:tim:err:wass} and concludes the proof.
\end{proof}

Before proceeding with the application of \cref{thm:gen:wk:conv} and \cref{thm:gen:wk:conv:2} within this setting, we present the following lemma showing finiteness of certain moments for the invariant measure of $\Pcont$, $t \geq 0$. This will ensure that the terms in \eqref{W:inv:meas} and \eqref{gen:w:conv} concerning $\mu_{\pmr_0}$ are finite.

\begin{Lem}\label{lem:int:inv:bounds}
	Fix any $\sigma \in \bdL^2$, and let $\cinv$ be an invariant measure of the corresponding Markov semigroup $\Pcont$, $t \geq 0$, defined in \eqref{def:Pcont}. Then, the following statements hold:
	\begin{itemize}
		\item[(i)] For every $\sm > 0$ satisfying condition \eqref{condgamma0},
		\begin{align}\label{int:exp:cinv}
			\int_{\dL^2} \exp(\sm |\xi_0|^2 ) \cinv(d\xi_0) \leq C,
		\end{align}
		for some constant $C = C(\nu, |\sigma|)$.
		\item[(ii)]	Suppose additionally that $\sigma \in \bdH^k$, for some fixed $k \in \NN$. Then, for every $m \in \NN$,
		\begin{align}\label{int:grad:xi:cinv}
			\int_{\dL^2} \| \xi_0\|_{\dH^k}^{2 m} \cinv(d \xi_0) \leq C,
		\end{align}
	for some constant $C = C(\nu, m, k , \|\sigma\|_{\dH^k})$. Consequently, $\cinv$ is supported in $\dH^k$, i.e. $$\cinv(\dL^2 \,\backslash\, \dH^k)= 0.$$
	\end{itemize}
\end{Lem}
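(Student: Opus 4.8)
\textbf{Proof proposal for \cref{lem:int:inv:bounds}.}

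The plan is to exploit the exponential moment bound \eqref{bE:Lyap:gal} from \cref{prop:exp:mom:cont:2} for part $(i)$, and the polynomial moment bounds \eqref{ineq:sup:xn:Hk} from \cref{prop:sup:xn:nabla} for part $(ii)$, in each case passing from a bound on solutions to a bound on $\cinv$ by invariance together with a Fatou/monotone convergence argument. For part $(i)$, fix $\sm$ satisfying \eqref{condgamma0}. The issue is that $\xi \mapsto \exp(\sm|\xi|^2)$ is unbounded, so $\int \exp(\sm|\xi|^2)\cinv(d\xi)$ is not \emph{a priori} finite and we cannot simply write $\int \Pcont\varphi\, d\cinv = \int \varphi\, d\cinv$. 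First I would introduce the truncated functions $\varphi_R(\xi) \coloneqq \exp(\sm (|\xi|^2 \wedge R))$, which are bounded and continuous, hence in $\mM_b(\dL^2)$. By invariance, $\int \Pcont \varphi_R \, d\cinv = \int \varphi_R \, d\cinv$ for every $t \geq 0$. On the other hand, $\varphi_R(\xi) \leq \exp(\sm|\xi|^2)$, so $\Pcont\varphi_R(\xi_0) = \bE \varphi_R(\xi(t;\xi_0)) \leq \bE \exp(\sm|\xi(t;\xi_0)|^2) \leq \exp(\sm(e^{-\nu t}|\xi_0|^2 + |\sigma|^2/\nu))$ by the continuous analogue of \eqref{bE:Lyap:gal}. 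Therefore
\begin{align*}
\int_{\dL^2} \varphi_R(\xi_0)\, \cinv(d\xi_0) = \int_{\dL^2} \Pcont\varphi_R(\xi_0)\, \cinv(d\xi_0) \leq \int_{\dL^2} \exp\!\left( \sm e^{-\nu t}|\xi_0|^2 + \sm \tfrac{|\sigma|^2}{\nu}\right)\cinv(d\xi_0).
\end{align*}
Now let $t \to \infty$: the right-hand side integrand decreases to $\exp(\sm|\sigma|^2/\nu)$ pointwise and is dominated by $\exp(\sm|\xi_0|^2 + \sm|\sigma|^2/\nu)$ --- but we do not yet know that the latter is $\cinv$-integrable. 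The cleaner route is to fix a single $t = t_0 > 0$ large enough that $\sm e^{-\nu t_0} \leq \sm/2$, giving $\int \varphi_R \, d\cinv \leq e^{\sm|\sigma|^2/\nu}\int \exp(\tfrac{\sm}{2}|\xi_0|^2)\cinv(d\xi_0)$; then iterate this bound, or more simply apply it with $\sm$ replaced by $2\sm$ (still admissible if $2\sm$ satisfies \eqref{condgamma0}, else handle the borderline case by a covering/continuity argument) to bootstrap, and finally let $R \to \infty$ invoking the Monotone Convergence Theorem on the left. The most expedient version: choose $t_0$ with $e^{-\nu t_0}\le 1/2$, so that for all $R$,
\begin{align*}
\int_{\dL^2}\varphi_R\,d\cinv \le e^{\sm|\sigma|^2/\nu}\int_{\dL^2}\exp\!\big(\tfrac{\sm}{2}|\xi_0|^2\big)\cinv(d\xi_0)\le e^{\sm|\sigma|^2/\nu}\int_{\dL^2}\varphi_R(\xi_0)^{1/2}\,d\cinv \le e^{\sm|\sigma|^2/\nu}\Big(\int_{\dL^2}\varphi_R\,d\cinv\Big)^{1/2},
\end{align*}
by Jensen/Cauchy--Schwarz; since $\int\varphi_R\,d\cinv < \infty$ for each fixed $R$ (as $\varphi_R$ is bounded), this self-improving inequality yields $\int\varphi_R\,d\cinv \le e^{2\sm|\sigma|^2/\nu}$ uniformly in $R$, and Monotone Convergence gives \eqref{int:exp:cinv} with $C = e^{2\sm|\sigma|^2/\nu}$.

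For part $(ii)$, assume $\sigma \in \bdH^k$. The strategy is the same truncation-plus-invariance scheme, now using the moment bounds for solutions from \cref{prop:sup:xn:nabla}. First I would record the continuous analogue of \eqref{ineq:sup:xn:Hk}: for the solution $\xi(t;\xi_0)$ of \eqref{2DSNSEv} with $\xi_0 \in \dL^2$, one has $\bE ( t^k \|\xi(t;\xi_0)\|_{\dH^k}^2 + \nu\int_0^t s^k\|\xi(s;\xi_0)\|_{\dH^{k+1}}^2 ds )^m \le \tC(1 + |\xi_0|^{2mp(k)})$. However, this bound grows in $|\xi_0|$, so to pass to $\cinv$ via invariance I need the $|\xi_0|$-moments of $\cinv$ to be finite --- which follows from part $(i)$ (exponential moments control all polynomial moments). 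Concretely, fix $t_0 = 1$, say. Define the truncated observables $\psi_R(\xi) \coloneqq \|\xi\|_{\dH^k}^{2m} \wedge R$ if $\xi \in \dH^k$ and $\psi_R(\xi) \coloneqq R$ otherwise; these are bounded and Borel measurable. By invariance, $\int \psi_R \, d\cinv = \int \Pcont[\psi_R] \, d\cinv$ with $t = 1$. Now
\begin{align*}
\Pcont[\psi_R](\xi_0) = \bE\, \psi_R(\xi(1;\xi_0)) \le \bE \|\xi(1;\xi_0)\|_{\dH^k}^{2m} \le \bE\Big( 1^k\|\xi(1;\xi_0)\|_{\dH^k}^2 + \nu\!\int_0^1\! s^k\|\xi(s;\xi_0)\|_{\dH^{k+1}}^2 ds\Big)^m \le \tC\big(1 + |\xi_0|^{2mp(k)}\big),
\end{align*}
using that $\Pcont$ sees only the law $\Pcont(\xi_0,\cdot) = \mL(\xi(1;\xi_0))$ (so the non-$\dH^k$ branch of $\psi_R$ is irrelevant once we bound by $\|\cdot\|_{\dH^k}^{2m}$, and the event that $\xi(1;\xi_0)\notin\dH^k$ has probability zero by the regularization in \cref{prop:sup:xn:nabla}). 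Integrating against $\cinv$,
\begin{align*}
\int_{\dL^2}\psi_R\,d\cinv \le \tC\Big(1 + \int_{\dL^2}|\xi_0|^{2mp(k)}\cinv(d\xi_0)\Big),
\end{align*}
and the right-hand side is finite and independent of $R$ by part $(i)$ (choose any admissible $\sm > 0$, then $|\xi_0|^{2mp(k)} \le C_{\sm,m,k}\exp(\sm|\xi_0|^2)$). Letting $R \to \infty$, Monotone Convergence yields $\int_{\dL^2}\|\xi_0\|_{\dH^k}^{2m}\cinv(d\xi_0) < \infty$, which is \eqref{int:grad:xi:cinv} after renaming constants; in particular the integrand is finite $\cinv$-almost everywhere, so $\cinv(\dL^2\setminus\dH^k) = 0$.

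The main obstacle, and the only genuinely delicate point, is the circularity lurking in part $(i)$: the bound we want on $\cinv$ appears on both sides of the inequality produced by invariance, so a naive application does not close. The resolution --- truncate to make everything finite, exploit the \emph{contractive} factor $e^{-\nu t}$ in the exponent of \eqref{bE:Lyap:gal} by choosing $t$ large, and then use the resulting self-improving (sub-quadratic in $\sqrt{\int\varphi_R}$) inequality to bootstrap to a uniform-in-$R$ bound before sending $R\to\infty$ --- is standard in the ergodic theory of SPDEs (cf. the references cited around \cref{prop:exp:mom:cont:2}), but it must be written carefully. Everything else is a routine combination of invariance, the quoted moment bounds, Hölder's inequality, and Monotone Convergence; part $(ii)$ is then immediate once part $(i)$ supplies the polynomial moments needed to absorb the $|\xi_0|$-dependence in \eqref{ineq:sup:xn:Hk}.
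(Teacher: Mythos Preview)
Your overall strategy matches the paper's: for (i) use the exponential Lyapunov inequality \eqref{bE:Lyap:gal} together with invariance, and for (ii) combine (i) with the polynomial moment bound \eqref{ineq:sup:xn:Hk} via the elementary estimate $1+|\xi_0|^{2mp}\le C\exp(\sm|\xi_0|^2)$. Your argument for part (ii) is correct and is essentially what the paper sketches.

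However, your bootstrap for part (i) contains a genuine gap. In the displayed chain
\[
\int\varphi_R\,d\cinv \le e^{\sm|\sigma|^2/\nu}\int\exp\!\big(\tfrac{\sm}{2}|\xi_0|^2\big)\,d\cinv \le e^{\sm|\sigma|^2/\nu}\int\varphi_R^{1/2}\,d\cinv,
\]
the middle inequality is false: $\varphi_R(\xi_0)^{1/2}=\exp\!\big(\tfrac{\sm}{2}(|\xi_0|^2\wedge R)\big)$, which is \emph{smaller} than $\exp(\tfrac{\sm}{2}|\xi_0|^2)$ whenever $|\xi_0|^2>R$, so the inequality points the wrong way. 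Consequently the self-improving step does not close, and the circularity you flagged is not actually resolved.

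The standard repair (and this is what the reference to \cite[Theorem~2.5.3]{KuksinShirikyan12} in the paper's proof is pointing to) is to avoid the bootstrap altogether: keep the bound $P_t\varphi_R(\xi_0)\le e^{\sm|\sigma|^2/\nu}\exp(\sm e^{-\nu t}|\xi_0|^2)$, but combine it with the trivial bound $P_t\varphi_R\le \|\varphi_R\|_\infty=e^{\sm R}$ to get
\[
\int\varphi_R\,d\cinv=\int P_t\varphi_R\,d\cinv\le \int \min\!\big\{e^{\sm R},\,e^{\sm|\sigma|^2/\nu}\exp(\sm e^{-\nu t}|\xi_0|^2)\big\}\,d\cinv.
\]
Now the integrand on the right is dominated by the constant $e^{\sm R}$ and converges pointwise to $e^{\sm|\sigma|^2/\nu}$ as $t\to\infty$ (for $R>|\sigma|^2/\nu$), so Dominated Convergence gives $\int\varphi_R\,d\cinv\le e^{\sm|\sigma|^2/\nu}$ uniformly in $R$, and Monotone Convergence finishes as you intended.
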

\begin{proof}
	The proof of \eqref{int:exp:cinv} follows as a consequence of the analogous version of the exponential Lyapunov inequality \eqref{bE:Lyap:gal} for $\xi(t)$, $t \geq 0$, similarly as in the proof of \cite[Theorem 2.5.3]{KuksinShirikyan12}. The subsequent bound \eqref{int:grad:xi:cinv} then follows by combining \eqref{int:exp:cinv} with inequality \eqref{ineq:sup:xn:Hk} for $\xi(t)$, $t \geq 0$, by estimating $1 + |\xi_0|^{2mp} \leq C(m,p,\sm)\exp(\sm |\xi_0|^2)$.
	We omit further details. 
\end{proof}

We are now ready to apply \cref{thm:gen:wk:conv} and derive a bias estimate between invariant measures of $\{\Pdisct\}_{t \geq 0}$ and $\{\Pcont\}_{t \geq 0}$.

\begin{Thm}[long time bias estimate]\label{thm:bias:ineq:NSE}
Fix any $N \in \NN$, $\dt, \dt_0 > 0$ with $\dt \leq \dt_0$. Suppose there exists $K \in \NN$ and $\sigma \in \bdH^2$ such that
\begin{align}\label{cond:K:sigma:1}
\HK \subset \range{\sigma}, \quad \mbox{ and } \quad
\lambda_{K+1} \geq 
\frac{c}{\nu} \max \left\{ 1, \frac{1}{\dt_0}, \frac{\dt_0^2 |\sigma|^4}{\nu^3 }, \frac{| \sigma|^4}{\nu^5 }  \right\}
\end{align}
for some absolute constant $c > 0$.	Let $\{\Pcont\}_{t\geq 0}$ and $\{\Pdisct\}_{t \geq 0}$ be the corresponding family of Markov kernels associated to systems \eqref{2DSNSEv} and \eqref{disc2DSNSEv2}, respectively, as defined in \eqref{def:Pcont} and \eqref{def:Pdisct}. Let $\cinv$ and $\dinv$ be invariant measures for $\{\Pcont\}_{t\geq 0}$ and $\{\Pdisct\}_{t \geq 0}$, respectively. 	

Then, there exists $\sm_* > 0$ such that for each fixed $\sm \in (0, \sm_*]$ there exists $\varepsilon > 0$ and $s \in (0,1]$ for which the following inequality holds for every $\tp \in (0,1/2)$
\begin{align}\label{ineq:bias:NSE:0}
\Wesa(\dinv,\cinv) \leq C \pfunc(N,\dt),
\end{align}
where 
\begin{align}\label{def:pfunc:NSE}
\pfunc(N,\dt) = \max \{ \dt^s, \dt^{\smx/2}\}^{\tp/2} + N^{-s/4},
\end{align}
with
\begin{align}\label{def:p:smp:NSE}
0 < \smx <  \frac{2 \nu^2\sm'}{c_2 + \nu^2 \sm'}, \quad \sm' = \frac{c_1}{|\sigma|^2} \min\left\{ \nu , \frac{1}{\dt_0}\right\},
\end{align}
for some absolute constants $c_1, c_2$, and where $C = C(\varepsilon,s,\sm, \nu,\dt_0, |\sigma|, |\nabla \sigma|, \|\sigma\|_{\dH^2}, \tp)$.
\end{Thm}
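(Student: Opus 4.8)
\textbf{Proof strategy for \cref{thm:bias:ineq:NSE}.} The plan is to apply the abstract bias estimate \cref{thm:gen:wk:conv} with the identifications already fixed in \cref{subsec:wk:conv:NSE}: $(\gsp,\gn{\cdot}) = (\dL^2,|\cdot|)$, $\gfd = \{\rhoes : \varepsilon > 0,\, s\in(0,1]\}$, $\gspmr = \{(N,\dt) : N\in\NN,\,\dt>0\} \cup \{(\infty,0)\}$, with $\gP^{\pmr}_t = \Pdisct$ for $\pmr = (N,\dt)$ and $\gP^{\pmr_0}_t = \Pcont$ for $\pmr_0 = (\infty,0)$. The bulk of the work is to verify the four hypotheses \ref{H:1}--\ref{H:3} in this setting and then read off the conclusion. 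First I would invoke \cref{prop:gen:tri:ineq:rhoesa} (cited in \cref{rmk:gen:tri:cond}) to obtain the generalized triangle inequality \ref{H:1} for the family $\rho_{\varepsilon,s,a}$, which supplies the constant $\gamma \geq 1$. Hypothesis \ref{H:2} — existence of invariant measures for each $\gP^\pmr_t$ — follows from \cref{cor:exist:inv:meas:disc} for the discrete parameters $(N,\dt)$ and from the classical result recalled after \cref{prop:wellposed:cont} (e.g. \cite{Flandoli1994}) together with uniqueness via \cref{thm:sp:gap:disc:SNSE}/\cref{rmk:exist:inv:meas} for $\pmr_0$. For \ref{H:3}, the parameter-uniform Wasserstein contraction, I would use \cref{thm:sp:gap:disc:SNSE}, whose conclusion \eqref{Wass:contr:Pdisc} gives exactly the bound \eqref{sp:gap:pmr} with constants $C_1,C_2$ independent of $N$ and $\dt$ — this discretization-uniformity is precisely what makes \ref{H:3} hold with the supremum over $\pmr \in \gspmr\backslash\{\pmr_0\}$.

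The key remaining step is \ref{H:4}, the finite-time error estimate, which is supplied by \cref{prop:fin:time:err:wass}: its conclusion \eqref{fin:tim:err:wass} reads $\Wesa(\Pdisct(\xi_0,\cdot),\Pcont(\xi_0,\cdot)) \leq \tgf(t)\,\gf(\xi_0)\,\pfunc(N,\dt)$ with $\tgf(t) = C e^{C't}$, $\gf(\xi_0) = \exp(\tC\sm'|\xi_0|^2)(1+|\nabla\xi_0| + |A\xi_0|^{1/2})^s$, and $\pfunc(N,\dt) = \max\{\dt^s,\dt^{\smx/2}\}^{\tp/2} + N^{-s/4}$, modulo the $\varepsilon^{-1/2}$ factor which can be absorbed into the constant $C$ in \ref{H:4} since $\varepsilon$ is fixed once $\sm$ is fixed. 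One must check the compatibility of parameter ranges: the restriction $\sm \leq \sm'$ with $\sm' = c_1|\sigma|^2{}^{-1}\min\{\nu,\dt_0^{-1}\}$ from \eqref{cond:sm:smx:2} and the restriction in \eqref{cond:sm:smx} on $\smx$ must be simultaneously consistent with the range $\sm \in (0,\sm_m]$ coming from \ref{H:3}; shrinking $\sm_*$ if necessary handles this, exactly as in the proof of \cref{thm:gen:wk:conv}. Since $\sigma \in \bdH^2$ is assumed, \cref{lem:int:inv:bounds}(i),(ii) guarantees $\int_{\dL^2}\gf(\xi_0)\,\cinv(d\xi_0) < \infty$ — indeed $|\nabla\xi_0|^s, |A\xi_0|^{s/2} \lesssim 1 + |A\xi_0|$ and the exponential moment \eqref{int:exp:cinv} together with the $\dH^2$-moment bound \eqref{int:grad:xi:cinv} (with $k=2$) control the integral after one application of Hölder. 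This is what makes the right-hand side of \eqref{W:inv:meas} finite, so that \cref{thm:gen:wk:conv} yields $\Wesa(\dinv,\cinv) \leq C\,\tgf(\tT)\,\pfunc(N,\dt)\int_{\dL^2}\gf(\xi_0)\,\cinv(d\xi_0)$ for a fixed time $\tT$ depending only on the fixed constants; since $\tT$ and the integral are $N,\dt$-independent, this is precisely \eqref{ineq:bias:NSE:0} with $\pfunc(N,\dt)$ as in \eqref{def:pfunc:NSE}.

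The main obstacle I anticipate is bookkeeping rather than a conceptual difficulty: carefully tracking the interdependence of the parameters $\sm, \varepsilon, s, \smx, \tp, m=\gamma$ through the chain of results — \cref{thm:gen:sp:gap} $\to$ \cref{thm:sp:gap:disc:SNSE} $\to$ \cref{thm:gen:wk:conv}, with \cref{prop:fin:time:err:wass} feeding in \ref{H:4} — and making sure that the distance-like function $\rhoes \in \gfd$ selected for the contraction \ref{H:3} (which depends on $m=\gamma$, hence on $\sm$) is the same one for which the finite-time estimate \ref{H:4} is applied, exactly as the proof of \cref{thm:gen:wk:conv} requires \ref{H:3} with $m = \gamma$. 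One subtlety worth spelling out is that \cref{prop:fin:time:err:wass} holds for \emph{any} $\varepsilon > 0$ and $s\in(0,1]$, so once \cref{thm:gen:sp:gap} dictates a particular $(\varepsilon, s)$ for the contraction, the finite-time bound automatically accommodates that choice; thus no circularity arises and the constant $C$ in \eqref{ineq:bias:NSE:0} legitimately depends only on $\varepsilon, s, \sm, \nu, \dt_0, |\sigma|, |\nabla\sigma|, \|\sigma\|_{\dH^2}, \tp$ as claimed. I would close by noting that the $t$-independence in \ref{H:4} is not actually needed — only the finite-time bound at the single time $\tT$ — but since \cref{prop:fin:time:err:wass} provides the bound for all $t \geq 0$ (with exponential growth, matching \cref{rmk:exp:growth}), the hypothesis is comfortably met.
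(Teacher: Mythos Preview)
Your proposal is correct and follows essentially the same approach as the paper: verify hypotheses \ref{H:1}--\ref{H:3} of \cref{thm:gen:wk:conv} via \cref{prop:gen:tri:ineq:rhoesa}, \cref{cor:exist:inv:meas:disc} (plus the classical existence result for $\cinv$), \cref{thm:sp:gap:disc:SNSE}, and \cref{prop:fin:time:err:wass} respectively, then use \cref{lem:int:inv:bounds} to bound $\int_{\dL^2} \gf(\xi_0)\,\cinv(d\xi_0)$. Your additional remarks on parameter compatibility and the fact that \cref{prop:fin:time:err:wass} holds for every $(\varepsilon,s)$, thereby avoiding circularity with the choice dictated by \ref{H:3}, are accurate and make explicit what the paper leaves implicit.
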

\begin{proof}
	We verify that all assumptions from \cref{thm:gen:wk:conv} are satisfied with the choices of $(X, \|\cdot\|)$, $\gfd$, $\gspmr$, $\{\gP_t^\pmr\}_{t \geq 0}$ and $\{\gP_t^{\pmr_0}\}_{t \geq 0}$ taken in the introduction to this section. 
	
	Assumption \ref{H:1} follows as a consequence of \cref{prop:gen:tri:ineq:rhoesa} applied to the metric $\rhoes$. Indeed, items \ref{rho:i} and \ref{rho:ii} of \cref{prop:gen:tri:ineq:rhoesa} hold with $M = K = 1$. Moreover, taking $c = 1$ in item \ref{rho:iii}, we notice that if $\rhoes(\xi_1,\xi_2) < 1$ then $|\xi_1 - \xi_2|^s < \varepsilon$. Thus, by triangle inequality, $|\xi_1|^2 < 2 |\xi_2|^2 + 2 \varepsilon^{2/s}$, so that item \ref{rho:iii} holds with $\gamma = 2$ and $C = 2 \varepsilon^{2/s}$. It thus follows from \eqref{gen:tri:ineq:rhosm} and the definition of $\tilde{K}$ inside the proof that
	\begin{align*}
	\rhoesa(\xi_1, \xi_2) \leq \exp(2 \sm \varepsilon^{2/s}) \left[ \rho_{\varepsilon, s, 2\sm} (\xi_1, \xi_3) + \rho_{\varepsilon, s, 2 \sm}(\xi_3, \xi_2) \right] \quad \mbox{ for all } \xi_1, \xi_2, \xi_3 \in \dL^2,
	\end{align*}
	as desired.
	
	Regarding assumption \ref{H:2}, the existence of an invariant measure for $\{\Pdisct\}_{t \geq 0}$ is shown in \cref{cor:exist:inv:meas:disc}. Whereas the existence of an invariant measure for $\{\Pcont\}_{t \geq 0}$, as mentioned in \cref{subsubsec:2DSNSE}, is a well-known result that is valid for even more general noise structures than we consider here, see e.g. \cite{Flandoli1994}.
	
	Assumption \ref{H:3} follows as a direct consequence of \cref{thm:sp:gap:disc:SNSE}. Finally, assumption \ref{H:4} follows from \cref{prop:fin:time:err:wass}, with $\pfunc(N,\dt)$, $\tgf(t)$ and $\gf(\xi_0)$ given for any fixed $\varepsilon > 0$ and $s \in (0,1]$ as
	\begin{gather}
	\pfunc(N,\dt) =  \max \{ \dt^s, \dt^{\smx/2}\}^{\tp/2}  + N^{-s/4}, \,\,
	\mbox{ for all } N \in \NN, \,\, 0 < \dt \leq \dt_0, \label{def:pfunc:nse}\\
	\tgf(t) = \frac{C e^{C' t}}{\varepsilon^{1/2}}, \,\, \mbox{ for all } t \geq 0, \label{def:tgf:nse}\\
	\gf(\xi_0) = \begin{cases} 
	\exp(\tC \sm' |\xi_0|^2) (1 + |\nabla \xi_0| + |A\xi_0|^{1/2})^s & \mbox{ for } \xi_0 \in \dH^2, \\ \infty &\mbox{ for } \xi_0 \in \dL^2 \backslash \dH^2,
	\end{cases} \label{def:gf:nse}
	\end{gather}
	with $p, \tp, \sm', C, C', \tC$ as in \eqref{fin:tim:err:wass}.

	Therefore, it follows from \cref{thm:gen:wk:conv} that
	\begin{align}\label{ineq:bias:NSE}
		\Wesa(\dinv,\cinv) \leq C \pfunc(N,\dt) \int_{\dL^2} \gf(\xi_0) \cinv(d\xi_0),
	\end{align}
	where $C = C(\varepsilon,s,\sm, \nu, \dt_0, |\sigma|, |\nabla\sigma|, \tp)$. Moreover, from the definitions of $\tC$ and $\sm'$ given in \cref{prop:fin:time:err:wass} it is not difficult to show that $\tC \sm' \leq c \nu |\sigma|^{-2}$. With this, we may apply H\"older's inequality and \cref{lem:int:inv:bounds} to obtain that $\int_{\dL^2} \gf(\xi_0) \cinv(d\xi_0) \leq C$, for $C = C(\nu,\|\sigma\|_{\dH^2})$. Plugging this into \eqref{ineq:bias:NSE} we conclude \eqref{ineq:bias:NSE:0}.
\end{proof}

We conclude this section by applying \cref{thm:gen:wk:conv:2} to show convergence in Wasserstein distance for $\{\Pdisct\}_{t \geq 0}$ towards $\{\Pcont\}_{t \geq 0}$, and consequently weak convergence for $\xfd^n$, $n \in \ZZ^+$, towards $\xi(t)$, $t \geq 0$, as a result of \cref{cor:gen:wk:conv:obs}.

\begin{Thm}[Uniform in time weak convergence]\label{thm:wk:conv:SNSE}
Fix any $N \in \NN$, $\dt, \dt_0 > 0$ with $\dt \leq \dt_0$. Suppose there exists $K \in \NN$ and $\sigma \in \bdH^2$ satisfying \eqref{cond:K:sigma:1}. Let $\{\Pcont\}_{t\geq 0}$ and $\{\Pdisct\}_{t \geq 0}$ be the corresponding family of Markov kernels associated to systems \eqref{2DSNSEv} and \eqref{disc2DSNSEv2}, respectively, as defined in \eqref{def:Pcont} and \eqref{def:Pdisct}. 

Then, there exists $\hat{\sm} > 0$ such that for each fixed $\sm \in (0, \hat{\sm}]$ there exists $\varepsilon > 0$ and $s \in (0,1]$ for which the following inequality holds
\begin{align}\label{sup:t:Wesa:Pdisc:Pcont:0}
\sup_{t \geq 0} \Wesa (\mu \Pdisct, \mu \Pcont)
\leq 
C \max \{ g(N,\dt)^{q C'}, g(N,\dt), g(N,\dt)^{1-q}\},
\end{align}
with $g$ as in \eqref{def:pfunc:NSE}, for every $\tp \in (0,1/2)$, $q \in (0,1)$, and $\mu \in \Pr(\dL^2)$ satisfying
\begin{align}\label{cond:mu}
\int_{\dL^2} \left[ \exp(c \sm |\xi_0|^2) +  \exp(\tC\sm' |\xi_0|^2) (1 + |\nabla \xi_0| + |A\xi_0|^{1/2})^s \right] \mu(d \xi_0) < \infty
\end{align}
for some absolute constant $c$, with $\tC$ and $\sm'$ being the same as in \eqref{fin:tim:err:wass} and \eqref{def:p:smp:NSE}, respectively. Moreover, $C = C(\varepsilon, s, \sm, \nu, \dt_0, |\sigma|,$ $|\nabla \sigma|, \|\sigma\|_{\dH^2}, \tp, \mu)$ and $C' = C'(\varepsilon, s, \sm, \nu, |\sigma|)$.

Consequently, under these same assumptions it follows that for all $\xi_0 \in \dH^2$ and $\rhoesa$-Lipschitz function $\varphi: \dL^2 \to \RR$ with Lipschitz constant $L_\varphi$,
\begin{align}\label{sup:E:obs:diff}
\sup_{n \in \NN} \left| \bE \left[ \varphi(\xi(n \dt;\xi_0)) - \varphi(\xfd^n(\xi_0)) \right] \right|
\leq
L_{\varphi} C \max \{ g(N,\dt)^{q C'}, g(N,\dt), g(N,\dt)^{1-q}\},
\end{align}
with $C$ and $C'$ as in \eqref{sup:t:Wesa:Pdisc:Pcont:0}. Here, $\xi(t;\xi_0)$, $t \geq 0$, and $\xfd^n(\xi_0)$, $n\in \NN$, denote the unique solutions of \eqref{2DSNSEv} and \eqref{disc2DSNSEv2}, respectively, such that $\xi(0;\xi_0) = \xi_0$ and $\xfd^0(\xi_0) = \xi_0$.
\end{Thm}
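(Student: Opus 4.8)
The plan is to verify that all the hypotheses \ref{H:1}--\ref{H:7} of \cref{thm:gen:wk:conv:2} hold in the present setting — with $(\gsp, \gn{\cdot}) = (\dL^2, |\cdot|)$, $\gfd = \{\rhoes : \varepsilon > 0, s \in (0,1]\}$, $\gspmr$ the index set of discretization parameters together with $\pmr_0 = (\infty, 0)$, and $\gP^\pmr_t$, $\gP^{\pmr_0}_t$ given by $\{\Pdisct\}_{t \geq 0}$ and $\{\Pcont\}_{t \geq 0}$ respectively — and then read off \eqref{sup:t:Wesa:Pdisc:Pcont:0} from the conclusion \eqref{gen:w:conv}, and finally deduce \eqref{sup:E:obs:diff} from \cref{cor:gen:wk:conv:obs}. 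Most of this work has already been done: \ref{H:1} is the generalized triangle inequality, which follows from \cref{prop:gen:tri:ineq:rhoesa} exactly as verified in the proof of \cref{thm:bias:ineq:NSE} (with $\gamma = 2$); \ref{H:2} follows from \cref{cor:exist:inv:meas:disc} for the discrete system and the classical result of \cite{Flandoli1994} for the continuous one; \ref{H:3} is precisely \cref{thm:sp:gap:disc:SNSE}, whose contraction coefficients $C_1, C_2$ are crucially \emph{independent} of the discretization parameters; and \ref{H:4} is \cref{prop:fin:time:err:wass}, which identifies $\tgf(t) = C\varepsilon^{-1/2} e^{C' t}$, $\pfunc(N,\dt) = \max\{\dt^s, \dt^{\smx/2}\}^{\tp/2} + N^{-s/4}$, and $\gf(\xi_0) = \exp(\tC \sm' |\xi_0|^2)(1 + |\nabla \xi_0| + |A\xi_0|^{1/2})^s$ for $\xi_0 \in \dH^2$ (and $\gf \equiv \infty$ off $\dH^2$).

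Next I would check the three additional conditions \ref{H:5}--\ref{H:7}. Condition \ref{H:5} holds since $\tgf(t) = C\varepsilon^{-1/2} e^{C' t}$ is continuous and strictly increasing in $t$. Condition \ref{H:6} holds because $\pfunc(N,\dt)$ is clearly bounded over $0 < \dt \leq \dt_0$ and $N \geq 1$, and $\pfunc(N,\dt) \to 0$ as $(N,\dt) \to (\infty, 0)$ — here I should be slightly careful to note that the limit $\pmr \to \pmr_0$ is taken in the natural sense $N \to \infty$, $\dt \to 0$. Condition \ref{H:7} — finiteness of $\int_{\dL^2} \gf(\xi_0)\, \cinv(d\xi_0)$ — is exactly what \cref{lem:int:inv:bounds} gives: one checks that $\tC \sm' \leq c\nu/|\sigma|^2$ satisfies \eqref{condgamma0}, applies H\"older's inequality to split the exponential factor from the polynomial Sobolev factor, then invokes \eqref{int:exp:cinv} and \eqref{int:grad:xi:cinv} (the latter with $k = 2$, using $\sigma \in \bdH^2$). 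This is why the hypothesis $\sigma \in \bdH^2$ appears in \eqref{cond:K:sigma:1}.

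Having verified all seven hypotheses, \cref{thm:gen:wk:conv:2} yields — since $\tgf$ is unbounded and of exponential type — that \eqref{gen:w:conv} holds with $\tilde\pfunc(\pmr)$ given by \eqref{def:tildeg}, which by \cref{rmk:exp:growth} simplifies to $C \max\{\pfunc^{C_2 q / C'}, \pfunc, \pfunc^{1-q}\}$ for any fixed $q \in (0,1)$. Substituting $\pfunc = g(N,\dt)$ and bounding the three $\mu$-dependent integral terms on the right of \eqref{gen:w:conv}: the term $\Wass_{\gdist_{\gamma\sm}}(\mu, \cinv)$ is finite because both $\int \gf\, d\mu < \infty$ (by \eqref{cond:mu}) and $\int \gf\, d\cinv < \infty$, together with finiteness of the corresponding exponential moments — actually it is cleaner to bound $\rho_{\varepsilon,s,\gamma\sm}(\xi,\txi) \leq \exp(\gamma\sm |\xi|^2 + \gamma\sm|\txi|^2)$ pointwise and then integrate against the product coupling, using \eqref{int:exp:cinv} and the first moment condition in \eqref{cond:mu}; and the terms $\int \gf\, d\mu$, $\int \gf\, d\cinv$ are finite by \eqref{cond:mu} and \cref{lem:int:inv:bounds} respectively. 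This gives \eqref{sup:t:Wesa:Pdisc:Pcont:0} with $C' = C_2/C'_{\text{growth}}$ in the notation of \cref{rmk:exp:growth}, a constant depending only on $\varepsilon, s, \sm, \nu, |\sigma|$. Finally, \eqref{sup:E:obs:diff} follows by applying \cref{cor:gen:wk:conv:obs} with $\mu = \delta_{\xi_0}$, $u_\pmr(t; \xi_0)$ the solution processes, and observing $\sup_{t \geq 0} \geq \sup_{n \in \NN}$ restricted to times $t = n\dt$; the hypothesis $\gf(\xi_0) < \infty$ is exactly $\xi_0 \in \dH^2$, and $\Wass_{\gdist_{\gamma\sm}}(\delta_{\xi_0}, \cinv) < \infty$ holds by the same pointwise bound as above. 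The main obstacle — such as it is — is purely bookkeeping: matching the hypothesis $\sigma \in \bdH^2$ and the condition \eqref{cond:mu} on $\mu$ precisely to what \ref{H:7} and the finiteness of $\Wass_{\gdist_{\gamma\sm}}(\mu,\cinv)$ demand, and tracking that the constant $C'$ in the exponent genuinely comes out independent of the discretization parameters (which it does, because $C_2$ from \cref{thm:sp:gap:disc:SNSE} and $C'$ from \cref{prop:fin:time:err:wass} both are).
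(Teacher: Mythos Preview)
Your proposal is correct and follows essentially the same approach as the paper's own proof: verify \ref{H:1}--\ref{H:3} by reference to the proof of \cref{thm:bias:ineq:NSE}, check \ref{H:5}--\ref{H:7} directly from the explicit forms of $\tgf$, $\pfunc$, $\gf$ and \cref{lem:int:inv:bounds}, apply \cref{thm:gen:wk:conv:2} with \cref{rmk:exp:growth}, bound $\Wass_{\varepsilon,s,2\sm}(\mu,\cinv)$ via the trivial coupling and exponential moments, and finish with \cref{cor:gen:wk:conv:obs}. Your treatment is in fact slightly more explicit than the paper's in justifying the finiteness of $\Wass_{\gdist_{\gamma\sm}}(\mu,\cinv)$ and in tracking the parameter dependence of the exponent $C'$.
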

\begin{Rmk}
 Here we refer to \cite[Proposition 46]{GHM2022}, which gives a sufficient condition under which any suitably regular function $\varphi$ is $\tilde{\rho}$-Lipschitz for some distance function $\tilde\rho$ sharing similar structure to $\rhoesa$. 
\end{Rmk}
\begin{proof}
Let us verify that the assumptions of \cref{thm:gen:wk:conv:2} hold. The verification of assumptions \ref{H:1}-\ref{H:3} follows as in the proof of \cref{thm:bias:ineq:NSE}. Moreover, from the definitions of $\pfunc(N,\dt)$ and $\tgf(t)$ in \eqref{def:pfunc:nse} and \eqref{def:tgf:nse}, respectively, it is clear that $\tgf$ is continuous and strictly increasing in $t$, and $\pfunc$ is bounded with respect to $(N,\dt) \in \NN \times (0,\dt_0]$. Further, as argued at the end of the proof of \cref{thm:bias:ineq:NSE}, denoting by $\cinv$ an invariant measure of $\{\Pcont\}_{t \geq 0}$ it follows from the definition of $\gf$ in \eqref{def:gf:nse} and \cref{lem:int:inv:bounds} that $\int_{\dL^2} \gf(\xi_0) \cinv(d\xi_0) \leq C < \infty$, for $C = C(\nu,\|\sigma\|_{\dH^2})$.

Thus, from \cref{thm:gen:wk:conv:2} and \cref{rmk:exp:growth} we deduce that 
\begin{align}\label{sup:t:Wesa:Pdisc:Pcont}
\sup_{t \geq 0} \Wesa(\mu \Pdisct, \mu \Pcont) 
\leq
\tilde{g}(N, \dt) \left[ \Wass_{\varepsilon,s,2\sm}(\mu, \mu_*) + \int_{\dL^2} f(\xi_0) \mu(d\xi_0) + \int_{\dL^2} f(\xi_0) \mu_*(d \xi_0)  \right],
\end{align}
for  every $\mu \in \Pr(\dL^2)$ satisfying \eqref{cond:mu}, where
\begin{align*}
\tilde{g}(N, \dt) = C \max \{ g(N,\dt)^{q C'}, g(N,\dt), g(N,\dt)^{1-q}\},
\end{align*}
for any fixed $q < 1$. Here, as seen from the proof of \cref{thm:gen:wk:conv:2} and the invoked results, it follows that $C$ and $C'$ are positive constants with $C = C(\varepsilon, s, \sm, \nu, \dt_0, |\sigma|, |\nabla \sigma|, \tp)$ and $C' = C'(\varepsilon, s, \sm, \nu, |\sigma|)$.

Moreover, from the definition of $\Wesa$, together with \cref{lem:int:inv:bounds} and under condition \eqref{cond:mu}, it is not difficult to see that for $\sm > 0$ sufficiently small $\Wass_{\varepsilon,s,2\sm}(\mu, \mu_*) \leq C < \infty$, with $C = C(\nu, |\sigma|, \mu)$. This concludes the proof of \eqref{sup:t:Wesa:Pdisc:Pcont:0}. The final inequality \eqref{sup:E:obs:diff} is clearly a direct consequence of \cref{cor:gen:wk:conv:obs}.
\end{proof}

\begin{Rmk}\label{rmk:variance:estimator}
As mentioned in \cref{subsec:res:approx:SNSE}, a useful consequence of the Wasserstein contraction result \eqref{Wass:contr:Pdisc}, together with the long time bias estimate \eqref{ineq:bias:NSE:0} established in \cref{thm:bias:ineq:NSE}, are error estimates between the stationary average $\int \varphi(\xi' ) \mu_*( d\xi')$ and its estimator $\frac{1}{n} \sum_{k =1}^n\varphi( \xfd^k(\xi_0))$ for given $n \in \NN$, $\xi_0 \in \Pi_N \dL^2$, and suitable observable $\varphi: \dL^2 \to \RR$. Here, $\mu_*$ denotes the invariant measure of the Markov semigroup $\Pcont$, $t \geq 0$, associated to the 2D SNSE \eqref{2DSNSEv} and defined in \eqref{def:Pcont:ker}. Commonly, estimates are sought for the estimator bias
\begin{align}\label{bias:estr:stavg}
	\bE \left( \frac{1}{n} \sum_{k =1}^n\varphi( \xfd^k(\xi_0)) - \int \varphi(\xi' ) \mu_*( d\xi') \right) = \frac{1}{n} \sum_{k=1}^n P^{N,\dt}_k \varphi(\xi_0) - \int \varphi(\xi' ) \mu_*( d\xi'),
\end{align}
and the mean-squared error
\begin{align}\label{L2:error}
	\bE \left(  \frac{1}{n} \sum_{k =1}^n\varphi( \xfd^k(\xi_0)) - \int \varphi(\xi' ) \mu_*( d\xi') \right)^2.
\end{align}

Let us briefly sketch some of the steps that lead to these estimates. We assume $\varphi$ is a $\rhoesa$-Lipschitz function, with $\varepsilon$, $s$, $\alpha$ fixed so that \eqref{Wass:contr:Pdisc} holds, and denote its Lipschitz constant by $L_\varphi$.

To estimate the bias \eqref{bias:estr:stavg}, we first decompose it as
\begin{align}\label{bias:est:00}
	\left( \frac{1}{n} \sum_{k=1}^n P^{N,\dt}_k \varphi(\xi_0)  - \int \varphi(\xi' ) \mu_*^{N,\dt}( d\xi') \right)
	+ \left( \int \varphi(\xi' ) \mu_*^{N,\dt}( d\xi') - \int \varphi(\xi' ) \mu_*( d\xi') \right).
\end{align}
From the Lipschitzianity of $\varphi$ and the contraction inequality \eqref{Wass:contr:Pdisc}, the first term can be bounded as
\begin{align*}
	\left| 	\frac{1}{n} \sum_{k=1}^n P^{N,\dt}_k \varphi(\xi_0)  - \int \varphi(\xi' ) \mu_*^{N,\dt}( d\xi')  \right| 
	\leq \frac{L_\varphi}{n} \sum_{k=1}^n \Wesa(P^{N,\dt}_k(\xi_0,\cdot), \mu_*^{N,\dt}).
\end{align*}
Take $T > 0$ as in \cref{thm:sp:gap:disc:SNSE}. Then fix $0 < \dt \leq \dt_0$ and consider $k_0 \in \NN$ sufficiently large such that $k_0 \dt \geq T$. From \eqref{Wass:contr:Pdisc}, we obtain
\begin{align}\label{bias:est:0}
	&\left| 	\frac{1}{n} \sum_{k=1}^n P^{N,\dt}_k \varphi(\xi_0)  - \int \varphi(\xi' ) \mu_*^{N,\dt}( d\xi')  \right| \notag \\
	&\qquad \qquad   \leq
	\frac{L_\varphi}{n} k_0 \sup_{1 \leq k \leq k_0} \Wesa(P^{N,\dt}_k(\xi_0,\cdot), \mu_*^{N,\dt})
	+ \frac{L_\varphi}{n} \sum_{k=k_0 + 1}^n C_1 e^{-k \dt C_2} \Wesa(\delta_{\xi_0}, \mu_*^{N,\dt}).
\end{align}
From \eqref{def:rhoesa} and \eqref{Lyap:disc:1} above, one can show that
\begin{align*}
\sup_{1 \leq k \leq k_0} \Wesa(P^{N,\dt}_k(\xi_0,\cdot), \mu_*^{N,\dt}) < \infty \quad \mbox{ and also } \quad 
\Wesa(\delta_{\xi_0}, \mu_*^{N,\dt}) < \infty,
\end{align*}
both with bounds independent of $0 < \delta \leq \delta_0$
so that from \eqref{bias:est:0} we deduce
\begin{align}\label{bias:est:1}
	\left| 	\frac{1}{n} \sum_{k=1}^n P^{N,\dt}_k \varphi(\xi_0)  - \int \varphi(\xi' ) \mu_*^{N,\dt}( d\xi')  \right| 
	= O\left( \frac{1}{n\dt}\right) \quad \mbox{ as } n \to \infty.
\end{align}
Estimating the second term in \eqref{bias:est:00} as $L_\varphi \Wesa(\mu_*^{N,\dt}, \mu_*)$ and invoking 
the bias estimate \eqref{ineq:bias:NSE:0} above thus yields \eqref{ineq:bias:intro}.

Regarding the mean-squared error \eqref{L2:error}, we may proceed similarly as in e.g. \cite[Appendix]{GHM2022} (see also references therein) and write
\begin{align*}
	 \frac{1}{n} \sum_{k =1}^n\varphi( \xfd^k(\xi_0)) - \int \varphi(\xi' ) \mu_*^{N,\dt}( d\xi') 
	 = \frac{1}{n} \sum_{k=1}^\infty \left( P^{N,\dt}_k \bar{\varphi} (\xi_0) - P^{N,\dt}_k \bar{\varphi}(\xfd^n(\xi_0)) \right) 
	 + \frac{M_n^\varphi}{n} 
	 =: T_1^{(n)} + T_2^{(n)},
\end{align*}
where $\bar{\varphi}(\xi_0) := \varphi(\xi_0) - \int \varphi(\xi') \mu_*^{N,\dt}(d \xi')$, and
\begin{align*}
  M_n^\varphi
  :=& \sum_{k=1}^\infty
  \left[ \bE \left( \bar{\varphi}(\xfd^k(\xi_0)) | \mF_{n\dt} \right)
  - \bE \left( \bar{\varphi} (\xfd^k(\xi_0)) \right) \right]\\
  =& \sum_{k=1}^n \bar{\varphi} (\xfd^k(\xi_0))
     + \sum_{k =1}^\infty \left( P^{N,\dt}_{k+1} \bar{\varphi} (\xfd^n (\xi_0))
     - P^{N,\dt}_{k}\bar{\varphi}(\xi_0) \right),
\end{align*}
$n \in \NN$, is a martingale (relative to the filtration given by the
noise increments). In view of \eqref{ineq:bias:NSE:0}, it suffices to
estimate $\bE (T_1^{(n)})^2$ and $\bE (T_2^{(n)})^2$.

For the first term, we have
\begin{align*}
	\bE (T_1^{(n)})^2  
	\leq
	\frac{L_\varphi^2}{n^2} \left( \sum_{k=1}^\infty \Wesa(P^{N,\dt}_k(\xi_0,\cdot), \mu_*^{N,\dt}) + \Wesa(P^{N,\dt}_k(\xfd^n(\xi_0),\cdot), \mu_*^{N,\dt}) \right)^2,
\end{align*}
so that by proceeding analogously as in \eqref{bias:est:0}-\eqref{bias:est:1} we obtain $\bE (T_1^{(n)})^2 = O((n\dt)^{-2})$ as $n \to \infty$.

For the second term, invoking standard martingale properties it follows that 
\begin{align*}
	\bE \left( \frac{M_n^\varphi}{n} \right)^2 = \frac{1}{n^2} \sum_{k=1}^n \bE (M_k^\varphi - M_{k-1}^\varphi )^2 
	\leq 2 \left( T_{2,1}^{(n)} + T_{2,2}^{(n)} \right),
\end{align*}
where
\begin{align*}	
	T_{2,1}^{(n)} := \frac{1}{n^2} \sum_{k=1}^n \bE \left( \bar{\varphi} (\xfd^k(\xi_0)) \right)^2, \quad
	T_{2,2}^{(n)} := \frac{1}{n^2} \sum_{k=1}^n \bE \left(  \sum_{l=1}^\infty P^{N,\dt}_l \bar{\varphi} (\xfd^k (\xi_0)) - P^{N,\dt}_l \bar{\varphi} (\xfd^{k-1} (\xi_0)) \right)^2.
\end{align*}
Now for $T_{2,1}^{(n)}$ we fix $\bar{\xi} \in \dL^2$ and estimate 
\begin{align*}
	\bE \left( \bar{\varphi} (\xfd^k(\xi_0)) \right)^2 
	\leq 2 \left\{ \bE \left[ \bar{\varphi}(\xfd^k(\xi_0)) - \bar{\varphi}(\bar{\xi}) \right]^2 + \bar{\varphi}(\bar{\xi})^2 \right\} 
	\leq  2 \left\{ L_\varphi \bE \rhoesa (\xfd^k(\xi_0), \bar{\xi})^2 + \bar{\varphi}(\bar{\xi})^2 \right\}.
\end{align*}
Again from \eqref{def:rhoesa} and \eqref{Lyap:disc:1}, we obtain $\sup_k \bE \left( \bar{\varphi} (\xfd^k(\xi_0)) \right)^2 < \infty$, which yields $T_{2,1}^{(n)} \leq C/n$ for some constant $C$. 

Lastly, we bound $T_{2,2}^{(n)}$ as 
\begin{align*}
	T_{2,2}^{(n)} \leq 
	\frac{1}{n^2} \sum_{k=1}^n \bE \left( \sum_{l=1}^\infty L_\varphi \Wesa(P^{N,\dt}_l(\xfd^k(\xi_0), \cdot),P^{N,\dt}_l(\xfd^{k-1}(\xi_0), \cdot)) \right)^2.
\end{align*}
Taking $k_0$ as in \eqref{bias:est:0}, we obtain after further estimates that
\begin{align*}
	T_{2,2}^{(n)} \leq 
	\frac{C}{n} + \frac{1}{n^2} \sum_{k=1}^n \bE \left( \sum_{l=1}^\infty C_1 e^{-l\dt C_2} \rhoesa(\xfd^k(\xi_0),\xfd^{k-1}(\xi_0)) \right)^2.
\end{align*}
By estimating the difference $|\xfd^k - \xfd^{k-1}|$ according to \eqref{disc2DSNSEv} and choosing $s$ appropriately, one can show that $\rhoesa(\xfd^k(\xi_0),\xfd^{k-1}(\xi_0)) \lesssim \dt^{1/2}$, which ultimately yields $T_{2,2}^{(n)} = O((n\dt)^{-1})$ as $n \to \infty$. Such considerations together with the bias estimate \eqref{ineq:bias:NSE:0} thus imply \eqref{eq:inv:mrs:appro:intro}.
\end{Rmk}

\section{Wasserstein contraction in the case of a bounded domain}
\label{sec:cont:bnd:dom}

In this section, we apply \cref{thm:gen:sp:gap} to show Wasserstein
contraction for the Markov kernel associated to the 2D stochastic
Navier-Stokes equations on a bounded domain.  As discussed in the
introduction we include this domain example to illustrate the full
scope and significance of \cref{thm:gen:sp:gap}.  Indeed such a
suitable form of contraction does not appear follow from the approachs
taken in previous relivant works in this direction,
\cite{HairerMattingly2008, HairerMattinglyScheutzow2011,
  ButkovskyKulikScheutzow2019} as we present describe in technical
detail in \cref{rmk:s:bdd} below.

\subsection{Mathematical setting}\label{subsec:prelim:snse:bdd}

We start by briefly recalling the associated mathematical setting in
\cref{subsec:prelim:snse:bdd}. For further details, we refer to
e.g. \cite{ConstantinFoias88,FMRTbook,Temam2001,AFS2008}.  Let
$\cD \subset \RR^2$ be an open and bounded domain with smooth boundary
$\partial \cD$. Similarly as in \cref{subsubsec:2DSNSE}, we fix a
stochastic basis
$(\Omega, \mF, \{\mF_t\}_{t \geq 0}, \bP, \{W^k\}_{k=1}^d)$ equipped
with a finite family $\{W^k\}_{k=1}^d$ of standard independent
real-valued Brownian motions on $\Omega$ that are adapted to the
filtration $\{\mF_t\}_{t \geq 0}$. We then consider the following
stochastically forced 2D Navier-Stokes equations in $\cD$
\begin{align}\label{eq:SNSE:bdd}
	d\bu + \left( - \nu \Delta \bu + \bu \cdot \nabla \bu + \nabla p \right) dt = \f dt + \sum_{k=1}^d \sigma_k d W^k, \quad \nabla \cdot \bu = 0,
\end{align}
subject to the no-slip (Dirichlet) boundary condition
\begin{align}\label{no:slip:cond}
	\bu|_{\partial \cD} = 0,
\end{align}
where $\bu = \bu(\bx,t)$ and $p = p(\bx,t)$, $(\bx,t) \in \cD \times [0,\infty)$, are the unknowns, and denote the velocity vector field and the scalar pressure field, respectively; whereas $\nu > 0$ and $\f = \f(\bx)$, $\bx \in \cD$, are given and represent the kinematic viscosity parameter and a deterministic body force, respectively. Moreover, $\{\sigma_k\}_{k=1}^d$ are given vector fields in $\cD$. We assume that $\f, \sigma_1, \ldots, \sigma_d \in L^2(\cD)^2$.

Consider the following functional spaces
\begin{gather*}
	\spH = \{ \bu \in L^2(\cD)^2 \,:\, \nabla \cdot \bu = 0, \,\, \bu \cdot \mathbf{n} |_{\partial \cD} = 0\}, \\
	\spV = \{ \bu \in H^1(\cD)^2 \,:\, \nabla \cdot \bu = 0, \,\, \bu|_{\partial \cD} = 0\},
\end{gather*}
where $\mathbf{n}$ denotes the outward unit normal vector to $\partial \cD$. See e.g. \cite{ConstantinFoias88, Temam2001}. We endow $\spH$ with the standard inner product and associated norm from $L^2(\cD)^2$, which we denote as $(\cdot, \cdot)$ and $|\cdot|$, respectively. For $\spV$, we consider the inner product $\ipV{\bu}{\bv} \coloneqq (\nabla \bu, \nabla \bv)$, with associated norm $\|\bu\| \coloneqq \ipV{\bu}{\bu}^{1/2} = |\nabla \bu|$, which is well-defined due to Poincar\'e inequality \eqref{ineq:Poincare} below. We identify $\spH$ with its dual $\spH'$, so that $\spV \subseteq \spH \equiv \spH' \subseteq \spV'$, with continuous injections, where $\spV'$ denotes the dual space of $\spV$.

Denoting by $\PL$ the Leray projection of $L^2(\cD)^2$ onto $\spH$, and applying $\PL$ to \eqref{eq:SNSE:bdd} yields the following functional formulation
\begin{align}\label{SNSE:bdd:func}
	d \bu + (\nu A \bu + B(\bu, \bu)) dt = \f dt + \sigma dW,
\end{align}
where we assume without loss of generality that $\PL \f = \f$ and $\PL \sigma_k = \sigma_k$, and use the abbreviated notation $\sigma dW \coloneqq \sum_{k=1}^d \sigma_k dW^k$. Here, $A: \spV \cap H^2(\cD)^2 \to \spH$, $A \bu = - \PL \Delta \bu$, is the Stokes operator, and $B: \spV \times \spV \to \spV'$ is the bilinear mapping $B(\bu, \bv) = \PL (\bu \cdot \nabla ) \bv$. Similarly as in the periodic case, we have that $A$ is a positive and self-adjoint operator with compact inverse. Therefore, it admits a nondecreasing sequence of positive eigenvalues $\{\lambda_k\}_{k \in \NN}$ with $\lambda_k \to \infty$ as $k \to \infty$, which is associated to a sequence of eigenfunctions $\{e_k\}_{k \in \NN}$ forming an orthonormal basis of $\spH$. For each $K \in \NN$, we denote by $\Pi_K : \spH \to \spH$ the projection operator onto the subspace $\Pi_K \spH$ of $\spH$ consisting of the span of the first $K$ eigenfunctions of $A$.

We recall Poincar\'e inequality
\begin{align}\label{ineq:Poincare}
	|\bu| \leq \lambda_1^{-1/2} \|\bu\| \quad \mbox{ for all } \bu \in \spV,
\end{align}
where $\lambda_1$ denotes the smallest eigenvalue of the Stokes operator $A$. Moreover, for each $K \in \NN$ we have
\begin{align}\label{ineq:Poincare:K}
	|(I - \Pi_K) \bu| \leq \lambda_{K+1}^{-1/2} \| (I - \Pi_K) \bu\| \quad \mbox{ for all } \bu \in \spV.
\end{align}

Recall also the following property of the bilinear term 
\begin{align*}
	(B(\bu, \bv), \bw) = - (B(\bu,\bw),\bv) \quad \mbox{ for all } \bu, \bv, \bw \in \spV,
\end{align*}
which implies
\begin{align}\label{orth:prop:bdd}
	(B(\bu, \bv), \bv) = 0 \quad \mbox{ for all } \bu, \bv \in \spV.
\end{align}

We adopt similar notation from \cref{subsubsec:2DSNSE} regarding the noise term $\sigma d W$. Specifically, let $\bH$ denote the $d$-fold product of $\spH$ and define, for each $\sigma = (\sigma_1,\ldots, \sigma_d) \in \bH$, $|\sigma|^2 \coloneqq \sum_{k=1}^d |\sigma_k|^2$. We also abuse notation and see any $\sigma \in \bH$ as a mapping $\sigma: \RR^d \to \spH$, with $\sigma(w_1,\ldots,w_d) = \sum_{k=1}^d \sigma_k w_k$, and denote by $\sigma^{-1}: \range{\sigma} \to \RR^d$ its corresponding pseudo-inverse. Clearly, both $\sigma$ and $\sigma^{-1}$ are bounded operators. 

The existence and uniqueness of pathwise solutions of \eqref{eq:SNSE:bdd}-\eqref{no:slip:cond} satisfying a given initial condition follows analogously as in \cref{prop:wellposed:cont}, with appropriate modifications in the functional spaces. Namely, it holds 
by replacing $\dL^2$ and $\dH^1$ with $\spH$ and $\spV$, respectively. We thus define the associated transition function for all $\bu_0 \in \spH$ and Borel set $\cO \in \mB(\spH)$ as
\begin{align*}
	P_t(\bu_0,\cO) \coloneqq \bP (\bu(t;\bu_0) \in \cO),
\end{align*}
where $\bu(t;\bu_0)$, $t \geq 0$, is the unique pathwise solution of \eqref{eq:SNSE:bdd}-\eqref{no:slip:cond} satisfying $\bu(0;\bu_0) = \bu_0$ almost surely. The associated Feller Markov semigroup $P_t$, $t \geq 0$, is defined as
\begin{align}\label{def:Pt:snse:bdd}
	P_t \varphi(\bu_0) = \bE \varphi (\bu(t;\bu_0)), \quad \bu_0 \in \spH,
\end{align}
for every bounded and measurable function $\varphi: \spH \to \RR$.

\subsection{Wasserstein contraction estimates}
\label{sec:was:contr:bnd:dm}

We proceed to verify that assumptions \ref{A:1}-\ref{A:3} of \cref{thm:gen:sp:gap} hold in this setting. Specifically, following the notation from \cref{thm:gen:sp:gap}, we take $(\gsp,\|\cdot\|) = (\spH, |\cdot|)$, $\mI = \RR^+$, and $\{\gP_t\}_{t \in \mI}$ to be the Markov semigroup defined in \eqref{def:Pt:snse:bdd}. Moreover, we take $\gfd$ to be the class of distances 
\begin{align}\label{def:gfd:bdd}
	\gfd = \left\{ \rhoes \,:\, \varepsilon > 0, \, 0 < s \leq  c \frac{\nu^3 \lambda_1}{|\sigma|^2} \right\}
\end{align}
for some positive absolute constant $c$, with $\rhoes$ defined analogously as in \eqref{def:rhoes}, namely
\begin{align}\label{def:rhoes:bdd}
	\rhoes(\bu, \btu) = 1 \wedge \frac{|\bu - \btu|^s}{\varepsilon} \quad \mbox{ for all } \bu, \btu \in \spH.
\end{align}
We notice that in \eqref{def:gfd:bdd} we impose a different assumption on $s$ than in \cref{subsec:Wass:contr:NSE}, where $s \in (0,1]$. See \cref{rmk:s:bdd} below for more details.

In the next proposition, we show with \eqref{Lyap:ineq:bdd} that \ref{A:1} is satisfied under this setting. We also provide the energy-type inequality \eqref{energy:ineq:bdd} to be used later in the verification of \ref{A:2}-\ref{A:3}.

\begin{Prop}\label{eq:bnd:dmn:exp:mom}
	Fix any $\sigma \in \bH$ and $\bu_0 \in \spH$. Let $\bu(t)$, $t \geq 0$, be the solution of \eqref{SNSE:bdd:func} satisfying $\bu(0) = \bu_0$ almost surely. Then, for all $\sm \in \RR$ satisfying
	\begin{align}\label{cond:sm:bdd}
		0 < \sm \leq \frac{\nu \lambda_1}{4 |\sigma|^2}
	\end{align}
	the following inequalities hold:
	\begin{align}\label{energy:ineq:bdd}
		\bE \sup_{t \geq 0} \exp \left( \sm |\bu(t)|^2 + \sm \nu \int_0^t \|\bu(s)\|^2 ds - \sm t \left( |\sigma|^2 + \frac{2}{\nu \lambda_1} |\f|^2 \right) \right)
		\leq 2 \exp (\sm |\bu_0|^2),
	\end{align}
	and 
	\begin{align}\label{Lyap:ineq:bdd}
		P_t \exp(\sm |\bu_0|^2) = \bE \exp(\sm |\bu(t)|^2) \leq C \exp \left( e^{-\nu \lambda_1 t} |\bu_0|^2 \right) \quad \mbox{ for all } \, t \geq 0,
	\end{align}
	where $C = C(\nu, \lambda_1, |\f|, |\sigma|)$.
\end{Prop}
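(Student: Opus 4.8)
The plan is to derive both inequalities from a single energy estimate obtained via It\^o's formula applied to $|\bu(t)|^2$, mirroring the discrete-time arguments in \cref{prop:Lyap:exp:disc} and \cref{prop:exp:bounds:sup} but now in the continuous, bounded-domain setting. First I would apply It\^o's formula to $t \mapsto |\bu(t)|^2$ using \eqref{SNSE:bdd:func}, invoking the orthogonality property \eqref{orth:prop:bdd} to kill the nonlinear term $(B(\bu,\bu),\bu)$. This yields
\begin{align*}
d|\bu|^2 + 2\nu\|\bu\|^2\,dt = 2(\f,\bu)\,dt + |\sigma|^2\,dt + 2(\bu,\sigma\,dW).
\end{align*}
The forcing term is handled by Young's inequality together with Poincar\'e \eqref{ineq:Poincare}: $2(\f,\bu) \leq \nu\lambda_1|\bu|^2 + \frac{1}{\nu\lambda_1}|\f|^2 \leq \nu\|\bu\|^2 + \frac{1}{\nu\lambda_1}|\f|^2$, so that after rearranging we get $d|\bu|^2 + \nu\|\bu\|^2\,dt \leq (|\sigma|^2 + \frac{1}{\nu\lambda_1}|\f|^2)\,dt + dM_t$ where $M_t = 2\int_0^t(\bu,\sigma\,dW)$ is a martingale with quadratic variation $\langle M\rangle_t = 4\int_0^t\sum_k(\bu,\sigma_k)^2\,ds \leq 4|\sigma|^2\int_0^t|\bu|^2\,ds \leq \frac{4|\sigma|^2}{\lambda_1}\int_0^t\|\bu\|^2\,ds$.

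For \eqref{energy:ineq:bdd}, I would integrate and then use the exponential supermartingale trick: writing $e^{\sm M_t - \sm^2\langle M\rangle_t}$ as a (super)martingale with expectation $\leq 1$, and observing that under the hypothesis \eqref{cond:sm:bdd} we have $\sm\langle M\rangle_t \leq \frac{4\sm|\sigma|^2}{\lambda_1}\int_0^t\|\bu\|^2\,ds \leq \nu\int_0^t\|\bu\|^2\,ds$, so the $\langle M\rangle$ term can be absorbed against the dissipation. Concretely, adding and subtracting $\sm\langle M\rangle_t$ in the integrated energy inequality, multiplying by $\sm$, exponentiating, and taking the supremum over $t\geq 0$ followed by expectation, one applies Doob's maximal inequality to the nonnegative supermartingale $e^{\sm M_t - \sm^2\langle M\rangle_t}$ exactly as in the estimate \eqref{estEk}, picking up the factor $2$. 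This gives \eqref{energy:ineq:bdd}.

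For \eqref{Lyap:ineq:bdd}, I would instead keep the dissipation in the sharper Poincar\'e form: from $d|\bu|^2 + \nu\lambda_1|\bu|^2\,dt \leq (|\sigma|^2 + \frac{1}{\nu\lambda_1}|\f|^2)\,dt + dM_t$ (here only half the dissipation is traded for the $|\bu|^2$ decay and the rest retained to absorb $\langle M\rangle$), apply the integrating factor $e^{\nu\lambda_1 t}$ and Gr\"onwall, then exponentiate and take expectations using the same exponential supermartingale bound $\bE e^{\sm M_t - \sm^2\langle M\rangle_t}\leq 1$. This produces a bound of the form $\bE\exp(\sm|\bu(t)|^2) \leq \exp\bigl(\sm(e^{-\nu\lambda_1 t}|\bu_0|^2 + C)\bigr)$ with $C$ depending only on $\nu,\lambda_1,|\f|,|\sigma|$; absorbing $e^{\sm C}$ into the constant $C$ gives \eqref{Lyap:ineq:bdd} in the stated form. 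The main obstacle is the careful bookkeeping of the constants in the hypothesis \eqref{cond:sm:bdd} so that $\sm\langle M\rangle_t$ is genuinely dominated by the retained fraction of the dissipation uniformly in $t$ — one must choose how to split $2\nu\|\bu\|^2$ between absorbing the forcing, absorbing $\langle M\rangle$, and (for the Lyapunov bound) providing exponential decay, and verify the three demands are simultaneously satisfiable under the single constraint $\sm \leq \nu\lambda_1/(4|\sigma|^2)$. Everything else is routine and parallels the already-established discrete estimates.
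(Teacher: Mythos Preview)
Your proposal is correct and follows essentially the same route as the paper: for \eqref{energy:ineq:bdd} the paper merely cites the standard It\^o-plus-exponential-supermartingale argument you describe, and for \eqref{Lyap:ineq:bdd} the paper applies It\^o directly to $(\tau,\bu)\mapsto e^{-\nu\lambda_1(T-\tau)}|\bu(\tau)|^2$, which is precisely your integrating-factor step rephrased. Two minor points of bookkeeping: the exponential (super)martingale should read $\exp(\sm M_t-\tfrac{\sm^2}{2}\langle M\rangle_t)$ rather than $\exp(\sm M_t-\sm^2\langle M\rangle_t)$, and after the integrating factor the relevant martingale is the \emph{weighted} stochastic integral $\int_0^t e^{-\nu\lambda_1(T-\tau)}(\bu,\sigma)\,dW$, whose quadratic variation must be absorbed---this is exactly why the paper's constraint \eqref{cond:sm:bdd} carries the factor $1/4$ rather than $1/2$.
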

\begin{proof}
The proof of \eqref{energy:ineq:bdd} follows by applying It\^o's formula to the mapping $\bu \mapsto |\bu|^2$ and invoking standard exponential martingale arguments. We refer to \cite{HairerMattingly2008,KuksinShirikyan12,GHRichMa2017,ButkovskyKulikScheutzow2019} for further details.

The proof of \eqref{Lyap:ineq:bdd} is essentially an analogous continuous version of \cref{prop:Lyap:exp:disc}. Indeed, fixing $T > 0$, applying It\^o's formula to the mapping $(\tau,\bu) \mapsto e^{-\nu \lambda_1 (T - \tau)} |\bu(\tau)|^2$ and invoking \eqref{orth:prop:bdd}, it follows that for all $t \in [0,T]$
\begin{align}\label{Lyap:ineq:1}
	&e^{-\nu \lambda_1 (T - t)} |\bu(t)|^2 + 2 \nu \int_0^t e^{-\nu \lambda_1 (T - \tau)} \|\bu(\tau)\|^2 d\tau \notag \\
	&\qquad\qquad = e^{-\nu \lambda_1 T} |\bu_0|^2 + \nu \lambda_1 \int_0^t e^{-\nu \lambda_1 (T - \tau )} |\bu(\tau)|^2 d\tau + \int_0^t e^{-\nu \lambda_1 (T - \tau)} \left[ 2 (\bu(\tau), \f) + |\sigma|^2 \right] d\tau + M_t,
\end{align}
where $M_t \coloneqq 2 \int_0^t e^{-\nu \lambda_1 (T - \tau )} (\bu, \sigma) d W(\tau)$, $t \in [0,T]$, is a martingale. We estimate its quadratic variation $\langle M \rangle_t$ as 
\begin{align}\label{ineq:Mt:bdd}
	\langle M \rangle_t = 4 \int_0^t e^{-2 \nu \lambda_1 (T - \tau )} (\bu, \sigma)^2 d\tau 
	\leq 4 \frac{|\sigma|^2}{\lambda_1} \int_0^t e^{-\nu \lambda_1 (T - \tau )} \|\bu(\tau)\|^2 d\tau,
\end{align}
where we applied Cauchy-Schwarz and Poincar\'e inequality \eqref{ineq:Poincare}. Moreover, again from \eqref{ineq:Poincare} and Young's inequality, we have
\begin{align}\label{ineq:u:f}
	(\bu(\tau), \f) \leq \frac{1}{\lambda_1^{1/2}} \|\bu(\tau) \| \, |\f| \leq \frac{\nu}{4} \|\bu(\tau)\|^2 + \frac{1}{\nu \lambda_1} |\f|^2.
\end{align}
Now we add and subtract $\sm \langle M \rangle_t/2$ in \eqref{Lyap:ineq:1}, for $\sm$ satisfying \eqref{cond:sm:bdd}, and invoke \eqref{ineq:Poincare} once again to estimate the second term in the right-hand side of \eqref{Lyap:ineq:1}. Plugging the estimates \eqref{ineq:Mt:bdd}-\eqref{ineq:u:f}, it follows after rearranging terms that
\begin{align*}
	e^{-\nu \lambda_1 (T - t)} |\bu(t)|^2
	\leq e^{- \nu \lambda_1 T} |\bu_0|^2 + \left( \frac{2}{\nu \lambda_1} |\f|^2 + |\sigma|^2 \right) \frac{e^{-\nu \lambda_1 T} (e^{\nu \lambda_1 t} - 1)}{\nu \lambda_1} + M_t - \frac{\sm}{2} \langle M \rangle_t.
\end{align*}
Multiplying by $\sm$ and taking expected values on both sides,
\begin{align}\label{Lyap:ineq:2}
	\bE \exp& \left( \sm e^{-\nu \lambda_1 (T - t)} |\bu(t)|^2 \right)\notag\\
	&\quad \leq \exp \left( \sm e^{- \nu \lambda_1 T} |\bu_0|^2 \right) \exp \left( \sm  \left( \frac{2}{\nu \lambda_1} |\f|^2 + |\sigma|^2 \right) \frac{e^{-\nu \lambda_1 T} (e^{\nu \lambda_1 t} - 1)}{\nu \lambda_1} \right),
\end{align}
where we used that $\{N_t\}_{t \geq 0} = \{ \exp ( \sm M_t - (\sm^2/2) \langle M \rangle_t ) \}_{t \geq 0}$ is a supermartingale (see e.g. \cite[Appendix A.11]{KuksinShirikyan12}), and hence $\bE N_t \leq \bE N_0 = 1$ for all $t \geq 0$. Taking in particular $T = t$ in \eqref{Lyap:ineq:2}, we deduce \eqref{Lyap:ineq:bdd} with $C = \exp \left( \frac{\sm}{\nu \lambda_1} \left( \frac{2}{\nu \lambda_1} |\f|^2 + |\sigma|^2 \right)\right)$.
\end{proof}

To verify the remaining assumptions \ref{A:2}-\ref{A:3} of \cref{thm:gen:sp:gap}, we proceed similarly as in \cref{subsec:Wass:contr:NSE} and consider the following modified system
\begin{align}\label{eq:nudging:bdd}
	d \btu + \left[ \nu A \btu + B(\btu, \btu) + \frac{\nu \lambda_{K+1}}{2} \Pi_K \left( \btu - \bu(\bu_0) \right) \right] dt = \f dt + \sigma d W
\end{align}
for each fixed $\bu_0 \in \spH$ and corresponding pathwise solution $\bu(t; \bu_0)$, $t \geq 0$, of \eqref{SNSE:bdd:func} satisfying $\bu(0;\bu_0) = \bu_0$ almost surely. Here, $K \in \NN$ is a parameter to be appropriately chosen in \eqref{cond:K:sigma:2} below. 

With similar arguments as in \cref{prop:wellposed:cont}, we can show \eqref{eq:nudging:bdd} to be well-posed in the pathwise sense. This allows us to define, for any fixed $\bu_0 \in \spH$, the mapping
\begin{align}\label{def:tP:bdd}
	\tP_{t,\bu_0}(\btu_0, \cO) = \bP(\btu(t; \btu_0, \bu_0) \in \cO) \quad \mbox{ for all } t \geq 0, \, \btu_0 \in \spH \, \mbox{ and } \, \cO \in \mB(\spH),
\end{align}
where $\btu(t;\btu_0, \bu_0)$, $t \geq 0$, is the unique pathwise solution of \eqref{eq:nudging:bdd} satisfying $\btu(0;\btu_0, \bu_0) = \btu_0$ almost surely.

The next proposition presents a pathwise contraction estimate for the difference between a solution $\btu(\btu_0,\bu_0)$ of \eqref{eq:nudging:bdd} and the corresponding solution $\bu(\bu_0)$ of \eqref{SNSE:bdd:func}. The proof is given in \cite{GHRichMa2017,ButkovskyKulikScheutzow2019}, but we present the main ideas here for completeness.

\begin{Prop}\label{prop:FP:bdd}
	Fix any $\sigma \in \bH$, $\bu_0, \btu_0 \in \spH$, and $K \in \NN$. Let $\btu(t) = \btu(t;\btu_0,\bu_0)$, $t \geq 0$, be the solution of \eqref{eq:nudging:bdd} corresponding to this data and satisfying $\btu(0) = \btu_0$ almost surely. Then the following inequality holds almost surely for all $t \geq 0$
	\begin{align}\label{ineq:FP:bdd}
		|\btu(t;\btu_0;\bu_0) - \bu(t;\bu_0)|^2 
		\leq |\btu_0 - \bu_0|^2 \exp \left( - \nu \lambda_{K+1} t + \frac{c}{\nu} \int_0^t \|\bu(\tau ;\bu_0)\|^2 d \tau \right).
	\end{align}
\end{Prop}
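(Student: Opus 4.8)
The plan is to set $\bw(t) \coloneqq \btu(t) - \bu(t)$ and derive a differential inequality for $|\bw(t)|^2$ that can be integrated directly via Gronwall's lemma. Subtracting \eqref{SNSE:bdd:func} from \eqref{eq:nudging:bdd}, the noise terms and the body force $\f$ cancel, leaving the pathwise (random ODE in $\spH$) equation
\begin{align*}
	\frac{d\bw}{dt} + \nu A \bw + B(\btu, \btu) - B(\bu, \bu) + \frac{\nu \lambda_{K+1}}{2} \Pi_K \bw = 0.
\end{align*}
Pairing with $\bw$ in $\spH$ and using the bilinear decomposition $B(\btu,\btu) - B(\bu,\bu) = B(\bw, \btu) + B(\bu, \bw)$ together with the orthogonality property \eqref{orth:prop:bdd} (so that $(B(\bu,\bw),\bw) = 0$), one obtains
\begin{align*}
	\frac{1}{2}\frac{d}{dt}|\bw|^2 + \nu \|\bw\|^2 + \frac{\nu \lambda_{K+1}}{2}|\Pi_K \bw|^2 = -(B(\bw, \btu), \bw) = (B(\bw,\bw),\btu),
\end{align*}
where in the last equality I used $(B(\bw,\btu),\bw) = -(B(\bw,\bw),\btu)$.

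The key step is then to control the nonlinear term. The cleanest route is to rewrite $-(B(\bw,\btu),\bw) = (B(\bw,\bw),\bu) - (B(\bw,\bw),\btu - \bu) = (B(\bw,\bw),\bu)$ since $(B(\bw,\bw),\bw) = 0$; hence the right-hand side is exactly $(B(\bw,\bw),\bu)$, which involves only $\bu$ (not $\btu$). Estimating this via the standard 2D interpolation bound $|(B(\bw,\bw),\bu)| \leq c|\bw|\|\bw\||\nabla\bu| = c|\bw|\|\bw\|\|\bu\|$ (Ladyzhenskaya-type inequality, as in \eqref{ineq:nonlin:b} adapted to the velocity form), followed by Young's inequality with the dissipation, gives $|(B(\bw,\bw),\bu)| \leq \frac{\nu}{2}\|\bw\|^2 + \frac{c}{\nu}\|\bu\|^2|\bw|^2$. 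The control term is then handled by the Poincaré-type estimate \eqref{ineq:Poincare:K}: writing $\|\bw\|^2 = \|\Pi_K\bw\|^2 + \|(I-\Pi_K)\bw\|^2 \geq \lambda_{K+1}|(I-\Pi_K)\bw|^2$, and combining with the $|\Pi_K\bw|^2$ term, the sum $\frac{\nu}{2}\|\bw\|^2 + \frac{\nu\lambda_{K+1}}{2}|\Pi_K\bw|^2$ dominates $\frac{\nu\lambda_{K+1}}{2}|\bw|^2$ (after distributing a half of the dissipation to each frequency band). Discarding the remaining nonnegative dissipation, this yields
\begin{align*}
	\frac{d}{dt}|\bw|^2 \leq \left( -\nu\lambda_{K+1} + \frac{c}{\nu}\|\bu(t)\|^2 \right)|\bw|^2.
\end{align*}

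Integrating this scalar linear differential inequality from $0$ to $t$ with Gronwall's lemma gives precisely \eqref{ineq:FP:bdd}. I do not anticipate a serious obstacle here: the argument is entirely pathwise and the only delicate point is bookkeeping the dissipation budget so that one retains enough of $\nu\|\bw\|^2$ to absorb the nonlinearity via Young's inequality while still producing the full coefficient $\nu\lambda_{K+1}$ in front of $|\bw|^2$ from the control term. Using the trick of expressing the nonlinear term as $(B(\bw,\bw),\bu)$ rather than $(B(\bw,\btu),\bw)$ is what makes the bound depend on $\|\bu\|$ alone, matching the statement; this is the one place where care is needed to avoid an unwanted $\|\btu\|$ dependence. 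The choice of the exact split of $\nu\|\bw\|^2$ (e.g. $\frac{\nu}{2}$ for the nonlinearity, $\frac{\nu}{2}$ distributed between high and low modes against the control term) and the absolute constant $c$ can be fixed at the end.
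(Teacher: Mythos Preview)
Your proposal is correct and follows essentially the same route as the paper. The paper denotes the difference by $\bv$, uses the decomposition $B(\btu,\btu)-B(\bu,\bu)=B(\bv,\bv)+B(\bv,\bu)+B(\bu,\bv)$, and after pairing with $\bv$ is left with $-(B(\bv,\bu),\bv)$, which is the same as your $(B(\bw,\bw),\bu)$ by antisymmetry; the nonlinear estimate, the Poincar\'e splitting via \eqref{ineq:Poincare:K}, and the Gronwall step then proceed identically. (Minor typo: in your line $-(B(\bw,\btu),\bw) = (B(\bw,\bw),\bu) - (B(\bw,\bw),\btu-\bu)$ the sign should be $+$, but since $(B(\bw,\bw),\bw)=0$ this is inconsequential.)
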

\begin{proof}
Denote $\bv(t) = \btu(t;\btu_0,\bu_0) - \bu(t;\bu_0)$. Subtracting \eqref{SNSE:bdd:func} from \eqref{eq:nudging:bdd}, it follows that
\begin{align*}
	\frac{d\bv}{dt} + \nu A \bv + B(\bv, \bv) + B(\bv, \bu) + B(\bu, \bv) + \frac{\nu \lambda_{K+1}}{2} \Pi_K \bv = 0.
\end{align*}
Taking the inner product in $\spH$ with $\bv$ and invoking \eqref{orth:prop:bdd},
\begin{align*}
	\frac{1}{2} \frac{d}{dt} |\bv|^2 + \nu \|\bv\|^2 +  \frac{\nu \lambda_{K+1}}{2}  |\Pi_K \bv|^2 = - (B(\bv, \bu), \bv).
\end{align*}
By classical H\"older, interpolation and Young's inequalities, we estimate the nonlinear term as
\begin{align}\label{ineq:nonlin:term:bdd}
	|(B(\bv, \bu), \bv)| 
	\leq |\bv| \, \|\bv\| \, \|\bu\|
	\leq \frac{\nu}{2} \|\bv\|^2 + \frac{c}{\nu} \|\bu\|^2 |\bv|^2,
\end{align}
so that 
\begin{align}\label{energy:ineq}
	\frac{d}{dt} |\bv|^2 + \nu \|\bv\|^2 +  \nu \lambda_{K+1} |\Pi_K \bv|^2 \leq \frac{c}{\nu} \|\bu\|^2 |\bv|^2,
\end{align}
Moreover, from \eqref{ineq:Poincare:K},
\begin{align*}
	\nu \|\bv\|^2 = \nu \| \Pi_K \bv\|^2 + \nu \| (I - \Pi_K) \bv\|^2 \geq \nu \| \Pi_K \bv\|^2 + \nu \lambda_{K+1} |(I - \Pi_K) \bv|^2.
\end{align*}
Plugging back into \eqref{energy:ineq}, we obtain
\begin{align*}
	\frac{d}{dt}|\bv|^2 + \left( \nu \lambda_{K+1} - \frac{c}{\nu} \|\bu\|^2 \right) |\bv|^2 \leq 0,
\end{align*}
from which \eqref{ineq:FP:bdd} follows after integrating on $[0,t]$.
\end{proof}

In the next two propositions, we establish the validity of assumptions \ref{A:2} and \ref{A:3} from \cref{thm:gen:sp:gap}, with the help of the pathwise contraction \eqref{ineq:FP:bdd}. In particular, for the smallness property from \ref{A:2}, we also make use of the following estimate for the total variation distance between the laws of a Wiener process $W$ in $\RR^d$ and the corresponding shifted process $\hW$ as in \eqref{hW:W:phi}:
\begin{align}\label{ineq:tv:phi:1}
	\tv{\mL(\hW) - \mL(W)} \leq 1 - \frac{1}{6} \min \left\{ \frac{1}{8}, \exp\left[ - \left(2^{2-a} \, \bE \left( \int_0^\infty |\varphi(t)|^2 dt \right)^a \right)^{\frac{1}{a}} \right] \right\}
\end{align}
for any $a \in (0,1]$\footnote{Similarly as for \eqref{ineq:tv:phi:0},
  here we notice that in \cite[Theorem A.5,
  (A.14)]{ButkovskyKulikScheutzow2019} it is assumed instead
  $a \in (0,1)$. In fact, \eqref{ineq:tv:kl:2} and
  \eqref{ineq:kl:phi:0} above imply that \eqref{ineq:tv:phi:1} also
  holds with $a = 1$, although with an even sharper bound.}, see
\cite[Theorem A.5, (A.14)]{ButkovskyKulikScheutzow2019}.

\begin{Prop}\label{prop:small:Wass:bdd}
	Suppose there exists $K \in \NN$ and $\sigma \in \bH$ such that 
	\begin{align}\label{cond:K:sigma:2}
	\Pi_K \spH \subset \range{\sigma}\quad \mbox{ and } \quad
	\lambda_{K+1} \geq \frac{c}{\nu^3} \left( |\sigma|^2 + \frac{2}{\nu \lambda_1} |\f|^2 \right)
	\end{align}
	for some absolute constant $c > 0$.	Then, for every $M > 0$, and $\varepsilon, s$ as in \eqref{def:gfd:bdd}, there exist a time $T_1 = T_1(M, \varepsilon, s) > 0$ and a coefficient $\kappa_1 = \kappa_1(M) \in (0,1)$, which is independent of $\varepsilon, s$, for which the following inequality holds
	\begin{align*}
	\Wes(\Pcont(\bu_0, \cdot), \Pcont(\btu_0, \cdot)) \leq 1 - \kappa_1
	\end{align*}
	for all $t \geq T_1$ and for every $\bu_0, \btu_0 \in \spH$ with $|\bu_0| \leq M $ and $|\btu_0| \leq M$.
\end{Prop}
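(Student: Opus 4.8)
The strategy mirrors the proof of \cref{prop:small:Wass} in the periodic setting, now using the pathwise contraction \eqref{ineq:FP:bdd} in place of \cref{lem:bound:diff:xi:txi}. Fix $M > 0$, fix $\varepsilon, s$ as in \eqref{def:gfd:bdd}, and take $\bu_0, \btu_0 \in \spH$ with $|\bu_0| \leq M$, $|\btu_0| \leq M$. Set $\beta \coloneqq \nu \lambda_{K+1}/2$; the assumption \eqref{cond:K:sigma:2} is exactly what is needed to make $\beta$ play the role of the control strength in \eqref{eq:nudging:bdd} while keeping $\Pi_K \spH \subset \range{\sigma}$ so that the Girsanov shift below is well-defined. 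First I would write the triangle inequality
\begin{align}\label{trg:bdd}
	\Wes(\Pcont(\bu_0, \cdot), \Pcont(\btu_0, \cdot))
	\leq \Wes(\Pcont(\bu_0, \cdot), \tP_{t,\bu_0}(\btu_0, \cdot)) + \Wes(\tP_{t,\bu_0}(\btu_0, \cdot), \Pcont(\btu_0, \cdot)),
\end{align}
valid since $\Wes$ is a metric, and estimate each term separately.

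For the first term I would bound $\Wes$ by coupling the solution of \eqref{eq:nudging:bdd} started at $\btu_0$ with the solution of \eqref{SNSE:bdd:func} started at $\bu_0$, so that from \eqref{def:rhoes:bdd} and H\"older's inequality
$$\Wes(\Pcont(\bu_0,\cdot), \tP_{t,\bu_0}(\btu_0,\cdot)) \leq \tfrac{1}{\varepsilon}\left(\bE|\btu(t;\btu_0,\bu_0) - \bu(t;\bu_0)|^2\right)^{s/2}.$$
Then I apply \eqref{ineq:FP:bdd} with this choice of $K$, take expectations, and control $\bE \exp\big(\tfrac{cs}{\nu}\int_0^t \|\bu(\tau;\bu_0)\|^2 d\tau\big)$ using the exponential moment bound \eqref{energy:ineq:bdd} from \cref{eq:bnd:dmn:exp:mom}: here the restriction $s \leq c\nu^3\lambda_1/|\sigma|^2$ in \eqref{def:gfd:bdd} is precisely what guarantees $\tfrac{cs}{\nu}$ falls in the admissible range \eqref{cond:sm:bdd} so that the exponential integral of $\|\bu\|^2$ is finite, leaving a factor $\exp(c s |\bu_0|^2) \le \exp(csM^2)$ together with a decaying factor $e^{-s\nu\lambda_{K+1}t/2}$, possibly after absorbing the linear-in-$t$ term $\exp(s t(|\sigma|^2 + \tfrac{2}{\nu\lambda_1}|\f|^2))$ from \eqref{energy:ineq:bdd} against the exponential decay $e^{-s\nu\lambda_{K+1}t/2}$ — which is possible thanks to the lower bound on $\lambda_{K+1}$ in \eqref{cond:K:sigma:2}, after enlarging the absolute constant $c$ there if necessary. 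Thus this term is $\leq C(M,\varepsilon,s)\, e^{-\kappa t}$ for some $\kappa > 0$, hence $\leq 1/4$ once $t$ exceeds some $T_1 = T_1(M,\varepsilon,s)$.

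For the second term I would argue, using uniqueness of pathwise solutions of \eqref{SNSE:bdd:func}, that $\btu(t;\btu_0,\bu_0,W)$ equals $\bu(t;\btu_0,\hW)$ where $\hW(t) = W(t) + \int_0^t \varphi(\tau)d\tau$ with $\varphi(\tau) = -\beta\sigma^{-1}\Pi_K(\btu(\tau;\btu_0,\bu_0) - \bu(\tau;\bu_0))$; this is the continuous-time analogue of \eqref{def:shifted:W:1}--\eqref{def:shifted:W:2}. Then \eqref{ineq:wass:TV}, \eqref{ineq:pfwd:tv} (continuity of $\bu_0 \mapsto \bu(t;\bu_0)$) and \eqref{ineq:tv:phi:1} (with $a=1$, say) give
$$\Wes(\tP_{t,\bu_0}(\btu_0,\cdot), \Pcont(\btu_0,\cdot)) \leq \tv{\mL(\hW) - \mL(W)} \leq 1 - \tfrac{1}{6}\min\Big\{\tfrac{1}{8}, \exp\big[-\big(2\,\bE\!\int_0^\infty|\varphi(t)|^2dt\big)\big]\Big\}.$$
Using $\|\sigma^{-1}\|$ bounded together with \eqref{ineq:FP:bdd} and \eqref{energy:ineq:bdd} exactly as above, one checks $\bE\int_0^\infty|\varphi(t)|^2dt \leq \beta^2\|\sigma^{-1}\|^2 |\bu_0 - \btu_0|^2 \int_0^\infty e^{-\nu\lambda_{K+1}t}\,(\ldots)\,dt \leq C(M)$ — the time-integral converges because $\lambda_{K+1}$ is large enough to dominate the $\|\bu\|^2$-exponential growth rate — so this term is $\leq 1 - \kappa_1'$ for a constant $\kappa_1' = \kappa_1'(M) \in (0,1)$ independent of $\varepsilon, s$. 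Combining with \eqref{trg:bdd} yields $\Wes(\Pcont(\bu_0,\cdot),\Pcont(\btu_0,\cdot)) \leq 1 - \kappa_1$ with $\kappa_1 = \kappa_1' - 1/4 \in (0,1)$ after shrinking $\kappa_1'$ if needed, for all $t \geq T_1$.

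\textbf{Main obstacle.} The delicate point is the interplay between the linear-in-$t$ growth term $\exp\big(s t(|\sigma|^2 + \tfrac{2}{\nu\lambda_1}|\f|^2)\big)$ appearing in the exponential moment bound \eqref{energy:ineq:bdd} and the exponential decay $e^{-s\nu\lambda_{K+1}t}$ coming from the pathwise contraction \eqref{ineq:FP:bdd}: one must verify that the net exponent is negative, i.e. that $\nu\lambda_{K+1}$ strictly dominates $|\sigma|^2 + \tfrac{2}{\nu\lambda_1}|\f|^2$, which is guaranteed precisely by \eqref{cond:K:sigma:2} (possibly after adjusting the absolute constant $c$). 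This same balance must also make the improper time-integral $\int_0^\infty e^{-\nu\lambda_{K+1}t}\bE\exp\big(\tfrac{c}{\nu}\int_0^t\|\bu\|^2\big)dt$ converge. Everything else is a routine transcription of the periodic-case argument with $\dL^2, \dH^1$ replaced by $\spH, \spV$ and the discrete sums replaced by time-integrals.
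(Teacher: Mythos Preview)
Your overall structure is right, but there is a genuine gap, and it is exactly the issue the paper isolates in \cref{rmk:s:bdd}. The pathwise contraction \eqref{ineq:FP:bdd} carries the factor $\exp\bigl(\tfrac{c}{\nu}\int_0^t\|\bu\|^2\bigr)$, while the exponential moment bound \eqref{energy:ineq:bdd} only controls $\bE\exp\bigl(\alpha\nu\int_0^t\|\bu\|^2\bigr)$ for $\alpha\le\nu\lambda_1/(4|\sigma|^2)$, i.e.\ for coefficients in front of $\int\|\bu\|^2$ of size at most $O(\nu^2)$. For small $\nu$ these are incompatible: the moment $\bE\exp\bigl(\tfrac{c}{\nu}\int_0^t\|\bu\|^2\bigr)$ need not be finite. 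This breaks both of your terms. In the first, your H\"older step produces $(\bE|\btu-\bu|^2)^{s/2}$, and bounding $\bE|\btu-\bu|^2$ via \eqref{ineq:FP:bdd} then requires the full moment $\bE\exp\bigl(\tfrac{c}{\nu}\int\|\bu\|^2\bigr)$ \emph{before} you take the outer $s/2$ power; the $s$ cannot migrate inside the expectation the way you claim. The fix is to drop H\"older and use that \eqref{ineq:FP:bdd} is pathwise: raise it to the $s/2$ power first, then take expectation, so that one only needs $\bE\exp\bigl(\tfrac{cs}{2\nu}\int\|\bu\|^2\bigr)$, which the restriction on $s$ in \eqref{def:gfd:bdd} makes admissible.

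The second term has the same problem. Taking $a=1$ in \eqref{ineq:tv:phi:1} and asserting $\bE\int_0^\infty|\varphi|^2\,dt<\infty$ again requires $\bE\exp\bigl(\tfrac{c}{\nu}\int\|\bu\|^2\bigr)$, which is unavailable. The paper instead chooses $a\le c\nu^3\lambda_1/|\sigma|^2$ and estimates $\bE\bigl(\int_0^\infty|\varphi|^2\,dt\bigr)^a$ by bounding the time integral pathwise as a deterministic convergent integral times $\sup_{t\ge 0}\exp\bigl(\tfrac{c}{\nu}\int_0^t\|\bu\|^2-\tfrac{c}{\nu^2}|\sigma|^2 t\bigr)$, raising to the $a$-th power, and only then taking expectation; the resulting moment has coefficient $ca/\nu^2$ in front of $\nu\int\|\bu\|^2$ and falls within \eqref{cond:sm:bdd}. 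Since the $a$-dependence disappears inside the $\min$ in \eqref{ineq:tv:phi:1}, the final $\kappa_1$ still depends only on $M$ and model parameters, not on $\varepsilon,s$. Your ``main obstacle'' paragraph correctly flags the competition between decay and growth in the \emph{deterministic} exponents, but misses that the binding constraint is on the \emph{stochastic} exponential moment, which forces both $s$ and $a$ to be small and forces you to work pathwise before taking expectations.
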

\begin{proof}
Fix $M > 0$, and $\varepsilon, s$ as in \eqref{def:gfd:bdd}. Let $\bu_0, \btu_0 \in \spH$ such that $|\bu_0| \leq M$ and $|\btu_0| \leq M$. Recalling the definition of $\Wes$ in \eqref{def:Wass:rhoe}, \eqref{def:rhoes}, and of $\tP_{t,\bu_0}(\btu_0, \cdot)$ in \eqref{def:tP:bdd}, we obtain by invoking \cref{prop:tri:ineq:Wass} that
\begin{align}\label{ineq:Wes:bdd}
	\Wes(P_t(\bu_0, \cdot), P_t(\btu_0,\cdot))
	\leq \Wes(P_t(\bu_0, \cdot), \tP_{t,\bu_0}(\btu_0, \cdot)) + 
	\Wes(\tP_{t,\bu_0}(\btu_0, \cdot),P_t(\btu_0,\cdot) ) .
\end{align}
For the first term in the right-hand side of \eqref{ineq:Wes:bdd}, we have
\begin{align}\label{ineq:Wes:bdd:0}
	\Wes(P_t(\bu_0, \cdot), \tP_{t,\bu_0}(\btu_0, \cdot)) 
	\leq \frac{1}{\varepsilon} \bE |\bu(t;\bu_0) - \btu(t;\btu_0, \bu_0)|^s,
\end{align}
so that by invoking the pathwise estimate \eqref{ineq:FP:bdd} it follows that
\begin{align}\label{ineq:Wes:bdd:1}
	\Wes(P_t(\bu_0, \cdot), \tP_{t,\bu_0}(\btu_0, \cdot)) 
	\leq  \frac{1}{\varepsilon} |\btu_0 - \bu_0|^s \exp\left( -\frac{s}{2} \nu \lambda_{K+1} t \right) \bE \exp \left( \frac{c s}{\nu} \int_0^t \|\bu(\tau)\|^2 d\tau \right).
\end{align}
Let $\sm = c s/\nu^2$. Since $0 < s \leq c \nu^3 \lambda_1/|\sigma|^2$ then $\sm$ satisfies \eqref{cond:sm:bdd}. We may thus invoke \eqref{energy:ineq:bdd} to further estimate  \eqref{ineq:Wes:bdd:1} as
\begin{align}
	&\Wes(P_t(\bu_0, \cdot), \tP_{t,\bu_0}(\btu_0, \cdot)) \notag\\
	&\qquad\qquad\qquad \leq
	\frac{c}{\varepsilon} |\btu_0 - \bu_0|^s \exp\left( -\frac{s}{2} \nu \lambda_{K+1} t \right)  \exp \left( \frac{cs}{\nu^2} |\bu_0|^2 \right) \exp \left(\frac{cs t}{\nu^2} \left( |\sigma|^2 + \frac{2}{\nu \lambda_1} |\f|^2 \right) \right) \notag \\
	&\qquad\qquad\qquad\leq \frac{c}{\varepsilon} |\btu_0 - \bu_0|^s \exp\left( -\frac{s}{4} \nu \lambda_{K+1} t \right)  \exp \left( \frac{cs}{\nu^2} |\bu_0|^2 \right) \label{est:first:term:bdd:0} \\
	&\qquad\qquad\qquad\leq \frac{c}{\varepsilon} (2 M)^s  \exp \left( \frac{cs}{\nu^2} M^2 \right) \exp\left( -\frac{s}{4} \nu \lambda_{K+1} t \right),\label{est:first:term:bdd}
\end{align}
where the second inequality follows from assumption \eqref{cond:K:sigma:2} on $K$. 

Regarding the second term in the right-hand side of \eqref{ineq:Wes:bdd}, we proceed analogously as in \eqref{est:sec:term}-\eqref{ineq:tv:hW:W} and obtain
\begin{align}\label{ineq:Wes:tv}
	\Wes(\tP_{t,\bu_0}(\btu_0, \cdot),P_t(\btu_0,\cdot) ) 
	\leq \tv{\mL(\hW) - \mL(W)},
\end{align}
where 
\begin{align*}
	\hW(t) \coloneqq W(t) - \int_0^t   \frac{\nu \lambda_{K+1}}{2}  \sigma^{-1} \Pi_K (\btu - \bu)(\tau) d \tau, \quad t \geq 0.
\end{align*}
From \eqref{ineq:tv:phi:1}, it follows that for any $a \in (0,1]$
\begin{align}\label{est:sec:term:bdd}
	&\Wes(\tP_{t,\bu_0}(\btu_0, \cdot),P_t(\btu_0,\cdot) ) \\
	&\qquad\qquad\qquad \leq  1 - \frac{1}{6} \min \left\{ \frac{1}{8}, \exp\left[ - \left(2^{2-a} \, \bE \left( \int_0^\infty \left|  \frac{\nu \lambda_{K+1}}{2}  \sigma^{-1} \Pi_K (\btu - \bu)(t) \right|^2 dt \right)^a \right)^{\frac{1}{a}} \right] \right\}.
	\notag
\end{align}
Invoking \eqref{ineq:FP:bdd} once again, we deduce
\begin{align}\label{ineq:E:phi:a:0}
	 &\bE \left( \int_0^\infty \left|  \frac{\nu \lambda_{K+1}}{2}  \sigma^{-1} \Pi_K (\btu - \bu)(t) \right|^2 dt \right)^a
	 \leq \left( \frac{\nu \lambda_{K+1}}{2} \|\sigma^{-1}\| \right)^{2a} \, \bE \left( \int_0^\infty | \btu(t) - \bu(t) |^2 dt \right)^a \notag \\
	 &\quad \leq  \left( \frac{\nu \lambda_{K+1}}{2} \|\sigma^{-1}\| \right)^{2a} \, \bE \left( \int_0^\infty |\btu_0 - \bu_0|^2 \exp \left( - \nu \lambda_{K+1} t + \frac{c}{\nu} \int_0^t \|\bu(\tau )\|^2 d \tau \right) dt \right)^a \notag \\
	 &\quad\leq  \left( \frac{\nu \lambda_{K+1}}{2} \|\sigma^{-1}\| \right)^{2a}  |\btu_0 - \bu_0|^{2a} \notag \\
	 &\qquad\qquad \cdot \bE \left[ \left( \int_0^\infty \exp \left(  - \nu \lambda_{K+1} t + \frac{c}{\nu^2} |\sigma|^2 t \right) dt \right)^a \sup_{t \geq 0} \exp \left( \frac{c a}{\nu} \int_0^t \|\bu(\tau)\|^2 d \tau - \frac{c a}{\nu^2} |\sigma|^2 t \right) \right] \notag \\
	 &\quad \leq  \left( \frac{\nu \lambda_{K+1}}{2} \|\sigma^{-1}\| \right)^{2a}  |\btu_0 - \bu_0|^{2a} \left( \int_0^\infty e^{-\frac{\nu \lambda_{K+1} t}{2}} dt \right)^a 
	 \bE \left[ \sup_{t \geq 0} \exp \left( \frac{c a}{\nu} \int_0^t \|\bu(\tau)\|^2 d \tau - \frac{c a}{\nu^2} |\sigma|^2 t \right)  \right],
\end{align}
where the last inequality follows from assumption \eqref{cond:K:sigma:2} on $K$. Now assuming $a \leq c \nu^3 \lambda_1/ |\sigma|^2$ so that we can resort to \eqref{energy:ineq:bdd}, we further obtain
\begin{align}
	\bE \left( \int_0^\infty \left|  \frac{\nu \lambda_{K+1}}{2} \sigma^{-1} \Pi_K (\btu - \bu)(t) \right|^2 dt \right)^a
	&\leq  c (\nu \lambda_{K+1})^a \|\sigma^{-1}\|^{2a}  |\btu_0 - \bu_0|^{2a} \exp \left( \frac{c a}{\nu^2} |\bu_0|^2 \right) \label{ineq:E:phi:a} \\
	&\leq c (\nu \lambda_{K+1})^a \|\sigma^{-1}\|^{2a} M^{2a} \exp \left( \frac{c a}{\nu^2} M^2 \right). \notag
\end{align}
Plugging back into \eqref{est:sec:term:bdd},
\begin{align}\label{ineq:sec:term:2}
	\Wes(\tP_{t,\bu_0}(\btu_0, \cdot),P_t(\btu_0,\cdot) ) 
	\leq 1 - \frac{1}{6} \min \left\{ \frac{1}{8}, \exp\left[ - c \nu \lambda_{K+1} \|\sigma^{-1}\|^2 M^2 \exp \left( c\frac{M^2}{\nu^2} \right)  \right] \right\}.
\end{align}
Thus, from \eqref{ineq:Wes:bdd}, \eqref{est:first:term:bdd} and \eqref{ineq:sec:term:2},
\begin{align*}
	\Wes(P_t(\bu_0, \cdot), P_t(\btu_0,\cdot))
	&\leq \frac{c}{\varepsilon} (2 M)^s  \exp \left( \frac{cs}{\nu^2} M^2 \right) \exp\left( -\frac{s}{4} \nu \lambda_{K+1} t \right) \\
	&\quad + 1 - \frac{1}{6} \min \left\{ \frac{1}{8}, \exp\left[ - c \nu \lambda_{K+1} \|\sigma^{-1}\|^2 M^2 \exp \left( c\frac{M^2}{\nu^2} \right)  \right] \right\}.
\end{align*}
Therefore, we may choose $T_1 = T_1(\nu, K, \|\sigma^{-1}\|, M, \varepsilon, s) > 0$ sufficiently large such that for all $t \geq T_1$
\begin{align*}
	\Wes(P_t(\bu_0, \cdot), P_t(\btu_0,\cdot))
	 \leq 1 - \frac{1}{12} \min \left\{ \frac{1}{8}, \exp\left[ - c \nu \lambda_{K+1} \|\sigma^{-1}\|^2 M^2 \exp \left( c\frac{M^2}{\nu^2} \right)  \right] \right\}.
\end{align*}
This concludes the proof. 
\end{proof}

\begin{Prop}\label{prop:contr:Wass:bdd}
	Suppose there exists $K \in \NN$ and $\sigma \in \bH$ such that \eqref{cond:K:sigma:2} holds. Then, for every $\kappa_2 \in (0,1)$ and for every $r > 0$ there exists $s > 0$ for which the following holds:
	\begin{itemize}
		\item[(i)] For every $\varepsilon > 0$, there exists a constant $C =  C(\varepsilon, s) > 0$ such that
		\begin{align}\label{ineq:contr:1:bdd}
		\sup_{t \geq 0} \Wes(\Pcont(\bu_0, \cdot), \Pcont(\btu_0, \cdot)) 
		\leq 
		C(\varepsilon,s) \exp\left( r |\bu_0|^2\right) \rhoes(\bu_0, \btu_0)
		\end{align}
		for every $\bu_0, \btu_0 \in \spH$ with $\rhoes(\bu_0, \btu_0) < 1$.
		\item[(ii)] There exist a parameter $\varepsilon > 0$ and a time $T_2 > 0$ such that 
		\begin{align}\label{ineq:contr:2:bdd}
		\Wes(\Pcont(\bu_0, \cdot), \Wes(\Pcont(\btu_0, \cdot))) \leq \kappa_2 \exp\left( r |\bu_0|^2 \right) \rhoes(\bu_0, \btu_0)
		\end{align}
		for all $t \geq T_2$ and for every $\bu_0, \btu_0 \in \spH$ with $\rhoes(\bu_0, \btu_0) < 1$.
	\end{itemize}
\end{Prop}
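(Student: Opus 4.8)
The plan is to mirror the structure of the proof of \cref{prop:contr:Wass} in the periodic case, using the modified (nudged) system \eqref{eq:nudging:bdd} together with the triangle-inequality splitting $\Wes(P_t(\bu_0,\cdot),P_t(\btu_0,\cdot)) \leq \Wes(P_t(\bu_0,\cdot),\tP_{t,\bu_0}(\btu_0,\cdot)) + \Wes(\tP_{t,\bu_0}(\btu_0,\cdot),P_t(\btu_0,\cdot))$, valid since $\Wes$ is a metric. Fix $\kappa_2 \in (0,1)$ and $r > 0$, and let $\bu_0,\btu_0 \in \spH$ with $\rhoes(\bu_0,\btu_0) < 1$, so $\rhoes(\bu_0,\btu_0) = |\bu_0 - \btu_0|^s/\varepsilon$. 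For the first term, I would start from \eqref{ineq:Wes:bdd:0} and insert the pathwise contraction \eqref{ineq:FP:bdd} followed by the exponential moment bound \eqref{energy:ineq:bdd} with $\sm = cs/\nu^2$; this is exactly the chain \eqref{ineq:Wes:bdd:1}--\eqref{est:first:term:bdd:0} already carried out in the previous proof, yielding
\begin{align*}
\Wes(P_t(\bu_0,\cdot),\tP_{t,\bu_0}(\btu_0,\cdot)) \leq \frac{c}{\varepsilon} |\bu_0 - \btu_0|^s \exp\left(-\frac{s}{4}\nu\lambda_{K+1} t\right)\exp\left(\frac{cs}{\nu^2}|\bu_0|^2\right).
\end{align*}
To get the factor $\exp(r|\bu_0|^2)$ I would then impose $cs/\nu^2 \leq r$, i.e. choose $s$ small enough (which is compatible with the constraint $s \leq c\nu^3\lambda_1/|\sigma|^2$ in \eqref{def:gfd:bdd} by further shrinking the absolute constant).

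For the second term, I would follow \eqref{est:sec:term}--\eqref{ineq:tv:hW:W} to reduce to the total variation distance between the laws of $W$ and the shifted process $\hW$ with $\varphi(\tau) = -\tfrac{\nu\lambda_{K+1}}{2}\sigma^{-1}\Pi_K(\btu - \bu)(\tau)$ — here using the standing assumption $\Pi_K\spH \subset \range{\sigma}$ from \eqref{cond:K:sigma:2} so that $\sigma^{-1}\Pi_K$ is well-defined and bounded. For part (i) (finite-time-style bound, valid for all $t$), I would apply inequality \eqref{ineq:tv:phi:0} with exponent $a \in (0,1]$ chosen so that $2a/(1+a) = s$, combined with H\"older's inequality and the estimate \eqref{ineq:E:phi:a} (or rather the step just preceding it: $\bE\int_0^\infty|\varphi(t)|^2 dt \leq c\,\nu\lambda_{K+1}\|\sigma^{-1}\|^2|\bu_0 - \btu_0|^2\exp(ca|\bu_0|^2/\nu^2)$, using \eqref{ineq:FP:bdd} and \eqref{energy:ineq:bdd}). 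This produces
\begin{align*}
\Wes(\tP_{t,\bu_0}(\btu_0,\cdot),P_t(\btu_0,\cdot)) \leq 2^{1-s}\left(c\,\nu\lambda_{K+1}\|\sigma^{-1}\|^2\right)^{s/2}|\bu_0 - \btu_0|^s\exp\left(r|\bu_0|^2\right),
\end{align*}
for the same $s$ fixed above (after again absorbing $cas/(2(1+a)\nu^2) \leq cs/\nu^2 \leq r$). Adding the two contributions and dividing by $\varepsilon$ gives \eqref{ineq:contr:1:bdd} with $C(\varepsilon,s) = c\varepsilon^{-1}\left[ c + \varepsilon\cdot 2^{1-s}(c\,\nu\lambda_{K+1}\|\sigma^{-1}\|^2)^{s/2}\right]$ after bounding $\exp(-\tfrac{s}{4}\nu\lambda_{K+1}t) \leq 1$, and rewriting $|\bu_0 - \btu_0|^s = \varepsilon\,\rhoes(\bu_0,\btu_0)$.

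For part (ii), I would keep the decaying factor $\exp(-\tfrac{s}{4}\nu\lambda_{K+1}t)$ in the first term rather than discarding it, so that choosing $T_2 = T_2(\kappa_2,r)$ large makes the first term at most $\tfrac{\kappa_2}{2}\exp(r|\bu_0|^2)\rhoes(\bu_0,\btu_0)$ for $t \geq T_2$; and then I would choose $\varepsilon = \varepsilon(\kappa_2,r)$ small enough that the ($t$-independent) second term, which carries a factor $\varepsilon^{1-?}$... more precisely, since the second-term bound above is $\leq (\text{const})\,|\bu_0-\btu_0|^s\exp(r|\bu_0|^2) = (\text{const})\,\varepsilon\,\rhoes(\bu_0,\btu_0)\exp(r|\bu_0|^2)$, shrinking $\varepsilon$ makes it $\leq \tfrac{\kappa_2}{2}\exp(r|\bu_0|^2)\rhoes(\bu_0,\btu_0)$. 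Summing yields \eqref{ineq:contr:2:bdd}. The main obstacle — though it is a soft rather than hard one — is bookkeeping the interplay of the three small parameters $s$, $\varepsilon$, $a$ and the constraint \eqref{def:gfd:bdd} so that all the exponential-moment invocations of \eqref{energy:ineq:bdd} are legitimate (which forces $cs/\nu^2$ and $ca/\nu^2$ to lie below $\nu\lambda_1/(4|\sigma|^2)$) while simultaneously matching $2a/(1+a) = s$ and $cs/\nu^2 \leq r$; all of these can be satisfied by taking $s$ sufficiently small depending on $r$, $\nu$, $|\sigma|$, $\lambda_1$, which is exactly the content of the freedom in \eqref{def:gfd:bdd}. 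Everything else is a direct transcription of the periodic-case arguments with $\dL^2 \rightsquigarrow \spH$, $\dH^1 \rightsquigarrow \spV$, and the discrete contraction \eqref{bound:diff:xi:txi} replaced by its continuous analogue \eqref{ineq:FP:bdd}.
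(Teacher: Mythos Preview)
Your proposal is correct and follows essentially the same approach as the paper's proof: the triangle-inequality splitting via the nudged system, the first term handled by \eqref{est:first:term:bdd:0} with the choice $cs/\nu^2 \leq r$, the second term via \eqref{ineq:tv:phi:0} with $a$ satisfying $2a/(1+a)=s$ and the estimate \eqref{ineq:E:phi:a:0}--\eqref{ineq:E:phi:a}, and then the choice of $T_2$ and $\varepsilon$ for part (ii). The only cosmetic difference is that the paper estimates $\bE\bigl(\int_0^\infty|\varphi|^2\,dt\bigr)^a$ directly via \eqref{ineq:E:phi:a:0} rather than first applying H\"older to reduce to $\bigl(\bE\int_0^\infty|\varphi|^2\,dt\bigr)^a$ as in the periodic case---but you already flagged this in your parenthetical, and either route works here.
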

\begin{proof}
Fix any $\kappa_2 \in (0,1)$ and $r > 0$. Let $\bu_0, \btu_0 \in \spH$ such that $\rhoes(\bu_0, \btu_0) < 1$. Now choose $s$ such that
\begin{align}\label{choice:s:bdd}
	0 < s \leq c \min \left\{ r \nu^2 , \frac{\nu^3 \lambda_1}{|\sigma|^2 + \nu^3 \lambda_1} \right\},
\end{align}
for some absolute constant $c$. We estimate the Wasserstein distance $\Wes(P_t(\bu_0, \cdot), P_t(\btu_0, \cdot))$ as in \eqref{ineq:Wes:bdd}, and then the first term on the right-hand side as in \eqref{est:first:term:bdd:0}. Since $\rhoes(\bu_0, \btu_0) < 1$, then $|\btu_0 - \bu_0|^s \varepsilon^{-1} = \rhoes(\bu_0, \btu_0)$. Thus, from  \eqref{est:first:term:bdd:0},
\begin{align}\label{est:first:term}
	\Wes(P_t(\bu_0, \cdot), \tP_{t,\bu_0}(\btu_0, \cdot)) 
	\leq c \rhoes(\bu_0, \btu_0) \exp\left( -\frac{s}{4} \nu \lambda_{K+1} t \right)  \exp \left( \frac{cs}{\nu^2} |\bu_0|^2 \right).
\end{align}

For the second term, we first estimate as in \eqref{ineq:Wes:tv}, and then invoke \eqref{ineq:tv:phi:0} to obtain for any $a \in (0,1]$
\begin{align}\label{est:2nd:term:yay}
	\Wes(\tP_{t,\bu_0}(\btu_0, \cdot),P_t(\btu_0,\cdot) )
	\leq 2^{\frac{1-a}{1+a}} \left[ \bE \left( \int_0^\infty \left|  \frac{\nu \lambda_{K+1}}{2}  \sigma^{-1} \Pi_K (\btu - \bu)(t) \right|^2 dt \right)^a \right]^{\frac{1}{1+a}}.
\end{align} 
Choose $a \in (0,1]$ such that $\frac{2a}{1+a} = s$. From the choice of $s$ in \eqref{choice:s:bdd}, it follows in particular that $a \leq c \nu^3 \lambda_1/|\sigma|^2$. We may thus proceed as in \eqref{ineq:E:phi:a:0}-\eqref{ineq:E:phi:a} and obtain that
\begin{align}\label{est:sec:term:1}
	\Wes(\tP_{t,\bu_0}(\btu_0, \cdot),P_t(\btu_0,\cdot) )
	&\leq c (\nu \lambda_{K+1})^{\frac{a}{1+a}} \|\sigma^{-1}\|^{\frac{2a}{1+a}} |\btu_0 - \bu_0|^{\frac{2a}{1+a}} \exp \left( \frac{c}{\nu^2} \frac{a}{1+a} |\bu_0|^2 \right) \notag \\
	&\leq c (\nu \lambda_{K+1})^{s/2} \|\sigma^{-1}\|^s |\btu_0 - \bu_0|^s \exp(r |\bu_0|^2) \notag \\
	&\quad = c \varepsilon (\nu \lambda_{K+1})^{s/2} \|\sigma^{-1}\|^s \rhoes(\bu_0, \btu_0) \exp(r |\bu_0|^2),
\end{align}
where in the second inequality we used that $s \leq c r \nu^2$. From \eqref{ineq:Wes:bdd}, \eqref{est:first:term} and \eqref{est:sec:term:1}, we thus have
\begin{align}\label{ineq:Wes:contr:bdd}
	\Wes&(P_t(\bu_0, \cdot), P_t(\btu_0,\cdot))
 \notag\\
	&\leq c \left[ \exp\left( -\frac{s}{4} \nu \lambda_{K+1} t \right) + \varepsilon (\nu \lambda_{K+1})^{s/2} \|\sigma^{-1}\|^s \right] \exp(r |\bu_0|^2) \rhoes(\bu_0, \btu_0) 
\end{align}
for all $t \geq 0$. This shows \eqref{ineq:contr:1:bdd}.

For \eqref{ineq:contr:2:bdd}, we choose $T_2 = T_2(s, K, \kappa_2) > 0$ and $\varepsilon = \varepsilon( s, K, \|\sigma^{-1}\|, \kappa_2) > 0$ such that the expression between brackets in \eqref{ineq:Wes:contr:bdd} is less than $\kappa_2$ for all $t \geq T_2$. This concludes the proof. 
\end{proof}

From \eqref{Lyap:ineq:bdd}, \cref{prop:small:Wass:bdd} and \cref{prop:contr:Wass:bdd}, we now obtain the following Wasserstein contraction result as an immediate consequence of \cref{thm:gen:sp:gap}.

\begin{Thm}\label{thm:sp:gap:SNSE:bdd}
	Suppose there exists $K \in \NN$ and $\sigma \in \bH$ such that \eqref{cond:K:sigma:2} holds. Let $\Pcont$, $t \geq 0$, be the Markov semigroup defined in \eqref{def:Pt:snse:bdd}. Then, for every $m > 1$ there exists $\sm_m > 0$ such that for each $\sm \in (0, \sm_m]$ there exist $\varepsilon,s, T > 0$ and constants $C_1, C_2 > 0$ for which the following inequality holds
	\begin{align}\label{Wass:contr:Pcont}
	\Wesa(\mu \Pcont, \tmu \Pcont) \leq C_1 e^{-t C_2} \Wass_{\varepsilon, s, \sm/m}(\mu, \tmu)
	\end{align}
	for every $\mu, \tmu \in \Pr(\spH)$ and all $t\geq T$. Here we recall that, for every $a > 0$, $\Wass_{\varepsilon,s,a}$ denotes the Wasserstein-like extension to $\Pr(\spH)$ of the distance-like function $\rho_{\varepsilon,s,a}$ defined as in \eqref{def:rhoesa}, with $\dL^2$ replaced by $\spH$.
\end{Thm}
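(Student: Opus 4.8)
The plan is to verify that the hypotheses \ref{A:1}, \ref{A:2}, and \ref{A3:i} of \cref{thm:gen:sp:gap} are satisfied in the present bounded-domain setting, with $(\gsp, \gn{\cdot}) = (\spH, |\cdot|)$, $\mI = \RR^+$, $\{\gP_t\}_{t \in \mI}$ the Markov semigroup $\Pcont$, $t \geq 0$, from \eqref{def:Pt:snse:bdd}, and $\gfd$ the class $\{\rhoes \,:\, \varepsilon > 0,\ 0 < s \leq c \nu^3 \lambda_1/|\sigma|^2\}$ defined in \eqref{def:gfd:bdd}--\eqref{def:rhoes:bdd}. Once these are in place, the conclusion \eqref{Wass:contr:Pcont} follows directly from the second assertion of \cref{thm:gen:sp:gap}, namely inequality \eqref{Wass:contr:gen:2}, since assumption \ref{A3:ii} is not needed to obtain the exponential-rate contraction (that portion of \cref{thm:gen:sp:gap} only requires \ref{A:1}, \ref{A:2}, \ref{A3:i} plus the super-Lyapunov structure built into \ref{A:1}). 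The dependence of $C_1, C_2$ on the listed parameters is read off from the statement of \cref{thm:gen:sp:gap}.

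First I would check \ref{A:1}: this is precisely \eqref{Lyap:ineq:bdd} from \cref{eq:bnd:dmn:exp:mom}, which gives $\Pcont \exp(\sm |\bu_0|^2) \leq C \exp(e^{-\nu \lambda_1 t} |\bu_0|^2)$ for all $\sm$ satisfying \eqref{cond:sm:bdd}; one takes $\psi(t) = e^{-\nu \lambda_1 t}$, which is continuous with $\psi(t) \to 0$, $\sm_0 = \nu \lambda_1/(4|\sigma|^2)$, and $\LC = \ln C / \sm$ (adjusting so that the constant is absorbed as $\exp(\sm \LC)$ with $\LC$ independent of $\sm$, which works since $C = \exp(\frac{\sm}{\nu\lambda_1}(\frac{2}{\nu\lambda_1}|\f|^2 + |\sigma|^2))$ is already of the form $\exp(\sm \cdot \text{const})$). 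Next, \ref{A:2} is supplied by \cref{prop:small:Wass:bdd}: for each $M > 0$ and each $\rhoes \in \gfd$ there is $T_1 = T_1(M,\varepsilon,s)$ and $\kappa_1 = \kappa_1(M) \in (0,1)$, independent of $\varepsilon, s$, with $\Wes(\Pcont(\bu_0,\cdot), \Pcont(\btu_0,\cdot)) \leq 1 - \kappa_1$ for $|\bu_0|, |\btu_0| \leq M$ and all $t \geq T_1$; the crucial independence of $\kappa_1$ from the choice of distance is exactly what \cref{prop:small:Wass:bdd} asserts and what prevents the circular argument flagged in the proof of \cref{thm:gen:sp:gap}. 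Finally, \ref{A3:i} follows from item (ii) of \cref{prop:contr:Wass:bdd}: given $\kappa_2 \in (0,1)$ and $r > 0$ there is $s$ (satisfying the constraint in \eqref{def:gfd:bdd}), then $\varepsilon$ and $T_2$, with $\Wes(\Pcont(\bu_0,\cdot), \Pcont(\btu_0,\cdot)) \leq \kappa_2 \exp(r|\bu_0|^2) \rhoes(\bu_0,\btu_0)$ for $\rhoes(\bu_0,\btu_0) < 1$ and $t \geq T_2$.

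With \ref{A:1}, \ref{A:2}, \ref{A3:i} established, I would invoke the second part of \cref{thm:gen:sp:gap}: for every $m > 1$ there is $\sm_m > 0$ such that for each $\sm \in (0,\sm_m]$ there exist $\rhoes \in \gfd$ (equivalently, parameters $\varepsilon > 0$ and $s > 0$ as in \eqref{def:gfd:bdd}), $T > 0$, and $C_1, C_2 > 0$ with $\Wesa(\mu \Pcont, \tmu \Pcont) \leq C_1 e^{-C_2 t} \Wass_{\varepsilon,s,\sm/m}(\mu,\tmu)$ for $t \geq T$ and all $\mu, \tmu \in \Pr(\spH)$; here $\rho_{\varepsilon,s,a}$ is the Lyapunov-weighted distance $\rho_{\varepsilon,s,a}(\bu,\btu) = \rhoes(\bu,\btu)^{1/2} \exp(a|\bu|^2 + a|\btu|^2)$, the analogue of \eqref{def:rhoesa} with $\dL^2$ replaced by $\spH$. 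This is exactly \eqref{Wass:contr:Pcont}, so the proof is complete. The main obstacle is not in this assembly step, which is essentially bookkeeping, but in the underlying propositions already proved above --- in particular \cref{prop:contr:Wass:bdd}, where the interplay between the Foias--Prodi-type pathwise contraction \eqref{ineq:FP:bdd} and the Girsanov cost-of-control estimate \eqref{ineq:tv:phi:0} forces the restriction $0 < s \leq c\nu^3\lambda_1/|\sigma|^2$ rather than $s \in (0,1]$; this is precisely the point of departure from the periodic analysis in \cref{subsec:Wass:contr:NSE} and is discussed in \cref{rmk:s:bdd}. The only subtlety in the present proof is to make sure the chosen $s$ simultaneously lies in the admissible range \eqref{def:gfd:bdd} and is compatible with the constraints \eqref{choice:s:bdd} imposed by \cref{prop:contr:Wass:bdd}, which holds by taking the absolute constant in \eqref{def:gfd:bdd} small enough.
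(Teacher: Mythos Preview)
Your overall strategy matches the paper's: verify \ref{A:1}--\ref{A:3} via \eqref{Lyap:ineq:bdd}, \cref{prop:small:Wass:bdd}, and \cref{prop:contr:Wass:bdd}, then invoke \cref{thm:gen:sp:gap}. The paper's proof is just a one-line citation of exactly these ingredients.

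However, there is a genuine gap in your argument. You claim that assumption \ref{A3:ii} is not needed to obtain the exponential-rate bound \eqref{Wass:contr:gen:2}, and that the second assertion of \cref{thm:gen:sp:gap} only requires \ref{A:1}, \ref{A:2}, \ref{A3:i}. This is a misreading of the theorem: the statement says explicitly ``Moreover, under additionally assumption \ref{A3:ii}\ldots'' before giving \eqref{Wass:contr:gen:2}, and the proof uses \ref{A3:ii} to control the residual time $s \in [0,T)$ after writing $t = jT + s$ and iterating the contraction $j$ times (see \eqref{pt:sp:gap:all:t:0}--\eqref{pt:sp:gap:all:t:1}). So as written, your proof does not actually invoke a theorem whose hypotheses you have checked.

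The fix is immediate: item (i) of \cref{prop:contr:Wass:bdd} supplies \ref{A3:ii} (indeed, it gives the stronger bound uniform over all $t \geq 0$ and all $\varepsilon > 0$, not just $t \in [0,\tau]$). You already cite item (ii) for \ref{A3:i}; just cite item (i) as well, drop the incorrect parenthetical about \ref{A3:ii} being unnecessary, and the argument goes through exactly as in the paper.
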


\begin{Rmk}\label{rmk:s:bdd}
  In the proofs of \cref{prop:small:Wass:bdd} and \cref{prop:contr:Wass:bdd}, namely in pursuit of the
  conditions \ref{A:2} and \ref{A:3} in \cref{thm:gen:sp:gap}, we will resort to
  the pathwise estimate \eqref{ineq:FP:bdd}.  It is precisely in
  making use of this challenging form of the `Foias-Prodi' estimate
  where \cref{thm:gen:sp:gap} improves upon the previous
  formulations of the weak Harris theorem in \cite{HairerMattingly2008,
    HairerMattinglyScheutzow2011, ButkovskyKulikScheutzow2019}.
   In effect, it does not seem possible to establish
   a suitable contraction bound \`a la \eqref{Wass:contr:Pcont} for the system
\eqref{eq:SNSE:bdd}, \eqref{no:slip:cond} with an obvious or direct application of these previous results.
  
  As throughout the extant SPDE literature, \eqref{ineq:FP:bdd}
  represents a crucial structural property of the dynamics which leads
  to the type of `irreducibility' and `smoothing' conditions embodied
  in \ref{A:2}, \ref{A:3}, respectively. However, the form that
  \eqref{ineq:FP:bdd} takes here illustrates a paradigmatic challenge
  in regards to establishing a suitable form of contractivity in the Markovian dynamic,
  one which has not been fully addressed previously in the literature
  as far as we can tell.  This is due to the coefficient in front of
  the integral term in \eqref{ineq:FP:bdd}, which is expected to be large
  in general. In turn, the size of this coefficient presents
  difficulties in terms of the available exponential moments on the
  law of the solution; namely the quadratic exponential moment bound \eqref{energy:ineq:bdd} in \cref{eq:bnd:dmn:exp:mom} degenerates as $O(\nu^2)$ while the demands of the 
  integral term in \eqref{ineq:FP:bdd} increases as $O(\nu^{-1})$ for
  small $\nu$.  Here it is notable that a similar issue does not occur
  in the case of periodic boundary conditions from
  \cref{sec:app:SNSE}, thanks to better properties regarding the
  nonlinear term in \eqref{2DSNSEv} in this periodic case,
  cf. \eqref{ineq:nonlin:term:per} and \eqref{ineq:nonlin:term:bdd}.
  On the other hand, analogously difficult (or even more difficult) forms of
  \eqref{ineq:FP:bdd} appear in other models considered in
  e.g. \cite{GHRichMa2017,ButkovskyKulikScheutzow2019,glatt2021long}.
	
  This issue makes itself evident in establishing smoothing estimates
  for $P_t$ as follows.  In the weak Harris approach developed in
  \cite{HairerMattinglyScheutzow2011} and in the subsequent literature,
  the appropriate smoothing condition is the $\rho$-contractivity
  condition. In contrast to \ref{A:3} in our formulation from
  \cref{thm:gen:sp:gap}, the $\rho$-contractivity is assumed to hold
  uniformly over the phase space. To be specific,
  \cite{HairerMattinglyScheutzow2011} requires that the bounded
  distance $\rho$ which they use to eventually build their contraction
  distance (in a fashion closely analogous to \eqref{def:gdista})
  maintains, for some $\alpha \in (0,1)$,
  \begin{align}\label{eq:glob:cont:cond}
    \Wass_\rho( P_{t_*}(\bu_0, \cdot), P_{t_*}(\bv_0, \cdot))
    \leq \alpha \rho(\bu_0, \bv_0),
    \quad \text{ whenever } \rho(\bu_0, \bv_0) < 1.
  \end{align}
  We observe that, even in the absence of boundaries, using a distance
  of the form $\rho(\bu, \bv) = \rhoes (\bu,\bv)$ given in
  \eqref{def:rhoes:bdd} for appropriately tuned $s, \epsilon > 0$ only
  leads to a local form of this contraction estimate
  \eqref{eq:glob:cont:cond}; cf. \eqref{ineq:contr:Wass} above.  To
  circumvent this difficulty, it is suggested in \cite[Proposition
  5.4]{HairerMattinglyScheutzow2011} that one use a certain geodesic
  distance developed in \cite[Section 4]{HairerMattingly2008} adapted
  to the `Lyapunov' structure in the available form
  $V(\bu) = \exp(\alpha |\bu|^2)$.  There they show that a time asymptotic
  smoothing estimate for the gradient estimate on the Markovian
  semigroup then provides the necessary global form of
  \eqref{eq:glob:cont:cond}. This is an infinitesimal 
  approach in the sense that we are trying to bound the distance between
  $P_{t_*}(\bu_0, \cdot)$ and $P_{t_*}(\bv_0, \cdot)$
  with  $\nabla P_{t_*}(\bu_0, \cdot)(\bu_0 - \bv_0)$.
  
  In our context, we could try to repeat this strategy from
  \cite[Section 4]{HairerMattingly2008} and \cite[Proposition
  5.4]{HairerMattinglyScheutzow2011} as follows.  Observe that, for
  any $C^1$ observable $\phi$ and any $\xi \in H$, we have from
  \eqref{def:Pt:snse:bdd}
  \begin{align}\label{eq:grad:approch}
  	\nabla P_t\phi(\bu_0)\xi =
	\bE \left(\nabla \phi(\bu(t; \bu_0) \bv\right)
    +
    \bE\left(  \phi(\bu(t; \bu_0)
    \medint\int_0^t
    \left( \sigma^{-1} \frac{\nu \lambda_{K+1}}{2} \Pi_K \bv \right)
    \cdot dW \right),
  \end{align}
  where
  \begin{align*}
	\frac{d \bv}{dt} + \nu A \bv + B(\bu, \bv) + B(\bv, \bu)
	+ \frac{\nu \lambda_{K+1}}{2} \Pi_K \bv = 0,
	\quad \bv(0) = \xi.
  \end{align*}
  The identity \eqref{eq:grad:approch} follows by adding and
  subtracting the gradient of $\bu(t;\bu_0)$ in its noise variable,
  taken in the direction
  $\bw = \sigma^{-1} \frac{\nu \lambda_{K+1}}{2} \Pi_K \bv$. One then
  performs a Malliavin integration by parts -- really just an
  application of the Girsanov theorem here since $\bw$ is adapted -- to
  obtain the second term in \eqref{eq:grad:approch}.  One may now view
  \eqref{eq:grad:approch} as means of estimating the possibility of
  coupling of two nearby solutions in two terms analogous to
  \eqref{ineq:Wes:bdd}. 
  
  This analogy is precisely what is
  operationalized in the proof of \cite[Proposition
  5.4]{HairerMattinglyScheutzow2011}.  To follow this approach, one
  would treat the first term as
  \begin{align}\label{first:term}
    (P_t |\nabla \phi|^2(\bu_0))^{1/2} (\bE |\bv|^2)^{1/2},
  \end{align}
  which we compare to our bound \eqref{est:first:term}.  The second
  term is estimated as
  \begin{align}\label{second:term}
    \sup_{\bu}|\phi(\bu)| \left(
    \bE \int_0^t \left|\sigma^{-1} \frac{\nu \lambda_{K+1}}{2} \Pi_K \bv \right|^2 dt
    \right)^{1/2},
  \end{align}
  which we may compare to \eqref{est:2nd:term:yay}.  However, in our
  setting neither of these terms can be shown to be finite. It is not clear how to introduce a suitable localization to the above arguments to avoid these issues with moments.

  Here we notice that choosing $s$ sufficiently small, specifically as
  in \eqref{def:gfd:bdd}, is what allows us to make use of the
  exponential moment bound \eqref{energy:ineq:bdd} to proceed with the estimates in \eqref{ineq:Wes:bdd:1}, \eqref{est:sec:term:1}.  
  However these estimates in themselves are insufficient 
  as they do not lead to the global form \eqref{eq:glob:cont:cond}.  On the other hand, \cite{ButkovskyKulikScheutzow2019} analogously
  employs the pseudo-metric 
  \begin{align*}
      \rho(\bu, \bv) = 1 \wedge \left(\frac{| \bu - \bv|^s}{\varepsilon} \exp(\alpha|\bu|^2)\right)
       \wedge \left(\frac{| \bu - \bv|^s}{\varepsilon} \exp(\alpha|\bv|^2)\right)
  \end{align*}
  to achieve \eqref{eq:glob:cont:cond}.  While this approach
  from \cite{ButkovskyKulikScheutzow2019} would lead to a contraction in a related  pseudo-metric 
  as a direct consequence of \cite[Theorem 4.8]{HairerMattinglyScheutzow2011},
  it is not clear that this pseudo-metric satisfies 
  any usable form of the generalized triangle inequality. Obviously, having such a generalized triangle inequality is indispensable for establishing continuous parameter dependence in the long time statistics of certain stochastic systems using the strategies we overviewed in \cref{subsec:unif:contr:overview} and used throughout \cref{sec:gen:wk:conv} and \cref{sec:app:SNSE}.
\end{Rmk}

\setcounter{equation}{0}
\appendix
\addcontentsline{toc}{section}{Appendix}
\addtocontents{toc}{\protect\setcounter{tocdepth}{0}}

\section{Distance-like functions}

Here we present some simple general results concerning distance-like functions on a Polish space $\gsp$ and their corresponding Wasserstein-like extensions to $\Pr(\gsp)$, as recalled in \cref{subsec:prelim}. 

We start by showing that if a given distance-like function $\gdist$ satisfies a generalized form of triangle inequality, namely \eqref{gen:tri:ineq:rho} below, together with a suitable set of conditions, then the corresponding distance-like function $\gdist_\sm$ for a fixed parameter $\sm > 0$, defined in \eqref{def:gdista}, satisfies an inequality of the form \eqref{gen:tri:ineq:gdista} from assumption \ref{H:1} in \cref{thm:gen:wk:conv}. The proof below follows similar ideas from \cite[Lemma 4.14]{HairerMattinglyScheutzow2011}.

\begin{Prop}\label{prop:gen:tri:ineq:rhoesa}
Let $(\gsp, \|\cdot \|)$ be a Banach space and let $\rho: \gsp \times \gsp \to \RR^+$ be a distance-like function on $\gsp$ satisfying the following conditions:
\begin{enumerate}[label={(\roman*)}]
	\item\label{rho:i} $\rho$ is bounded, i.e. there exists a constant $M > 0$ such that $\rho(u, v) \leq M$ for all $u, v \in \gsp$.
	\item\label{rho:ii} There exists a constant $K > 0$ such that
	\begin{align}\label{gen:tri:ineq:rho}
		\rho(u, v) \leq K \left[ \rho(u, w) + \rho(w, v) \right] \quad \mbox{ for all } u, v, w \in \gsp.
	\end{align}
	\item\label{rho:iii} There exists a constant $c > 0$ for which the following holds: if $\rho(u, v) < c$ for some $u, v \in \gsp$, then $\| u\|^2 \leq \gamma \|v\|^2 + C$ for some constants $\gamma > 1$ and $C > 0$, which are independent of $u$ and $v$.
\end{enumerate}
Then, for the distance-like function $\rho_\sm: \gsp \times \gsp \to \RR^+$ defined for a fixed parameter $\sm > 0$ by
\begin{align*}
	\rho_\sm(u, v) = \rho(u, v)^{1/2} \exp \left(\sm \|u\|^2 + \sm \|v\|^2 \right)  \quad \mbox{ for all } u, v \in \gsp,
\end{align*}
it follows that there exists a constant $\tilde{K} > 0$ such that
\begin{align}\label{gen:tri:ineq:rhosm}
	\rho_\sm(u, v) \leq \tilde{K} \left[ \rho_{\gamma \sm}(u, w) + \rho_{\gamma \sm}(w, v) \right] \quad \mbox{ for all } u, v, w \in \gsp,
\end{align}
where $\gamma > 0$ is the constant from assumption $(iii)$.
\end{Prop}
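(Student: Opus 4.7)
The plan is to carry out a case split based on the size of $\rho(u,w)$ and $\rho(w,v)$ relative to the threshold $c$ from condition \ref{rho:iii}. In all cases, the goal is to dominate $\rho_\alpha(u,v)$ by a constant multiple of $\rho_{\gamma\alpha}(u,w) + \rho_{\gamma\alpha}(w,v)$ by exploiting the interplay between the polynomial bulk of $\rho$ and the exponential weight $\exp(\alpha \|u\|^2 + \alpha\|v\|^2)$.

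First consider the ``local'' case where $\rho(u,w) < c$ and $\rho(w,v) < c$. Then condition \ref{rho:iii}, applied to each pair, yields $\|u\|^2 \leq \gamma \|w\|^2 + C$ and $\|v\|^2 \leq \gamma \|w\|^2 + C$. The generalized triangle inequality \eqref{gen:tri:ineq:rho} together with the elementary bound $\sqrt{a+b} \leq \sqrt{a} + \sqrt{b}$ gives
\begin{align*}
\rho(u,v)^{1/2} \leq \sqrt{K}\bigl[\rho(u,w)^{1/2} + \rho(w,v)^{1/2}\bigr].
\end{align*}
Multiplying by $\exp(\alpha\|u\|^2 + \alpha\|v\|^2)$ and bounding the exponential factor associated to $v$ by $e^{\alpha C}\exp(\gamma\alpha\|w\|^2)$ in the first term (and symmetrically for the second term), we obtain $\rho_\alpha(u,v) \leq \sqrt{K}\, e^{\alpha C}\bigl[\rho_{\gamma\alpha}(u,w) + \rho_{\gamma\alpha}(w,v)\bigr]$, using also that $\gamma > 1$ to absorb $\alpha\|u\|^2 \leq \gamma\alpha\|u\|^2$.

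Next consider the complementary ``far'' cases. Suppose without loss of generality $\rho(u,w) \geq c$. Then by \ref{rho:i}
\begin{align*}
\rho_\alpha(u,v) \leq \sqrt{M}\exp(\alpha\|u\|^2 + \alpha\|v\|^2),
\qquad
\rho_{\gamma\alpha}(u,w) \geq \sqrt{c}\exp(\gamma\alpha\|u\|^2 + \gamma\alpha\|w\|^2).
\end{align*}
If additionally $\rho(w,v) < c$, then \ref{rho:iii} gives $\exp(\alpha\|v\|^2) \leq e^{\alpha C}\exp(\gamma\alpha\|w\|^2)$, and since $\alpha \leq \gamma\alpha$ this immediately produces $\rho_\alpha(u,v) \leq \sqrt{M/c}\,e^{\alpha C}\rho_{\gamma\alpha}(u,w)$. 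If instead both $\rho(u,w) \geq c$ and $\rho(w,v) \geq c$, then multiplying the two lower bounds and using $\gamma > 1$ yields
\begin{align*}
\rho_{\gamma\alpha}(u,w)\,\rho_{\gamma\alpha}(w,v) \geq c\exp(\gamma\alpha\|u\|^2 + \gamma\alpha\|v\|^2) \geq \frac{c}{M}\rho_\alpha(u,v)^2,
\end{align*}
so that an application of AM--GM, $\sqrt{ab} \leq (a+b)/2$, gives the desired bound with constant $\sqrt{M/c}/2$.

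Taking $\tilde{K}$ to be the maximum of the constants produced in the three cases (which depends only on $K$, $M$, $c$, $C$, $\alpha$ and $\gamma$) establishes \eqref{gen:tri:ineq:rhosm} uniformly in $u,v,w$. The main delicacy is the mixed case where $w$ lies close to one point but far from the other, since there the exponential weight at the ``far'' endpoint cannot be controlled via \ref{rho:iii}; the boundedness from \ref{rho:i} is what allows the bulk factor $\rho(u,v)^{1/2}$ to absorb the missing information and the strict inequality $\gamma > 1$ is what lets the available exponential weight on the right-hand side dominate $\exp(\alpha\|u\|^2)$.
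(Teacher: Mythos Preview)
Your three-case decomposition is natural, and Cases 1 and 2a are handled correctly. However, Case 2b (both $\rho(u,w) \geq c$ and $\rho(w,v) \geq c$) has a genuine gap. You claim
\[
c\,\exp\bigl(\gamma\alpha\|u\|^2 + \gamma\alpha\|v\|^2\bigr) \;\geq\; \frac{c}{M}\,\rho_\alpha(u,v)^2,
\]
but $\rho_\alpha(u,v)^2 = \rho(u,v)\exp\bigl(2\alpha\|u\|^2 + 2\alpha\|v\|^2\bigr)$, and after using $\rho(u,v)\leq M$ this inequality reduces to $\gamma \geq 2$. The proposition only assumes $\gamma > 1$, so for $\gamma \in (1,2)$ and $\|u\|,\|v\|$ large your product/AM--GM estimate fails. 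The issue is structural: when $w$ is far from both $u$ and $v$, neither hypothesis \ref{rho:iii} nor the generalized triangle inequality gives you any control over $\|v\|^2$ in terms of $\|u\|^2$ (or vice versa), and the exponential weight $\exp(\alpha\|u\|^2 + \alpha\|v\|^2)$ is a genuinely two-variable object that a single factor $\rho_{\gamma\alpha}(u,w)$ or $\rho_{\gamma\alpha}(w,v)$ cannot absorb unless $\gamma \geq 2$.

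The paper sidesteps this by first exploiting the symmetry of $\rho_\alpha$ in $(u,v)$ to assume without loss of generality that $\|v\|^2 \leq (\gamma-1)\|u\|^2$, which immediately gives $\alpha\|u\|^2 + \alpha\|v\|^2 \leq \gamma\alpha\|u\|^2$. With this reduction in hand, the entire exponential weight can be absorbed into the single factor $\exp(\gamma\alpha\|u\|^2)$, and a two-case split on $\rho(u,w)$ alone suffices: if $\rho(u,w) \geq c$ one bounds $\rho_\alpha(u,v) \leq (M/c)^{1/2}\rho_{\gamma\alpha}(u,w)$ directly, and if $\rho(u,w) < c$ one uses \ref{rho:iii} to move $\|u\|^2$ onto $\|w\|^2$ and then the generalized triangle inequality \ref{rho:ii}. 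Your Case 2b never arises.
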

\begin{proof}
Let $u, v, w \in \gsp$. Since, for any $\sm > 0$, $\rho_\sm$ is symmetric, we may assume without loss of generality that $\|v\| \leq (\gamma - 1)^{1/2} \|u\|$, with $\gamma > 1$ being the constant from assumption \ref{rho:iii}. 

First, suppose that $\rho(u, w) \geq c$. Then, by invoking assumption \ref{rho:i} we obtain that
\begin{align}\label{ineq:rhosm:1}
	\rho_\sm(u, v) \leq M^{1/2} \exp \left(\sm \|u\|^2 + \sm \|v\|^2 \right)
	&\leq M^{1/2} \frac{\rho(u, w)^{1/2}}{c^{1/2}} \exp \left(\sm \|u\|^2 + \sm (\gamma - 1) \|u\|^2 \right) \notag \\
	&\leq \frac{M^{1/2}}{c^{1/2}} \rho_{\gamma \sm} (u, w) 
	\leq \frac{M^{1/2}}{c^{1/2}} \left[ \rho_{\gamma \sm} (u, w) + \rho_{\gamma \sm}(w, v) \right].
\end{align}
On the other hand, if $\rho(u, w) < c$ then by invoking assumptions \ref{rho:ii} and \ref{rho:iii} it follows that
\begin{align}\label{ineq:rhosm:2}
	\rho_\sm(u, v)
	&\leq K^{1/2}  \left[ \rho(u, w)^{1/2} + \rho(w, v)^{1/2} \right] \exp \left(\sm \|u\|^2 + \sm \|v\|^2 \right) \notag \\
	&\leq K^{1/2} \left[ \rho(u, w)^{1/2} \exp \left(\gamma \sm \|u\|^2  \right)  + \rho(w, v)^{1/2} \exp \left( \sm \gamma \|w\|^2 + \sm C + \sm \|v\|^2 \right) \right] \notag \\
	&\leq \tilde{C} \left[ \rho_{\gamma \sm} (u, w) + \rho_{\gamma \sm} (w, v) \right],
\end{align}
where $\tilde{C} = K^{1/2} \exp(\sm C)$. 

From \eqref{ineq:rhosm:1} and \eqref{ineq:rhosm:2}, we conclude that \eqref{gen:tri:ineq:rhosm} holds with $\tilde{K} =\max\{ (M/c)^{1/2}, \tilde{C} \}$.
\end{proof}

In the following result, we show that a generalized triangle inequality satisfied by given distance-like functions, namely \eqref{gen:trg:ineq:rho} below, induces an analogous inequality for the corresponding Wasserstein-like extensions, \eqref{tri:ineq:Wass}. The proof relies essentially on the Disintegration theorem (see e.g. \cite[Lemma 5.3.2]{AmbrosioGigliSavare2005}): fixed measures $\mu, \nu, \tilde{\nu} \in \Pr(\gsp)$, and given any couplings $\Gamma \in \Co(\mu, \tilde{\nu})$, $\Gamma' \in \Co(\tilde{\nu},\nu)$, it provides a way of constructing a coupling $\Gamma'' \in \Co(\mu,\nu)$, so that one can pass from \eqref{gen:trg:ineq:rho} to \eqref{tri:ineq:Wass}. Before we state the result, let us recall a few definitions. 

Let $(\mX, \Sigma_{\mX} )$ and $(\mY, \Sigma_{\mY})$ be measurable spaces. Given a measurable function $\phi: \mX \to \mY$ and a measure $\mu \in \Pr(\mX)$, the \emph{pushforward of $\mu$ by $\phi$}, denoted by $\phi^* \mu$, is defined as the measure on $\mY$ given by
\begin{align*}
	\phi^*\mu(A) \coloneqq \mu (\phi^{-1}(A)) \quad \mbox{ for any } A \in \Sigma_{\mX},
\end{align*}
where $\phi^{-1}(A)$ denotes the preimage of the set $A$ by $\phi$. Moreover, given a $(\phi^*\mu)$-integrable function $\psi: \mY \to \RR$, it follows that the composition $\psi \circ \phi: \mX \to \RR$ is $\mu$-integrable and the following change of variables formula holds
\begin{align}\label{change:var}
	\int_{\mY} \psi(u) (\phi^*\mu)(d u) = \int_{\mX} \psi(\phi(u)) \mu(du).
\end{align}

\begin{Prop}\label{prop:tri:ineq:Wass}
 Let $\gsp$ be a Polish space. Suppose there exist distance-like functions $\gdist_1, \gdist_2, \gdist_3: \gsp \times \gsp \to \RR^+$ for which there exists a constant $C > 0$ such that
 \begin{align}\label{gen:trg:ineq:rho}
 	\gdist_1(u, v) \leq C \left[ \gdist_2(u, w) + \gdist_3(w, v) \right] \quad \mbox{ for all } u, v, w \in \gsp.
 \end{align}
 Let $\Wass_{\gdist_1}$, $\Wass_{\gdist_2}$ and $\Wass_{\gdist_3}$ be the Wasserstein-like extensions of $\gdist_1$, $\gdist_2$ and $\gdist_3$, respectively, to $\Pr(\gsp)$, according to the definition given in \eqref{def:Wass:rhoe}. Then,
 \begin{align}\label{tri:ineq:Wass}
 	\Wass_{\gdist_1}(\mu, \mu') \leq C \left[ \Wass_{\gdist_2}(\mu, \tmu)  +  \Wass_{\gdist_3}(\tmu, \mu') \right] \quad \mbox{ for all } \mu, \mu', \tmu \in \Pr(\gsp).
 \end{align}
\end{Prop}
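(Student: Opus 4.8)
The plan is to reduce the statement about Wasserstein-like distances to the pointwise triangle-type inequality \eqref{gen:trg:ineq:rho} by a standard gluing argument based on the Disintegration Theorem. First I would fix $\mu, \mu', \tmu \in \Pr(\gsp)$ and, given $\varepsilon > 0$, choose near-optimal couplings $\Gamma \in \Co(\mu, \tmu)$ and $\Gamma' \in \Co(\tmu, \mu')$ satisfying
\begin{align*}
	\int_{\gsp \times \gsp} \gdist_2(u, w) \, \Gamma(du, dw) &\leq \Wass_{\gdist_2}(\mu, \tmu) + \varepsilon, \\
	\int_{\gsp \times \gsp} \gdist_3(w, v) \, \Gamma'(dw, dv) &\leq \Wass_{\gdist_3}(\tmu, \mu') + \varepsilon,
\end{align*}
(if either Wasserstein distance is infinite there is nothing to prove). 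Next I would disintegrate $\Gamma$ and $\Gamma'$ with respect to their common marginal $\tmu$, writing $\Gamma(du, dw) = \Gamma_w(du)\, \tmu(dw)$ and $\Gamma'(dw, dv) = \Gamma'_w(dv)\, \tmu(dw)$, and then define the glued measure
\begin{align*}
	\Gamma''(du, dv) := \int_{\gsp} \Gamma_w(du)\, \Gamma'_w(dv)\, \tmu(dw)
\end{align*}
on $\gsp \times \gsp$. A routine check using the marginal properties shows $\Gamma'' \in \Co(\mu, \mu')$: integrating out $v$ returns $\int_\gsp \Gamma_w(du)\tmu(dw) = \mu(du)$, and symmetrically for $u$.

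The second step is to push these measures onto a common product space so the pointwise inequality can be applied. Concretely, I would introduce the measure $\Pi$ on $\gsp \times \gsp \times \gsp$ defined by $\Pi(du, dw, dv) = \Gamma_w(du)\Gamma'_w(dv)\tmu(dw)$, whose $(1,3)$-marginal (projection forgetting the middle $w$-coordinate) is exactly $\Gamma''$, whose $(1,2)$-marginal is $\Gamma$, and whose $(2,3)$-marginal is $\Gamma'$. Using the change-of-variables formula \eqref{change:var} for pushforwards under the coordinate projections, together with the definition \eqref{def:Wass:rhoe} of $\Wass_{\gdist_1}$ and inequality \eqref{gen:trg:ineq:rho}, I would estimate
\begin{align*}
	\Wass_{\gdist_1}(\mu, \mu')
	&\leq \int_{\gsp \times \gsp} \gdist_1(u, v)\, \Gamma''(du, dv)
	= \int_{\gsp^3} \gdist_1(u, v)\, \Pi(du, dw, dv) \\
	&\leq C \int_{\gsp^3} \left[ \gdist_2(u, w) + \gdist_3(w, v) \right] \Pi(du, dw, dv) \\
	&= C \left[ \int_{\gsp \times \gsp} \gdist_2(u, w)\, \Gamma(du, dw) + \int_{\gsp \times \gsp} \gdist_3(w, v)\, \Gamma'(dw, dv) \right] \\
	&\leq C \left[ \Wass_{\gdist_2}(\mu, \tmu) + \Wass_{\gdist_3}(\tmu, \mu') + 2\varepsilon \right].
\end{align*}
Letting $\varepsilon \to 0$ yields \eqref{tri:ineq:Wass}.

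The only genuinely delicate point is the measurability and existence of the disintegrations $w \mapsto \Gamma_w$ and $w \mapsto \Gamma'_w$, i.e. verifying that these are honest probability kernels so that $\Gamma''$ and $\Pi$ are well-defined Borel measures. This is where the hypothesis that $\gsp$ is Polish is used: the Disintegration Theorem (e.g. \cite[Lemma 5.3.2]{AmbrosioGigliSavare2005}) applies on Polish spaces, guaranteeing $\tmu$-a.e. uniquely determined kernels depending measurably on $w$. One also needs the existence of near-optimal couplings in \eqref{def:Wass:rhoe}, which follows immediately from the definition of the infimum. Beyond this, every step is bookkeeping with Fubini/Tonelli and \eqref{change:var}; the lower semicontinuity of the $\gdist_i$ ensures the integrals defining the Wasserstein-like distances behave as expected but is not otherwise needed here. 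I do not anticipate any further obstacles.
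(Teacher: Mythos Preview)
Your proposal is correct and follows essentially the same approach as the paper: choose near-optimal couplings, glue them via the Disintegration Theorem (the paper cites the same reference, \cite[Lemma 5.3.2]{AmbrosioGigliSavare2005}) to obtain a three-fold measure whose pairwise marginals recover $\Gamma$, $\Gamma'$, and a coupling of $(\mu,\mu')$, then apply \eqref{gen:trg:ineq:rho} pointwise and let $\varepsilon\to 0$. The only cosmetic difference is that you spell out the kernels $\Gamma_w,\Gamma'_w$ and the glued measure explicitly, whereas the paper simply asserts the existence of $\tilde\Gamma\in\Pr(\gsp^3)$ with the right $(1,2)$- and $(2,3)$-marginals.
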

\begin{proof}
Let $\pi_1, \pi_2: \gsp\times \gsp \to \gsp$ denote the projection functions onto the first and second components, respectively. Namely, $\pi_1(u, v) = u$ and $\pi_2(u, v) = v$ for all $u, v \in \gsp$. Then, recalling the definition of the family of couplings $\Co(\mu, \mu')$ of any two measures $\mu, \mu' \in \Pr(\gsp)$, given in \cref{subsec:prelim}, it follows that, for any $\Gamma \in \Co(\mu, \mu') \subset \Pr(\gsp \times \gsp)$, $\pi_1^* \Gamma = \mu$ and $\pi_2^* \Gamma = \mu'$. 

Fix $\mu, \mu', \tmu \in \Pr(\gsp)$. From \eqref{def:Wass:rhoe}, it follows that for any given $\tilde{\varepsilon} > 0$ there exist $\Gamma \in \Co(\mu, \tmu)$ and $\Gamma' \in \Co(\tmu, \mu')$ such that
\begin{align*}
	\int_{\gsp \times \gsp} \gdist_2(u, v) \Gamma (du, d v) < \Wass_{\gdist_2}(\mu, \tmu) + \tilde{\varepsilon},
\end{align*}
and
\begin{align*}
	\int_{\gsp \times \gsp} \gdist_3(u, v) \Gamma' (du, d v) < \Wass_{\gdist_3}(\tmu, \mu') + \tilde{\varepsilon}.
\end{align*}

Further, let us denote by $\pi_{i,j}: \gsp \times \gsp \times \gsp \to \gsp\times \gsp$, $i,j= 1,2,3$, the projection functions 
\begin{align*}
	\pi_{i,j}(u_1, u_2, u_3) = (u_i, u_j), \quad \mbox{ for all } u_1, u_2, u_3 \in \gsp.
\end{align*}
Since $\pi_2^* \Gamma = \tmu = \pi_1^* \Gamma'$, it follows from the Disintegration theorem (see e.g. \cite[Lemma 5.3.2]{AmbrosioGigliSavare2005}) that there exists $\tilde{\Gamma} \in \Pr(\gsp \times \gsp \times \gsp)$ such that $\pi_{1,2}^\ast \tilde{\Gamma} = \Gamma$ and $\pi_{2,3}^\ast \tilde{\Gamma} = \Gamma'$. Consequently, $\pi_{1,3}^\ast \tilde{\Gamma} \in \Co(\mu, \mu')$ and 
\begin{align}\label{Wass:rho1}
	\Wass_{\gdist_1}(\mu, \mu') \leq \int_{\gsp \times \gsp} \gdist_1 (u, v) \pi_{1,3}^\ast \tilde{\Gamma} (d u, dv)
	= \int_{\gsp \times \gsp \times \gsp} \gdist_1(\pi_{1,3}(u, w, v)) \tilde{\Gamma}(du, d w, d v).
\end{align}
From assumption \eqref{gen:trg:ineq:rho}, we have that for any $u, v, w \in \gsp$
\begin{align}\label{g:trg:ineq:rho:pi}
	\gdist_1(\pi_{1,3}(u, w, v)) = \gdist_1(u, v) 
	&\leq C \left[ \gdist_2(u, w) + \gdist_3 (w, v) \right] \notag\\
	&= C \left[ \gdist_2(\pi_{1,2}(u, w, v)) + \gdist_3 (\pi_{2,3}(u, w, v)) \right].
\end{align}
Plugging \eqref{g:trg:ineq:rho:pi} in \eqref{Wass:rho1} and changing variables as in \eqref{change:var}, we deduce that
\begin{align}\label{Wass:rho1:b}
	\Wass_{\gdist_1}(\mu, \mu') 
	&\leq C  \left[ \int_{\gsp \times \gsp} \gdist_2(u, w) \pi_{1,2}^* \tilde{\Gamma}(du, d w) + \int_{\gsp \times \gsp} \gdist_3(w, v) \pi_{2,3}^* \tilde{\Gamma}(dw, d v)  \right] \notag \\
	&= C  \left[ \int_{\gsp \times \gsp} \gdist_2(u, w) \Gamma (du, d w) + \int_{\gsp \times \gsp} \gdist_3(w, v) \Gamma' (dw, d v)  \right] \notag \\
	&< C \left[  \Wass_{\gdist_2}(\mu, \tmu) +  \Wass_{\gdist_3}(\tmu, \mu') + 2 \tilde{\varepsilon} \right].
\end{align}
Since $\tilde{\varepsilon} > 0$ is arbitrary, taking the limit as $\tilde{\varepsilon}$ goes to $0$ in \eqref{Wass:rho1:b} we conclude \eqref{tri:ineq:Wass}.
\end{proof}

\section{Proof of \cref{thm:holder:reg}}\label{sec:app:holder:reg}

With the same notation from \eqref{snse:func}, we write the Galerkin system \eqref{2DSNSEvGal} in the following functional form 
\begin{align}\label{eq:SNSE:func}
	\rd \xn + \left[ \nu A \xn + \Pi_N B(\xn, \xn)  \right] \rd t = \Pi_N \sigma \rd W.
\end{align}

The following preliminary lemma provides some suitable bounds for the analytic semigroup $e^{-\nu t A}$, $t \geq 0$, generated by the operator $- \nu A$. For the proof, we refer to \cite[Theorem 6.13, Chapter 2]{Pazy2012}. The notation $\| \cdot \|_{\mL(\dL^2)}$ below refers to the standard operator norm of a linear operator on $\dL^2$.

\begin{Lem}\label{lem:prop:heat:ker}
	For every $a \geq 0$ and $b \in (0,1]$, there exist constants $c_a > 0$ and $c_b > 0$ such that 
	\begin{gather}
	\| A^a e^{-\nu t A}\|_{\mL(\dL^2)} \leq c_a (\nu t)^{-a}, \\
	\| A^{-b} (I - e^{-\nu t A})\|_{\mL(\dL^2)} \leq c_b (\nu t)^b,
	\end{gather}
	for all $t > 0$.
\end{Lem}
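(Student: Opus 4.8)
\textbf{Proof plan for \cref{lem:prop:heat:ker}.}

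The statement asserts the two classical smoothing bounds for the analytic semigroup $e^{-\nu t A}$ generated by $-\nu A$ on $\dL^2$, where $A = (-\Delta)$ restricted to mean-free functions. The plan is to prove these directly from the spectral decomposition of $A$, which is available here because $A$ is positive, self-adjoint with compact inverse (as recalled in \cref{subsubsec:2DSNSE}), so it admits an orthonormal eigenbasis $\{e_k\}_{k \in \NN}$ of $\dL^2$ with eigenvalues $0 < \lambda_1 \leq \lambda_2 \leq \cdots$, $\lambda_k \to \infty$. For $\xi \in \dL^2$ write $\xi = \sum_k \hat\xi_k e_k$; then by the functional calculus $A^a e^{-\nu t A}\xi = \sum_k \lambda_k^a e^{-\nu t \lambda_k} \hat\xi_k e_k$ and $A^{-b}(I - e^{-\nu t A})\xi = \sum_k \lambda_k^{-b}(1 - e^{-\nu t \lambda_k})\hat\xi_k e_k$, so by Parseval it suffices to bound the scalar multipliers $m_t(\lambda) := \lambda^a e^{-\nu t\lambda}$ and $n_t(\lambda) := \lambda^{-b}(1 - e^{-\nu t\lambda})$ uniformly over $\lambda \geq \lambda_1 > 0$.

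For the first bound, the plan is to optimize $\sup_{\lambda > 0}\lambda^a e^{-\nu t\lambda}$ over $\lambda$: setting $x = \nu t\lambda$ gives $\lambda^a e^{-\nu t\lambda} = (\nu t)^{-a} x^a e^{-x}$, and $\sup_{x > 0} x^a e^{-x} = (a/e)^a =: c_a$ (with $c_0 = 1$), which is finite for every fixed $a \geq 0$. Hence $\|A^a e^{-\nu t A}\xi\|_{\dL^2}^2 = \sum_k (\lambda_k^a e^{-\nu t\lambda_k})^2 |\hat\xi_k|^2 \leq c_a^2 (\nu t)^{-2a}\sum_k|\hat\xi_k|^2 = c_a^2(\nu t)^{-2a}\|\xi\|_{\dL^2}^2$, which is exactly the claimed operator bound. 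For the second bound, I would use the elementary inequality $1 - e^{-y} \leq y^b$ valid for all $y \geq 0$ and all $b \in (0,1]$ (which follows since $1 - e^{-y} \leq \min\{1, y\} \leq y^b$ when $y \leq 1$, and $1 - e^{-y} \leq 1 \leq y^b$ when $y \geq 1$). Applying this with $y = \nu t\lambda$ gives $\lambda^{-b}(1 - e^{-\nu t\lambda}) \leq \lambda^{-b}(\nu t\lambda)^b = (\nu t)^b$, so that $c_b = 1$ works; again squaring and summing over $k$ with Parseval yields $\|A^{-b}(I - e^{-\nu t A})\xi\|_{\dL^2} \leq (\nu t)^b\|\xi\|_{\dL^2}$.

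There is no serious obstacle here: the result is a standard spectral-calculus estimate and the only ingredients are the two scalar inequalities above together with the self-adjoint diagonalization of $A$. Alternatively, one could simply cite \cite[Theorem 6.13, Chapter 2]{Pazy2012} as the excerpt already does; the spectral proof sketched above is included for self-containedness and because the periodic torus setting makes the eigenbasis completely explicit (Fourier modes $e^{i\kappa\cdot x}$ with eigenvalues $|\kappa|^2$, $\kappa \in \ZZ^2\setminus\{\mathbf 0\}$), so no abstract analytic-semigroup theory is strictly needed. The one point worth stating carefully is that the suprema are taken over the actual spectrum $\{\lambda_k\} \subset [\lambda_1,\infty)$, which is a subset of $(0,\infty)$, so the bounds on $(0,\infty)$ certainly apply.
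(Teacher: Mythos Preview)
Your proof is correct. The paper does not actually prove this lemma at all: it simply cites \cite[Theorem~6.13, Chapter~2]{Pazy2012}, which gives these bounds for general analytic semigroups generated by sectorial operators. Your approach is genuinely different in that it bypasses the abstract theory entirely and works directly with the spectral decomposition of $A$ on $\dL^2(\bT^2)$, reducing both operator-norm bounds to the elementary scalar inequalities $\sup_{x>0} x^a e^{-x} = (a/e)^a$ and $1 - e^{-y} \leq y^b$ for $y \geq 0$, $b \in (0,1]$. This buys self-containedness and explicit constants ($c_b = 1$ in particular), at the cost of relying on the self-adjointness and compact resolvent of $A$, which are of course available here. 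The Pazy reference, by contrast, would also cover non-self-adjoint sectorial generators where no eigenbasis is available; but for the periodic Stokes/Laplace setting of this paper your spectral argument is the natural and sharper route. You already note this tradeoff yourself, so there is nothing to add.
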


Having fixed the necessary terminology, we proceed to show the desired H\"older regularity for solutions of the Galerkin system \eqref{2DSNSEvGal}.

\begin{proof}[Proof of \cref{thm:holder:reg}]
We only show a proof of inequality \eqref{holder:reg:H1}, since the proof of \eqref{holder:reg:L2} is simpler and follows entirely analogously. Fix $T > 0$, $m \in \NN$ and $\tp \in (0,1/2)$. 
We consider the mild form of the solution $\xn$ that follows from the functional formulation \eqref{eq:SNSE:func}, namely
\begin{align*}
\xn(t) =  e^{- \nu t A} \Pi_N \xi_0 - \int_0^t e^{-\nu (t - \tau) A} \Pi_N B(\xn, \xn ) \rd \tau + \int_0^t e^{-\nu (t - \tau) A} \Pi_N \sigma \rd W(\tau),
\end{align*}
for every $t \geq 0$. Thus, for every $s,t \in [0,T]$,
\begin{align}\label{eq:diff:gradxi}
\nabla \xn(t) - \nabla \xn (s)
=&
\left( e^{-\nu t A}  -  e^{-\nu s A} \right) \nabla \Pi_N \xi_0 
\nonumber\\
&- \left(  \int_0^t e^{-\nu(t - \tau) A} \nabla \Pi_N B(\xn, \xn) \rd \tau  -   \int_0^s  e^{-\nu(s - \tau) A} \nabla \Pi_N B(\xn, \xn) \rd \tau  \right)
\nonumber\\
&+ \left( \int_0^t e^{-\nu (t - \tau) A} \nabla \Pi_N\sigma \rd W(\tau)  
- \int_0^s e^{-\nu (s - \tau) A} \nabla \Pi_N\sigma \rd W(\tau) \right)
\nonumber\\
=& \, (I) + (II)  + (III).
\end{align}

We proceed to estimate each term in the right-hand side of \eqref{eq:diff:gradxi}. Without loss of generality, let us assume $s < t$.
We estimate $(I)$ as
\begin{align*}
|(I)| = |\left( e^{-\nu t A}  -  e^{-\nu s A} \right) \nabla \Pi_N \xi_0 | 
&=  |e^{-\nu s A} ( e^{- \nu (t - s) A} - I ) \nabla \Pi_N \xi_0 | 
\\
&\leq  \| e^{-\nu s A}\|_{\mL(\dL^2)} \|A^{- \tp} ( e^{- \nu (t - s) A} - I )\|_{\mL(\dL^2)} |A^\tp \nabla \xi_0|
\\
&\leq c| t - s |^{\tp} |A \xi_0|,
\end{align*}
where the last inequality follows from \cref{lem:prop:heat:ker}, and the fact that $A^\tp \nabla \xi_0 = A^\tp A^{1/2} \xi_0 = A^{\tp + 1/2} \xi_0$, so that since $\tp \in (0,1/2)$ we have $|A^\tp \nabla \xi_0| = \|\xi_0\|_{\dH^{2\tp + 1}} \leq \|\xi_0\|_{\dH^2}$, see \cref{subsubsec:2DSNSE}. Hence,
\begin{align}\label{ineq:I}
\bE |(I)|^m \leq c |t-s|^{m \tp} |A \xi_0|^m.
\end{align}

Now for term $(II)$ we have
\begin{align*}
|(II)| =&
\left| \int_0^t e^{-\nu(t - \tau) A} A^{1/2} \Pi_N B(\xn, \xn) \rd \tau  -   \int_0^s  e^{-\nu(s - \tau) A} A^{1/2} \Pi_N B(\xn, \xn) \rd \tau \right| \notag \\
\leq&
\left| \int_0^s (  e^{-\nu(t - \tau) A }  - e^{-\nu(s - \tau) A} ) A^{1/2} \Pi_N B(\xn, \xn) \rd \tau  \right|
+  \left|  \int_s^t e^{-\nu (t - \tau) A } A^{1/2} \Pi_N B(\xn, \xn) \rd \tau  \right| 
\\
&= |(II_a)|  +  |(II_b)|.
\end{align*}

Notice that 
\begin{align}\label{ineq:term:IIa}
\bE |(II_a)|^m
&= \bE \left|  \int_0^s e^{-\nu (s - \tau) A} (e^{-\nu(t - s) A} - I) A^{1/2} \Pi_N B(\xn, \xn) \rd \tau  \right|^m 
\nonumber \\
&\leq \bE \left( \int_0^s \| A^{\tp + 1/2} e^{-\nu (s - \tau) A} \|_{\mL(\dL^2)} \|A^{-\tp } (e^{-\nu(t - s) A} - I)\|_{\mL(\dL^2)}  \, |\Pi_N B(\xn, \xn)| \rd \tau \right)^m
\nonumber \\
&\leq \frac{c}{\nu^{m/2}} \bE \sup_{0 \leq \tau \leq T} |\Pi_N B(\xn, \xn)|^m \left(  \int_0^s  |s - \tau|^{-\tp - 1/2} |t - s|^{\tp}  \rd \tau \right)^m,
\end{align}
where in the last inequality we invoked \cref{lem:prop:heat:ker} once again. 

With inequality \eqref{ineq:nonlin:b} for the nonlinear term and estimate \eqref{ineq:sup:nabla:xn} from \cref{prop:sup:xn:nabla}, it follows that 
\begin{align*}
\frac{c}{\nu^{m/2}} \bE \sup_{0 \leq \tau \leq T} |\Pi_N B(\xn, \xn)|^m 
\leq \frac{c}{\nu^{m/2}} \bE \sup_{0 \leq \tau \leq T} |B(\xn, \xn)|^m 
&\leq \frac{c}{\nu^{m/2}} \bE \sup_{0 \leq \tau \leq T} |\nabla \xn|^{2m} \\
&\leq C(1 + |\xi_0|^{4m} + |\nabla \xi_0|^{2m}),
\end{align*}
where $C$ is a constant depending on $m, T, \nu, |\sigma|$ and $|\nabla \sigma|$. Thus, from \eqref{ineq:term:IIa} and since $\tp \in (0,1/2)$
\begin{align}\label{ineq:IIa}
\bE |(II_a)|^m &\leq C (1 + |\xi_0|^{4m} + |\nabla \xi_0|^{2m}) |t - s|^{m\tp} \left(  \int_0^s |s - \tau|^{-\tp - 1/2} \rd \tau \right)^m 
\notag\\
&\leq C (1 + |\xi_0|^{4m} + |\nabla \xi_0|^{2m}) |t - s|^{m\tp}.
\end{align}

Similarly, we have for $(II_b)$ that
\begin{align}\label{ineq:IIb}
\bE |(II_b)|^m &\leq \bE \left(  \int_s^t \|A^{1/2} e^{-\nu(t - \tau)A}\|_{\mL(\dL^2)} |\Pi_N B(\xn, \xn)| \rd \tau \right)^m 
\notag\\
&\leq \frac{c}{\nu^{m/2}} \bE \sup_{0 \leq \tau \leq T} |\Pi_N B(\xn, \xn)|^m  \left( \int_s^t |t - \tau|^{-1/2} \rd \tau  \right)^m
\notag\\
&\leq C (1 + |\xi_0|^{4m} + |\nabla \xi_0|^{2m}) |t - s|^{m/2} 
\notag\\
&\leq C (1 +|\xi_0|^{4m} + |\nabla \xi_0|^{2m}) |t - s|^{m \tp } T^{(-\tp + 1/2)m}.
\end{align}

Lastly, we estimate $(III)$ as
\begin{align*}
|(III)| &\leq 
\left| \int_0^s (e^{-\nu (t - \tau) A}  -  e^{- \nu (s - \tau) A } ) \nabla \Pi_N \sigma \rd W(\tau) \right|
+ 
\left| \int_s^t e^{-\nu (t - \tau) A} \nabla \Pi_N \sigma \rd W (\tau)  \right| \\
&\quad = |(III_a)| + |(III_b)|.
\end{align*}

For each fixed $s,t \in [0,T]$, we define for every $r \in [0,s]$
\begin{align*}
	M_r := \int_0^r (e^{-\nu (t - \tau) A}  -  e^{- \nu (s - \tau) A } ) \nabla \Pi_N\sigma \rd W(\tau). 
\end{align*}
Then, $\{M_r\}_{0 \leq r \leq s}$ is a martingale. By Burkholder-Davis-Gundy inequality \cite[Theorem 3.28]{KaratzasShreve1991}, for every $p \in (0,\infty)$,
\begin{align*}
\bE |M_s|^p \leq \bE \sup_{0 \leq r \leq s} |M_r|^p \leq c \, \bE \left( \langle M \rangle_s^{p/2}\right),
\end{align*}
where
\begin{align*}
\langle M \rangle_s = \int_0^s | (e^{-\nu (t - \tau) A}  -  e^{- \nu (s - \tau) A }) \nabla \Pi_N\sigma |^2 \rd \tau.
\end{align*}
Hence, invoking \cref{lem:prop:heat:ker} again,
\begin{align}\label{ineq:IIIa}
\bE |(III_a)|^m = \bE |M_s|^m
&\leq c \left(   \int_0^s | (e^{-\nu (t - \tau) A}  -  e^{- \nu (s - \tau) A }) \nabla \Pi_N\sigma |^2 \rd \tau  \right)^{m/2}
\notag\\
&\leq c \left(  \int_0^s |e^{-\nu (s - \tau) A} ( e^{-\nu (t - s ) A} - I) \nabla \Pi_N \sigma|^2 \rd \tau  \right)^{m/2}
\notag\\
&\leq c \left( \int_0^s \| A^\tp e^{-\nu(s - \tau) A }\|_{\mL(\dL^2)}^2  \| A^{-\tp} (e^{-\nu (t - s ) A} - I )\|_{\mL(\dL^2)}^2 |\nabla \sigma |^2 \rd \tau  \right)^{m/2}
\notag\\
&\leq c \left( \int_0^s |s- \tau|^{-2 \tp} |t -s|^{2 \tp} |\nabla \sigma|^2 \rd \tau  \right)^{m/2} 
\notag\\
&= c |t -s |^{m \tp} |\nabla \sigma|^m \left( \int_0^s |s - \tau|^{-2 \tp} \rd \tau  \right)^{m/2}
\notag\\
&\leq c |t - s|^{m \tp} |\nabla \sigma|^m s^{(- \tp + 1/2)m}
\leq c |t - s|^{m \tp} |\nabla \sigma|^m T^{ (- \tp + 1/2)m},
\end{align}
where the last inequality holds thanks to the assumption that $\tp \in (0,1/2)$.

Analogously, we estimate $\bE |(III_b)|^m$ as 
\begin{align}\label{ineq:IIIb}
\bE |(III_b)|^m 
\leq \left( \int_s^t  |e^{-\nu(t - \tau) A} \nabla \Pi_N\sigma|^2 \rd \tau \right)^{m/2}
&\leq \left( \int_s^t \| e^{-\nu(t - \tau) A}\|_{\mL(\dL^2)}^2 |\nabla \sigma|^2 \rd \tau  \right)^{m/2}
\notag\\
&\leq |\nabla \sigma|^m |t - s|^{m/2} 
\notag\\
&\leq |t - s|^{m \tp} \, |\nabla \sigma|^m  \, T^{(- \tp + 1/2)m}
\end{align}

Therefore, it follows from \eqref{eq:diff:gradxi} and the estimates \eqref{ineq:I}, \eqref{ineq:IIa}-\eqref{ineq:IIIb} above that for all $s,t \in [0,T]$ with $s \leq t$
\begin{align}\label{holder:reg:a}
\bE |\nabla \xn(t) - \nabla \xn(s)|^m 
\leq C |t - s|^{m\tp} \left[ 1  + |\xi_0|^{4m} + |\nabla \xi_0|^{2m} +  |A \xi_0|^m \right],
\end{align}
where $C = C(m , \tp, T, \nu, |\sigma|, |\nabla \sigma|)$. This concludes the proof of \eqref{holder:reg:L2}. Clearly, by following similar steps as above one can show that \eqref{holder:reg:L2} and \eqref{holder:reg:H1} also hold with $\xn(t)$ replaced by the solution $\xi(t)$, $t \geq 0$, of \eqref{2DSNSEv} satisfying $\xi(0) = \xi_0$ almost surely.
\end{proof}

\section*{Acknowledgements}

Our efforts are supported under the grants DMS-1816551, DMS-2108790 (NEGH) and
DMS-2009859 (CFM). 

\addtocontents{toc}{\protect\setcounter{tocdepth}{1}}
\addcontentsline{toc}{section}{References}
\bibliographystyle{alpha}
\bibliography{bib}

\vfill

\begin{multicols}{2}
\noindent
Nathan E. Glatt-Holtz\\ {\footnotesize
Department of Mathematics\\
Tulane University\\
Web: \url{http://www.math.tulane.edu/~negh/}\\
Email: \url{negh@tulane.edu}} \\[.2cm]
\columnbreak

\noindent 
Cecilia F. Mondaini \\
{\footnotesize
  Department of Mathematics\\
  Drexel University\\
  Web: \url{https://www.math.drexel.edu/~cf823/}\\
  Email: \url{cf823@drexel.edu}}\\[.2cm]
 \end{multicols}

\end{document}